\newcommand\ocirc[1]{\ensurestackMath{\stackon[1pt]{#1}{\mkern2mu\circ}}}
\newtheorem{theorem}{Theorem}[section]
\newtheorem{lemma}[theorem]{Lemma}
\newtheorem{proposition}[theorem]{Proposition}
\newtheorem{corollary}[theorem]{Corollary}
\theoremstyle{definition}
\newtheorem{definition}[theorem]{Definition}
\theoremstyle{remark}
\newtheorem{remark}[theorem]{Remark}
\numberwithin{equation}{section}
\newcommand{\bx}{\mathbf{x}}
\newcommand{\by}{\mathbf{y}}
\newcommand{\bq}{\mathbf{q}}
\newcommand{\bu}{\mathbf{u}}
\newcommand{\bfu}{\mathbf{u}}
\newcommand{\bff}{\mathbf{f}}
\newcommand{\bfn}{\mathbf{n}}
\newcommand{\vu}{\mathbf{u}}
\newcommand{\bn}{\mathbf{n}}
\newcommand{\dy}{\, \mathrm{d}\mathbf{y}}
\newcommand{\dd}{\,\mathrm{d}}
\newcommand{\dq}{\, \mathrm{d} \mathbf{q}}
\newcommand{\dx}{\, \mathrm{d} \mathbf{x}}
\newcommand{\dt}{\, \mathrm{d}t}
\newcommand{\Div}{\mathrm{div}_{\mathbf{x}}}
\newcommand{\divx}{\mathrm{div}_{\mathbf{x}}}
\newcommand{\divq}{\mathrm{div}_{\mathbf{q}}}
\newcommand{\nabx}{\nabla_{\mathbf{x}}}
\newcommand{\naby}{\nabla_{\mathbf{y}}}
\newcommand{\nabq}{\nabla_{\mathbf{q}}}
\newcommand{\Delx}{\Delta_{\mathbf{x}}}
\newcommand{\Dely}{\Delta_{\mathbf{y}}}
\newcommand{\R}{\mathbb{R}}
\newcommand{\Oeta}{\Omega_{\eta(t)}}
\newcommand{\Ozeta}{\Omega_{\zeta}}
\begin{document}

\title[Local strong solution to the beam-polymeric fluid interaction system]
{Existence of a local strong solution to the beam-polymeric fluid interaction system}

\author{Dominic Breit and Prince Romeo Mensah}
\address{Institute of Mathematics, TU Clausthal, Erzstra\ss e 1, 38678 Clausthal-Zellerfeld, Germany}
\email{dominic.breit@tu-clausthal.de, prince.romeo.mensah@tu-clausthal.de}

\thanks{The authors would like to thank S. Schwarzacher  for valuable suggestions concerning the compability condition in fluid-structure interaction.}


\subjclass[2010]{76D03; 35Q30; 35Q84; 82D60}

\date{\today}


\keywords{Incompressible Navier--Stokes--Fokker--Planck system, FENE dumbbell, Fluid-Structure interaction
}

\begin{abstract}

  We construct  a unique local strong solution to the  finitely extensible nonlinear elastic (FENE) dumbbell model of Warner-type for an incompressible polymer fluid (described by the Navier--Stokes--Fokker--Planck equations)
interacting with a flexible elastic shell. The latter occupies the flexible boundary of the polymer fluid domain and is modeled by a beam equation coupled through kinematic boundary conditions and the balance of forces.  In the 2D case for the co-rotational Fokker-Planck model we obtain global-in-time strong solutions.

A main step in our approach is the proof of local well-posedness for just  the solvent-structure system in higher-order topologies which is  of independent interest. Different from most of the previous results in the literature, the reference spatial domain is an arbitrary smooth subset of $\mathbb{R}^3$, rather than a flat one. That is, we cover viscoelastic shells rather than  elastic plates.
 Our results also supplement the existing literature on the Navier--Stokes--Fokker--Planck equations posed on a fixed bounded domain. 
\end{abstract}

\maketitle

\section{Introduction}
\label{sec:intro}

\subsection{Motivation}
The mathematical theory of fluid-structure interactions has seen vast progress in the last two decades. This has largely  been motivated by the variety of applications ranging from hydroelasticity and aeroelasticity  to biomechanics and hemodynamics. Many results in the literature are concerned with the existence of solutions as well as the qualitative properties of the underlying systems of nonlinear partial differential equations (PDEs). See Section \ref{ref:biblio} below for an overview. Most of these results are focused on incompressible Newtonian fluids. Clearly, only simple fluids such as water can be realistically described in such a way. Complex fluids, on the other hand, require more complicated models. Nevertheless, it is also common to work with Newton's rheological law for the viscous stress tensor even in the context of complex fluids. 
A particular instance is hemodynamics where one studies the flow of blood in vessels, which deform elastically as a response. Blood has a very complex behaviour and the incompressible Navier--Stokes equations fail to capture all of it. In fact, there  only exists a few results on the mathematical analysis of non-Newtonian fluids (where Newton's rheological law is replaced by a nonlinear stress-strain relation) interacting with elastic structures, see \cite{HLN} and \cite{Len}. A different Ansatz to model the behaviour of complex fluids is to consider polymeric fluid models. Here, an additional stress tensor is obtained which describes the prolongation vector of polymer chains arising from a micro- or mesoscopic model, see the next subsection. The mathematical theory for such models (in fixed domains) is in a mature state (we give an overview of the literature below).
Although they arise naturally in many applications,
mathematical results concerning the interaction of a polymeric fluid with a flexible structure are virtually missing in the literature. The only result we are aware of is our previous paper \cite{breit2021incompressible} in which we construct a weak solution to a polymer fluid interacting with an elastic shell. We continue in this direction and construct a strong solution to the corresponding polymer fluid-structure problem which exists locally in time.

\subsection{The model}
We consider a solute-solvent-structure mutually coupled problem describing the interaction between a polymeric fluid and a flexible structure. Here, the polymeric fluid consists of a mixture of a solvent, say, water, and a solute made up of a pair of monomers linked by a finitely extensible nonlinear elastic (FENE) spring described by the FENE dumbbell model of Warner-type \cite{warner1972kinetic}.  More precisely, our system is described by the three-dimensional incompressible Navier--Stokes--Fokker--Planck system of equations defined on $I \times \Oeta \times B$ coupled with a two-dimensional viscous beam equation defined on $I \times \omega$. Here, $I:=(0,T)$ is the time interval, $ \Oeta\subset \mathbb{R}^3$ is the configuration of the moving spatial domain at a time $t\in I$ (which arises by deforming the reference domain $\Omega\subset\R^3$ in the normal direction with amplitude $\eta(t)$, see Section \ref{sec:prelims} for the set-up),  and the domain for the elongation vector $\bq=(q_1,q_2,q_3)$ of the monomer molecules is taken as the ball
 $B\subset \R^3$ centered at the origin with radius $\sqrt{b}$.
 Finally, $\omega$ represent $\partial\Omega$; the boundary of the reference domain $\Omega$. For technical simplification, we identify $\omega$ with the two-dimensional
torus. The normal vectors on $\partial\Omega$ and $\partial\Oeta$ are denoted by $\bn$ and $\bn_\eta$, respectively.

We wish to find the structure displacement $\eta:(t, \by)\in I \times \omega \mapsto   \eta(t,\by)\in \mathbb{R}$, the fluid's velocity field $\mathbf{u}:(t, \mathbf{x})\in I \times \Oeta \mapsto  \mathbf{u}(t, \mathbf{x}) \in \mathbb{R}^3$, the fluid's pressure $\pi:(t, \mathbf{x})\in I \times \Oeta \mapsto  \pi(t, \mathbf{x}) \in \mathbb{R}$ and the probability density function $f :(t, \mathbf{x}, \mathbf{q})\in I \times \Oeta \times B \mapsto f (t, \mathbf{x}, \mathbf{q}) \in [0,\infty)$
such that for $\widehat{f}:=f/M$, where  $M=M(\bq)>0$ is given in \eqref{maxwellianPartial} below, the equations
\begin{align}
\varrho_s\partial_t^2\eta -\gamma\partial_t\Dely \eta + \alpha\Dely^2\eta&=g-\bn^\top\mathbb{T}\circ\bm{\varphi}_\eta\bn_\eta  \det(\naby\bm{\varphi}_\eta) ,
\label{shellEq}
\\
\varrho_f\big(\partial_t \bu  + (\mathbf{u}\cdot \nabx)\mathbf{u} \big)
&= 
\mu\Delx \bu -\nabx\pi+ \bff+
\divx   \mathbb{S}_\bq(\widehat{f}),
\label{momEq}
\\
\divx \bu&=0,
\label{contEq}
\\
M\big(\partial_t \widehat{f} + (\mathbf{u}\cdot \nabx) \widehat{f})
+
 \divq  \big( (\nabx \bu) \bq M\widehat{f} \big) 
&=
\varepsilon\Delx(M \widehat{f})
+
\kappa\,   \divq  \big( M \nabq  \widehat{f}
\big)
\label{fokkerPlank}
\end{align}
are satisfied almost everywhere in $I \times \Oeta \times B$. Here, the tensor field $\mathbb{T}$ is given by
\begin{align}
\mathbb{T}:=\mathbb{T}(\bu, \pi,\widehat{f})=\mu(\nabx\bu+\nabx\bu^\top)-\pi\mathbb{I}_{3\times3}+\mathbb{S}_\bq(\widehat{f})
\end{align}
and $\varrho_s, \gamma, \alpha, \varrho_f, \mu, \varepsilon,\kappa$ are positive   parameters all of which we henceforth set to $1$ for simplicity. There are two external forcing terms given by $\bff:(t, \mathbf{x})\in I \times \Oeta \mapsto  \bff(t, \mathbf{x}) \in \mathbb{R}^3$ and $g:(t, \by)\in I \times \omega \mapsto  g(t,\by)\in   \mathbb{R}$.
The elastic stress tensor $\mathbb{S}_\bq$ in the momentum equation \eqref{momEq} is given by
\begin{align} 
\label{extraStreeTensor}
\mathbb{S}_\bq(\widehat{f} ) 
=
 \int_B M(\bq) \widehat{f} (t, \mathbf{x},\mathbf{q})\nabq U(\tfrac{1}{2}\vert\mathbf{q}\vert^2 ) \otimes\mathbf{q} \dq, 
\end{align}
where the spring potential $U$ is given by
\begin{align}
\label{elasticSpringForce}
U(s )
=-\frac{b}{2}\ln\left(1-\frac{2s}{b}\right),
\qquad
s\in[0,b/2), \quad b>2.
\end{align}
The spring potential is also related to the associated  Maxwellian $M$ via the relation
\begin{align}
\label{maxwellianPartial}
M(\mathbf{q}) = \frac{e^{-U(\tfrac{1}{2}\vert\mathbf{q}\vert^2 ) }}{\int_{B}e^{-U (\tfrac{1}{2}\vert\mathbf{q}\vert^2 )}\,\mathrm{d}\bq}.
\end{align}
%
The initial conditions for the  polymer fluid  are
\begin{align}
&\eta(0, \cdot) = \eta_0, \quad \partial_t\eta(0, \cdot) = \eta_\star
&\quad \text{in }  \omega,
\label{initialStructure}
\\
&\mathbf{u}(0, \cdot) = \mathbf{u}_0
&\quad \text{in }  \Omega_{\eta_0},
\label{initialDensityVelo}
\\
&\widehat{f}(0, \cdot, \cdot) =\widehat{f}_0 \geq 0
& \quad \text{in }\Omega_{\eta_0} \times B
\label{fokkerPlankIintialx}
\end{align}
with given functions $\eta_0,\eta_\star:\by\in\omega\mapsto \eta_0(\by),\eta_\star(\by)\in\R$, $\mathbf u_0:\bx\in\Omega_{\eta_0} \mapsto \mathbf u_0(\bx)\in\R^3$ and $\widehat f_0:(\bx,\bq)\in\Omega_{\eta_0}\times B \mapsto \widehat{f}_0(\bx,\bq)\in[0,\infty)$.
With respect to boundary conditions, we supplement  the viscous beam equation \eqref{shellEq} with periodic boundary conditions and we impose  
\begin{align}
\label{noSlip}
&\bu  \circ \bm{\varphi}_{\eta} =(\partial_t\eta)\bn
&\quad \text{on }I \times \omega 
\end{align}
at the polymer fluid-structure interface with a  normal vector $\bn$ on $\partial\Omega$. Finally, for the solute, we have
\begin{align}
&
\nabx\widehat{f}\cdot \bn_\eta =0
&\quad \text{on }I \times \partial\Omega_\eta \times B,
\label{fokkerPlankBoundaryxnEta}
\\
&M\big(\nabq\widehat{f}   -  (\nabx \bu) \bq \widehat{f}
 \big) \cdot \frac{\bq}{\vert\bq\vert} =0
&\quad \text{on }I \times \Omega_\eta \times \partial \overline{B}.
\label{fokkerPlankBoundaryx}
\end{align}
\noindent When the probability density function $f$ is identically zero, the system \eqref{contEq}--\eqref{fokkerPlank} reduces to a normal fluid-structure problem for an unsteady three-dimensional viscous incompressible fluid interacting with an elastic structure.  Let us point out that the reference spatial domain in our set-up is an arbitrary smooth subset of $\mathbb{R}^3$ (such as cylinders or spheres), rather than a flat one. That is, we cover viscoelastic shells rather than simple elastic plates.

\subsection{Bibliographical overview}
\label{ref:biblio}
We may broadly classify analytic works on fluid-structure interaction problems into the construction of weak and strong solutions. In this paper, we are only interested in strong solutions but let us refer to \cite{barbu2007existence, barbu2008smoothness, boulakia2007existence, chambolle2005existence,  coutand2005motion, grandmont2008existence, lengeler2014weak, muha2019existence} 
for some important works on the construction of weak solutions.
\\
When it comes to strong solutions, the short time existence and uniqueness of solutions in Sobolev spaces are studied in \cite{cheng2007navier, cheng2010interaction} for a viscous incompressible fluid interacting with a nonlinear thin elastic  shell. The shell equation, for the former \cite{cheng2007navier}, is modeled by the nonlinear Saint-Venant-Kirchhoff constitutive law whereas that of the latter \cite{cheng2010interaction}  is modeled by the nonlinear Koiter shell model.  In \cite{coutand2006interaction}, however, the authors prove the existence of a unique local strong solution, without restriction on the size of the data, when the elastic structure is now governed by quasilinear elastodynamics. 
In \cite{ignatova2014well}, the elastic structure is modeled by a damped wave equation  with  additional boundary stabilization terms. For sufficiently small initial data, subject to said boundary stabilization terms,
global-in-time existence of   strong solutions and exponential decay of the solutions are shown. 
The free boundary fluid-structure interaction problem consisting of a Navier--Stokes equation and a wave equation defined in two different but adjacent domains is studied in \cite{kukavica2012solutions}. A local strong solution is constructed under suitable compatibility conditions for the data.
Another local-in-time strong existence result is \cite{maity2020maximal}, where the viscous Newtonian fluid is now interacting with an elastic structure modeled by a nonlinear damped shell equation.
Finally, a local strong solution is constructed for the motion of a linearly elastic Lam\'e solid moving in a viscous fluid in \cite{raymond2014fluid}.
\\
For a fixed geometry and an identically zero solution of the structure equation, the system \eqref{contEq}--\eqref{fokkerPlank} reduces to an incompressible Navier--Stokes--Fokker--Planck system  for a polymeric fluid with center-of-mass diffusion. Weak solutions for such a system have been studied in,  for example,  \cite{barrett2005existence, barrett2007existence, barrett2008existence, barrettSuli2011existence, barrett2012existenceMMMAS, barrett2012existenceJDE, el1989existence, gwiazda2018existence, lukacova2017global}. 
\\
For strong solutions, however, a unique local-in-time strong solution   was first shown to exist in \cite{renardy1991existence}, which unfortunately  excludes the physically relevant FENE dumbbell models. The local theory was then revisited in \cite{jourdain2004existence} for the stochastic FENE model for the simple Couette flow
and in \cite{e2004well} where the authors analysed the  incompressible Navier--Stokes equation coupled with a system of SDEs describing the configuration of the spring. The corresponding deterministic system  was then studied in \cite{li2004local, zhang2006local}.  The existence of Lyapunov functionals and smooth solutions was shown to exist in \cite{constantin2005nonlinear}. Finally, a global-in-time strong solution for the 2D system is shown in \cite{masmoudi2008well}(see also \cite{constantin2007regularity} and \cite{lin20082d}).
\\
The only result on the fully coupled system  \eqref{contEq}--\eqref{fokkerPlank}
is our previous paper \cite{breit2021incompressible}, in which we prove the existence of a weak solution (allowing even the fully nonlinear original Koiter model for the structure displacement).

\subsection{Main result and novelties}
\label{sec:nov}
Our main result is the existence of a unique local-in-time strong solution to \eqref{shellEq}--\eqref{fokkerPlank}. The precise statement can be found in Theorem \ref{thm:main}. The proof consists of constructing a fixed point of the following solution map in a suitable topology (see Section  \ref{sec:fullyCoupled} for details): Given a probability density function $\widehat f$, we solve the solvent-structure problem \eqref{shellEq}--\eqref{contEq} leading to a solution $(\eta,\bu,\pi)$. Eventually, we solve the Fokker-Planck equation \eqref{fokkerPlank} in a given moving domain $\Omega_\eta$ with a given velocity field $\bu$ yielding a solution $\widehat h$. Then we consider the map $$\mathtt{T}:X\rightarrow X,\quad \widehat f\mapsto \widehat h,$$
in (a subset of) a function space $X$. Such a strategy is also applied in \cite{masmoudi2008well} and other papers and it turns out that the velocity field needs to belong at least to $W^{1,\infty}$ with respect to the spatial variable to close the argument.
In \cite{masmoudi2008well}, the Navier--Stokes--Fokker--Planck system
is considered without centre-of-mass diffusion (in a fixed domain). At first glance, it will seem easier to do the same in the case $\varepsilon>0$ (leaving the difficulty of a moving boundary beside for the moment). This is certainly true if the Navier--Stokes--Fokker--Planck equations are studied on the whole space or with respect to periodic boundary conditions. However, in the case of bounded domains, where \eqref{fokkerPlank} must be complemented with Neumann boundary conditions as in \eqref{fokkerPlankBoundaryxnEta}--\eqref{fokkerPlankBoundaryx}, it is not clear if one can obtain higher order spatial derivatives for the probability density function even for smooth velocity fields. As a consequence, a result similar to \cite{masmoudi2008well} for the problem with centre-of-mass diffusion and a non-trivial boundary does not seem to exist in the literature.
As a by-product of our theory, we close this gap via the following idea: We first differentiate \eqref{fokkerPlank}  once in space (formerly testing by $\Delx \widehat f$). As just explained, this is not sufficient  to close the fixed point argument but does not create problems with the boundary conditions either. Eventually, we differentiate in time and obtain the same estimate
for the time derivative of the probability density function. Details can be found in Section \ref{sec:FP}, where for a given velocity field and moving geometry, we construct a strong solution to the Fokker--Planck equation. Here, due to the linear structure of the equation, we rely on an approximation procedure similar to \cite{breit2021local, masmoudi2008well}. The analysis here is, however, more complicated due to the flexible nature of the given geometry. 

 Let us now comment on the fluid-structure system  \eqref{shellEq}--\eqref{contEq}. Its solvability, for a given $\widehat f$, is an intermediate step for the fixed point problem just described but it is also of independent interest. 
It is worth pointing out that, different from most of the previous results on strong solutions such as \cite{cheng2010interaction}, \cite{GHL}, \cite{Leq}, \cite{maity2020maximal} and \cite{schwarzacherSu}, we consider a general non-flat geometry. The first results in this direction were only provided  very recently in  \cite{breit2022regularity}, where the existence of a unique global-in-time strong solution was proved in the 2D case.
 The existence of a local strong solution to the 3D fluid-structure problem has been recently shown in \cite{BMSS}. However, this strong solution is not regular enough to couple the solvent-structure system with the Fokker--Planck equation. For this reason we devote Section \ref{sec:solveSolventStructure} to obtaining higher space-time regularity for the strong solution constructed in \cite{BMSS}  by way of a fixed-point argument. 
This is of independent interest, and it is the first result of the higher regularity of the strong solution to the incompressible fluid-structure problem  in the case of shells.
Although one might expect that taking higher order derivatives will be easy, the problem of compatibility conditions of the data occurs (typical for parabolic equations in bounded domains, see the classical works \cite{So1,So2}). Such a condition is needed to control the initial pressure (see the proof of Proposition \ref{prop:bigData}), a problem that is absent in  \cite{BMSS}.

In two dimensions, if the co-rotational model is considered (i.e. $\nabx \bu$ is replaced by $\mathcal W(\bu)=\frac{1}{2}(\nabla\mathbf u-\nabla\mathbf u^\top)$ in the drag term in \eqref{fokkerPlank}), we prove the existence of a unique global strong solution, cf. Theorem \ref{thm:main2D}. It is a consequence of a novel estimate for the Fokker-Planck equation derived in 
Section \ref{sec:FP} combined with the recent results from \cite{breit2022regularity}
on the fluid-structure problem.
Again, the result for the Navier--Stokes--Fokker--Planck system seems new even for fixed domains (the counterpart without centre-of-mass diffusion in the Fokker-Planck equation is given in \cite{masmoudi2008well}).

\section{Preliminaries and main results}
\label{sec:prelims}
\noindent Without loss of generality, henceforth, we set all the parameters ($\rho_s$, \ldots, $\kappa$) in \eqref{shellEq}--\eqref{fokkerPlankBoundaryx} to 1. For two non-negative quantities $F$ and $G$, we write $F \lesssim G$  if there is a $c>0$  such that $F \leq c\,G$. If $F \lesssim G$ and $G\lesssim F$ both hold, we use the notation $F\sim G$. The symbol $\vert \cdot \vert$ may be used in four different contexts. For a scalar function $f\in \mathbb{R}$, $\vert f\vert$ denotes the absolute value of $f$. For a vector $\bff\in \mathbb{R}^d$, $\vert \bff \vert$ denotes the Euclidean norm of $\bff$. For a square matrix $\mathbb{F}\in \mathbb{R}^{d\times d}$, $\vert \mathbb{F} \vert$ shall denote the Frobenius norm $\sqrt{\mathrm{trace}(\mathbb{F}^T\mathbb{F})}$. Finally, if $S\subseteq  \mathbb{R}^d$ is  a (sub)set, then $\vert S \vert$ is the $d$-dimensional Lebesgue measure of $S$.
\\ 
The spatial domain $\Omega$ is assumed to be an open bounded subset of $\mathbb{R}^3$, with a smooth boundary and an outer unit normal ${\bfn}$. We assume that
 $\partial\Omega$ can be parametrised by an injective mapping ${\bm{\varphi}}\in C^k(\omega;\R^3)$ for some sufficiently large $k\in\mathbb N$. We suppose for all points $\by=(y_1,y_2)\in \omega$ that the pair of vectors  
$\partial_i {\bm{\varphi}}(\by)$, $i=1,2,$ are linearly independent.
 For a point $\bx$ in the neighbourhood
of $\partial\Omega$ we can define the functions $\by$ and $s$ by
\begin{align*}
 \by(\bx)=\arg\min_{\by\in\omega}|\bx-\bm{\varphi}(\by)|,\quad s(\bx)=(\bx-\by(\bx))\cdot\bfn(\by(\bx)).
 \end{align*}
Moreover, we define the projection $\mathbf p(\bx)=\bm{\varphi}(\by(\bx))$. We define $L>0$ to be the largest number such that $s,\by$ and $\mathbf p$ are well-defined on $S_L$, where
\begin{align}
\label{eq:boundary1}
S_L=\{\bx\in\R^3:\,\mathrm{dist}(\bx,\partial\Omega)<L\}.
\end{align}
Due to the smoothness of $\partial\Omega$ for $L$ small enough we have $|s(\bx)|=\min_{\by\in\omega}|\bx-\bm{\varphi}(\by)|$ for all $\bx\in S_L$. This implies that $S_L=\{s\bfn(\by)+\by:(s,\by)\in (-L,L)\times \omega\}$.
%
For a given function $\eta : I \times \omega \rightarrow\R$ we parametrise the deformed boundary by 
\begin{align}\label{eq:0108}
{\bm{\varphi}}_\eta(t,\by)={\bm{\varphi}}(\by) + \eta(t,\by){\bfn}(\by), \quad \,\by \in \omega,\,t\in I.
\end{align}
With some abuse of notation, we define the deformed space-time cylinder as $$I\times\Omega_\eta=\bigcup_{t\in I}\{t\}\times\Omega_{\eta(t)}\subset\R^{4}.$$
The corresponding function spaces for variable domains are defined as follows.
\begin{definition}{(Function spaces)}
Let $M\in C(B)$ be the Maxwellian \eqref{maxwellianPartial}. For $1\leq q\leq \infty$, we denote by
\begin{align*}
&L^q_M(B)=\{f\in L^q_{\mathrm{loc}}(B)\,:\, \Vert f\Vert_{L^q_M(B)}^q<\infty\}, 
\\&
W^{1,q}_M(B)=\{ f\in W^{1,q}_{\mathrm{loc}}(B)\,:\, \Vert f \Vert_{W^{1,q}_M(B)}^q<\infty \},
\end{align*}
the Maxwellian-weighted $L^q$ and $W^{1,q}$ spaces over $B$ with respective norms
\begin{align*}
\Vert f\Vert_{L^q_M(B)}^q:=\int_BM(\bq)\vert f(\bq)\vert^q\dq, \qquad
\Vert f\Vert_{W^{1,q}_M(B)}^q:=\int_BM(\bq)\big(\vert f(\bq)\vert^q+ \vert \nabq f(\bq)\vert^q\big)\dq.
\end{align*}
For $I=(0,T)$, $T>0$, and $\eta\in C(\overline{I}\times\omega)$ with $\|\eta\|_{L^\infty(I\times\omega)}< L$, we define for $1\leq p,r\leq\infty$,
\begin{align*}
L^p(I;L^r(\Omega_\eta))&:=\Big\{v\in L^1(I\times\Omega_\eta):\substack{v(t,\cdot)\in L^r(\Omega_{\eta(t)})\,\,\text{for a.e. }t,\\\|v(t,\cdot)\|_{L^r(\Omega_{\eta(t)})}\in L^p(I)}\Big\},\\
L^p(I;W^{1,r}(\Omega_\eta))&:=\big\{v\in L^p(I;L^r(\Omega_\eta)):\,\,\nabx v\in L^p(I;L^r(\Omega_\eta))\big\}.
\end{align*}
\end{definition}
Higher order Sobolev spaces can be defined accordingly. For $k>0$ with $k\notin\mathbb N$, we define the fractional Sobolev space $L^p(I;W^{k,r}(\Oeta))$ as the class of $L^p(I;L^r(\Omega_\eta))$-functions $v$ for which the norm 
\begin{align*}
\|v\|_{L^p(I;W^{k,r}(\Oeta))}^p
&=\int_I\bigg(\int_{\Oeta} \vert v\vert^r\dx
+\int_{\Oeta}\int_{\Oeta}\frac{|v(\bx)-v(\bx')|^r}{|\bx-\bx'|^{3+k r}}\dx\dx'\bigg)^{\frac{p}{r}}\dt
\end{align*}
is finite. Accordingly, we can also introduce fractional differentiability in time for the spaces on moving domains.
\noindent When we combine the function spaces defined on $B$ and on space-time, we obtain spaces of the form
\begin{align*}
L^p(I;W^{k,r}(\Omega_\eta;W^{l,q}(B)))\quad k\geq0,\quad
1\leq p,r,q\leq \infty, \quad l\in\{0,1\}
\end{align*}
and more generally
\begin{align*}
W^{s,p}(I;W^{k,r}(\Omega_\eta;W^{l,q}(B)))\quad s,k\geq0,\quad 1\leq p,r,q\leq \infty, \quad l\in\{0,1\}.
\end{align*}
For various purposes, it is useful to relate the time-dependent domain and the fixed domain. This can be done by means of the Hanzawa transform. Its construction can be found in
\cite[pages 210, 211]{lengeler2014weak}. Note that variable domains in \cite{lengeler2014weak} are defined via functions $\zeta:\partial\Omega\rightarrow\R$ rather than functions $\eta:\omega\rightarrow\R$ (clearly, one can link them by setting
$\zeta=\eta\circ\bm{\varphi}^{-1}$). For any $\eta(t):\omega\rightarrow (-L,L)$ at time point $t\in I$, we let $\bm{\Psi}_{\eta(t)} : \Omega \rightarrow\Omega_{\eta(t)}$ be the Hanzawa transform   defined by 
\begin{equation}
\bm{\Psi}_{\eta(t)}(\bx)
=
 \left\{
  \begin{array}{lr}
    \mathbf p(\bx)+\big(s(\bx)+\eta(t,\by(\bx))\phi(\by(\bx))\big)\bn(\by(\bx)) &\text{if dist}(\bx,\partial\Omega)<L,\\
    \bx &\text{elsewhere}.
  \end{array}
\right.
\end{equation}
and with inverse $\bm{\Psi}^{-1}_{\eta(t)} : \Omega_{\eta(t)} \rightarrow\Omega$. Here, $\phi\in C^\infty(\mathbb{R})$ is such that $\phi\equiv0$ in a neighbourhood of $-L$ and $\phi\equiv1$ in a neighbourhood of $1$. 
It is shown in \cite{breit2022regularity} that if for some $\alpha,R>0$, we assume that
\begin{align*}
\Vert\eta(t)\Vert_{L^\infty_\by}
+
\Vert\zeta(t)\Vert_{L^\infty_\by}
< \alpha <L \qquad\text{and}\qquad
\Vert\naby\eta(t)\Vert_{L^\infty_\by}
+
\Vert\naby\zeta(t)\Vert_{L^\infty_\by}
<R
\end{align*}
holds, then for any  $s>0$, $p\in[1,\infty]$, $k\in\{0,1,2\}$ and for any $\eta,\zeta \in W^{k,1}(I;W^{s,p}(\omega))$, we have that
\begin{align}
\label{210and212}
&\Vert \partial_t^k\bm{\Psi}_\eta \Vert_{W^{s,p}_\bx}
+
\Vert \partial_t^k\bm{\Psi}_\eta^{-1} \Vert_{W^{s,p}_\bx}
 \lesssim
1+ \Vert \partial_t^k\eta \Vert_{W^{s,p}_\by},
\\
\label{211and213}
&\Vert \partial_t^k(\bm{\Psi}_\eta - \bm{\Psi}_\zeta)  \Vert_{W^{s,p}_\bx} 
+
\Vert \partial_t^k(\bm{\Psi}_\eta^{-1} - \bm{\Psi}_\zeta^{-1}  )\Vert_{W^{s,p}_\bx} 
\lesssim
 \Vert \partial_t^k(\eta - \zeta) \Vert_{W^{s,p}_\by}
\end{align}
holds uniformly in time with the hidden constants depending only on the reference geometry, on $L-\alpha$ and $R$. The estimate \eqref{210and212} holds without the $1$ on the right-hand side when in addition, $k\neq0$.
\\
Our interest is to construct a strong and regular solution to the system \eqref{shellEq}--\eqref{fokkerPlank} (i.e. a solution that satisfies \eqref{shellEq}--\eqref{fokkerPlank} pointwise almost everywhere in spacetime with additional regularity properties which will soon be made precise) emanating from the initial conditions \eqref{initialStructure}--\eqref{fokkerPlankIintialx}. 
To make the notion of \textit{a strong solution} precise, we first present the following notion of a \textit{a weak solution}.
\begin{definition}[Weak solution] \label{def:weakSolution}
Let $(\bff, g, \eta_0, \eta_\star, \bu_0, \widehat{f}_0)$ be a dataset such that
\begin{equation}
\begin{aligned}
\label{dataset} 
&\bff \in L^2\big(I; L^2_{\mathrm{loc}}(\mathbb{R}^3 )\big),\quad
g \in L^2\big(I; L^{2}(\omega)\big), \quad
\eta_0 \in W^{2,2}(\omega) \text{ with } \Vert \eta_0 \Vert_{L^\infty( \omega)} < L,
\\
&\eta_\star \in L^{2}(\omega), \quad  \widehat{f}_0\in L^2\big( \Omega_{\eta_0} ;L^2_M(B)\big), \quad
\bu_0\in L^{2}_{\mathrm{\divx}}(\Omega_{\eta_0} ) \text{ is such that }\bu_0 \circ \bm{\varphi}_{\eta_0} =\eta_\star \bn \text{ on } \omega.
\end{aligned}
\end{equation} 
We call  
$( \eta, \bu,  \widehat{f} )$
a \textit{weak solution} to the system \eqref{shellEq}--\eqref{fokkerPlank} with data $(\bff, g, \eta_0, \eta_\star, \bu_0, \widehat{f}_0)$ provided that the following holds:
\begin{itemize}
\item[(a)]  the shell displacement $\eta$ satisfies $\Vert \eta \Vert_{L^\infty(I \times \omega)} <L$ and
\begin{align*}
\eta \in W^{1,\infty} \big(I; L^{2}(\omega) \big)\cap  L^\infty \big(I; W^{2,2}(\omega) \big) \cap W^{1,2}\big(I;W^{1,2}(\omega) \big);
\end{align*}
\item[(b)] the velocity $\bu$ is such that $\bu  \circ \bm{\varphi}_{\eta} =(\partial_t\eta)\bn$ on $I\times \omega$ and
\begin{align*}
 \vu \in L^{\infty} \big(I; L^2_{\divx}(\Omega_{\eta(t)} ) \big)\cap  L^2 \big(I; W^{1,2}(\Omega_{\eta(t)} \big)
;
\end{align*}
\item[(c)] the probability density function $ \widehat{f}$ satisfies 
\begin{align*}
\widehat{f} \in  L^\infty \big( I; L^2(\Omega_{\eta(t)}; L^2_M(B))\big)
\cap  L^2 \big( I; W^{1,2}(\Omega_{\eta(t)} ; L^2_M(B))\big)
\cap  L^2 \big( I; L^2(\Omega_{\eta(t)} ; H^1_M(B))\big);
\end{align*}
\item[(d)]  for all  $(\phi, \bm{\phi}, \varphi ) \in C^\infty(\overline{I}\times\omega) \times C^\infty(\overline{I}; C^\infty_{\divx}(\R^3))
\times C^\infty (\overline{I}\times \R^3 \times \overline{B} )$ with $\phi(T,\cdot)=0$, $\bm{\phi}(T,\cdot)=0$  and $\bm{\phi}\circ \bm{\varphi}_{\eta}= \phi\bn$, we have
\begin{align*}
\int_I  \frac{\mathrm{d}}{\dt}\bigg(\int_\omega \partial_t \eta \, \phi \dy
&+
\int_{\Oeta}
\bu  \cdot \bm{\phi}\dx
+
\int_{\Omega_{\eta(t)} \times B}M \widehat{f} \, \varphi \dq \dx
\bigg)\dt 
\\
&=
\int_I \int_\omega \big(\partial_t \eta\, \partial_t\phi
-
\partial_t \naby \eta\cdot \naby\phi
+
 g\, \phi
 - 
 \Dely \eta\, \Dely\phi \big)\dy\dt
 \\&+
 \int_I  \int_{\Oeta}\big(  \vu\cdot \partial_t  \bm{\phi} + \vu \otimes\vu: \nabx \bm{\phi} 
  \big) \dx\dt
\\&
-\int_I  \int_{\Oeta}\big(   
 \nabx \bu:\nabx \bm{\phi}  + \mathbb{S}_\bq(\widehat{f} )  :\nabx \bm{\phi}-\bff\cdot\bm{\phi} \big) \dx\dt
 \\&+\int_I\int_{\Omega_{\eta(t)} \times B}\big(M \widehat{f} \,\partial_t \varphi 
+
M\bu  \widehat{f} \cdot \nabx \varphi \big) \dq \dx \dt
\\&
+ \int_I\int_{ \Omega_{\eta(t)} \times B}
 \big( M(\nabx \bu)  \bq\widehat{f}-
   M \nabq   \widehat{f} \big) \cdot \nabq\varphi \dq \dx \dt,
\end{align*}
with $\eta(0)=\eta_0$ and $\partial_t\eta=\eta_\star$ a.e. in $\omega$, $\bfu(0)=\bfu_0$ a.e. in $\Omega_{\eta_0}$ as well as $\widehat f(0)=\widehat f_0$ a.e. in $\Omega_{\eta_0}\times B$.
\end{itemize}
\end{definition}
\noindent The existence of a weak solution \eqref{shellEq}--\eqref{fokkerPlank} in the sense of Definition \ref{def:weakSolution} is shown in \cite{breit2021incompressible}.\footnote{Note that there is no dissipation in the shell equation in \cite{breit2021incompressible}. It makes, however, the analysis easier and can thus be incorporated without any problems.}
For this solution to be regular, we impose below, additional regularity assumptions on the initial conditions and the forcing terms in \eqref{shellEq}--\eqref{fokkerPlank}. More precisely, we suppose that the dataset
$(\bff, g, \eta_0, \eta_\star, \bu_0, \widehat{f}_0)$
satisfies
\begin{equation}
\begin{aligned}
\label{mainDataForAll}
&\bff \in  W^{1,2}(I;L^{2}(\Oeta)) ,\quad  \bff(0)\in W^{1,2}(\Omega_{\eta_0}),
\\
&g\in  W^{1,2}(I;W^{1,2}(\omega)) ,
\\&\eta_0 \in W^{5,2}(\omega) \text{ with } \Vert \eta_0 \Vert_{L^\infty( \omega)} < L, \quad \eta_\star \in W^{3,2}(\omega),
\\&\bu_0 \in W^{3,2}_{\divx}(\Omega_{\eta_0} )\text{ is such that }\bu_0 \circ \bm{\varphi}_{\eta_0} =\eta_\star \bn \text{ on } \omega,
\\&
\widehat{f}_0\in W^{1,2}(\Omega_{\eta_0};L^2_M(B))    
\end{aligned}
\end{equation}
with the compatibility condition 
\begin{equation}
\begin{aligned}
\label{compatibilityConditionMain}
\big[\Dely  \eta_\star &-  \Dely^2 \eta_0
+
g(0) 
-\bn^\top\mathbb{T}(0)\circ\bm{\varphi}_{\eta_0}\bn_{\eta_0} \det(\naby\bm{\varphi}_{\eta_0}) \big]\bn
\\&=
\big[ \Delx \bu_0 -\nabx\pi_0+ \bff(0)+
\divx   \mathbb{S}_\bq(\widehat{f}_0)  \big] \circ\bm{\varphi}_{\eta_0}
\end{aligned}
\end{equation}
on $\omega$, where $\pi_0$ is the solution to
\begin{align*}
\begin{cases}
\Delx  \pi_0
=
\Div(\Delx\bu_0 +\bff(0)+\divx\mathbb{S}_\bq(\widehat{f}_0) -(\bfu_0\cdot\nabx)\bfu_0 )&\text{ in } \Omega_{\eta_0},
\\
\nabla
\pi_0\cdot\bn\circ\bm{\varphi}_\eta^{-1}
+
\pi_0\bn^\top\circ\bm{\varphi}_{\eta_0}^{-1}\cdot\bfn_\eta\circ\bm{\varphi}_{\eta_0}^{-1}\det(\naby\bm{\varphi}_{\eta_0})\circ\bm{\varphi}_{\eta_0}^{-1}
\\
\qquad=
[\Delx\bu_0+\bff(0)+\divx\mathbb{S}_\bq(\widehat{f}_0)]\cdot\bn\circ\bm{\varphi}_{\eta_0}^{-1} 
-
[\Dely\eta_\star-\Dely^2\eta_0+g(0)]\circ \bm{\varphi}_{\eta_0}^{-1}
\\
\qquad
+\bn^\top\circ\bm{\varphi}_{\eta_0}^{-1}(\nabx\bu_0+\nabx\bu^\top_0+\mathbb{S}_\bq(\widehat{f}_0)) \bfn_{\eta_0}\circ\bm{\varphi}_{\eta_0}^{-1}\det(\naby\bm{\varphi}_{\eta_0})\circ\bm{\varphi}_{\eta_0}^{-1} &\text{ on } \partial\Omega_{\eta_0}.
\end{cases}
\end{align*}
Note that \eqref{compatibilityConditionMain} is in line with the compatibility condition for the fluid-structure interaction problems studied in \cite{cheng2010interaction}.

As far as the Fokker-Planck equation is concerned, we require that the function $\widetilde{f}_0$ defined by 
\begin{equation}
\begin{aligned}
\label{compatibilityFokker}
M   \widetilde{f}_0 
&=
\Delx(M  \widehat{f}_0)
+
 \divq  \big( M \nabq    \widehat{f}_0
\big)
-
M  (\bu_0\cdot \nabx)   \widehat{f}_0
-
 \divq  \big(   (  \nabx\bu_0) \bq M\widehat{f}_0 \big) 
\end{aligned}
\end{equation} 
in $\Omega_{\eta_0}\times B$ is such that
\begin{align}
\label{tildeFinitial}
\widetilde{f}_0\in L^{2}(\Omega_{\eta_0};L^2_M(B)).
\end{align}
\noindent 
With this regularised dataset, we can now make precise, what we mean by a strong solution of \eqref{shellEq}--\eqref{fokkerPlank}. 
\begin{definition}[Strong solution] \label{def:strongSolution}
Let $(\bff, g, \eta_0, \eta_\star, \bu_0, \widehat{f}_0)$ be a dataset satisfying \eqref{mainDataForAll}-\eqref{tildeFinitial}.
We call $( \eta, \bu,  \pi, \widehat{f} )$ a \textit{strong solution} of \eqref{shellEq}--\eqref{fokkerPlank} with data $(\bff, g, \eta_0, \eta_\star, \bu_0, \widehat{f}_0)$ provided that:
\begin{enumerate}
\item[(a)] $( \eta, \bu,  \widehat{f} )$ is a weak solution of \eqref{shellEq}--\eqref{fokkerPlank} in the sense of Definition \ref{def:weakSolution};
\item[(b)] $( \eta, \bu,  \pi, \widehat{f} )$ satisfies
\begin{align*}
\eta&\in W^{1,\infty}\big(I_*;W^{3,2}(\omega)  \big) \cap W^{2,2}\big(I_*;W^{1,2}(\omega)  \big)
\cap W^{3,2}\big(I_*;L^2(\omega)  \big)
\\
&\qquad\qquad\cap W^{1,2}\big(I_*;W^{3,2}(\omega)  \big)\cap L^{\infty}\big(I_*;W^{4,2}(\omega)  \big)\cap L^{2}\big(I_*;W^{5,2}(\omega)  \big),\\
\bu &\in
W^{1,\infty} \big(I_*; W^{1,2}(\Oeta) \big)\cap  W^{2,2}\big(I_*;L^2(\Oeta)  \big)
\\
&\qquad\qquad\cap  W^{1,2}\big(I_*;W^{2,2}(\Oeta)  \big)
\cap L^2\big(I_*;W^{3,2}(\Oeta)  \big),
\\
\pi &\in
W^{1,2}\big(I_*;W^{1,2}(\Oeta)  \big)
\cap
L^2\big(I_*;W^{2,2}(\Oeta)  \big),
\\ 
\widehat{f} & 
\in   W^{1,\infty}\big(I_*;W^{1,2}(\Oeta;L^2_M(B))  \big)
\cap 
W^{1,2}\big(I_*;W^{2,2}(\Oeta;L^2_M(B))  \big)
\\
&\qquad\qquad\cap  W^{1,2}\big(I_*;W^{1,2}(\Oeta;H^1_M(B))  \big).
\end{align*}
\item[(c)] $(\bu,\pi)$ satisfies
$$\partial_t \bu  + (\mathbf{u}\cdot \nabx)\mathbf{u}
= \Delx \bu -\nabx\pi+ \bff+
\divx   \mathbb{S}_\bq(\widehat{f})$$
a.e. in $I\times\Omega_\eta$.
\end{enumerate}
\end{definition}
Our main result now reads as follows. 
\begin{theorem}\label{thm:main}
Let $(\bff, g, \eta_0, \eta_\star, \bu_0, \widehat{f}_0)$ be a dataset satisfying \eqref{mainDataForAll}--\eqref{tildeFinitial}. 
There is a time $T_*>0$ such that a unique  strong solution $( \eta, \bu,  \pi, \widehat{f} )$ of \eqref{shellEq}--\eqref{fokkerPlank}, in the sense of Definition \ref{def:strongSolution}, exists.
\end{theorem}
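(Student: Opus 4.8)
The plan is to realize the strong solution as a fixed point of the solution map $\mathtt{T}:\widehat f\mapsto\widehat h$ described in the introduction, working on a closed ball of a suitable space on a short time interval $I_*=(0,T_*)$. First I would fix the function space: let $X$ be the space of probability density functions $\widehat f$ with $\widehat f\in W^{1,\infty}(I_*;W^{1,2}(\Oeta;L^2_M(B)))\cap W^{1,2}(I_*;W^{2,2}(\Oeta;L^2_M(B)))\cap W^{1,2}(I_*;W^{1,2}(\Oeta;H^1_M(B)))$ with $\widehat f(0)=\widehat f_0$, and let $\mathcal{B}_\Lambda\subset X$ be the closed ball of radius $\Lambda$ (to be chosen large, depending on the data) intersected with the constraint set where $\eta$ stays away from $\pm L$. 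Given $\widehat f\in\mathcal B_\Lambda$, the extra stress $\mathbb{S}_\bq(\widehat f)$ and its space-time derivatives are controlled in $L^2(I_*;W^{2,2}(\Oeta))\cap W^{1,2}(I_*;L^2(\Oeta))$ by Hölder in $\bq$ against the Maxwellian (the potential term $\nabq U\otimes\bq$ is bounded on $B$ after absorbing the singularity into $M$). This data is fed into the solvent-structure system \eqref{shellEq}--\eqref{contEq}.

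The second step is to solve \eqref{shellEq}--\eqref{contEq} for $(\eta,\bu,\pi)$ with right-hand side $\divx\mathbb{S}_\bq(\widehat f)+\bff$ in the momentum equation. Here I would invoke the local strong existence result of \cite{BMSS} to get a solution, and then the higher-regularity fixed-point argument of Section \ref{sec:solveSolventStructure} (Proposition \ref{prop:bigData}) to upgrade it to the regularity class in Definition \ref{def:strongSolution}(b); this is where the compatibility condition \eqref{compatibilityConditionMain} and the pressure problem for $\pi_0$ enter, ensuring $\partial_t\bu(0)$ and the initial pressure are controlled. The estimates must be quantitative: the norm of $(\eta,\bu,\pi)$ on $I_*$ is bounded by a constant depending on the data and on $\Lambda$, and — crucially — the velocity field obeys $\bu\in L^2(I_*;W^{3,2}(\Oeta))\hookrightarrow L^2(I_*;W^{1,\infty})$ and $\nabx\bu\in L^\infty\cap\ldots$, which is exactly the threshold needed for the Fokker--Planck step. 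One then uses the Hanzawa transform and the estimates \eqref{210and212}--\eqref{211and213} to pull everything back to the fixed domain $\Omega$.

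The third step is to solve the Fokker--Planck equation \eqref{fokkerPlank} with the boundary conditions \eqref{fokkerPlankBoundaryxnEta}--\eqref{fokkerPlankBoundaryx} in the moving domain $\Omega_\eta$ with the given velocity $\bu$, obtaining $\widehat h$; this is the content of Section \ref{sec:FP}. The novelty here is the derivative estimate: differentiate once in space (test with $\Delx\widehat f$, which is compatible with the Neumann condition \eqref{fokkerPlankBoundaryxnEta}), then differentiate in time and test with $\partial_t\widehat f$, using the regularity of $\bu$ and the parabolic structure ($\varepsilon\Delx$ in $\bx$, $\kappa\,\divq(M\nabq\cdot)$ in $\bq$, the latter coercive in $H^1_M(B)$) to absorb the drag term $\divq((\nabx\bu)\bq M\widehat f)$; the compatibility assumption \eqref{tildeFinitial} on $\widetilde f_0$ controls $\partial_t\widehat h(0)$. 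Construction proceeds via the Galerkin/approximation scheme of \cite{breit2021local,masmoudi2008well} adapted to the flexible geometry. The output $\widehat h$ lies in the same class $X$, and — by choosing $T_*$ small and $\Lambda$ adjusted to the data — in $\mathcal B_\Lambda$, so $\mathtt T$ maps $\mathcal B_\Lambda$ to itself.

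The final step is contraction. Take $\widehat f_1,\widehat f_2\in\mathcal B_\Lambda$ with outputs $\widehat h_1,\widehat h_2$; the difference $\widehat h_1-\widehat h_2$ solves a linear Fokker--Planck-type equation whose right-hand side involves $\mathbb{S}_\bq(\widehat f_1-\widehat f_2)$ (through the difference of the velocity fields $\bu_1-\bu_2$, estimated by the Lipschitz dependence of the solvent-structure map, again using \eqref{211and213}) and the difference of drag coefficients. Running the same energy estimate in a \emph{lower-order} topology — say $L^\infty(I_*;L^2(\Oeta;L^2_M(B)))\cap L^2(I_*;W^{1,2})\cap L^2(I_*;H^1_M(B))$, the weak-solution norm — one gains a factor $T_*^\theta$ for some $\theta>0$ from the time integration, so that $\mathtt T$ is a contraction for $T_*$ small. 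Banach's fixed-point theorem then yields a unique $\widehat f$; feeding it back through steps two and three gives $(\eta,\bu,\pi,\widehat f)$ with the asserted regularity, and it is a weak solution because each subproblem was solved in the weak sense with matching traces ($\bu\circ\bm\varphi_\eta=(\partial_t\eta)\bn$ is preserved throughout). Uniqueness in the strong class follows from the contraction estimate applied to two putative solutions. \textbf{The main obstacle} I anticipate is the Fokker--Planck derivative estimate in the moving domain: keeping the spatial Neumann boundary condition \eqref{fokkerPlankBoundaryxnEta} consistent under differentiation while the domain itself moves (so that $\bn_\eta$ and the Hanzawa pullback bring in extra lower-order but geometry-dependent terms), and simultaneously controlling the $\bq$-boundary flux in \eqref{fokkerPlankBoundaryx} — together with verifying that $\widetilde f_0$ from \eqref{compatibilityFokker} genuinely serves as the initial time derivative — is the delicate technical heart, since, as the authors note, even in a fixed bounded domain with centre-of-mass diffusion no such higher-order estimate was previously available.
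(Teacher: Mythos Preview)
Your proposal is correct and follows essentially the same route as the paper: a fixed-point argument for $\mathtt T:\widehat f\mapsto\widehat h$, self-mapping of a ball in the high-order space $X$ via Theorem~\ref{thm:fluidStructureWithoutFK} and Theorem~\ref{thm:mainFP}, and contraction in a lower-order space after pulling back to the reference domain via the Hanzawa transform. The only point you leave slightly implicit is that, for the contraction step, the two candidate solutions $\widehat h_1,\widehat h_2$ live on \emph{different} moving domains $\Omega_{\eta^1},\Omega_{\eta^2}$, so their difference must be taken after transformation to $\Omega$ (this produces a host of geometric error terms controlled by $\|\eta^1-\eta^2\|$ via \eqref{211and213}); the paper carries this out in detail, and the final contraction factor is $c(\delta)T+\delta$ rather than a pure $T^\theta$.
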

\begin{remark}
We note that the Fokker--Planck equation is conservative and its solution is an actual probability density function (meaning that $\widehat{f}\geq0$) within the flexible geometry under consideration. More precisely, we note that if we integrate \eqref{fokkerPlank} over $\Oeta\times B$ and use \eqref{noSlip}--\eqref{fokkerPlankBoundaryx} together with Reynold's transport theorem, we obtain
\begin{align*}
\frac{\dd}{\dt}\int_{\Oeta\times B}M\widehat{f}\dq\dx=0
\end{align*}
and thus it is conservative at all times.
Furthermore, the solution $\widehat{f}$ of \eqref{fokkerPlank} advected by the velocity field $\bu\in L^2(I;W^{1,\infty}(\Oeta))$ remains nonnegative if it were initially nonnegative. Indeed,  if we test \eqref{fokkerPlank} with the nonpositive part $\widehat{f}_-=\min\{0,\widehat{f}\}$ of $\widehat{f}$, integrate over $\Oeta\times B$ and use \eqref{noSlip}--\eqref{fokkerPlankBoundaryx} together with Reynold's transport theorem, we obtain
\begin{align*}
\frac{1}{2}\frac{\dd}{\dt}\int_{\Oeta\times B}M&\vert \widehat{f}_-\vert^2\dq\dx
+\int_{\Oeta\times B}M\vert \nabx \widehat{f}_-\vert^2\dq\dx
+\int_{\Oeta\times B}M\vert \nabq \widehat{f}_-\vert^2\dq\dx
\\&\leq
\frac{1}{2}\int_{\Oeta\times B}M\vert \nabq \widehat{f}_-\vert^2\dq\dx
+
c(\bq)
\frac{1}{2}\int_{\Oeta}\vert\nabx\bu\vert^2\int_{B}M\vert \widehat{f}_-\vert^2\dq\dx.
\end{align*} 
If we now apply Gr\"onwall's lemma, then for a nonnegative initial data $\widehat{f}_0\geq0$, it follows that
\begin{align*}
\frac{1}{2}\frac{\dd}{\dt}\int_{\Oeta\times B}M\vert \widehat{f}_-\vert^2\dq\dx
&+\int_{\Oeta\times B}M\vert \nabx \widehat{f}_-\vert^2\dq\dx
+\frac{1}{2}\int_{\Oeta\times B}M\vert \nabq \widehat{f}_-\vert^2\dq\dx
=0.
\end{align*} 
Therefore, $ \widehat{f}_-=0$ a.e. in $\Oeta\times B$ and thus, $ \widehat{f}=\widehat{f}_+=\max\{0,\widehat{f}\}$. See \cite{debiec2023corotational} for the corresponding argument for the fixed geometry.
\end{remark} 
\begin{remark}\label{rem:reg}
The reason for the choice of the topology in Definition \ref{def:strongSolution} comes from the coupling of the Navier--Stokes equations and the Fokker-Planck equation with centre-of-mass diffusion in a bounded domain: The velocity field must be Lipschitz in space, we cannot allow more than two spatial derivatives for the probability density function (this is related to \eqref{fokkerPlankBoundaryxnEta} and \eqref{fokkerPlankBoundaryx}). Moreover, we need additional temporal regularity to close the fixed point argument for the fully coupled system in Section \ref{sec:fullyCoupled}. As we have explained in Section \ref{sec:nov}
these difficulties are not related to the moving boundary and our result is even new for fixed boundaries (referring to $\eta_0=\eta_\star=g=0$ in \eqref{shellEq} and \eqref{initialStructure}).
\end{remark}
\begin{remark}
It remains open if a result similar to Theorem \ref{thm:main} holds if the Fokker-Planck equation \emph{without} centre-of-mass diffusion, that is $\varepsilon=0$ in \eqref{fokkerPlank}, is considered. This is related to the non-trivial boundary conditions for the fluid as well as the moving domain. For the fixed point argument for the fully coupled system in Section \ref{sec:fullyCoupled}, we need to prove a stability estimate for two different solutions of \eqref{fokkerPlank} being posed in two different moving domains. This requires to transform them to the reference domain, which eventually creates several boundary terms. They can only be controlled with the help of the additional regularity coming form the centre-of-mass diffusion.
\end{remark}

\section{Solving the Solvent-Structure problem}
\label{sec:solveSolventStructure}
\noindent In this section, we assume that a solution $\hbar$ for the equation of the solute described by the Fokker--Planck equation is known and that $ \hbar$ and its associated elastic stress tensor $ \mathbb{S}_\bq( \hbar)$ has sufficient regularity. For given body forces $\bff$ and $g$, our goal now is to construct a local-in-time strong solution of the solvent-structure coupled system given by 
\begin{align}
\partial_t^2\eta - \partial_t\Dely \eta + \Dely^2\eta&=g-\bn^\top\mathbb{T}\circ\bm{\varphi}_\eta\bn_\eta\det(\naby\bm{\varphi}_\eta) ,
\label{shellEqAlone}
\\
 \partial_t \bu  + (\mathbf{u}\cdot \nabx)\mathbf{u}  
&= 
\Delx \bu -\nabx\pi+ \bff+
\divx   \mathbb{S}_\bq( \hbar),
\label{momEqAlone}\\
\divx \bu&=0,
\label{contEqAlone}
\end{align}
where $\mathbb{T}$ is given by
\begin{align*}
\mathbb{T}:=\mathbb{T}(\bu, \pi,\hbar)=(\nabx\bu+\nabx\bu^\top)-\pi\mathbb{I}_{3\times3}+\mathbb{S}_\bq(\hbar).
\end{align*}
In the weak formulation, one considers a pair of test-functions  $(\phi, \bm{\phi}) \in C^\infty(\overline{I}\times\omega) \times C^\infty(\overline{I}; C^\infty_{\divx}(\R^3))
$ with $\phi(T,\cdot)=0$, $\bm{\phi}(T,\cdot)=0$  and $\bm{\phi}\circ \bm{\varphi}_{\eta}= \phi\bn$, obtaining
\begin{align*}
\int_I  \frac{\mathrm{d}}{\dt}\bigg(\int_\omega \partial_t \eta \, \phi \dy
&+
\int_{\Oeta}
\bu  \cdot \bm{\phi}\dx
\bigg)\dt 
\\
&=
\int_I \int_\omega \big(\partial_t \eta\, \partial_t\phi
-
\partial_t \naby \eta\cdot \naby\phi
+
 g\, \phi
 - 
 \Dely \eta\, \Dely\phi \big)\dy\dt
 \\&+
 \int_I  \int_{\Oeta}\big(  \vu\cdot \partial_t  \bm{\phi} + \vu \otimes \vu: \nabx \bm{\phi} 
  \big) \dx\dt
\\&
-\int_I  \int_{\Oeta}\big(   
\nabx \bu:\nabx \bm{\phi}  + \mathbb{S}_\bq( \hbar) :\nabx \bm{\phi}-\bff\cdot\bm{\phi} \big) \dx\dt.
\end{align*}
Note that this formulation is pressure-free.
 The pressure can be recovered by solving a.e. in time
\begin{align*}
\begin{cases}
\Delx \tilde \pi_\star
=
\Div(\Delx\bu +\bff+\divx\mathbb{S}_\bq(\hbar) -(\bfu\cdot\nabx)\bfu )&\text{ in }I\times\Omega_\eta,
\\
\nabla\tilde
\pi_\star\cdot\bn\circ\bm{\varphi}_\eta^{-1}
+
\pi_\star\bn^\top\circ\bm{\varphi}_\eta^{-1}\cdot\bfn_\eta\circ\bm{\varphi}_\eta^{-1}\det(\naby\bm{\varphi}_\eta)\circ\bm{\varphi}_\eta^{-1}
\\
\qquad=
[\Delx\bu+\bff+\divx\mathbb{S}_\bq(\hbar)]\cdot\bn\circ\bm{\varphi}_\eta^{-1} 
-
[\partial_t\Dely\eta-\Dely^2\eta+g]\circ \bm{\varphi}_\eta^{-1}
\\
\qquad
+\bn^\top\circ\bm{\varphi}_\eta^{-1}(\nabx\bu+\nabx\bu^\top+\mathbb{S}_\bq(\hbar)) \bfn_\eta\circ\bm{\varphi}_\eta^{-1}\det(\naby\bm{\varphi}_\eta)\circ\bm{\varphi}_\eta^{-1} &\text{ on }I \times\partial\Omega_\eta,
\end{cases}
\end{align*}
and setting
$\pi_\star:=\tilde\pi_\star-(\tilde\pi_\star)_{\Omega_{\eta}}.$
 If $\Omega_\eta$ is Lipschitz uniformly in time (which follows from Definition \ref{def:strongSolutionAlone} (a) below) the solution operator to the Robin problem above has the usual properties, i.e. the solution belongs to $W^{1,2}$ if the right-hand side belongs to $W^{-1,2}$ and the boundary datum is in $W^{1/2,2}$.
We must complement $\pi_\star$ by a function $c_\pi(t)$ depending on only time which is uniquely determined by the structure equation. Setting
$\pi(t)=\pi_\star(t)+c_\pi(t)$ and 
testing the structure equation with 1 we obtain
\begin{align}\label{eq:pressure}
\begin{aligned}
c_\pi(t)\int_{\omega}\bfn\cdot\bfn_\eta\det(\naby\bm{\varphi}_\eta)\dy&=\int_{\omega}\bfn(\nabx\bu+\nabx\bu^\top-\pi_\star\mathbb{I}_{3\times3}+\mathbb{S}_\bq(\hbar))\circ \bm{\varphi}_\eta\bfn_\eta\det(\naby\bm{\varphi}_\eta)\dy\\&+\int_\omega\partial_t^2\eta\dy-\int_\omega g\dy.
\end{aligned}
\end{align}
This equation can be solved for $c_\pi(t)$ provided the integral on the left-hand side is strictly positive (which certainly holds if the $W^{1,\infty}_\by$-norm of $\eta$ is not too large, cf.~\eqref{eq:0108}).

\begin{definition}[Strong solution]
\label{def:strongSolutionAlone}
Let $(\bff, g, \eta_0, \eta_\star, \bu_0, \mathbb{S}_\bq( \hbar))$ be a dataset such that
\begin{equation}
\begin{aligned}
\label{datasetAlone}
&\bff \in L^2\big(I; L^2_{\mathrm{loc}}(\mathbb{R}^3 )\big),\quad
g \in L^2\big(I; L^{2}(\omega)\big), \quad
\eta_0 \in W^{3,2}(\omega) \text{ with } \Vert \eta_0 \Vert_{L^\infty( \omega)} < L, \quad
\eta_\star \in W^{1,2}(\omega), 
\\
& \mathbb{S}_\bq( \hbar)\in L^2(I;W^{1,2}_{\mathrm{loc}}(\R^3)), \quad
\bu_0\in W^{1,2}_{\mathrm{\divx}}(\Omega_{\eta_0} ) \text{ is such that }\bu_0 \circ \bm{\varphi}_{\eta_0} =\eta_\star \bn \text{ on } \omega.
\end{aligned}
\end{equation} 
We call 
$( \eta, \bu,  \pi )$
a \textit{strong solution}  of  \eqref{shellEqAlone}--\eqref{contEqAlone} with data $(\bff, g, \eta_0, \eta_\star, \bu_0, \mathbb{S}_\bq( \hbar))$ provided that the following holds:
\begin{itemize}
\item[(a)]  the structure-function $\eta$ is such that $
\Vert \eta \Vert_{L^\infty(I \times \omega)} <L$ and
\begin{align*}
\eta \in W^{1,\infty}\big(I;W^{1,2}(\omega)  \big)\cap L^{\infty}\big(I;W^{3,2}(\omega)  \big) \cap  W^{2,2}\big(I;L^2(\omega)  \big)\cap L^{2}\big(I;W^{4,2}(\omega)  \big) ;
\end{align*}
\item[(b)] the velocity $\bu$ is such that $\bu  \circ \bm{\varphi}_{\eta} =(\partial_t\eta)\bn$ on $I\times \omega$ and
\begin{align*}
\bu\in  W^{1,2} \big(I; L^2_{\divx}(\Omega_{\eta(t)} ) \big)\cap L^2\big(I;W^{2,2}(\Oeta)  \big);
\end{align*}
\item[(c)] the pressure $\pi$ is such that
\begin{align*}
\pi\in L^2\big(I;W^{1,2}(\Oeta)  \big);
\end{align*}
\item[(d)] equations \eqref{shellEqAlone}--\eqref{contEqAlone} are satisfied a.e. in space-time with $\eta(0)=\eta_0$ and $\partial_t\eta=\eta_\star$ a.e. in $\omega$ as well as $\bfu(0)=\bfu_0$ a.e. in $\Omega_{\eta_0}$.
\end{itemize}
\end{definition}
\noindent
The existence of a unique local-in-time strong solution to \eqref{shellEqAlone}--\eqref{contEqAlone} in the sense of Definition \ref{def:strongSolutionAlone} is shown in \cite{BMSS}. The regularity obtained is, however, not sufficient for the coupling with the Fokker--Planck equation. Hence we are going to prove a corresponding result in higher-order Sobolev spaces.
Our main theorem is the following:
\begin{theorem}
\label{thm:fluidStructureWithoutFK}
Suppose that the dataset
$(\bff, g, \eta_0, \eta_\star, \bu_0, \mathbb{S}_\bq( \hbar))$
satisfies
 \eqref{mainDataForAll}--\eqref{compatibilityConditionMain} (which is stronger than \eqref{datasetAlone}) and
\begin{align*}
\mathbb{S}_\bq( \hbar) \in L^2(I;W^{2,2}(\Oeta)) \cap W^{1,2}(I;W^{1,2}(\Oeta)),
 \quad
 \mathbb{S}_\bq(\overline{\hbar} (0))\in  W^{2,2}(\Omega_{\eta_0}).
\end{align*}
There is a time $T_*>0$ such that  \eqref{shellEqAlone}--\eqref{contEqAlone} admits a unique strong solution $( \eta, \bu,  \pi )$, in the sense of Definition \ref{def:strongSolutionAlone}, that further satisfies
\begin{align*}
\eta&\in W^{1,\infty}\big(I_*;W^{3,2}(\omega)  \big) \cap W^{2,2}\big(I_*;W^{1,2}(\omega)  \big)
\cap W^{3,2}\big(I_*;L^2(\omega)  \big)
\\
&\qquad\qquad\qquad \cap W^{1,2}\big(I_*;W^{3,2}(\omega)  \big)\cap L^{\infty}\big(I_*;W^{4,2}(\omega)  \big)\cap L^{2}\big(I_*;W^{5,2}(\omega)  \big),\\
\bu &\in
W^{1,\infty} \big(I_*; W^{1,2}(\Oeta) \big)\cap  W^{2,2}\big(I_*;L^2(\Oeta)  \big)\cap W^{1,2}\big(I_*;W^{2,2}(\Oeta)  \big)
\cap L^2\big(I_*;W^{3,2}(\Oeta)  \big),
\\
\pi &\in
W^{1,2}\big(I_*;W^{1,2}(\Oeta)  \big)
\cap
L^2\big(I_*;W^{2,2}(\Oeta)  \big).
\end{align*}
\end{theorem}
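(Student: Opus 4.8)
The plan is to upgrade the regularity of the strong solution from \cite{BMSS} by a fixed-point argument run at the higher-order level, exactly as the existence proof at the lower level proceeds, but now tracking one extra spatial and one extra temporal derivative. First I would linearise and set up the iteration: given a structure displacement $\eta$ (hence a moving domain $\Omega_\eta$ via the Hanzawa transform $\bm{\Psi}_\eta$) and a velocity $\bu$ taken from the target regularity class, solve the decoupled problems --- the beam equation \eqref{shellEqAlone} with the forcing assembled from the trace of $\mathbb{T}$, and the (Stokes-type) system \eqref{momEqAlone}--\eqref{contEqAlone} with the convective term $(\bu\cdot\nabx)\bu$ and $\divx\mathbb{S}_\bq(\hbar)$ treated as data --- and define the solution map $\mathtt{S}:(\eta,\bu)\mapsto(\tilde\eta,\tilde\bu)$. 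The core of the argument is an a priori estimate for the linearised system in the norms appearing in the statement, obtained by differentiating the equations: take $\partial_t$ of the whole coupled linear system and also apply tangential spatial derivatives along $\omega$ lifted into $\Omega_\eta$, then test the momentum equation with $\partial_t\bu$ (and its differentiated analogues) and the beam equation with $\partial_t^2\eta$, $\partial_t\Dely\eta$, etc., using the kinematic coupling $\bu\circ\bm{\varphi}_\eta=(\partial_t\eta)\bn$ to cancel the interface terms as in \cite{BMSS, breit2022regularity}. This yields control of $\bu$ in $W^{1,\infty}(I;W^{1,2})\cap W^{2,2}(I;L^2)\cap W^{1,2}(I;W^{2,2})$ and of $\eta$ in the stated spaces; the remaining $L^2(I;W^{3,2})$ bound for $\bu$ and $L^2(I;W^{2,2})$ for $\pi$ then come from elliptic/Stokes regularity applied at each fixed time, using that $\Omega_\eta$ is uniformly $W^{4,2}$-regular because $\eta\in L^\infty(I;W^{4,2}(\omega))$ and $W^{1,\infty}(I;W^{3,2})$ controls the geometry's time dependence through \eqref{210and212}--\eqref{211and213}.

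The subtle point, flagged in Section \ref{sec:nov}, is the treatment of the initial pressure: to run the energy estimate with $\partial_t\bu$ as a test function at $t=0$ one needs $\partial_t\bu(0)$ in $L^2$ and $\partial_t\pi(0)$ (or at least $\nabx\pi(0)$) under control, which forces one to define $\pi_0$ via the Robin/Neumann problem displayed after \eqref{compatibilityFokker} and to \emph{impose} the compatibility condition \eqref{compatibilityConditionMain} so that the trace of $[\Delx\bu_0-\nabx\pi_0+\bff(0)+\divx\mathbb{S}_\bq(\widehat f_0)]\circ\bm{\varphi}_{\eta_0}$ matches the beam forcing $[\Dely\eta_\star-\Dely^2\eta_0+g(0)-\ldots]\bn$ on $\omega$. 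Concretely, I would differentiate \eqref{momEqAlone} in time, evaluate at $t=0$, and read off $\partial_t\bu(0)$ from the data; the compatibility condition is exactly what makes this $\partial_t\bu(0)$ satisfy the boundary condition $\partial_t\bu(0)\circ\bm{\varphi}_{\eta_0}$ consistently with $\partial_t^2\eta(0)\bn=\eta_{\star\star}\bn$, so the differentiated system has an admissible, finite-energy initial state. This is the step I expect to be the main obstacle, because one must carefully verify that the Robin problem for $\pi_0$ is solvable with $\pi_0\in W^{2,2}(\Omega_{\eta_0})$ (using $\eta_0\in W^{5,2}$, $\bu_0\in W^{3,2}$, $\bff(0)\in W^{1,2}$, $\mathbb{S}_\bq(\overline\hbar(0))\in W^{2,2}$), and then that the compatibility identity propagates to give finite norms for all the second-order time/space derivatives that enter the estimate.

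With the linear a priori estimate in hand, the fixed point follows on a short time interval $I_*=(0,T_*)$: I would show $\mathtt{S}$ maps a ball (of radius governed by the data norms and the compatibility-determined initial energies) in the target space into itself, choosing $T_*$ small so that the nonlinear contributions $(\bu\cdot\nabx)\bu$, the geometry-dependent lower-order terms from transforming to $\Omega$, and the time-integrated excess over the initial energy are all absorbed --- here one uses that quantities like $\int_0^{T_*}\|\partial_t\bu\|^2$ are $o(1)$ as $T_*\to0$ against the uniform-in-time bounds, plus the interpolation inequalities for the fractional-in-time trace spaces. Contractivity in a weaker norm (one derivative less, as is standard for such quasi-stationary couplings) is obtained from the stability estimates \eqref{211and213} for the Hanzawa maps applied to the difference of two iterates, giving uniqueness as well. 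Finally I would check that the fixed point indeed satisfies \eqref{shellEqAlone}--\eqref{contEqAlone} a.e.\ with the correct initial data and that $\pi$, recovered through the Robin problem and the time-dependent constant $c_\pi(t)$ from \eqref{eq:pressure}, lies in $W^{1,2}(I_*;W^{1,2})\cap L^2(I_*;W^{2,2})$, which completes the proof of Theorem \ref{thm:fluidStructureWithoutFK}.
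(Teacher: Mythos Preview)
Your proposal is correct and follows the paper's overall strategy---linearise, differentiate in time, use the compatibility condition \eqref{compatibilityConditionMain} to control the initial pressure (solving the Robin problem for $\pi_0$ and checking $\pi_0\in W^{2,2}(\Omega_{\eta_0})$), then close a fixed point on a short interval with contraction in a weaker norm. The core obstacle you identify (the initial pressure and the role of \eqref{compatibilityConditionMain}) is exactly the one the paper isolates.

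There are, however, a few implementation differences worth flagging. First, the paper transforms to the reference domain $\Omega$ \emph{before} linearising (Theorem \ref{thm:transformedSystem}), packaging all geometry dependence into the auxiliary data $h_\zeta(\overline{\mathbf w}),\mathbf h_\zeta(\overline{\mathbf w}),\mathbf H_\zeta(\overline{\mathbf w},\overline q)$; the linearised problem \eqref{shellEqAloneBarLinear}--\eqref{contEqAloneBarLinear} is then a genuinely \emph{coupled} beam--Stokes system (not decoupled as you suggest), for which the baseline estimate \eqref{energyEstLinear} from \cite{BMSS} is applied to the time-differentiated system. Second, the paper's fixed-point map iterates over the triple $(\zeta,\overline{\mathbf w},\overline q)$ including the pressure, because $\mathbf H_\zeta$ depends on it; your map $\mathtt S:(\eta,\bu)\mapsto(\tilde\eta,\tilde\bu)$ omits $\pi$, which is fine only if you solve the full coupled system at each step rather than freezing the stress in the boundary term. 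Third, for the top-order spatial regularity ($\bu\in L^2(I_*;W^{3,2})$, $\pi\in L^2(I_*;W^{2,2})$, and then $\eta\in L^2(I_*;W^{5,2})$), the paper does \emph{not} take tangential spatial derivatives as you propose; instead it invokes stationary Stokes maximal regularity (see \eqref{maxMaximalReg}) after transforming back to $\Omega_{\eta_0}$, and then reads off the shell regularity from \eqref{shellEqAloneBarLinear}. Your tangential-derivative route could work in principle but would require tracking commutators with the Hanzawa transform more carefully; the paper's use of maximal regularity is cleaner and avoids that bookkeeping.
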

\begin{remark}\label{rem:fsi1/2}
The choice of the rather unusual topology for the solution in Theorem \ref{thm:fluidStructureWithoutFK} is due to the coupling with the Fokker-Planck equation which is our main aim (see Section \ref{sec:nov} and Remark \ref{rem:reg} for the explanation). For instance, one can  also construct solutions provided it only holds
\begin{equation}
\begin{aligned}
\label{mainDataForAll'}
&\bff \in  W^{1/2,2}(I;L^{2}(\Oeta))\cap L^2(I;W^{1,2}(\Oeta)) ,\quad  \bff(0)\in W^{1/2,2}(\Omega_{\eta_0}),
\\
&g\in  L^{2}(I;W^{1,2}(\omega))\cap  W^{1/2,2}(I;W^{1/2,2}(\omega)),
\quad  g(0)\in W^{1/2,2}(\omega)
\\&\eta_0 \in W^{4,2}(\omega) \text{ with } \Vert \eta_0 \Vert_{L^\infty( \omega)} < L, \quad \eta_\star \in W^{2,2}(\omega),
\\&\bu_0 \in W^{2,2}_{\divx}(\Omega_{\eta_0} )\text{ is such that }\bu_0 \circ \bm{\varphi}_{\eta_0} =\eta_\star \bn \text{ on } \omega,
\\&
\mathbb{S}_\bq( \hbar) \in L^2(I;W^{2,2}(\Oeta)) \cap W^{1/2,2}(I;W^{1,2}(\Oeta)),
 \quad
 \mathbb{S}_\bq(\overline{\hbar} (0))\in  W^{1,2}(\Omega_{\eta_0})
\end{aligned}
\end{equation}
rather than  \eqref{mainDataForAll}. In this case the solution belongs to the following regularity class
\begin{align*}
\eta&\in W^{1,\infty}\big(I_*;W^{3,2}(\omega)  \big)
\cap W^{5/2,2}\big(I_*;L^2(\omega)  \big)
\\
&\qquad\qquad\qquad \cap W^{3/2,2}\big(I_*;W^{2,2}(\omega)  \big)\cap L^{\infty}\big(I_*;W^{4,2}(\omega)  \big)\cap L^{2}\big(I_*;W^{5,2}(\omega)  \big),\\
\bu &\in
W^{3/2,2}\big(I_*;L^2(\Oeta)  \big)\cap W^{1/2,2}\big(I_*;W^{2,2}(\Oeta)  \big)
\cap L^2\big(I_*;W^{3,2}(\Oeta)  \big),
\\
\pi &\in
W^{1/2,2}\big(I_*;W^{1,2}(\Oeta)  \big)
\cap
L^2\big(I_*;W^{2,2}(\Oeta)  \big).
\end{align*}
Similarly, if we have
\begin{equation}
\begin{aligned}
\label{mainDataForAll''}
&\bff \in  W^{1,2}(I;L^{2}(\Oeta))\cap L^2(I;W^{2,2}(\Oeta)) ,\quad  \bff(0)\in W^{1,2}(\Omega_{\eta_0}),
\\
&g\in  L^{2}(I;W^{2,2}(\omega))\cap  W^{1,2}(I;W^{1,2}(\omega)),
\\&\eta_0 \in W^{5,2}(\omega) \text{ with } \Vert \eta_0 \Vert_{L^\infty( \omega)} < L, \quad \eta_\star \in W^{3,2}(\omega),
\\&\bu_0 \in W^{3,2}_{\divx}(\Omega_{\eta_0} )\text{ is such that }\bu_0 \circ \bm{\varphi}_{\eta_0} =\eta_\star \bn \text{ on } \omega,
\\&
\mathbb{S}_\bq( \hbar) \in L^2(I;W^{3,2}(\Oeta)) \cap W^{1,2}(I;W^{1,2}(\Oeta)),
 \quad
 \mathbb{S}_\bq(\overline{\hbar} (0))\in  W^{2,2}(\Omega_{\eta_0})
\end{aligned}
\end{equation}
then the solution satisfies
\begin{align*}
\eta&\in W^{1,\infty}\big(I_*;W^{3,2}(\omega)  \big)
\cap W^{3,2}\big(I_*;L^2(\omega)  \big)
\\
&\qquad\qquad\qquad \cap W^{2,2}\big(I_*;W^{2,2}(\omega)  \big)\cap L^{\infty}\big(I_*;W^{5,2}(\omega)  \big)\cap L^{2}\big(I_*;W^{6,2}(\omega)  \big),\\
\bu &\in
W^{2,2}\big(I_*;L^2(\Oeta)  \big)\cap W^{1,2}\big(I_*;W^{2,2}(\Oeta)  \big)
\cap L^2\big(I_*;W^{4,2}(\Oeta)  \big),
\\
\pi &\in
W^{1,2}\big(I_*;W^{1,2}(\Oeta)  \big)
\cap
L^2\big(I_*;W^{3,2}(\Oeta)  \big).
\end{align*}
\end{remark}
\noindent
In order to prove Theorem \ref{thm:fluidStructureWithoutFK}, we follow the following strategy which has been successfully implemented before, for instance in \cite{breit2022regularity,BMSS,GH,GHL,Leq}.
\begin{itemize}
\item We transform the solvent-structure system to its reference domain.
\item We then linearise the resulting system on the reference domain and obtain estimates for the linearised system.
\item We construct a contraction map for the linearised problem (by choosing the end time
small enough) which gives the local solution to the system on its original/actual domain.
\end{itemize}
\subsection{Transformation to reference domain}\label{sec:ref}
For a solution $( \eta, \bu,  \pi )$  of \eqref{shellEqAlone}--\eqref{contEqAlone}, we define $\overline{\pi}=\pi\circ \bm{\Psi}_\eta$ and 
$\overline{\bu}=\bu\circ \bm{\Psi}_\eta$
as well as define
\begin{align*}
\mathbf{A}_\eta=J_\eta\big( \nabx \bm{\Psi}_\eta^{-1}\circ \bm{\Psi}_\eta \big)^\mathtt{T}\nabx \bm{\Psi}_\eta^{-1}\circ \bm{\Psi}_\eta,\\
\mathbf{B}_\eta=J_\eta \nabx \bm{\Psi}_\eta^{-1}\circ \bm{\Psi}_\eta,\\
h_\eta(\overline{\bu})=\big( \mathbf{B}_{\eta_0}-\mathbf{B}_\eta\big):\nabx \overline{\bu},
\\
\mathbf{H}_\eta(\overline{\bu}, \overline{\pi})
=\
\big( \mathbf{A}_{\eta_0}-\mathbf{A}_\eta\big)\nabx \overline{\bu}
-
\big( \mathbf{B}_{\eta_0}-\mathbf{B}_\eta\big) [\overline{\pi}\,\mathbb I_{3\times 3}-\mathbb{S}_\bq(\overline{\hbar}) ],
\\
\mathbf{h}_\eta(\overline{\bu})
=
(J_{\eta_0}-J_\eta)\partial_t \overline{\bu}
-
J_\eta  \nabx \overline{\bu} \big(\partial_t \bm{\Psi}_\eta^{-1}\circ \bm{\Psi}_\eta  \big)
-
J_\eta\big( \nabx \bm{\Psi}_\eta^{-1}\circ \bm{\Psi}_\eta  \big)( \overline{\bu}\cdot\nabx) \overline{\bu}
+
J_\eta  \bff \circ \bm{\Psi}_\eta 
\end{align*}
where $\overline{\hbar}=\hbar\circ \bm{\Psi}_\eta$.
The following result holds true and can be found in \cite[Lemma 4.2]{breit2022regularity}. 
\begin{theorem}
\label{thm:transformedSystem}
Suppose that the dataset
$(\bff, g, \eta_0, \eta_\star, \bu_0, \mathbb{S}_\bq(\hbar))$
satisfies \eqref{datasetAlone}.
Then $( \eta, \bu,  \pi )$ is strong solution to \eqref{shellEqAlone}--\eqref{contEqAlone}, in the sense of Definition \ref{def:strongSolutionAlone}, if and only if $( \eta, \overline{\bu},  \overline{\pi} )$ is a strong solution of
\begin{align}
\partial_t^2\eta - \partial_t\Dely \eta + \Dely^2\eta
&=
g+\bn^\top \big[\mathbf{H}_\eta(\overline{\bu}, \overline{\pi})-\mathbf{A}_{\eta_0}  \nabx\overline{\bu} +\mathbf{B}_{\eta_0}(\overline{\pi}\,\mathbb I_{3\times 3}-
\mathbb{S}_\bq(\overline{\hbar}) )\big]\circ\bm{\varphi} \bn ,
\label{shellEqAloneBar}
\\
J_{\eta_0}\partial_t \overline{\bu}  -\divx(\mathbf{A}_{\eta_0}  \nabx\overline{\bu}-\mathbf{B}_{\eta_0}\overline{\pi}) 
& = 
 \divx(\mathbf{B}_{\eta_0}\mathbb{S}_\bq(\overline{\hbar}) ) +
\mathbf{h}_\eta(\overline{\bu})-
\divx  \mathbf{H}_\eta(\overline{\bu}, \overline{\pi}),
\label{momEqAloneBar}\\
\label{contEqAloneBar}
\mathbf{B}_{\eta_0}:\nabx \overline{\bu}&= h_\eta(\overline{\bu})
\end{align}
in $I\times \Omega$ with  $\overline{\bu}  \circ \bm{\varphi}  =(\partial_t\eta)\bn$ on $I\times \omega$.
\end{theorem}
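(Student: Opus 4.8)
\textbf{Proof plan for Theorem \ref{thm:transformedSystem}.}
The statement is an ``if and only if'' reformulation, so the plan is purely computational: to show that $(\eta,\bu,\pi)$ solves \eqref{shellEqAlone}--\eqref{contEqAlone} precisely when its pullback $(\eta,\overline{\bu},\overline{\pi})=(\eta,\bu\circ\bm\Psi_\eta,\pi\circ\bm\Psi_\eta)$ solves \eqref{shellEqAloneBar}--\eqref{contEqAloneBar}. Since $\bm\Psi_\eta:\Omega\to\Omega_{\eta(t)}$ is a $W^{k,p}$-diffeomorphism with the quantitative bounds \eqref{210and212}--\eqref{211and213}, composition with $\bm\Psi_\eta$ is a bounded invertible operation between the function spaces appearing in Definition \ref{def:strongSolutionAlone}, so the regularity class is preserved in both directions and it suffices to verify the PDEs are equivalent. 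First I would record the chain-rule identities: for a scalar $p$ on $\Omega_\eta$ and $\overline p=p\circ\bm\Psi_\eta$ one has $\nabx p\circ\bm\Psi_\eta=(\nabx\bm\Psi_\eta^{-1}\circ\bm\Psi_\eta)^{\mathtt T}\nabx\overline p$, and for a vector field $\divx\bfg\circ\bm\Psi_\eta=\tfrac1{J_\eta}\divx(J_\eta(\nabx\bm\Psi_\eta^{-1}\circ\bm\Psi_\eta)\,\overline{\bfg})$ by the Piola identity (with $J_\eta=\det\nabx\bm\Psi_\eta$); also $\partial_t(p\circ\bm\Psi_\eta)=(\partial_t p)\circ\bm\Psi_\eta+(\nabx p\circ\bm\Psi_\eta)\cdot\partial_t\bm\Psi_\eta$, equivalently $(\partial_t p)\circ\bm\Psi_\eta=\partial_t\overline p-\nabx\overline p\cdot(\partial_t\bm\Psi_\eta^{-1}\circ\bm\Psi_\eta)\,$ after reshuffling. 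These are exactly the identities that produce the matrices $\mathbf A_\eta$, $\mathbf B_\eta$ and the lower-order terms $h_\eta,\mathbf H_\eta,\mathbf h_\eta$ above.

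The core of the argument is then three bookkeeping steps. For the continuity equation: $\divx\bu=0$ on $\Omega_\eta$ transforms, via the Piola identity, to $\divx(J_\eta(\nabx\bm\Psi_\eta^{-1}\circ\bm\Psi_\eta)\overline{\bu})=0$, i.e.\ $\divx(\mathbf B_\eta\overline{\bu})=0$; writing $\mathbf B_\eta=\mathbf B_{\eta_0}-(\mathbf B_{\eta_0}-\mathbf B_\eta)$ and using $\mathbf B_\eta:\nabx\overline{\bu}=\divx(\mathbf B_\eta\overline{\bu})$ (the columns of $\mathbf B_\eta$ are divergence-free — another consequence of Piola) gives $\mathbf B_{\eta_0}:\nabx\overline{\bu}=(\mathbf B_{\eta_0}-\mathbf B_\eta):\nabx\overline{\bu}=h_\eta(\overline{\bu})$, which is \eqref{contEqAloneBar}. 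For the momentum equation: I would multiply \eqref{momEqAlone} by $J_\eta$, compose with $\bm\Psi_\eta$, and rewrite each term. The diffusion--pressure part $\divx((\nabx\bu+\nabx\bu^{\mathtt T})-\pi\mathbb I+\mathbb S_\bq(\hbar))$ becomes (after Piola) $\divx(\mathbf A_\eta\nabx\overline{\bu}-\mathbf B_\eta[\,\overline\pi\mathbb I-\mathbb S_\bq(\overline\hbar)\,])$ up to the symmetrised-gradient subtlety, the time derivative $J_\eta\partial_t\bu\circ\bm\Psi_\eta$ becomes $J_\eta\partial_t\overline{\bu}-J_\eta\nabx\overline{\bu}(\partial_t\bm\Psi_\eta^{-1}\circ\bm\Psi_\eta)$, the convective term $J_\eta(\bu\cdot\nabx)\bu\circ\bm\Psi_\eta$ becomes $J_\eta(\nabx\bm\Psi_\eta^{-1}\circ\bm\Psi_\eta)(\overline{\bu}\cdot\nabx)\overline{\bu}$, and $J_\eta\bff\circ\bm\Psi_\eta$ is the forcing; collecting the $\eta_0$-pieces on the left and the differences $\mathbf A_{\eta_0}-\mathbf A_\eta$, $\mathbf B_{\eta_0}-\mathbf B_\eta$, $J_{\eta_0}-J_\eta$ on the right reproduces exactly \eqref{momEqAloneBar} with the stated definitions of $\mathbf h_\eta$ and $\mathbf H_\eta$. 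For the shell equation: the only term that changes is the boundary traction $\bn^\top\mathbb T\circ\bm\varphi_\eta\,\bn_\eta\det(\naby\bm\varphi_\eta)$; since $\bm\Psi_\eta\circ\bm\varphi=\bm\varphi_\eta$ on $\omega$, composing the Cauchy stress with $\bm\varphi_\eta$ and using the same matrix identities turns it into $\bn^\top[\mathbf A_\eta\nabx\overline{\bu}-\mathbf B_\eta(\overline\pi\mathbb I-\mathbb S_\bq(\overline\hbar))]\circ\bm\varphi\,\bn$, and splitting off the $\eta_0$-terms yields \eqref{shellEqAloneBar}.

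The one genuinely delicate point — the ``main obstacle'' in an otherwise routine change of variables — is the treatment of the symmetric gradient $\nabx\bu+\nabx\bu^{\mathtt T}$: it is \emph{not} form-invariant under the pullback (unlike $\nabx\bu$ alone, which is why $\mathbf A_\eta$ appears rather than a symmetrised analogue), so one must either absorb the correction $\nabx\bm\Psi_\eta^{-1}$-dependent antisymmetric remainder into $\mathbf H_\eta$, or — as is done in \cite{breit2022regularity} — exploit that on the divergence-free class the extra terms can be rewritten so that the principal part on the reference domain is exactly $\divx(\mathbf A_{\eta_0}\nabx\overline{\bu})$ with everything else of lower order and collected into $\mathbf H_\eta$; keeping careful track of which pieces are ``principal, frozen at $\eta_0$'' versus ``lower-order, depending on $\eta$'' is the crux, because that split is precisely what makes the later linearisation and fixed-point argument work. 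A secondary check is that the boundary conditions transform correctly: $\bu\circ\bm\varphi_\eta=(\partial_t\eta)\bn$ becomes $\overline{\bu}\circ\bm\varphi=(\partial_t\eta)\bn$ directly since $\bm\Psi_\eta\circ\bm\varphi=\bm\varphi_\eta$, and the Robin-type pressure boundary condition transforms consistently. Since every identity used is an exact algebraic consequence of the chain rule and the Piola identity and is reversible (the diffeomorphism and all matrices are invertible with bounds uniform in time by \eqref{210and212}), the equivalence holds in both directions, and the regularity classes match term by term; I would simply cite \cite[Lemma 4.2]{breit2022regularity} for the full computation.
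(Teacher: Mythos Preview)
Your proposal is correct and matches the paper's approach exactly: the paper does not give its own proof of this theorem but simply states that it ``can be found in \cite[Lemma 4.2]{breit2022regularity}'', which is precisely the reference you invoke at the end. Your computational sketch (Piola identity for the divergence constraint, chain rule for the time derivative and convection, splitting into $\eta_0$-frozen principal parts versus $\eta$-dependent remainders) is the standard route and in fact supplies more detail than the paper itself.
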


\subsection{The linearised problem}
In this section, we let $(g, \eta_0, \eta_\star)$ be as before in Theorem \ref{thm:transformedSystem}. In addition, we take $(h,\mathbf{h}, \mathbf{H}, \overline{\bu}_0, \mathbb{S}_\bq(\overline{\hbar}))$ that satisfies
\begin{align*}
&h\in L^2(I;W^{1,2}(\Omega))\cap W^{1,2}(I;W^{-1,2}(\Omega)),
\\ 
&\mathbf{h}\in L^2(I;L^2(\Omega)), \quad \mathbf{H},\,\mathbb{S}_\bq(\overline{\hbar})\in L^2(I;W^{1,2}(\Omega)),
\\&\overline{\bu}_0\in W^{1,2}(\Omega),
\quad \overline{\bu}_0\circ \bm{\varphi} =\eta_\star\bn, \quad \mathbf{B}_{\eta_0}:\nabx \overline{\bu}_0=h,
\end{align*}
and consider the following linear system
\begin{align}
 \partial_t^2\eta - \partial_t\Dely \eta +  \Dely^2\eta
&=
g+\bn^\top \big[\mathbf{H} -\mathbf{A}_{\eta_0}  \nabx\overline{\bu} +\mathbf{B}_{\eta_0}(\overline{\pi}\,\mathbb I_{3\times 3} -  \mathbb{S}_\bq(\overline{\hbar}))\big]\circ\bm{\varphi} \bn ,
\label{shellEqAloneBarLinear}
\\
J_{\eta_0}\partial_t \overline{\bu}  -\divx(\mathbf{A}_{\eta_0}  \nabx\overline{\bu}
-
\mathbf{B}_{\eta_0}\overline{\pi}) 
&=
 \divx(\mathbf{B}_{\eta_0}\mathbb{S}_\bq(\overline{\hbar}) ) 
 +
\mathbf{h}-
\divx  \mathbf{H},
\label{momEqAloneBarLinear}\\
\label{contEqAloneBarLinear}
\mathbf{B}_{\eta_0}:\nabx \overline{\bu}&= h
\end{align}
in $I\times \Omega$ with  $\overline{\bu}  \circ \bm{\varphi}  =(\partial_t\eta)\bn$ on $I\times \omega$ and with$\eta(0)=\eta_0$ and $\partial_t\eta=\eta_\star$ a.e. in $\omega$ as well as $\overline \bfu(0)=\overline\bfu_0$ a.e. in $\Omega$. As shown in \cite[Proposition 3.3]{BMSS}, we obtain
\begin{equation}
\begin{aligned}
\label{energyEstLinear}
&\sup_I\int_\omega
\big(\vert \partial_t\naby \eta\vert^2 
+
\vert \naby\Dely \eta\vert^2
\big)
\dy
+
\sup_I\int_\Omega\vert\nabx \overline{\bu}\vert^2\dx
\\&+
\int_I\int_\omega
\big(\vert \partial_t\Dely \eta \vert^2 + \vert \partial_t^2 \eta\vert^2
+ \vert \Dely^2 \eta\vert^2
  \big)\dy\dt
 +
\int_I\int_\Omega\big( \vert \nabx^2\overline{\bu}\vert^2 +\vert \partial_t\overline{\bu} \vert^2  +\vert \overline{\pi}\vert^2 + \vert \nabx \overline{\pi}\vert^2
 \big)\dx\dt
 \\&\lesssim
 \int_\omega\big( 
 \vert \naby\eta_\star\vert^2
 +
  \vert \naby\Dely\eta_0\vert^2
  \big)\dy
  +
  \int_\Omega \big(\vert
  \overline{\bu}_0\vert^2
  +
   \vert\nabx\overline{\bu}_0\vert^2 \big)\dx
   \\&+
   \int_I\Vert \partial_t h \Vert_{W^{-1,2}(\Omega)}^2\dt
   +
 \int_I\int_\omega  \vert g\vert^2 \dy\dt
  \\&+
  \int_I\int_\Omega\big(
  \vert  h\vert^2
  +
  \vert \nabx h\vert^2
  +
   \vert \mathbf{h}\vert^2 +
  \vert \mathbf{H}\vert^2
  +
  \vert \nabx\mathbf{H}\vert^2
  +
  \vert \mathbb{S}_\bq(\overline{\hbar}) ) \vert^2
  +
  \vert \nabx\mathbb{S}_\bq(\overline{\hbar}) ) \vert^2
  \big)
 \dx\dt.
\end{aligned}
\end{equation}
For a dataset that is more regular in time and space, our goal now is to obtain higher-in-time (and then in space) regularity for the strong solution above.
This requires assuming the compatibility condition \footnote{Setting $h=h_\eta(\overline{\bu}), \mathbf h=\mathbf h_\eta(\overline{\bu})$ and $\mathbf H=\mathbf H_\eta(\overline{\bu},\overline{\pi})$ with the definitions from Section \ref{sec:ref}, this is equivalent to \eqref{compatibilityConditionMain}.}
\begin{equation}
\begin{aligned}
\label{compatibilityCondition}
\big[\Dely  \eta_\star &-  \Dely^2 \eta_0
+
g(0)
+
\bn^\top \big[ \mathbf{H}(0) -\mathbf{A}_{\eta_0}  \nabx\overline{\bu}_0 +\mathbf{B}_{\eta_0}(\overline{\pi}_0\,\mathbb I_{3\times 3}
-
\mathbb{S}_\bq(\overline{\hbar}(0) ))\big]\circ\bm{\varphi}  \bn \big]\bn
\\&=
J_{\eta_0}^{-1}\big[ \divx(\mathbf{A}_{\eta_0}  \nabx\overline{\bu}_0
 -\mathbf{B}_{\eta_0}\overline{\pi}_0+
 \mathbf{B}_{\eta_0}\mathbb{S}_\bq(\overline{\hbar}(0) ))
 + 
\mathbf{h}(0)-
\divx  \mathbf{H}(0)  \big] \circ\bm{\varphi}
\end{aligned}
\end{equation}
on $\omega$ for the data. Here the initial pressure $\pi_0$ is the unique solution to the elliptic problem
\begin{equation}
\begin{aligned}
\divx(\mathbf{A}_{\eta_0}\nabx\overline{\pi}_0) 
&
=
 - 
\partial_th(0)
 + \divx\big(\mathbf{B}_{\eta_0}^\top J_{\eta_0}^{-1}\big\{ \divx(\mathbf{A}_{\eta_0}  \nabx\overline{\bu}_0
 +
 \mathbf{B}_{\eta_0}\mathbb{S}_\bq(\overline{\hbar}
 (0)) )\}\big) \\
& + \divx\big(\mathbf{B}_{\eta_0}^\top J_{\eta_0}^{-1}\big\{ \mathbf{h}(0) -
\divx  \mathbf{H}(0) 
\}\big)
\label{4'a}
\end{aligned}
\end{equation}
in $\Omega$ with the Neumann boundary condition
\begin{equation}
\begin{aligned}
\mathbf{A}_{\eta_0}\nabx\overline{\pi}_0&\cdot \bn \circ \bm{\varphi}^{-1} 
 +
J_{\eta_0} 
\bn^\top\circ \bm{\varphi}^{-1}   \mathbf{A}_{\eta_0}\overline{\pi}
_0\cdot \bn\circ \bm{\varphi}^{-1} 
\\&=
\mathbf{B}_{\eta_0}^\top J_{\eta_0}^{-1}\big\{ \divx(\mathbf{A}_{\eta_0}  \nabx\overline{\bu}_0
+
\mathbf{B}_{\eta_0}\mathbb{S}_\bq(\overline{\hbar}
 (0))) 
 +
\mathbf{h}(0) -
\divx  \mathbf{H}(0)
\}\cdot \bn\circ \bm{\varphi}^{-1} 
\\
&
-
 \mathbf{B}_{\eta_0}^\top
\big[\big\{\Dely\eta_\star - \Dely^2\eta_0+g(0)\big\}\bfn\big]\circ \bm{\varphi}^{-1}\bfn\circ \bm{\varphi}^{-1} 
\\&
-
\mathbf{B}_{\eta_0}^\top
\bn^\top\circ \bm{\varphi}^{-1} \big[\mathbf{H}(0) -\mathbf{A}_{\eta_0}  \nabx\overline{\bu}_0  \big]
\bn\circ \bm{\varphi}^{-1}
+
J_{\eta_0} 
\bn^\top\circ \bm{\varphi}^{-1}  \mathbf{A}_{\eta_0}
\mathbb{S}_\bq(\overline{\hbar}
(0)) \bn\circ \bm{\varphi}^{-1}  
\label{6a}
\end{aligned}
\end{equation}
on $\partial\Omega$.
Our main result in this subsection is the following.
\begin{proposition}
\label{prop:bigData}
Suppose that the dataset
$(g, \eta_0, \eta_\star, \overline{\bu}_0,\overline{\hbar}, h, \mathbf{h},\mathbf{H})$
satisfies \eqref{datasetAlone} and in addition
\begin{equation}
\begin{aligned}
\label{bigData}
&g\in L^2(I;W^{2,2}(\omega)) \cap W^{1,2}(I;W^{1,2}(\omega)) 
\\&\eta_0 \in W^{5,2}(\omega) \text{ with } \Vert \eta_0 \Vert_{L^\infty( \omega)} < L, \quad \eta_\star \in W^{3,2}(\omega),
\\&\overline{\bu}_0 \in W^{3,2}(\Omega),
\quad \overline{\bu}_0\circ \bm{\varphi} =\eta_\star\bn, \quad \mathbf{B}_{\eta_0}:\nabx \overline{\bu}_0=h,
\\&
h\in L^2(I;W^{3,2}(\Omega)) \cap W^{1,2}(I;W^{1,2}(\Omega))  \cap W^{2,2}(I;W^{-1,2}(\Omega)) 
, 
\\&
\mathbf{h} \in L^2(I;W^{2,2}(\Omega))
 \cap W^{1,2}(I;L^2(\Omega)) 
 ,\quad 
 \mathbf{h}(0)\in  W^{1,2}(\Omega),
 \\&
 \mathbf{H}\in L^2(I;W^{3,2}(\Omega)) \cap W^{1,2}(I;W^{1,2}(\Omega)),
 \quad
 \mathbf{H}(0)\in  W^{2,2}(\Omega),
 \\&
\mathbb{S}_\bq(\overline{\hbar} )\in L^2(I;W^{3,2}(\Omega)) \cap W^{1,2}(I;W^{1,2}(\Omega)),
 \quad
 \mathbb{S}_\bq(\overline{\hbar} (0))\in  W^{2,2}(\Omega),
\end{aligned}
\end{equation}
with the compatibility condition
\eqref{compatibilityCondition}.
 Then a strong solution $( \eta,  \overline{\bu},  \overline{\pi})$ of \eqref{shellEqAloneBarLinear}--\eqref{contEqAloneBarLinear} satisfies
 \begin{equation}
\begin{aligned}
\label{timeEstimate}
&\sup_I\int_\omega
\big(\vert \partial_t^2\naby \eta\vert^2 
+
\vert \partial_t\naby^3 \eta\vert^2+
\vert \naby^5 \eta\vert^2
\big)
\dy
+
\sup_I\int_\Omega\vert \partial_t\nabx \overline{\bu}\vert^2\dx
\\&+
\int_I\big(\Vert\overline{\bu}
 \Vert_{W^{4,2}(\Omega )}^2
 +
 \Vert\overline{\pi} \Vert_{W^{3,2}(\Omega )}^2\big) \dt
+
\int_I\int_\omega
\big(\vert \partial_t^2\naby^2 \eta \vert^2+\vert \partial_t\naby^4 \eta \vert^2 + \vert \partial_t^3 \eta\vert^2
 +\vert\naby^6\eta\vert^2\big)\dy\dt
 \\&+
\int_I\int_\Omega\big( \vert \partial_t \nabx^2\overline{\bu}\vert^2 +\vert \partial_t^2\overline{\bu} \vert^2  +\vert \partial_t\overline{\pi}\vert^2 + \vert \partial_t\nabx \overline{\pi}\vert^2
 \big)\dx\dt
\lesssim
 \mathcal{D}(g, \eta_0, \eta_\star, \bu_0, h, \mathbf{h},\mathbf{H}),
\end{aligned}
\end{equation}
where
\begin{equation}
\begin{aligned}
\label{bigDataNorm}
\mathcal{D}&(g, \eta_0, \eta_\star, \bu_0, h, \mathbf{h},\mathbf{H}):=
\Vert \eta_\star\Vert_{W^{3,2}(\omega)}^2
 +
 \Vert \eta_0\Vert_{W^{5,2}(\omega)}^2
  +
 \Vert
  \overline{\bu}_0\Vert_{W^{3,2}(\Omega)}^2
  +
  \Vert \mathbb{S}_\bq(\overline{\hbar}(0))
  \Vert_{W^{2,2}(\Omega)}^2
   \\&
    +
   \Vert \mathbf{h}(0)\Vert_{W^{1,2}(\Omega)}^2 
   +
  \Vert \mathbf{H}(0)\Vert_{W^{2,2}(\Omega)}^2
   +
   \int_I\Vert \partial_t^2 h \Vert_{W^{-1,2}(\Omega)}^2\dt
    +
 \Vert
  g(0)\Vert_{W^{1,2}(\omega)}^2
   \\&+
   \int_I \big(\Vert \partial_t g \Vert_{W^{1,2}(\omega)}^2
   + 
 \Vert g\Vert_{W^{2,2}(\omega)}^2
 +
 \Vert \partial_t h \Vert_{W^{1,2}(\Omega)}^2   +
 \Vert   h \Vert_{W^{3,2}(\Omega)}^2  
 +
 \Vert \partial_t\mathbf{h}\Vert_{L^2(\Omega)}^2\big)\dt
  \\&+
  \int_I\big(
   \Vert \mathbf{h}\Vert_{W^{2,2}(\Omega)}^2
  +
\Vert
\mathbb{S}_\bq(\partial_t\overline{\hbar} )
\Vert_{W^{1,2}(\Omega)}^2
 +
\Vert
\mathbb{S}_\bq(\overline{\hbar} )
\Vert_{W^{3,2}(\Omega)}^2 +
  \Vert \partial_t\mathbf{H}\Vert_{W^{1,2}(\Omega)}^2
+
  \Vert \mathbf{H}\Vert_{W^{3,2}(\Omega)}^2
  \big)
\dt.
\end{aligned}
\end{equation}
\end{proposition}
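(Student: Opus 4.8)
The plan is to obtain \eqref{timeEstimate} by differentiating the linear system \eqref{shellEqAloneBarLinear}--\eqref{contEqAloneBarLinear} once in time, applying the basic linear estimate \eqref{energyEstLinear} (i.e.\ \cite[Proposition 3.3]{BMSS}) to the differentiated system, and then upgrading the remaining purely spatial regularity by elliptic/Stokes bootstrapping. Since the matrices $\mathbf{A}_{\eta_0},\mathbf{B}_{\eta_0}$ and the Jacobian $J_{\eta_0}$ depend only on the fixed displacement $\eta_0$, the triple $(\partial_t\eta,\partial_t\overline{\bu},\partial_t\overline{\pi})$ solves a system of exactly the same structure as \eqref{shellEqAloneBarLinear}--\eqref{contEqAloneBarLinear}, now with data $(\partial_t g,\partial_t h,\partial_t\mathbf{h},\partial_t\mathbf{H},\mathbb{S}_\bq(\partial_t\overline{\hbar}))$, with initial displacement $\eta_\star$, with initial structure velocity $\partial_t^2\eta(0)$ obtained by evaluating \eqref{shellEqAloneBarLinear} at $t=0$, and with initial fluid velocity $\partial_t\overline{\bu}(0)$ obtained from \eqref{momEqAloneBarLinear} at $t=0$. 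Assumption \eqref{bigData} guarantees that all these new data lie in the spaces required by \eqref{energyEstLinear}: in particular $\partial_t h\in L^2(I;W^{1,2}(\Omega))$ with $\partial_t^2 h\in L^2(I;W^{-1,2}(\Omega))$, $\partial_t\mathbf{h}\in L^2(I;L^2(\Omega))$, and $\partial_t\mathbf{H},\mathbb{S}_\bq(\partial_t\overline{\hbar})\in L^2(I;W^{1,2}(\Omega))$; moreover Lions--Magenes trace embeddings (applied to $\mathbf{h}\in L^2(I;W^{2,2})\cap W^{1,2}(I;L^2)$ and similarly to $\mathbf{H}$, $\mathbb{S}_\bq(\overline{\hbar})$, $h$) show that $\mathbf{h}(0)\in W^{1,2}(\Omega)$, $\mathbf{H}(0),\mathbb{S}_\bq(\overline{\hbar}(0))\in W^{2,2}(\Omega)$ and $\partial_t h(0)\in L^2(\Omega)$ are well defined.

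The main obstacle is the construction of $\partial_t\overline{\bu}(0)$ together with its compatibility with the kinematic coupling, which is precisely where the initial pressure $\overline{\pi}_0$ enters. Differentiating the constraint \eqref{contEqAloneBarLinear} in time and evaluating at $t=0$ forces $\mathbf{B}_{\eta_0}:\nabx(\partial_t\overline{\bu}(0))=\partial_t h(0)$; inserting into this the expression for $\partial_t\overline{\bu}(0)$ coming from \eqref{momEqAloneBarLinear} at $t=0$ produces exactly the elliptic Robin problem \eqref{4'a}--\eqref{6a} for $\overline{\pi}_0$. One checks that its right-hand side belongs to $L^2(\Omega)$ and its boundary datum to $W^{1/2,2}(\partial\Omega)$: this uses $\overline{\bu}_0\in W^{3,2}(\Omega)$, $\mathbb{S}_\bq(\overline{\hbar}(0))\in W^{2,2}(\Omega)$, $\mathbf{h}(0)\in W^{1,2}(\Omega)$, $\mathbf{H}(0)\in W^{2,2}(\Omega)$ and $\partial_t h(0)\in L^2(\Omega)$ from the previous step, together with the regularity of $\mathbf{A}_{\eta_0},\mathbf{B}_{\eta_0},J_{\eta_0}$ and of their spatial derivatives inherited from $\eta_0\in W^{5,2}(\omega)$ via \eqref{210and212} and the multiplicative properties of Sobolev spaces. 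Elliptic regularity then yields $\overline{\pi}_0\in W^{2,2}(\Omega)$, hence $\partial_t\overline{\bu}(0)\in W^{1,2}(\Omega)$ by \eqref{momEqAloneBarLinear} at $t=0$, while the compatibility condition \eqref{compatibilityCondition} is exactly the identity $(\partial_t\overline{\bu}(0))\circ\bm{\varphi}=(\partial_t^2\eta(0))\bn$ on $\omega$, i.e.\ the kinematic coupling for the differentiated system at the initial time. Therefore \cite[Proposition 3.3]{BMSS} applies to the differentiated system and, after estimating $\Vert\partial_t\overline{\bu}(0)\Vert_{W^{1,2}(\Omega)}^2+\Vert\naby\partial_t^2\eta(0)\Vert_{L^2(\omega)}^2+\Vert\naby\Dely\eta_\star\Vert_{L^2(\omega)}^2$ by $\mathcal{D}(g,\eta_0,\eta_\star,\bu_0,h,\mathbf{h},\mathbf{H})$ (which reduces to the bounds on $\overline{\pi}_0$ just derived together with \eqref{bigData}), it delivers all the $\partial_t$-weighted terms on the left-hand side of \eqref{timeEstimate}.

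It remains to recover the purely spatial regularity. For a.e.\ $t$ we regard \eqref{momEqAloneBarLinear}--\eqref{contEqAloneBarLinear} as a stationary generalized Stokes system for $(\overline{\bu}(t),\overline{\pi}(t))$ with $\eta_0$-dependent coefficients, right-hand side $\divx(\mathbf{B}_{\eta_0}\mathbb{S}_\bq(\overline{\hbar}))+\mathbf{h}-\divx\mathbf{H}-J_{\eta_0}\partial_t\overline{\bu}$, constraint datum $h$, and Dirichlet datum $(\partial_t\eta)\bn\circ\bm{\varphi}^{-1}$ on $\partial\Omega$. By the previous step together with the Lions--Magenes embeddings $\mathbf{h}\in C(\overline{I};W^{1,2})$ and $\mathbf{H},\mathbb{S}_\bq(\overline{\hbar})\in C(\overline{I};W^{2,2})$, this right-hand side is bounded in $L^2(I;W^{2,2}(\Omega))\cap L^\infty(I;W^{1,2}(\Omega))$, the constraint datum in $L^2(I;W^{3,2}(\Omega))$, and, using the $\eta$-bounds in \eqref{timeEstimate} already established, the boundary datum in $L^2(I;W^{4,2}(\omega))\cap L^\infty(I;W^{3,2}(\omega))$. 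The elliptic estimate for this Stokes-type problem then gives $\overline{\bu}\in L^2(I;W^{4,2}(\Omega))\cap L^\infty(I;W^{3,2}(\Omega))$ and $\overline{\pi}\in L^2(I;W^{3,2}(\Omega))\cap L^\infty(I;W^{2,2}(\Omega))$. Feeding the resulting boundary traces into the shell equation, written as $\Dely^2\eta=g-\partial_t^2\eta+\partial_t\Dely\eta+\bn^\top[\mathbf{H}-\mathbf{A}_{\eta_0}\nabx\overline{\bu}+\mathbf{B}_{\eta_0}(\overline{\pi}\,\mathbb{I}_{3\times3}-\mathbb{S}_\bq(\overline{\hbar}))]\circ\bm{\varphi}\,\bn$, and invoking elliptic regularity on $\omega$ yields $\eta\in L^\infty(I;W^{5,2}(\omega))\cap L^2(I;W^{6,2}(\omega))$, which closes \eqref{timeEstimate}.

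Finally, the formal differentiation in time performed above is made rigorous by running the whole argument on the difference quotients $\delta^{-1}\big(v(\cdot+\delta)-v\big)$ of the solution (or, equivalently, inside the Galerkin scheme used to establish \eqref{energyEstLinear} in \cite{BMSS}) and passing to the limit $\delta\to0$; the bounds are uniform precisely because they are the ones derived above. The hardest single ingredient is the initial-pressure and compatibility step of the second paragraph — everything else is a routine combination of the linear estimate \eqref{energyEstLinear}, elliptic and Stokes regularity on the fixed domains $\Omega$ and $\omega$, and Sobolev trace and interpolation embeddings.
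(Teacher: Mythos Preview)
Your proposal is correct and follows essentially the same three-step strategy as the paper: time-differentiate the linear system and apply \eqref{energyEstLinear} to the differentiated system, control the new initial data (in particular $\partial_t\overline{\bu}(0)$ and $\partial_t^2\eta(0)$) via the elliptic Robin problem \eqref{4'a}--\eqref{6a} for $\overline{\pi}_0$ together with the compatibility condition \eqref{compatibilityCondition}, and finally bootstrap the purely spatial regularity of $(\overline{\bu},\overline{\pi})$ by a stationary Stokes estimate (the paper does this after transforming to $\Omega_{\eta_0}$, which is equivalent to your variable-coefficient Stokes estimate on $\Omega$) before reading off the top-order spatial regularity of $\eta$ from the shell equation. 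The paper also remarks that the formal time-differentiation is justified by a Galerkin approximation, in line with your difference-quotient/Galerkin comment.
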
 
\begin{proof}
The proof of Proposition \ref{prop:bigData} will be obtained in three steps. Firstly, we differentiate in time, each of the equations in \eqref{shellEqAloneBarLinear}--\eqref{contEqAloneBarLinear} as well as the interface condition $\overline{\bu}  \circ \bm{\varphi}  =(\partial_t\eta)\bn$. Since the system \eqref{shellEqAloneBarLinear}--\eqref{contEqAloneBarLinear} is linear, the resulting system after differentiating in time will be of the same form except for the extra time derivative applied to the individual terms in the system. Also, the initial conditions are no longer given but now  solve PDEs as well. Consequently, in the first instant, our new system will also satisfy the inequality \eqref{energyEstLinear} (for the time derivatives of each term). However, since the initial conditions solve  PDEs, we will have to estimate them as well. The estimate for these initial conditions is our second step. Finally, our third step will consist of obtaining estimates for the remaining terms on the left-hand side of  \eqref{timeEstimate} (which happens to be the highest spatial regularity for the velocity field and the pressure) in terms of  $\mathcal{D}(\cdot)$  as defined in \eqref{bigDataNorm}.
\\
Let us now give further details. We argue formerly, a rigorous proof can be obtained by working with a Galerkin approximation (see also \cite[Section 3]{BMSS} and \cite[Section 4]{schwarzacherSu}). This is also where the compatibility condition \eqref{compatibilityCondition} comes into play to obtain sufficient temporal regularity.
 First, in order to simplify notation, let us  set
\begin{align}
\label{tildePartialT}
\tilde{\bu}=\partial_t\overline{\bu},
\quad \tilde{\pi}=\partial_t\overline{\pi}, \quad
\tilde{\hbar}=\partial_t\overline{\hbar}, \quad
 \tilde{\eta}=\partial_t\eta,
\quad \tilde{g}=\partial_t g,
\quad \tilde{h}=\partial_t h, \quad\tilde{\mathbf{h}}=\partial_t \mathbf{h}, \quad
\tilde{\mathbf{H}}=\partial_t \mathbf{H}.
\end{align}
We now obtain the system
\begin{align}
 \partial_t^2\tilde{\eta} - \partial_t\Dely\tilde{ \eta} +  \Dely^2\tilde{\eta}
&=
\tilde{g} 
+
\bn^\top \big[\tilde{\mathbf{H}} -\mathbf{A}_{\eta_0}  \nabx\tilde{\bu} +\mathbf{B}_{\eta_0}(\tilde{\pi}\,\mathbb I_{3\times 3}
-
\mathbb{S}_\bq(\tilde{\hbar} ))\big]\circ\bm{\varphi} \bn ,
\label{shellEqAloneBarLinearTime}
\\
J_{\eta_0}\partial_t \tilde{\bu}  -\divx(\mathbf{A}_{\eta_0}  \nabx\tilde{\bu}
-\mathbf{B}_{\eta_0}\tilde{\pi}) 
 &=\divx(\mathbf{B}_{\eta_0}\mathbb{S}_\bq(\tilde{\hbar} ))
 +
\tilde{\mathbf{h}}-
\divx  \tilde{\mathbf{H}},
\label{momEqAloneBarLinearTime}\\
\label{contEqAloneBarLinearTime}
\mathbf{B}_{\eta_0}:\nabx \tilde{\bu}&= \tilde{h}
\end{align}
with  $\tilde{\bu}  \circ \bm{\varphi}  =(\partial_t\tilde{\eta})\bn$ on $I\times \omega$ and with the initial conditions 
\begin{align}
&\tilde{\eta}(0, \cdot) = \tilde{\eta}_0, \quad \partial_t\tilde{\eta}(0, \cdot) = \tilde{\eta}_\star
&\quad \text{in }  \omega,
\\
&\tilde{\mathbf{u}}(0, \cdot) = \tilde{\mathbf{u}}_0
&\quad \text{in }  \Omega.
\end{align}
Here, the initial data   $( \tilde{\eta}_0, \tilde{\eta}_1, \tilde{\bu}_0)$  satisfies
\begin{align}
&\tilde{\eta}_0=\eta_\star,
\label{eta0eta1}
\\
&\tilde{\eta}_\star=
\Dely  \eta_\star -  \Dely^2 \eta_0
+
g(0) 
+
\bn^\top \big[ \mathbf{H}(0) -\mathbf{A}_{\eta_0}  \nabx\overline{\bu}_0 +\mathbf{B}_{\eta_0}(\overline{\pi}_0\,\mathbb I_{3\times 3}
-
\mathbb{S}_\bq(\overline{\hbar}(0) ))\big]\circ\bm{\varphi} \bn, 
\label{tiledEtaNoTiledEta}
\\
&\tilde{\bu}_0  
=
J_{\eta_0}^{-1}\big[\divx(\mathbf{A}_{\eta_0}  \nabx\overline{\bu}_0
-\mathbf{B}_{\eta_0}\overline{\pi}_0
+
\mathbf{B}_{\eta_0}\mathbb{S}_\bq(\overline{\hbar}(0) ))
 + 
\mathbf{h}(0)-
\divx  \mathbf{H}(0) \big].
\label{tiledUNoTiledU}
\end{align}
The initial pressure $\overline\pi_0$ is prescribed by the data via \eqref{4'a}--\eqref{6a}.
Because the system \eqref{shellEqAloneBarLinearTime}--\eqref{contEqAloneBarLinearTime} is of the same form as  \eqref{shellEqAloneBarLinear}--\eqref{contEqAloneBarLinear}, we can directly infer from \eqref{energyEstLinear} that
\begin{equation}
\begin{aligned}
\label{energyEstLinearTilde}
&\sup_I\int_\omega
\big(\vert \partial_t\naby \tilde{\eta}\vert^2 
+
\vert \naby\Dely \tilde{\eta}\vert^2
\big)
\dy
+
\sup_I\int_\Omega\vert\nabx \tilde{\bu}\vert^2\dx
\\&+
\int_I\int_\omega
\big(\vert \partial_t\Dely \tilde{\eta} \vert^2 + \vert \partial_t^2 \tilde{\eta}\vert^2+ \vert \Dely^2 \tilde{\eta}\vert^2
 \big)\dy\dt
 +
\int_I\int_\Omega\big( \vert \nabx^2\tilde{\bu}\vert^2 +\vert \partial_t\tilde{\bu} \vert^2  +\vert \tilde{\pi}\vert^2 + \vert \nabx \tilde{\pi}\vert^2
 \big)\dx\dt
 \\&\lesssim
 \int_\omega\big( \vert \tilde{\eta}_\star\vert^2
 +
 \vert \naby\tilde{\eta}_\star\vert^2
 +
 \vert \Dely\tilde{\eta}_0\vert^2
 +
  \vert \naby\Dely\tilde{\eta}_0\vert^2
  \big)\dy
  +
  \int_\Omega \big(\vert
  \tilde{\bu}_0\vert^2
  +
   \vert\nabx\tilde{\bu}_0\vert^2 \big)\dx
   \\&
+
   \int_I\Vert \partial_t \tilde{h} \Vert_{W^{-1,2}(\Omega)}^2\dt
   +
 \int_I\int_\omega  \vert \tilde{g}\vert^2 \dy\dt
  \\&
  +
  \int_I\int_\Omega\big(
  \vert \tilde{h}\vert^2
  +
  \vert \nabx \tilde{h}\vert^2
  +
   \vert \tilde{\mathbf{h}}\vert^2 +
  \vert \tilde{\mathbf{H}}\vert^2
  +
  \vert \nabx\tilde{\mathbf{H}}\vert^2
  +
  \vert \mathbb{S}_\bq(\tilde{\hbar}) ) \vert^2
  +
  \vert \nabx\mathbb{S}_\bq(\tilde{\hbar}) ) \vert^2
  \big)
 \dx\dt.
\end{aligned}
\end{equation}
Now, since the initial data solve the equations \eqref{eta0eta1}--\eqref{tiledUNoTiledU}, we have to further estimate the right-hand side of \eqref{energyEstLinearTilde} above to get the right-hand side \eqref{timeEstimate}. 
\\
Since $\tilde{\eta}_0=\eta_\star$ and \eqref{tildePartialT} holds, clearly,
\begin{equation}
\begin{aligned}
\label{initialCondEstimate1}
&\int_\omega\big( 
 \vert \Dely\tilde{\eta}_0\vert^2
 +
  \vert \naby\Dely\tilde{\eta}_0\vert^2
  \big)\dy
  +
   \int_I\Vert \partial_t \tilde{h} \Vert_{W^{-1,2}(\Omega)}^2\dt
   +
 \int_I\int_\omega \vert \tilde{g}\vert^2 \dy\dt
  \\&+
  \int_I\int_\Omega\big(
  \vert \tilde{h}\vert^2
  +
  \vert \nabx \tilde{h}\vert^2
  +
   \vert \tilde{\mathbf{h}}\vert^2 +
  \vert \tilde{\mathbf{H}}\vert^2
  +
  \vert \nabx\tilde{\mathbf{H}}\vert^2
  +
  \vert \mathbb{S}_\bq(\tilde{\hbar}) ) \vert^2
  +
  \vert \nabx\mathbb{S}_\bq(\tilde{\hbar}) ) \vert^2
  \big)
 \dx\dt
 \\&\lesssim
\Vert \eta_\star\Vert_{W^{3,2}(\omega)}^2
  +
   \int_I\Vert \partial_t^2 h\Vert_{W^{-1,2}(\Omega)}^2\dt
   +
   \int_I 
   \Vert \partial_t g \Vert_{W^{1,2}(\omega)}^2\dt
  \\&+
  \int_I\big( 
 \Vert \partial_t h \Vert_{W^{1,2}(\Omega)}^2   
 +
   \Vert \partial_t\mathbf{h}\Vert_{L^2(\Omega)}^2 
    +
  \Vert \partial_t\mathbf{H}\Vert_{W^{1,2}(\Omega)}^2 
  +
  \Vert
  \mathbb{S}_\bq(\partial_t\overline{\hbar}) ) \Vert_{W^{1,2}(\Omega)}^2 \big)
\dt.
\end{aligned}
\end{equation}
It remains to estimate
\begin{equation}
\begin{aligned}
&\Vert \tilde{\eta}_\star\Vert_{W^{1,2}(\omega)}^2
  +
  \Vert
  \tilde{\bu}_0\Vert_{W^{1,2}(\Omega)}^2.
\end{aligned}
\end{equation}
From \eqref{tiledEtaNoTiledEta}--\eqref{tiledUNoTiledU},
\begin{equation}
\begin{aligned}
\label{estTildeEta}
&\Vert \tilde{\eta}_\star\Vert_{W^{1,2}(\omega)}^2
  +
  \Vert
  \tilde{\bu}_0\Vert_{W^{1,2}(\Omega)}^2
 \\&\lesssim
\Vert \eta_\star\Vert_{W^{3,2}(\omega)}^2
 +
 \Vert \eta_0\Vert_{W^{5,2}(\omega)}^2
   +
 \Vert
  g(0)\Vert_{W^{1,2}(\omega)}^2
   +
 \Vert
  \overline{\bu}_0\Vert_{W^{3,2}(\Omega)}^2
  \\&+
   \Vert \overline{\pi}_0\Vert_{W^{2,2}(\Omega)}^2 +
   \Vert \mathbf{h}(0)\Vert_{W^{1,2}(\Omega)}^2 
   +
  \Vert \mathbf{H}(0)\Vert_{W^{2,2}(\Omega)}^2
  +
  \Vert \mathbb{S}_\bq(\overline{\hbar}(0))
  \Vert_{W^{2,2}(\Omega)}^2 
  \\&+
   \Vert \mathbf{H}(0)\Vert_{W^{1,2}(\partial\Omega)}^2 +
   \Vert \overline{\bu}_0\Vert_{W^{2,2}(\partial\Omega)}^2 +
  \Vert \overline{\pi}_0\Vert_{W^{1,2}(\partial\Omega)}^2
  +
  \Vert
  \mathbb{S}_\bq(\overline{\hbar}(0) )
  \Vert_{W^{1,2}(\partial\Omega)}^2
  .
\end{aligned}
\end{equation}
The last boundary terms can be estimated using the trace theorem to obtain
\begin{equation}
\begin{aligned}
\label{estPress222}
   \Vert \mathbf{H}(0)\Vert_{W^{1,2}(\partial\Omega)}^2 &+
   \Vert \overline{\bu}_0\Vert_{W^{2,2}(\partial\Omega)}^2 +
  \Vert \overline{\pi}(0)\Vert_{W^{1,2}(\partial\Omega)}^2
  +
  \Vert \mathbb{S}_\bq(\overline{\hbar}(0))
  \Vert_{W^{1,2}(\partial\Omega)}^2
  \\&\lesssim
   \Vert \mathbf{H}(0)\Vert_{W^{2,2}(\Omega)}^2 +
   \Vert \overline{\bu}_0\Vert_{W^{3,2}(\Omega)}^2 +
  \Vert \overline{\pi}(0)\Vert_{W^{2,2}(\Omega)}^2
  +
  \Vert \mathbb{S}_\bq(\overline{\hbar}(0))
  \Vert_{W^{2,2}(\Omega)}^2
  .
\end{aligned}
\end{equation}
If we now combine \eqref{estTildeEta} and \eqref{estPress222}, we obtain
\begin{equation}
\begin{aligned}
\label{estTildeEta1}
&\Vert \tilde{\eta}_\star\Vert_{W^{1,2}(\omega)}^2
  +
  \Vert
  \tilde{\bu}_0\Vert_{W^{1,2}(\Omega)}^2
 \\&\lesssim
\Vert \eta_\star\Vert_{W^{3,2}(\omega)}^2
 +
 \Vert \eta_0\Vert_{W^{5,2}(\omega)}^2
   +
 \Vert
  g(0)\Vert_{W^{1,2}(\omega)}^2
   +
 \Vert
  \overline{\bu}_0\Vert_{W^{3,2}(\Omega)}^2
  \\&+
   \Vert \overline{\pi}_0\Vert_{W^{2,2}(\Omega)}^2 +
   \Vert \mathbf{h}(0)\Vert_{W^{1,2}(\Omega)}^2 
   +
  \Vert \mathbf{H}(0)\Vert_{W^{2,2}(\Omega)}^2
  +
  \Vert \mathbb{S}_\bq(\overline{\hbar}(0))
  \Vert_{W^{2,2}(\Omega)}^2.
\end{aligned}
\end{equation}
 To get suitable bounds for the pressure $\overline\pi_0$, we study \eqref{4'a}. In particular, to get $L^2$-bound for $\nabx\pi$, we consider
\begin{equation}
\begin{aligned}
\int_\Omega\mathbf{A}_{\eta_0}\nabx\overline{\pi}_0\cdot \nabx\overline{\pi} _0\dx
 &
=
\int_\Omega\divx(\mathbf{A}_{\eta_0}\nabx\overline{\pi}_0\,\overline{\pi}_0) \dx
-
\int_\Omega\divx(\mathbf{A}_{\eta_0}\nabx\overline{\pi}_0) \overline{\pi}_0 \dx,
\label{7}
\end{aligned}
\end{equation}
where, by ellipticity of $\mathbf A_{\eta_0}$,
\begin{equation}
\begin{aligned}
\int_\Omega \vert\nabx\overline{\pi}_0\vert^2 \dx
\lesssim
\int_\Omega\mathbf{A}_{\eta_0}\nabx\overline{\pi}_0\cdot \nabx\overline{\pi}_0 \dx
\label{7'}
\end{aligned}
\end{equation}
and
\begin{equation}
\begin{aligned}
\int_\Omega&\divx(\mathbf{A}_{\eta_0}\nabx\overline{\pi}_0\,\overline{\pi}_0) \dx
=
\int_{\partial\Omega} \mathbf{A}_{\eta_0}\nabx\overline{\pi}_0\,\overline{\pi}_0\cdot\bn\circ \bm{\varphi}^{-1}  \,\mathrm{d}\mathcal H^2
\\&=
-
\int_{\partial\Omega} J_{\eta_0} 
\bn^\top\circ \bm{\varphi}^{-1}  \mathbf{A}_{\eta_0}\overline{\pi}_0^2\cdot\bn\circ \bm{\varphi}^{-1}  \,\mathrm{d}\mathcal H^2
+
\int_{\partial\Omega}\overline\pi_0 J_{\eta_0} 
\bn^\top\circ \bm{\varphi}^{-1}  \mathbf{A}_{\eta_0}\mathbb{S}_\bq(\overline{\hbar}
 (0))\cdot\bn\circ \bm{\varphi}^{-1}  \,\mathrm{d}\mathcal H^2
\\&
+
\int_{\partial\Omega} \overline\pi_0
\mathbf{B}_{\eta_0}^\top J_{\eta_0}^{-1}[ \divx(\mathbf{A}_{\eta_0}  \nabx\overline{\bu}_0
+\mathbf{B}_{\eta_0}
\mathbb{S}_\bq(\overline{\hbar}
 (0)))
 +
\mathbf{h}(0) -
\divx  \mathbf{H}(0) 
]\cdot \bn\circ \bm{\varphi}^{-1}  \,\mathrm{d}\mathcal H^2
\\&
-
\int_{\partial\Omega} \overline\pi_0
 \mathbf{B}_{\eta_0}^\top
 \big[
\big(\{ \Dely\eta_\star - \Dely^2\eta_0+g(0)\}\bn\big)\circ \bm{\varphi}^{-1} 
+
\bn^\top\circ \bm{\varphi}^{-1} (\mathbf{H}(0) -\mathbf{A}_{\eta_0}  \nabx\overline{\bu}_0 ) \big]\cdot \bn\circ \bm{\varphi}^{-1}   \,\mathrm{d}\mathcal H^2
\label{8}
\end{aligned}
\end{equation}
with
\begin{align*}
-
\int_{\partial\Omega} J_{\eta_0} 
\bn^\top\circ \bm{\varphi}^{-1}  \mathbf{A}_{\eta_0}\overline{\pi}^2_0\bn\circ \bm{\varphi}^{-1}  \dy\leq 0
\end{align*}
and
\begin{equation}
\begin{aligned}
-
\int_\Omega&\divx(\mathbf{A}_{\eta_0}\nabx\overline{\pi}_0) \overline{\pi}_0 \dx
=
\int_\Omega\partial_t h(0)  \overline{\pi}_0 \dx
\\&
-
\int_{\partial\Omega}\overline\pi_0\mathbf{B}_{\eta_0}^\top J_{\eta_0}^{-1}\big[ \divx(\mathbf{A}_{\eta_0}  \nabx\overline{\bu}_0
+
\mathbf{B}_{\eta_0}\mathbb{S}_\bq(\overline{\hbar}
(0) ))
 +
\mathbf{h}(0) -
\divx  \mathbf{H}(0)
\big]\cdot \bn\circ \bm{\varphi}^{-1} \dy
\\
&
+
\int_\Omega \big(\mathbf{B}_{\eta_0}^\top J_{\eta_0}^{-1}[ \divx(\mathbf{A}_{\eta_0}  \nabx\overline{\bu}_0
+
\mathbf{B}_{\eta_0}\mathbb{S}_\bq(\overline{\hbar}(0)
 ))
 +
\mathbf{h}(0) -
\divx  \mathbf{H}(0)
]\big) \cdot\nabx\overline{\pi}_0 \dx.
\label{9}
\end{aligned}
\end{equation}
Therefore,
\begin{equation}
\begin{aligned}
\int_\Omega \vert\nabx\overline{\pi}_0\vert^2 \dx
&\leq
\int_\Omega \overline{\pi}_0\,\partial_t h(0)   \dx
+
\int_{\partial\Omega}\overline\pi_0 J_{\eta_0} 
\bn^\top\circ \bm{\varphi}^{-1}   \mathbf{A}_{\eta_0}\mathbb{S}_\bq(\overline{\hbar}(0))
 )\cdot\bn\circ \bm{\varphi}^{-1}  \,\mathrm{d}\mathcal H^2
 \\&
+
\int_\Omega \big(\mathbf{B}_{\eta_0}^\top J_{\eta_0}^{-1}[ \divx(\mathbf{A}_{\eta_0}  \nabx\overline{\bu}_0
+
\mathbf{B}_{\eta_0}\mathbb{S}_\bq(\overline{\hbar}
 (0)) 
 +
\mathbf{h}(0) -
\divx  \mathbf{H}(0) 
]\big) \nabx\overline{\pi}_0 \dx
\\&
-
\int_{\partial\Omega} \overline\pi_0
 \mathbf{B}_{\eta_0}^\top
\big[\{( \Dely\eta_\star - \Dely^2\eta_0+g(0))\bn\}\circ \bm{\varphi}^{-1} \big]
\cdot\bn\circ \bm{\varphi}^{-1}   \,\mathrm{d}\mathcal H^2\\
&+\int_{\partial\Omega} \overline\pi_0
 \mathbf{B}_{\eta_0}^\top
\big[
\bn^\top\circ \bm{\varphi}^{-1}  (\mathbf{H}(0) -\mathbf{A}_{\eta_0}  \nabx\overline{\bu}_0  ) \big]
\cdot\bn\circ \bm{\varphi}^{-1}   \,\mathrm{d}\mathcal H^2
\\&
\leq 
\delta
\int_\Omega \vert\nabx\overline{\pi}_0\vert^2 \dx
+
c(\delta)
  \Big(
  \Vert \partial_t h(0) \Vert_{W^{-1,2}(\Omega)}^2
 +
  \Vert \overline{\bu}_0\Vert^2_{W^{2,2}(\Omega)}
  +
   \Vert \mathbf{h}(0)\Vert^2_{L^2(\Omega)}  
   \Big)\\&+
  c(\delta)
  \Big(\Vert  \mathbf{H}(0)\Vert^2_{W^{1,2}(\Omega)}
  +
  \Vert
  \mathbb{S}_\bq(\overline{\hbar}(0) )\Vert^2_{W^{1,2}(\Omega)}
+
  \Vert\Dely\eta_\star\Vert^2_{L^2(\omega)}
 \Big) \\&+c(\delta)
  \Big(
   \Vert \Dely\eta_\star\Vert^2_{L^2(\omega)}  
  +\Vert \Dely^2\eta_0\Vert^2_{L^2(\omega)}  
  +
  \Vert g(0)\Vert^2_{L^2(\omega)}
  \Big),
\label{11}
\end{aligned}
\end{equation}
with a constant depending on the $W^{2,\infty}_\by$-norm of $\eta_0$. Note that we used again the trace theorem to estimate the boundary terms. Choosing $\delta$ small enough and using  Sobolev embedding yields
\begin{equation}
\begin{aligned}
\int_\Omega \vert\nabx\overline{\pi}_0\vert^2 \dx&
\lesssim  \Vert \partial_t h(0) \Vert_{W^{-1,2}(\Omega)}^2
 +
  \Vert \overline{\bu}_0\Vert^2_{W^{2,2}(\Omega)}
  +
   \Vert \mathbf{h}(0)\Vert^2_{L^2(\Omega)}\\
&  +\Vert  \mathbf{H}(0)\Vert^2_{W^{1,2}(\Omega)}
  +
  \Vert
  \mathbb{S}_\bq(\overline{\hbar}(0) )\Vert^2_{W^{1,2}(\Omega)}
   +
   \Vert \Dely\eta_\star\Vert^2_{L^2(\omega)}  
   \\&+
   \Vert \Dely^2\eta_0\Vert^2_{L^2(\omega)}
  +
  \Vert g(0)\Vert^2_{L^2(\omega)}\\
&\lesssim \mathcal{D}_*(g, \eta_0, \eta_\star, \bu_0, h, \mathbf{h},\mathbf{H}),
\label{11b}
\end{aligned}
\end{equation}
where
\begin{equation}
\begin{aligned}
\label{bigDataNorm*}
\mathcal{D}_*&(g, \eta_0, \eta_\star, \bu_0, h, \mathbf{h},\mathbf{H}):=
\Vert \eta_\star\Vert_{W^{3,2}(\omega)}^2
 +
 \Vert \eta_0\Vert_{W^{5,2}(\omega)}^2
  +
 \Vert
  \overline{\bu}_0\Vert_{W^{3,2}(\Omega)}^2
  +
  \Vert \mathbb{S}_\bq(\overline{\hbar}(0))
  \Vert_{W^{2,2}(\Omega)}^2
   \\&
    +
   \Vert \mathbf{h}(0)\Vert_{W^{1,2}(\Omega)}^2 
   +
  \Vert \mathbf{H}(0)\Vert_{W^{2,2}(\Omega)}^2
   +
   \int_I\Vert \partial_t^2 h \Vert_{W^{-1,2}(\Omega)}^2\dt 
   +
\Vert \partial_t h(0) \Vert_{L^2(\Omega )}^2
    +
 \Vert
  g(0)\Vert_{W^{1,2}(\omega)}^2
   \\&+
   \int_I \big(\Vert \partial_t g \Vert_{W^{1,2}(\omega)}^2
 +
 \Vert \partial_t h \Vert_{W^{1,2}(\Omega)}^2  
 +
 \Vert \partial_t\mathbf{h}\Vert_{L^2(\Omega)}^2
 +
\Vert
\mathbb{S}_\bq(\partial_t\overline{\hbar} )
\Vert_{W^{1,2}(\Omega)}^2
 +
  \Vert \partial_t\mathbf{H}\Vert_{W^{1,2}(\Omega)}^2
  \big)
\dt.
\end{aligned}
\end{equation}
In order to control the $W^{1,2}_\bx$-norm of $\overline{\pi}_0$, we must also control its $L^2_\bx$-norm. 
Let us write $\overline\pi_0=\overline{\pi}_{0}^\perp+c_0$, where $(\overline\pi_{0}^\perp)_{\Omega}=0$. We obtain from \eqref{compatibilityCondition} (multiplying it by $\bfn$)
\begin{align*}
|c_0|&\lesssim \int_{\partial\Omega}\bfn^\top\mathbf B_{\eta_0}\mathbf B_{\eta_0}^\top\bfn |c_0|\dd\mathcal{H}^2
\\&\lesssim\|\Dely  \eta_\star\|_{L^2(\omega)} +  \|\Dely^2 \eta_0\|_{L^2(\omega)}
+
\|g(0) \|_{L^2(\omega)}+
\|\overline{\bu}_0\|_{W^{2,2}(\partial\Omega)}\\&
+\|\overline{\pi}^\perp_0\|_{W^{1,2}(\partial\Omega)}+
 \|\mathbb{S}_\bq(\overline{\hbar}(0) )\|_{W^{1,2}(\partial\Omega)}
 + 
\|\mathbf{h}(0)\|_{L^2(\partial\Omega)}+\|\mathbf{H}(0) \|_{W^{1,2}(\partial\Omega)}
\end{align*}
using ellipticity of $\mathbf B_{\eta_0}\mathbf B_{\eta_0}^\top$ as well as $\Div\mathbf B_{\eta_0}^\top=0$. By the trace theorem and interpolation, we finally get
\begin{align}\label{11b'}\begin{aligned}
|c_0|\
&\lesssim\|\Dely  \eta_\star\|_{L^2(\omega)} +  \|\Dely^2 \eta_0\|_{L^2(\omega)}
+
\|g(0) \|_{L^2(\omega)}+
\|\overline{\bu}_0\|_{W^{2,2}(\partial\Omega)}\\&+
 \|\mathbb{S}_\bq(\overline{\hbar}(0) )\|_{W^{2,2}(\Omega)}
 + 
\|\mathbf{h}(0)\|_{W^{1,2}(\Omega)}+\|\mathbf{H}(0) \|_{W^{2,2}(\Omega)}\\
&+c(\delta)\|\nabx\overline{\pi}_0\|_{L^{2}(\Omega)}+\delta \|\nabx^2\overline{\pi}_0\|_{L^{2}(\Omega)}\\
&\lesssim c(\delta)\mathcal{D}_*(g, \eta_0, \eta_\star, \bu_0, h, \mathbf{h},\mathbf{H})+\delta \|\nabx^2\overline{\pi}_0\|_{L^{2}(\Omega)}.
\end{aligned}
\end{align}
On the other hand, setting
$\underline{\pi}_0=\overline{\pi}_0\circ\bm{\Psi}_{\eta_0}^{-1}$ 
we obtain
from \eqref{4'a} the elliptic problem
\begin{equation}
\begin{aligned}
 \Delx\underline{\pi}_0
 &
=
J_{\eta_0}f_0\circ\bm{\Psi}_{\eta_0}^{-1}\quad\text{in}\quad \Omega_{\eta_0}
\end{aligned}
\end{equation}
subject to the boundary condition
\begin{equation}
\begin{aligned}
 \partial_{\bn_{\eta_0}}\underline{\pi}_0=J_{\eta_0}f_1\circ\bm{\Psi}_{\eta_0}^{-1}\quad\text{on}\quad \partial\Omega_{\eta_0},
\end{aligned}
\end{equation}
where
\begin{equation}
\begin{aligned}
f_0:=& - 
\partial_th(0)
 + \divx\big(\mathbf{B}_{\eta_0}^\top J_{\eta_0}^{-1}\big\{ \divx(\mathbf{A}_{\eta_0}  \nabx\overline{\bu}_0
 +
 \mathbf{B}_{\eta_0}\mathbb{S}_\bq(\overline{\hbar}(0) )) 
 +
\mathbf{h}(0) -
\divx  \mathbf{H}(0) 
\}\big),
\\
f_1
 :=&-
J_{\eta_0} 
\bn^\top\circ \bm{\varphi}^{-1}    \mathbf{A}_{\eta_0}(\overline{\pi}_0
+
\mathbb{S}_\bq(\overline{\hbar}(0) ))
\cdot \bn \circ \bm{\varphi}^{-1}  
\\&+
\mathbf{B}_{\eta_0}^\top J_{\eta_0}^{-1}\big\{ \divx(\mathbf{A}_{\eta_0}  \nabx\overline{\bu}_0
+
\mathbf{B}_{\eta_0}\mathbb{S}_\bq(\overline{\hbar}(0) )) 
 +
\mathbf{h}(0) -
\divx  \mathbf{H}(0) 
\}\cdot \bn\circ \bm{\varphi}^{-1}  
\\
&
-
 \mathbf{B}_{\eta_0}^\top\big[
\big\{\Dely\eta_\star - \Dely^2\eta_0+g(0)\big\}\bn\big]\circ \bm{\varphi}^{-1} \cdot \bn\circ \bm{\varphi}^{-1}   
\\&-
\mathbf{B}_{\eta_0}^\top
\bn^\top\circ \bm{\varphi}^{-1}   \big[\mathbf{H}(0) -\mathbf{A}_{\eta_0}  \nabx\overline{\bu}_0  \big]
\cdot \bn\circ \bm{\varphi}^{-1}  .
\end{aligned}
\end{equation}
 By classical elliptic estimates, it follows that
\begin{equation}
\begin{aligned}
\Vert \nabx^2\underline{\pi}\Vert_{L^2(\Omega_{\eta_0})}
 &
\lesssim
\Vert
 J_{\eta_0}f_0\circ\bm{\Psi}_{\eta_0}^{-1}
\Vert_{L^2(\Omega_{\eta_0})}
+
 \Vert J_{\eta_0}f_1\circ\bm{\Psi}_{\eta_0}^{-1}
\Vert_{W^{1/2,2}(\partial\Omega_{\eta_0})}
\end{aligned}
\end{equation}
which implies that
\begin{equation}
\begin{aligned}
\Vert \nabx^2\overline{\pi}\Vert_{L^2(\Omega )}^2
 &
\lesssim
\Vert
 f_0 \Vert_{L^2(\Omega )}^2
+
 \Vert f_1
\Vert_{W^{1/2,2}(\omega )}^2
\\
&
\lesssim
\Vert \overline{\pi}_0\Vert_{W^{1,2}(\Omega)}^2
+
\Vert \partial_t h(0) \Vert_{L^2(\Omega )}^2
+
\Vert \overline{\bu}_0 \Vert_{W^{3,2}(\Omega )}^2
\\&
+
\Vert \mathbf{h}(0) \Vert_{W^{1,2}(\Omega )}^2
+
\Vert \mathbf{H}(0)\Vert_{W^{2,2}(\Omega )}^2
+
\Vert
\mathbb{S}_\bq(\overline{\hbar}(0) )
\Vert_{W^{2,2}(\Omega )}^2
\\&
+
 \Vert\Dely\eta_\star\Vert^2_{W^{1,2}(\omega)}
  +
   \Vert \Dely^2\eta_0\Vert^2_{W^{1,2}(\omega)}  
  +
  \Vert g(0)\Vert^2_{W^{1,2}(\omega)},
  \label{12}
\end{aligned}
\end{equation}
where we have used the trace theorem. If we now combine \eqref{11b}, \eqref{11b'} (with $\delta$ sufficiently small) and \eqref{12}, and transform back to the reference domain (using the regularity of $\eta_0$),
we get that
\begin{align}
\label{estPressZero}
\Vert \overline{\pi}_0\Vert_{W^{2,2}(\Omega)}^2\lesssim
\mathcal{D}_*(g, \eta_0, \eta_\star, \bu_0, h, \mathbf{h},\mathbf{H}).
\end{align}
Combining \eqref{estTildeEta1}, \eqref{initialCondEstimate1} and \eqref{estPressZero}, gives 
\begin{equation}
\begin{aligned}
\label{estForEvolvingInitialCondition}
&\int_\omega\big( \vert \tilde{\eta}_\star\vert^2
 +
 \vert \naby\tilde{\eta}_\star\vert^2
 +
 \vert \Dely\tilde{\eta}_0\vert^2
 +
  \vert \naby\Dely\tilde{\eta}_0\vert^2
  \big)\dy
  +
  \int_\Omega \big(\vert
  \tilde{\bu}_0\vert^2
  +
   \vert\nabx\tilde{\bu}_0\vert^2 \big)\dx
   \\&
+
   \int_I\Vert \partial_t \tilde{h} \Vert_{W^{-1,2}(\Omega)}^2\dt
   +
 \int_I\int_\omega  \vert \tilde{g}\vert^2 \dy\dt
  \\&
  +
  \int_I\int_\Omega\big(
  \vert \tilde{h}\vert^2
  +
  \vert \nabx \tilde{h}\vert^2
  +
   \vert \tilde{\mathbf{h}}\vert^2 +
  \vert \tilde{\mathbf{H}}\vert^2
  +
  \vert \nabx\tilde{\mathbf{H}}\vert^2
  +
  \vert \mathbb{S}_\bq(\tilde{\hbar}) ) \vert^2
  +
  \vert \nabx\mathbb{S}_\bq(\tilde{\hbar}) ) \vert^2
  \big)
 \dx\dt
 \\&\lesssim \mathcal{D}_*(g, \eta_0, \eta_\star, \bu_0, h, \mathbf{h},\mathbf{H})
\end{aligned}
\end{equation}
where $\mathcal{D}_*(\cdot)$ is as defined in \eqref{bigDataNorm*}. We obtain
\begin{equation}
\begin{aligned}
\label{energyEstLinearTildeB}
&\sup_I\int_\omega
\big(\vert \partial_t^2\naby \eta\vert^2 
+
\vert \partial_t\naby\Dely \eta\vert^2
\big)
\dy
+
\sup_I\int_\Omega\vert\partial_t\nabx \bu\vert^2\dx
\\&+
\int_I\int_\omega
\big(\vert \partial_t^2\Dely \eta \vert^2 + \vert \partial_t^3 \eta\vert^2+ \vert \partial_t\Dely^2 \eta\vert^2
 \big)\dy\dt\\
& +
\int_I\int_\Omega\big( \vert \partial_t\nabx^2\bu\vert^2 +\vert \partial_t^2\bu \vert^2  +\vert \partial_t\pi\vert^2 + \vert \partial_t\nabx \pi\vert^2
 \big)\dx\dt
 \\&\lesssim
\mathcal{D}_*(g, \eta_0, \eta_\star, \bu_0, h, \mathbf{h},\mathbf{H}).
\end{aligned}
\end{equation}
%
\\
We now proceed to obtain   the maximal-in-space regularity estimate for the velocity and pressure pair, i.e. the $L^2$-in-time estimate for the terms $\Vert\overline{\bu}\Vert_{W^{4,2}(\Omega )}^2$ and  $\Vert\overline{\pi} \Vert_{W^{3,2}(\Omega )}^2$. To obtain this higher spatial regularity, we apply the maximal regularity theorem to the momentum equation rather than differentiate the equations in our fluid system with respect to the spatial variable like it was done for the time regularity. 
First of all, we transform
\eqref{contEqAloneBarLinear} and \eqref{momEqAloneBarLinear} by applying $\bm{\Psi}_{\eta_0}^{-1}$ to them. By setting $\underline{\bu}:=\overline{\bu} \circ \bm{\Psi}_{\eta_0}^{-1}$,  $\underline{\pi}:=\overline{\pi} \circ \bm{\Psi}_{\eta_0}^{-1}$ and $\underline{\hbar}:=\overline{\hbar} \circ \bm{\Psi}_{\eta_0}^{-1}$, we obtain
\begin{align*}
&\divx \underline{\bu}=J_{\eta_0}^{-1}h\circ\bm{\Psi}_{\eta_0}^{-1},
\\&
  \Delx\underline{\bu}
 - \nabx \underline{\pi} 
 = 
\partial_t\underline{\bu}
+\divx(
\mathbb{S}_\bq(\underline{\hbar} )
)  -
 J_{\eta_0}^{-1}\big(
\mathbf{h}-
\divx  \mathbf{H} \big)\circ \bm{\Psi}_{\eta_0}^{-1}
\end{align*}
in $I \times \Omega_{\eta_0}$ with  $\underline{\bu}  \circ \bm{\varphi}_{\eta_0}  =(\partial_t\eta)\bn$   on $I\times \omega$. By maximal regularity theory, it follows that
\begin{align*}
\int_I\big(\Vert\underline{\bu}
 \Vert_{W^{4,2}(\Omega_{\eta_0})}^2
 +
 \Vert\underline{\pi} \Vert_{W^{3,2}(\Omega_{\eta_0})}^2\big) \dt
& \lesssim
 \int_I\Vert \partial_t\eta\Vert_{W^{7/2,2}_\by}^2\dt
 +
 \int_I \big(\Vert\partial_t\underline{\bu}
 \Vert_{W^{2,2}(\Omega_{\eta_0})}^2
 +
 \Vert\divx\mathbb{S}_\bq(\underline{\hbar} )
 \Vert_{W^{2,2}(\Omega_{\eta_0})}^2\big)
   \dt
 \\
 &+
 \int_I 
 \big(\Vert J_{\eta_0}^{-1}
h\circ \bm{\Psi}_{\eta_0}^{-1}
\Vert_{W^{3,2}(\Omega_{\eta_0})}^2
+
\Vert
J_{\eta_0}^{-1}\mathbf{h}\circ \bm{\Psi}_{\eta_0}^{-1}
\Vert_{W^{2,2}(\Omega_{\eta_0})}^2)\dt
\\&+\int_I
\Vert J_{\eta_0}^{-1}(\divx \mathbf{H} )\circ \bm{\Psi}_{\eta_0}^{-1}\Vert_{W^{2,2}(\Omega_{\eta_0})}^2\dt.
 \end{align*}
If we now transform back to $\Omega$ and use that $\eta_0\in W^{5,2}(\omega)$ we obtain
\begin{equation}
\begin{aligned}
\label{maxMaximalReg}
\int_I\big(\Vert\overline{\bu}
 \Vert_{W^{4,2}(\Omega )}^2
 +
 \Vert\overline{\pi} \Vert_{W^{3,2}(\Omega )}^2\big) \dt
 &\lesssim
 \int_I\Vert \partial_t\eta\Vert_{W^{4,2}_\by}^2\dt
 +
 \int_I\big( \Vert\partial_t\overline{\bu}
 \Vert_{W^{2,2}(\Omega )}^2
  +
 \Vert\nabx\mathbb{S}_\bq(\overline{\hbar} )
 \Vert_{W^{2,2}(\Omega)}^2\big)
   \dt
 \\
 &+
 \int_I 
 \big(
 \Vert
h \Vert_{W^{3,2}(\Omega )}^2
+\Vert
\mathbf{h} \Vert_{W^{2,2}(\Omega )}^2
+
\Vert \nabx \mathbf{H}  \Vert_{W^{2,2}(\Omega )}^2\big)\dt.
 \end{aligned}
 \end{equation}
Now take
\begin{equation}
\begin{aligned}
\label{nowTake}
\partial_t\Dely^2\eta
=
-\partial_t^3\eta + \partial_t^2\Dely \eta
+
\partial_t g+\bn \big[\partial_t\mathbf{H} -\mathbf{A}_{\eta_0} \partial_t \nabx\overline{\bu} +\mathbf{B}_{\eta_0}(\partial_t\overline{\pi}
-
\mathbb{S}_\bq(\partial_t\overline{\hbar} ))\big]\circ\bm{\varphi} \bn 
\end{aligned}
\end{equation}
which is just \eqref{shellEqAloneBarLinearTime}.
By using \eqref{energyEstLinearTilde} (and \eqref{estForEvolvingInitialCondition}) as well as
\begin{equation}
\begin{aligned}
\int_I&\int_{\partial\Omega}\big(
\vert\partial_t\mathbf{H} \vert^2
+
\vert \partial_t \nabx\overline{\bu}\vert^2
 +
 \vert \partial_t\overline{\pi}\vert^2
 +
\vert \mathbb{S}_\bq(\partial_t\overline{\hbar} )\vert^2 
 \big)
 \dy\dt
 \\&
 \lesssim
 \int_I\big(
\Vert\partial_t\mathbf{H} \Vert_{W^{1,2}(\Omega)}^2
+
\Vert \partial_t \nabx\overline{\bu}\Vert_{W^{1,2}(\Omega)}^2
 +
 \Vert \partial_t\overline{\pi}\Vert_{W^{1,2}(\Omega)}^2
+
\Vert
\mathbb{S}_\bq(\partial_t\overline{\hbar} )
\Vert_{W^{1,2}(\Omega)}^2
 \big)
 \dy\dt
\end{aligned}
\end{equation}
which follows from the trace theorem, we obtain from \eqref{energyEstLinearTildeB} and \eqref{nowTake}
\begin{equation}
\begin{aligned}
\int_I\int_\omega\vert\partial_t\Dely^2\eta
\vert^2\dy\dt
&\lesssim
\int_I\int_\omega\big(\vert\partial_t^3\eta\vert^2 + \vert\partial_t^2\Dely \eta
\vert^2
+
\vert
\partial_t g
\vert^2\big)
 \dy\dt
 \\&
+
\int_I\int_{\partial\Omega}\big(
\vert\partial_t\mathbf{H} \vert^2
+
\vert \partial_t \nabx\overline{\bu}\vert^2
 +
 \vert \partial_t\overline{\pi}\vert^2
  +
\vert \mathbb{S}_\bq(\partial_t\overline{\hbar} )\vert^2\big)
 \dy\dt
\\&\lesssim
 \mathcal{D}_{**}(g, \eta_0, \eta_\star, \bu_0, h, \mathbf{h},\mathbf{H})
\end{aligned}
\end{equation}
where $\mathcal{D}_{**}(\cdot)$ is given by
\begin{equation}
\begin{aligned}
\label{bigDataNorm**}
\mathcal{D}_{**}&(g, \eta_0, \eta_\star, \bu_0, h, \mathbf{h},\mathbf{H}):=
\mathcal{D}_*(g, \eta_0, \eta_\star, \bu_0, h, \mathbf{h},\mathbf{H})
   \\&+
   \int_I \big(
 \Vert   h \Vert_{W^{3,2}(\Omega)}^2  
 +
   \Vert \mathbf{h}\Vert_{W^{2,2}(\Omega)}^2
 +
\Vert
\mathbb{S}_\bq(\overline{\hbar} )
\Vert_{W^{3,2}(\Omega)}^2 
+
  \Vert \mathbf{H}\Vert_{W^{3,2}(\Omega)}^2
  \big)
\dt
\end{aligned}
\end{equation}
with $\mathcal{D}_*(\cdot)$ given by \eqref{bigDataNorm*}.
Substituting this into \eqref{maxMaximalReg} and using \eqref{energyEstLinearTildeB} again to estimate the term involving $\Vert\partial_t\overline{\bu}
 \Vert_{W^{2,2}(\Omega )}^2$ yield
\begin{equation}
\begin{aligned}
\label{moreMaximal}
\int_I&\big(\Vert\overline{\bu}
 \Vert_{W^{4,2}(\Omega )}^2
 +
 \Vert\overline{\pi} \Vert_{W^{3,2}(\Omega )}^2\big) \dt
 \lesssim 
 \mathcal{D}_{**}(g, \eta_0, \eta_\star, \bu_0, h, \mathbf{h},\mathbf{H}).
 \end{aligned}
 \end{equation}
Using regularity theory for equation \eqref{shellEqAloneBarLinear} (recall that we consider periodic boundary conditions) and setting
$$\overline{\mathbb T}:=\mathbf{H} -\mathbf{A}_{\eta_0}  \nabx\overline{\bu} +\mathbf{B}_{\eta_0}(\overline{\pi}\,\mathbb I_{3\times 3} -  \mathbb{S}_\bq(\overline{\hbar}))$$
we have
\begin{align*}
\sup_I\|\partial_t\eta\|_{W^{3,2}(\omega)}^2&+\sup_I\|\eta\|_{W^{5,2}(\omega)}^2+\int_I\big(\|\partial_t\eta\|^2_{W^{4,2}}+\|\eta\|_{W^{6,2}(\omega)}^2\big)\dt\\
&\lesssim \int_I\|g\|^2_{W^{2,2}(\omega)}\dt+\int_I\|\overline{\mathbb T}\|^2_{W^{2,2}(\partial\Omega)}\dt\\
&\lesssim \int_I\|g\|^2_{W^{2,2}(\omega)}\dt+\int_I\|\overline{\mathbb T}\|^2_{W^{3,2}(\Omega)}\dt\\
&\lesssim \int_I\|g\|^2_{W^{2,2}(\omega)}\dt+\int_I\Big(\|\overline\bfu\|^2_{W^{4,2}(\Omega)}+\|\overline\pi\|^2_{W^{3,2}(\Omega)}+\|\mathbb S_{\mathbf q}(\overline\hbar)\|^2_{W^{3,2}(\Omega)}\Big)\dt\\
& \lesssim
 \mathcal{D}(g, \eta_0, \eta_\star, \bu_0, h, \mathbf{h},\mathbf{H})
\end{align*}
with $\mathcal{D}(\cdot)$ defined in \eqref{bigDataNorm}.
The proof is now complete.
%
%
\end{proof}
Interpolating between \eqref{energyEstLinear} and Proposition \ref{prop:bigData} with interpolation parameter\footnote{One can certainly obtain a corresponding result for any interpolation parameter in $(0,1)$.} $1/2$, we obtain the following corollary.
\begin{corollary}
\label{cor:bigData}
Suppose that the dataset
$(g, \eta_0, \eta_\star, \overline{\bu}_0,\overline{\hbar}, h, \mathbf{h},\mathbf{H})$
satisfies \eqref{datasetAlone} and in addition
\begin{equation}
\begin{aligned}
\label{bigData'}
&g\in L^2(I;W^{1,2}(\omega)) \cap W^{1/2,2}(I;W^{1/2,2}(\omega)) ,\quad  g(0)\in W^{1/2,2}(\omega),
\\&\eta_0 \in W^{4,2}(\omega) \text{ with } \Vert \eta_0 \Vert_{L^\infty( \omega)} < L, \quad \eta_\star \in W^{2,2}(\omega),
\\&\overline{\bu}_0 \in W^{2,2}(\Omega),
\quad \overline{\bu}_0\circ \bm{\varphi}  =\eta_\star\bn, \quad \mathbf{B}_{\eta_0}:\nabx \overline{\bu}_0=h,
\\&
h\in L^2(I;W^{2,2}(\Omega)) \cap W^{1,2}(I;L^{2}(\Omega))  \cap W^{3/2,2}(I;W^{-1,2}(\Omega)) 
, 
\\&
\mathbf{h} \in L^2(I;W^{1,2}(\Omega))
 \cap W^{1/2,2}(I;L^2(\Omega)) 
 ,\quad 
 \mathbf{h}(0)\in  W^{1/2,2}(\Omega),
 \\&
 \mathbf{H}\in L^2(I;W^{2,2}(\Omega)) \cap W^{1/2,2}(I;W^{1,2}(\Omega)),
 \quad
 \mathbf{H}(0)\in  W^{1,2}(\Omega),
 \\&
\mathbb{S}_\bq(\overline{\hbar} )\in L^2(I;W^{2,2}(\Omega)) \cap W^{1/2,2}(I;W^{1,2}(\Omega)),
 \quad
 \mathbb{S}_\bq(\overline{\hbar} (0))\in  W^{1,2}(\Omega),
\end{aligned}
\end{equation}
with the compatibility condition
\eqref{compatibilityCondition}.
 Then a strong solution $( \eta,  \overline{\bu},  \overline{\pi})$ of \eqref{shellEqAloneBarLinear}--\eqref{contEqAloneBarLinear} satisfies
 \begin{equation}
\begin{aligned}
\label{timeEstimate'}
&\sup_I\int_\omega
\big(
\vert \partial_t\naby^3 \eta\vert^2+
\vert \naby^4 \eta\vert^2
\big)
\dy
\\&+
\int_I\big(\Vert\overline{\bu}
 \Vert_{W^{3,2}(\Omega )}^2
 +
 \Vert\overline{\pi} \Vert_{W^{2,2}(\Omega )}^2\big) \dt
+\|\eta\|^2_{W^{5/2,2}(I;L^2(\omega))}+\|\eta\|^2_{W^{3/2,2}(I;W^{2,2}(\omega))}
\\&+\int_I\int_\omega
\big(\vert \partial_t\naby^3 \eta \vert^2
 +\vert\naby^5\eta\vert^2\big)\dy\dt
+
\|\bfu\|_{W^{1/2,2}(I;W^{2,2}(\Omega))}^2\\&+\|\partial_t\overline\bfu\|_{W^{1/2,2}(I;L^2(\Omega))}^2+\|\overline\pi\|_{W^{1/2,2}(I;W^{1,2}(\Omega))}^2
\lesssim
 \mathcal{D}(g, \eta_0, \eta_\star, \bu_0, h, \mathbf{h},\mathbf{H}),
\end{aligned}
\end{equation}
where
\begin{equation}
\begin{aligned}
\label{bigDataNorm'}
\mathcal{D}&(g, \eta_0, \eta_\star, \overline\bu_0, h, \mathbf{h},\mathbf{H}):=
\Vert \eta_\star\Vert_{W^{2,2}(\omega)}^2
 +
 \Vert \eta_0\Vert_{W^{4,2}(\omega)}^2
  +
 \Vert
  \overline{\bu}_0\Vert_{W^{2,2}(\Omega)}^2
  +
  \Vert \mathbb{S}_\bq(\overline{\hbar}(0))
  \Vert_{W^{1,2}(\Omega)}^2
   \\&
    +
   \Vert \mathbf{h}(0)\Vert_{W^{1/2,2}(\Omega)}^2 
   +
  \Vert \mathbf{H}(0)\Vert_{W^{1,2}(\Omega)}^2
    +
 \Vert
  g(0)\Vert_{W^{1/2,2}(\omega)}^2
   \\&+
   \int_I \big(
 \Vert g\Vert_{W^{1,2}(\omega)}^2 
 +
 \Vert \partial_t h \Vert_{L^{2}(\Omega)}^2   +
 \Vert   h \Vert_{W^{2,2}(\Omega)}^2  
\big)\dt+\Vert g \Vert_{W^{1/2,2}(I;W^{1/2,2}(\omega))}^2\\
&
   +  
   \Vert h \Vert_{W^{3/2}(I;W^{-1,2}(\Omega))}^2
   +
 \Vert \mathbf{h}\Vert_{W^{1/2,2}(I;L^2(\Omega))}^2+ 
  \Vert \mathbf{H}\Vert_{W^{1/2,2}(I;W^{1,2}(\Omega))}^2
  +
  \Vert
\mathbb{S}_\bq(\overline{\hbar} )
\Vert_{W^{1/2,2}(I;W^{1,2}(\Omega))}^2
\\&+
  \int_I\big(
   \Vert \mathbf{h}\Vert_{W^{1,2}(\Omega)}^2
  +
\Vert
\mathbb{S}_\bq(\overline{\hbar} )
\Vert_{W^{2,2}(\Omega)}^2
+
  \Vert \mathbf{H}\Vert_{W^{2,2}(\Omega)}^2
  \big)
\dt
\end{aligned}
\end{equation}
\end{corollary}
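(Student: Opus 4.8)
The plan is to derive Corollary \ref{cor:bigData} purely by real interpolation between the two a priori estimates \eqref{energyEstLinear} (from \cite[Proposition 3.3]{BMSS}) and \eqref{timeEstimate} (Proposition \ref{prop:bigData}), at parameter $\theta=1/2$, exploiting that the transformed linearised system \eqref{shellEqAloneBarLinear}--\eqref{contEqAloneBarLinear} is genuinely linear. First I would record that, with the reference geometry frozen, the data-to-solution map
\[
\mathtt S:(g,\eta_0,\eta_\star,\overline{\bu}_0,\mathbb{S}_\bq(\overline{\hbar}),h,\mathbf{h},\mathbf{H})\longmapsto(\eta,\overline{\bu},\overline{\pi})
\]
is linear: the equations, the interface condition $\overline{\bu}\circ\bm{\varphi}=(\partial_t\eta)\bn$ and the initial conditions are linear in the unknowns, and the initial pressure $\overline{\pi}_0$ prescribed by \eqref{4'a}--\eqref{6a} depends linearly and boundedly on the data, as quantified by \eqref{estPressZero}. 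Letting $Y_0$ be the data space normed by the square root of the right-hand side of \eqref{energyEstLinear}, $Y_1$ the one normed by $\mathcal{D}(\cdot)^{1/2}$ from \eqref{bigDataNorm}, and $Z_0,Z_1$ the solution spaces given by the left-hand sides of \eqref{energyEstLinear} and \eqref{timeEstimate}, estimate \eqref{energyEstLinear} says $\mathtt S$ is bounded $Y_0\to Z_0$ and Proposition \ref{prop:bigData} that it is bounded $Y_1\to Z_1$; since $Y_1\hookrightarrow Y_0$ and $Z_1\hookrightarrow Z_0$ these form interpolation couples.

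Next I would apply the real interpolation functor with $\theta=1/2$ to get $\mathtt S$ bounded $(Y_0,Y_1)_{1/2,2}\to(Z_0,Z_1)_{1/2,2}$, and then identify the two interpolation spaces. Every building block of the $Y_i$ and $Z_i$ is a Bochner--Sobolev space of type $W^{s,2}(I;W^{k,2}(\Omega))$, $W^{s,2}(I;W^{k,2}(\omega))$ or $W^{s,2}(I;W^{-1,2}(\Omega))$, together with traces at $t=0$ and $\sup_I$ (energy) quantities, so the classical Lions--Peetre identities $[W^{s_0,2}(I;X_0),W^{s_1,2}(I;X_1)]_{1/2,2}=W^{(s_0+s_1)/2,2}(I;[X_0,X_1]_{1/2,2})$ and $[W^{k_0,2}(\mathcal O),W^{k_1,2}(\mathcal O)]_{1/2,2}=W^{(k_0+k_1)/2,2}(\mathcal O)$ reduce the identification to arithmetic on the indices. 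The fractional-in-time norms $W^{5/2,2}(I;L^2(\omega))$, $W^{3/2,2}(I;W^{2,2}(\omega))$, $W^{1/2,2}(I;W^{2,2}(\Omega))$, $W^{1/2,2}(I;L^2(\Omega))$, $W^{1/2,2}(I;W^{1,2}(\Omega))$ appearing on the left of \eqref{timeEstimate'} are the midpoints of their counterparts in \eqref{energyEstLinear} and \eqref{timeEstimate}, the $\sup_I$-terms are recovered from the standard parabolic embedding $W^{1,2}(I;X_0)\cap L^2(I;X_1)\hookrightarrow C(\overline I;[X_0,X_1]_{1/2,2})$ applied to $\eta$ and $\partial_t\eta$, and likewise the hypotheses \eqref{bigData'} with the norm \eqref{bigDataNorm'} are read off as the midpoints of the right-hand sides of \eqref{energyEstLinear} and \eqref{bigDataNorm}; the traces $g(0)$, $\mathbf h(0)$, $\mathbf H(0)$, $\mathbb{S}_\bq(\overline{\hbar}(0))$ then land in the claimed spaces because evaluation at $t=0$ loses the expected half order in the parabolic scaling. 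Finally, since data satisfying \eqref{bigData'} is more regular than \eqref{datasetAlone}, a strong solution in the sense of Definition \ref{def:strongSolutionAlone} exists by \cite{BMSS}, so the interpolated bound applies to it and gives \eqref{timeEstimate'} after squaring.

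I expect the main obstacle to be the correct treatment of the affine constraints on the data under interpolation --- in particular the compatibility condition \eqref{compatibilityCondition}, which is enforced in \eqref{timeEstimate} but invisible to \eqref{energyEstLinear}, together with the conditions $\overline{\bu}_0\circ\bm{\varphi}=\eta_\star\bn$ and $\mathbf{B}_{\eta_0}:\nabx\overline{\bu}_0=h$. The clean way around this is to lift the inhomogeneous parts of these constraints by a fixed bounded extension operator (lifting $\eta_\star$, $h$ and the compatibility mismatch to a correction of $(\overline{\bu}_0,\overline{\pi}_0)$ and the forcing), so that on the resulting \emph{linear} homogeneous-constraint subspace the solution operator and the real interpolation functor commute without further ado; the compatibility mismatch is a bounded linear functional of the data on each of $Y_0,Y_1$ thanks to the linear dependence of $\overline{\pi}_0$ recorded above, which is what makes this lifting consistent with the couple. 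Everything else is the lengthy but routine bookkeeping of matching each of the many terms in \eqref{timeEstimate'} and \eqref{bigDataNorm'} against the midpoint of its endpoint counterparts.
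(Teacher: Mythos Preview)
Your approach is correct and is exactly the one the paper uses: the corollary is stated immediately after the sentence ``Interpolating between \eqref{energyEstLinear} and Proposition \ref{prop:bigData} with interpolation parameter $1/2$, we obtain the following corollary,'' with no further detail. Your discussion of the constraint lifting and the identification of the interpolation spaces is more careful than anything the paper provides.
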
 
Taking into account estimate \eqref{energyEstLinearTildeB} we also obtain the following.
\begin{corollary}\label{cor:bigData'}
Suppose that the dataset 
$(g, \eta_0, \eta_\star, \overline{\bu}_0,\overline{\hbar}, h, \mathbf{h},\mathbf{H})$
satisfies \eqref{datasetAlone} and in addition
\begin{equation}
\begin{aligned}
\label{bigData''}
&g\in  W^{1,2}(I;W^{1,2}(\omega)) ,\quad  
\\&\eta_0 \in W^{5,2}(\omega) \text{ with } \Vert \eta_0 \Vert_{L^\infty( \omega)} < L, \quad \eta_\star \in W^{3,2}(\omega),
\\&\overline{\bu}_0 \in W^{3,2}(\Omega),
\quad \overline{\bu}_0\circ \bm{\varphi} =\eta_\star\bn, \quad \mathbf{B}_{\eta_0}:\nabx \overline{\bu}_0=h,
\\&
h\in W^{1,2}(I;W^{1,2}(\Omega))  \cap W^{2,2}(I;W^{-1,2}(\Omega)) ,\quad \partial_t h(0)\in W^{1,2}(\Omega),
, 
\\&
\mathbf{h} \in  W^{1,2}(I;L^2(\Omega)) 
 ,\quad 
 \mathbf{h}(0)\in  W^{1,2}(\Omega),
 \\&
 \mathbf{H}\in W^{1,2}(I;W^{1,2}(\Omega)),
 \quad
 \mathbf{H}(0)\in  W^{2,2}(\Omega),
 \\&
\mathbb{S}_\bq(\overline{\hbar} )\in  W^{1,2}(I;W^{1,2}(\Omega)),
 \quad
 \mathbb{S}_\bq(\overline{\hbar} (0))\in  W^{2,2}(\Omega),
\end{aligned}
\end{equation}
with the compatibility condition
\eqref{compatibilityCondition}. Then a strong solution $( \eta,  \overline{\bu},  \overline{\pi})$ of \eqref{shellEqAloneBarLinear}--\eqref{contEqAloneBarLinear} satisfies
 \begin{align}
\label{energyEstLinearTilde'}
\begin{aligned}
\sup_I\int_\omega
\big(&\vert \partial_t^2\naby \eta\vert^2 
+
\vert \partial_t \naby^3 \eta\vert^2
\big)
\dy
+
\sup_I\int_\Omega\vert \partial_t\nabx \overline\bu\vert^2\dx
\\&+
\int_I\int_\omega
\big(\vert \partial_t^2\naby^2 \eta \vert^2 + \vert \partial_t^3 \eta\vert^2+ \vert \partial_t\naby^4  \eta\vert^2
 \big)\dy\dt
 \\&+
\int_I\int_\Omega\big( \vert \partial_t\nabx^2 \overline\bu\vert^2 +\vert \partial_t^2\overline\bu \vert^2  +\vert \partial_t\overline\pi\vert^2 + \vert \partial_t\nabx  \overline\pi\vert^2
 \big)\dx\dt\lesssim \mathcal D_*(g, \eta_0, \eta_\star, \overline\bu_0, h, \mathbf{h},\mathbf{H}),
\end{aligned}
\end{align}
where
\begin{align*}
\mathcal{D}_*&(g, \eta_0, \eta_\star, \overline\bu_0, h, \mathbf{h},\mathbf{H}):=
\Vert \eta_\star\Vert_{W^{3,2}(\omega)}^2
 +
 \Vert \eta_0\Vert_{W^{5,2}(\omega)}^2
  +
 \Vert
  \overline{\bu}_0\Vert_{W^{3,2}(\Omega)}^2
  +
  \Vert \mathbb{S}_\bq(\overline{\hbar}(0))
  \Vert_{W^{2,2}(\Omega)}^2
   \\&
    +
   \Vert \mathbf{h}(0)\Vert_{W^{1,2}(\Omega)}^2 
   +
  \Vert \mathbf{H}(0)\Vert_{W^{2,2}(\Omega)}^2
   +
   \int_I\Vert \partial_t^2 h \Vert_{W^{-1,2}(\Omega)}^2\dt 
   +
\Vert \partial_t h(0) \Vert_{L^2(\Omega )}^2
   \\&+
   \int_I \big(\Vert \partial_t g \Vert_{W^{1,2}(\omega)}^2
 +
 \Vert \partial_t h \Vert_{W^{1,2}(\Omega)}^2  
 +
 \Vert \partial_t\mathbf{h}\Vert_{L^2(\Omega)}^2
 +
\Vert
\mathbb{S}_\bq(\partial_t\overline{\hbar} )
\Vert_{W^{1,2}(\Omega)}^2
 +
  \Vert \partial_t\mathbf{H}\Vert_{W^{1,2}(\Omega)}^2
  \big)
\dt.
\end{align*}
\end{corollary}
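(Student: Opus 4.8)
The plan is to observe that the left-hand side of \eqref{energyEstLinearTilde'} is exactly the quantity estimated in \eqref{energyEstLinearTildeB}, which was derived in the course of the proof of Proposition \ref{prop:bigData} using only the \emph{first two} of its three steps; the heavier spatial-regularity hypotheses in \eqref{bigData} (such as $h,\mathbf{H},\mathbb{S}_\bq(\overline{\hbar})\in L^2(I;W^{3,2}(\Omega))$) entered only in the third step --- the maximal-in-space estimates \eqref{maxMaximalReg}--\eqref{moreMaximal} together with the final shell estimate --- which we do not need here. So the task reduces to re-running those first two steps under the lighter assumptions \eqref{bigData''}.

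Concretely, first I would differentiate the linear system \eqref{shellEqAloneBarLinear}--\eqref{contEqAloneBarLinear} together with the interface condition $\overline{\bu}\circ\bm{\varphi}=(\partial_t\eta)\bn$ once in time, obtaining a system of the \emph{same} structure --- namely \eqref{shellEqAloneBarLinearTime}--\eqref{contEqAloneBarLinearTime} --- driven by the time-derivative data $\partial_t g,\partial_t h,\partial_t\mathbf{h},\partial_t\mathbf{H},\mathbb{S}_\bq(\partial_t\overline{\hbar})$, with initial data $\tilde{\eta}_0,\tilde{\eta}_\star,\tilde{\bu}_0$ no longer free but determined through \eqref{eta0eta1}--\eqref{tiledUNoTiledU}. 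Applying the basic linear estimate \eqref{energyEstLinear} to this differentiated system yields \eqref{energyEstLinearTilde}; its left-hand side is the left-hand side of \eqref{energyEstLinearTilde'}, and the time-derivative data norms appearing on its right-hand side ($\int_I\Vert\partial_t^2 h\Vert_{W^{-1,2}(\Omega)}^2$, $\int_I\Vert\partial_t g\Vert_{L^2(\omega)}^2$, $\int_I\Vert\partial_t h\Vert_{W^{1,2}(\Omega)}^2$, $\int_I\Vert\partial_t\mathbf{h}\Vert_{L^2(\Omega)}^2$, $\int_I\Vert\partial_t\mathbf{H}\Vert_{W^{1,2}(\Omega)}^2$, $\int_I\Vert\mathbb{S}_\bq(\partial_t\overline{\hbar})\Vert_{W^{1,2}(\Omega)}^2$) are all finite under \eqref{bigData''} and are accounted for in $\mathcal{D}_*$.

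It then remains to estimate the initial-layer contribution $\Vert\tilde{\eta}_\star\Vert_{W^{1,2}(\omega)}^2+\Vert\tilde{\bu}_0\Vert_{W^{1,2}(\Omega)}^2$ (the term from $\tilde{\eta}_0=\eta_\star$ being immediate). Using \eqref{tiledEtaNoTiledEta}--\eqref{tiledUNoTiledU}, exactly as in the proof of Proposition \ref{prop:bigData}, this is reduced to a $W^{2,2}(\Omega)$-bound on the initial pressure $\overline{\pi}_0$, which solves the elliptic Neumann problem \eqref{4'a}--\eqref{6a}. I would then repeat verbatim the threefold estimate of $\overline{\pi}_0$ from that proof: a $W^{1,2}$-bound on the mean-free part $\overline{\pi}_0^\perp$ obtained by testing \eqref{4'a} with $\overline{\pi}_0$ and controlling the boundary terms by the trace theorem (this is \eqref{11b}); a bound on the constant $c_0$ obtained from the compatibility condition \eqref{compatibilityCondition} contracted with $\bn$, using ellipticity of $\mathbf{B}_{\eta_0}\mathbf{B}_{\eta_0}^\top$ and $\Div\mathbf{B}_{\eta_0}^\top=0$ (this is \eqref{11b'}); and a second-derivative bound via classical elliptic regularity after transforming the problem to $\Omega_{\eta_0}$ (this is \eqref{12}). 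Combining these and absorbing the small parameters gives the initial-pressure estimate \eqref{estPressZero}, hence \eqref{estForEvolvingInitialCondition}, and therefore \eqref{energyEstLinearTildeB}, which is the claimed \eqref{energyEstLinearTilde'}; every data norm produced along the way is one of those collected in $\mathcal{D}_*$, and no $L^2(I;W^{3,2})$-type spatial norm of $h$, $\mathbf{h}$, $\mathbf{H}$ or $\mathbb{S}_\bq(\overline{\hbar})$ is required.

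The only genuinely delicate point --- as in Proposition \ref{prop:bigData} --- is the control of the initial pressure $\overline{\pi}_0$: its elliptic problem carries Robin/Neumann-type data assembled from the various inhomogeneities and their traces, and one must check that each resulting boundary term closes with the norms available after applying the trace theorem and interpolation. Since this was already carried out in the proof of Proposition \ref{prop:bigData}, here it amounts to bookkeeping --- keeping track of which norms survive once the maximal-in-space step is omitted. As there, the whole computation is formal and is made rigorous by performing it on a Galerkin approximation, cf.\ \cite{BMSS}.
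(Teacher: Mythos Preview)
Your proposal is correct and matches the paper's approach: the paper simply records that the corollary follows by ``taking into account estimate \eqref{energyEstLinearTildeB}'', i.e.\ by isolating the first two steps of the proof of Proposition~\ref{prop:bigData} (time-differentiation plus the initial-pressure estimate), exactly as you describe.
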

 
\subsection{Fixed-point argument}
\label{fixedPointFluidAlone} 
In this section we assume that the triplet $(\zeta, \overline{\mathbf{w}}, \overline{q})$ are given and we wish to solve
\begin{align}
\label{contEqAloneBarFixed}
\mathbf{B}_{\eta_0}:\nabx \overline{\bu}= h_\zeta(\overline{\mathbf{w}}),
\\
\partial_t^2\eta - \partial_t\Dely \eta +  \Dely^2\eta
=
g+\bn^\top \big[\mathbf{H}_\zeta(\overline{\mathbf{w}}, \overline{q})-\mathbf{A}_{\eta_0}  \nabx\overline{\bu} +\mathbf{B}_{\eta_0}(\overline{\pi}\,\mathbb I_{3\times 3}-
\mathbb{S}_\bq(\overline{\hbar}) )\big]\circ\bm{\varphi} \bn ,
\label{shellEqAloneBarFixed}
\\
J_{\eta_0}\partial_t \overline{\bu}  -\divx(\mathbf{A}_{\eta_0}  \nabx\overline{\bu}-\mathbf{B}_{\eta_0}\overline{\pi}) 
 = 
 \divx(\mathbf{B}_{\eta_0}\mathbb{S}_\bq(\overline{\hbar}) )
 +
\mathbf{h}_\zeta(\overline{\mathbf{w}})-
\divx  \mathbf{H}_\zeta(\overline{\mathbf{w}}, \overline{q})
\label{momEqAloneBarFixed}
\end{align}
with  $\overline{\bu}  \circ \bm{\varphi}  =(\partial_t\eta)\bn$ on $I_*\times \omega$. Here, $I_*:=(0,T_*)$ is to be determined later. Let's define  the space
\begin{align*}
X_{I_*}:=& W^{1,\infty}\big(I_*;W^{3,2}(\omega)  \big) \cap W^{2,2}\big(I_*;W^{1,2}(\omega)  \big)
\cap W^{3,2}\big(I_*;L^2(\omega)  \big)
\\
&\qquad\qquad\qquad \cap W^{1,2}\big(I_*;W^{3,2}(\omega)  \big)\cap L^{\infty}\big(I_*;W^{4,2}(\omega)  \big)\cap L^{2}\big(I_*;W^{5,2}(\omega)  \big)\\
&
\times 
W^{1,\infty} \big(I_*; W^{1,2}(\Omega ) \big)\cap  W^{2,2}\big(I_*;L^2(\Omega)  \big)\cap W^{1,2}\big(I_*;W^{2,2}(\Omega)  \big)
\cap L^2\big(I_*;W^{3,2}(\Omega)  \big)
\\&
\times
W^{1,2}\big(I_*;W^{1,2}(\Omega)  \big)
\cap
L^2\big(I_*;W^{2,2}(\Omega)  \big)
\end{align*}
equipped with the norm
\begin{align*}
\Vert (\zeta,\overline{\mathbf{w}}, \overline{q}) \Vert_{X_{I_*}}^2
&:=
\sup_{I_*}\int_\omega\big( \vert \partial_t\naby^3\zeta\vert^2 +|\naby^4\zeta|^2 \big)\dy
+
\int_{I_*}\int_\omega\big(   \vert\partial_t^3\zeta \vert^2 +  \vert\naby^5\zeta \vert^2 \big)\dy\dt
\\&
+\sup_{I_*}\int_\Omega\big( \vert\partial_t \overline{\mathbf{w}}\vert^2 +  \vert \partial_t\nabx \overline{\mathbf{w}} \vert^2  \big)\dx
\\&+
\int_{I_*}\int_\omega\big( \vert\partial_t^2  \overline{\mathbf{w}} \vert^2
+ \vert \partial_t\nabx \overline{\mathbf{w}}\vert^2 +  \vert \partial_t \nabx^2 \overline{\mathbf{w}} \vert^2 + \vert \nabx^3\overline{\mathbf{w}}\vert^2 \big)\dx\dt
\\&+
\int_{I_*}\int_\omega\big( 
\vert \partial_t\overline{q}\vert^2 +\vert \partial_t\nabx\overline{q}\vert^2 
+\vert \nabx^2\overline{q}\vert^2
\big)\dx\dt.
\end{align*}
Note that for $\zeta$, we only keep track of the highest-order terms. However, on account of the embeddings
\begin{align}\label{eq:emb}
\begin{aligned}
L^{2}\big(I_*;W^{5,2}(\omega)  \big)  \cap W^{3,2}\big(I_*;L^{2}(\omega)  \big)  \hookrightarrow W^{2,2}\big(I_*;W^{1,2}(\omega)  \big), \\
L^{2}\big(I_*;W^{5,2}(\omega)  \big)  \cap W^{3,2}\big(I_*;L^{2}(\omega)  \big)  \hookrightarrow W^{1,2}\big(I_*;W^{3,2}(\omega)  \big) ,
\end{aligned}
\end{align}
the norm $\Vert (\zeta,\overline{\mathbf{w}}, \overline{q}) \Vert_{X_{I_*}}^2$ actually controls
\begin{align*}
&\sup_{I_*}\int_\omega\big( \vert \partial_t\naby^3\zeta\vert^2 +|\naby^4\zeta|^2 \big)\dy
\\&+
\int_{I_*}\int_\omega\big( \vert \partial_t\naby^3\zeta\vert^2 +  \vert \partial_t^2\naby\zeta\vert^2 +  \vert\partial_t^3\zeta \vert^2 +  \vert\naby^5\zeta \vert^2 \big)\dy\dt.
\end{align*}
Now let $B_R^{X_{I_*}}$ be defined as
\begin{align*} 
B_R^{X_{I_*}}:= \big\{ (\zeta,\overline{\mathbf{w}}, \overline{q})\in X_{I_*}
\text{ with } \,\, \zeta(0)=\eta_0, \,\, \partial_t\zeta(0) = \eta_\star, \,\, \overline{\mathbf w}(0) = \overline{\bu}_0, \, \text{ such that }\, \Vert(\zeta,\overline{\mathbf{w}}, \overline{q})\Vert_{X_{I_*}}^2\leq R^2 \big\}
\end{align*}
for some $R>0$ large enough, where the data is chosen to satisfy \eqref{compatibilityCondition}. We want to show that the solution map $\mathcal{T}:X_{I_*}\rightarrow X_{I_*}$ defined by $\mathcal{T}(\zeta, \overline{\mathbf{w}}, \overline{q})=(\eta,\overline{\bu}, \overline{\pi})$ maps the ball  $B_R^{X_{I_*}} $ into itself and that it is a contraction. By so doing  we obtain the existence of a unique fixed point. See for example, \cite[Lemma 2.3]{kreml2010local}.
\\
We will show these two properties of $\mathcal{T}$ in two different spaces where one space is contained in the other. The fact that $\mathcal{T}$ maps the ball into itself will be shown in the space  $X_{I_*}$ defined above. For the contraction property, we consider the auxiliary space $\widehat{X}_{I_*}$ defined by
\begin{align*}
\widehat{X}_{I_*}:=&W^{1,\infty}\big(I_*;W^{1,2}(\omega)  \big)  \cap 
 L^{\infty}\big(I_*;W^{3,2}(\omega)  \big)\cap W^{1,2}\big(I_*;W^{2,2}(\omega)  \big)\cap W^{2,2}\big(I_*;L^{2}(\omega)  \big)\\
&
\times 
L^{\infty} \big(I_*; W^{1,2}(\Omega ) \big)\cap  W^{1,2}\big(I_*;L^2(\Omega)  \big)\cap L^{2}\big(I_*;W^{2,2}(\Omega)  \big)
\\&\times L^2\big(I_*;W^{1,2}(\Omega)  \big)
\end{align*}
and equipped with its corresponding canonical norm $\Vert\cdot\Vert_{\widehat{X}_{I_*}}$. By keeping \eqref{eq:emb} in mind, one observes that $ X_{I_*} \subset \widehat{X}_{I_*}$. Furthermore, with $ \widehat{X}_{I_*}$ in hand, we refer to \cite{BMSS} where we show that
for any $(\zeta_i, \overline{\mathbf{w}}_i, \overline{q}_i)\in B_R^{X_{I_*}}$, $i=1,2$, we can find $\rho<1$ such that
\begin{align*}
\Vert \mathcal{T}(\zeta_1, \overline{\mathbf{w}}_1, \overline{q}_1)
-
\mathcal{T}(\zeta_2, \overline{\mathbf{w}}_2, \overline{q}_2)\Vert_{\widehat{X}_{I_*}}\leq \rho \Vert (\zeta_1, \overline{\mathbf{w}}_1, \overline{q}_1)-(\zeta_2, \overline{\mathbf{w}}_2, \overline{q}_2)\Vert_{\widehat{X}_{I_*}}.
\end{align*}
Thus, $\mathcal{T}$ is a contraction.
\\
To show the mapping  $\mathcal{T}:B_R^{X_{I_*}} \rightarrow  B_R^{X_{I_*}}$, we need to show that for any $(\zeta, \overline{\mathbf{w}}, \overline{q}) \in B_R^{X_{I_*}}$, we have that 
\begin{align}
\label{ballToBall}
\Vert\mathcal{T}(\zeta, \overline{\mathbf{w}}, \overline{q})\Vert_{X_{I_*}}^2
=
\Vert(\eta, \overline{\bu}, \overline{\pi})\Vert_{X_{I_*}}^2\leq R^2.
\end{align}
Indeed, from \eqref{timeEstimate'} and \eqref{energyEstLinearTilde'}, we can deduce that the solution to \eqref{contEqAloneBarFixed}--\eqref{momEqAloneBarFixed} satisfies
\begin{equation}
\begin{aligned}
\label{ballToballEst}
\Vert(\eta, \overline{\bu}, \overline{\pi})\Vert_{X_{I_*}}^2
& \lesssim f_0+
   \int_I \big(\Vert \partial_t^2 h_\zeta(\overline{\mathbf{w}}) \Vert_{W^{-1,2}(\Omega)}^2
 +
 \Vert \partial_t h_\zeta(\overline{\mathbf{w}}) \Vert_{W^{1,2}(\Omega)}^2   
   \\&
  +
 \Vert \partial_t\mathbf{h}_\zeta(\overline{\mathbf{w}})\Vert_{L^2(\Omega)}^2 
+
   \Vert \mathbf{h}_\zeta(\overline{\mathbf{w}})\Vert_{W^{1,2}(\Omega)}^2 +
  \Vert \partial_t\mathbf{H}_\zeta(\overline{\mathbf{w}}, \overline{q})\Vert_{W^{1,2}(\Omega)}^2
\\&+
  \Vert \mathbf{H}_\zeta(\overline{\mathbf{w}}, \overline{q})\Vert_{W^{2,2}(\Omega)}^2
  +
   \Vert   h_\zeta(\overline{\mathbf{w}}) \Vert_{W^{2,2}(\Omega)}^2  \big)\dt 
\\&
=:
f_0+K_1+\ldots+K_7
\end{aligned}
\end{equation}
where for $R>0$ large enough, the dataset estimate
\begin{equation}
\begin{aligned}
f_0:= &
 \Vert \eta_\star\Vert_{W^{3,2}(\omega)}^2
 +
 \Vert \eta_0\Vert_{W^{5,2}(\omega)}^2
  +
 \Vert
  \overline{\bu}_0\Vert_{W^{3,2}(\Omega)}^2+
  \Vert
  \mathbb{S}_\bq(\overline{\hbar}(0))
  \Vert_{W^{2,2}(\Omega)}^2  
\\&+
\int_I
\big(
  \Vert
  \mathbb{S}_\bq(\partial_t\overline{\hbar} )
  \Vert_{W^{1,2}(\Omega)}^2
 +
  \Vert
  \mathbb{S}_\bq(\overline{\hbar})
  \Vert_{W^{2,2}(\Omega)}^2
  \big)\dt  
    +
 \Vert
  g(0)\Vert_{W^{1,2}(\omega)}^2
   \\&+
   \int_I \big(\Vert \partial_t g \Vert_{W^{1,2}(\omega)}^2
   + 
 \Vert g\Vert_{W^{1,2}(\omega)}^2
  \big)\dt
   +
   \Vert \mathbf{h}_{\zeta(0)}(\overline{\mathbf{w}}(0))\Vert_{W^{1,2}(\Omega)}^2 
    \\&+
  \Vert \mathbf{H}_{\zeta(0)}(\overline{\mathbf{w}}(0), \overline{q}(0))\Vert_{W^{2,2}(\Omega)}^2
\end{aligned}
\end{equation}
is such that 
\begin{align}
\label{rBoundsFnot}
\tilde{c}f_0<R^2/2.
\end{align}
Here, $\tilde{c}$ is the constant in the inequality \eqref{ballToballEst}.
Let us now estimate $K_1$ in \eqref{ballToballEst}.
First of all,  we write
\begin{align*}
\partial_t^2 h_{\zeta}(\overline{\mathbf{w}})
&
=
 (\mathbf{B}_{\eta_0}-\mathbf{B}_{\zeta}):\partial_t^2\nabx \overline{\mathbf{w}}
+2\partial_t(\mathbf{B}_{\eta_0}-\mathbf{B}_{\zeta}):\partial_t\nabx \overline{\mathbf{w}}
+
\partial_t^2(\mathbf{B}_{\eta_0}-\mathbf{B}_{\zeta}):\nabx \overline{\mathbf{w}}.
\end{align*} 
Now note that it follows from \eqref{210and212}--\eqref{211and213} and the continuous embedding
\begin{align}\label{eq:emb1}
L^2(I_*;W^{3,2}(\omega))\cap W^{1,2}(I_*;W^{2,2}(\omega))\hookrightarrow C^{0,1/4}(\overline I_*;W^{9/4,2}(\omega))\hookrightarrow
L^\infty(I_*;W^{1,\infty}(\omega)),
\end{align}
that
\begin{equation}
\begin{aligned}
\label{k1a}
\int_{I_*}\Vert (\mathbf{B}_{\eta_0}-\mathbf{B}_{\zeta }):\partial_t^2\nabx \overline{\mathbf{w}} \Vert_{W^{-1,2}_{\bx}}
^2\dt
&\lesssim
\int_{I_*}\big(\Vert \nabx(\mathbf{B}_{\eta_0}-\mathbf{B}_{\zeta})\Vert_{L^3_\bx}^2
+
\Vert  \mathbf{B}_{\eta_0}-\mathbf{B}_{\zeta }\Vert_{L^\infty_\bx}^2
\big)
\Vert\partial_t^2  \overline{\mathbf{w}}  \Vert_{L^2_\bx}^2
\dt
\\&\lesssim\sup_{I_*}\big(\Vert  \eta_0 - \zeta\Vert_{W^{2,3}_\by}^2
+
\Vert  \eta_0 - \zeta\Vert_{W^{1,\infty}_\by}^2
\big)\int_{I_*}\Vert  \partial_t^2  \overline{\mathbf{w}}  \Vert_{L^2_\bx}^2
\dt
\\&
\lesssim
T^{1/2}_*
\Vert  (\zeta , \overline{\mathbf{w}} , \overline{q}) \Vert_{X_{I_*}}^2.
\end{aligned}
\end{equation}
On the other hand, due to the continuous embedding
\begin{align*}
L^{2}(I_*;W^{4,2}(\Omega)) \cap
W^{2,2}(I_*;W^{1,2}(\Omega))
\hookrightarrow
W^{3/2,2}(I_*;W^{1,4}(\Omega))\hookrightarrow
W^{1,4}(I_*;W^{1,4}(\Omega))
\end{align*}
and \eqref{eq:emb}
it follows that
\begin{equation}
\begin{aligned}
\label{k1c}
\int_{I_*}\Vert2\partial_t (\mathbf{B}_{\eta_0}-\mathbf{B}_{\zeta}):\partial_t\nabx \overline{\mathbf{w}}_1 \Vert_{W^{-1,2}_\bx}^2
\dt
&\lesssim
\int_{I_*}\Vert  \partial_t\mathbf{B}_{\zeta}\Vert^2_{L^4_\bx}\Vert\partial_t\nabx \overline{\mathbf{w}}  \Vert_{L^2_\bx}^2
\dt
\\&\lesssim T^{1/2}_*
\bigg(\int_{I_*}(1+\Vert  \partial_t\zeta \Vert_{W^{1,4}_\by})^4\dt\bigg)^\frac{1}{2}
\sup_{I_*}\Vert  \partial_t\nabx \overline{\mathbf{w}}  \Vert_{L^2_\bx}^2
\\&
\lesssim
T^{1/2}_*
\Vert  (\zeta , \overline{\mathbf{w}} , \overline{q}) \Vert_{X_{I_*}}^2.
\end{aligned}
\end{equation}
Similar to \eqref{k1c}, we can use the embeddings
\begin{align*}
W^{3,2}(I_*;L^2(\omega))\cap W^{2,2}(I_*;W^{2,2}(\omega))\hookrightarrow W^{5/2,2}(I_\ast;W^{1,2}(\omega)) \hookrightarrow
W^{2,2}(I_*;W^{1,2}(\omega)),\\
W^{1,2}(I_*;W^{2,2}(\Omega))\hookrightarrow L^\infty(I_*;W^{1,4}(\Omega)),
\end{align*}
to obtain
\begin{equation}
\begin{aligned}
\nonumber
\int_{I_*}\Vert\partial_t^2 (\mathbf{B}_{\eta_0}-\mathbf{B}_{\zeta}):\nabx \overline{\mathbf{w}} \Vert_{W^{-1,2}_\bx}^2
\dt
&\lesssim\sup_{I_*} \|\overline{\mathbf w} \|_{W_{\bx}^{1,4}}^2\int_{I_*}\|\partial_t^2\zeta \|^2_{W^{1,2}_\by}\dt 
\\
&
\lesssim
T_*
\Vert  (\zeta , \overline{\mathbf{w}} , \overline{q}) \Vert_{X_{I_*}}^2.\label{k1e}
\end{aligned}
\end{equation}
It follows from \eqref{k1a}--\eqref{k1e} that
\begin{equation}
\begin{aligned}
\label{k1final}
K_1
\lesssim T^{1/2}_*
\Vert  (\zeta , \overline{\mathbf{w}} , \overline{q}) \Vert_{X_{I_*}}^2.
\end{aligned}
\end{equation}
To estimate $K_2$, note that
\begin{align*}
\vert
\partial_t h_{\zeta}(\overline{\mathbf{w}}) 
\vert
\vert
&\lesssim
\vert (\mathbf{B}_{\eta_0}-\mathbf{B}_{\zeta}) \partial_t\nabx\overline{\mathbf{w}} 
\vert
+
\vert \partial_t(\mathbf{B}_{\eta_0}-\mathbf{B}_{\zeta}) \nabx \overline{\mathbf{w}}
\vert
.
\end{align*}
Using again the embedding \eqref{eq:emb1}
it follows from \eqref{210and212}--\eqref{211and213} that
\begin{equation}
\begin{aligned}
\label{K2a}
\int_{I_*}\Vert   (\mathbf{B}_{\eta_0}-\mathbf{B}_{\zeta}) \partial_t\nabx \overline{\mathbf{w}} \Vert_{L^2_\bx}^2\dt
&+
\int_{I_*}\Vert   (\mathbf{B}_{\eta_0}-\mathbf{B}_{\zeta}) \partial_t\nabx^2 \overline{\mathbf{w}} \Vert_{L^2_\bx}^2\dt
\\&\lesssim
\sup_{I_*}\Vert    \eta_0 - \zeta \Vert_{W^{1,\infty}_\by}^2
\int_{I_*} \Vert\partial_t   \overline{\mathbf{w}} \Vert_{W^{2,2}_\bx}^2\dt
\\&\lesssim
T^{1/2}_*
\Vert (\zeta, \overline{\mathbf{w}}, \overline{q})\Vert_{X_{I_*}}^2
\end{aligned}
\end{equation}
and similarly,
\begin{equation}
\begin{aligned}
\label{K2a1}
\int_{I_*}\Vert   \partial_t(\mathbf{B}_{\eta_0}-\mathbf{B}_{\zeta}) \nabx \overline{\mathbf{w}} \Vert_{L^2_\bx}^2\dt
&+
\int_{I_*}\Vert   \partial_t(\mathbf{B}_{\eta_0}-\mathbf{B}_{\zeta}) \nabx^2 \overline{\mathbf{w}} \Vert_{L^2_\bx}^2\dt
\\&\lesssim\sup_{I_*} \Vert  \overline{\mathbf{w}} \Vert_{W^{2,2}_\bx}^2
\int_{I_*}\Vert    \partial_t \zeta \Vert_{W^{1,\infty}_\by}^2\dt
\\&\lesssim
T_*
\Vert (\zeta, \overline{\mathbf{w}}, \overline{q})\Vert_{X_{I_*}}^2
\end{aligned}
\end{equation}
holds due to the embedding
\begin{align*}
W^{1,\infty}(I_\ast;W^{3,2}(\omega))\hookrightarrow  W^{1,2}(I_\ast, W^{1,\infty}(\omega)).
\end{align*}
It follows 
 that
\begin{align}
\label{k2final}
K_2
\lesssim
T^{1/2}_*
\Vert (\zeta , \overline{\mathbf{w}} , \overline{q})\Vert_{X_{I_*}}^2.
\end{align}
Next, note that
\begin{align*}
\vert
\partial_t[\mathbf{h}_{\zeta}(\overline{\mathbf{w}})
&]
\vert
\lesssim
\vert (J_{\eta_0}-J_{\zeta})\partial_t^2 \overline{\mathbf{w}} 
\vert 
+
\big\vert
J_{\zeta} \partial_t\nabx  \overline{\mathbf{w}} \partial_t \bm{\Psi}_{\zeta}^{-1}\circ \bm{\Psi}_{\zeta} 
\big\vert
+
\big\vert
J_{\zeta}\big( \nabx \bm{\Psi}_{\zeta}^{-1}\circ \bm{\Psi}_{\zeta}\big) \nabx \overline{\mathbf{w}} \partial_t\overline{\mathbf{w}}
\big\vert
\\&
+
\vert
J_{\zeta} \partial_t \bff\circ \bm{\Psi}_{\zeta}  
\vert
+
\vert
\partial_t(J_{\zeta } ) 
\bff\circ \bm{\Psi}_{\zeta }  
\vert
+\mathrm{L.O.T}.
\end{align*}
where $\mathrm{L.O.T}$
are lower-order terms satisfying
\begin{align}
\label{lot}
\int_{I_*}\Vert \mathrm{L.O.T}\Vert_{L^2(\Omega)}^2\dt
\lesssim
T^{1/2}_*
\Vert (\zeta,\overline{\mathbf{w}}, \overline{q}) \Vert_{X_{I_*}}^2.
\end{align}
Due to the continuous embeddings \eqref{eq:emb1}, 
it follows from the definition $J_\eta=\det(\nabx \bm{\Psi}_\eta)$ and \eqref{210and212}--\eqref{211and213}  that
\begin{equation}
\begin{aligned}
\label{K3a}
\int_{I_*}\Vert   (J_{\eta_0}-J_{\zeta})\partial_t^2  \overline{\mathbf{w}} \Vert_{L^2_\bx}^2\dt
&\lesssim
\sup_{I_*}\Vert  \eta_0 - \zeta \Vert_{W^{1,\infty}_\by}^2
\int_{I_*}\Vert \partial_t^2 \overline{\mathbf{w}}   \Vert_{L^2_\bx}^2\dt
 \lesssim
T^{1/2}_*
\Vert (\zeta ,\overline{\mathbf{w}} , \overline{q} ) \Vert_{X_{I_*}}^2.
\end{aligned}
\end{equation}
By using the embeddings
\begin{align*}
&L^\infty(I_*;W^{3,2}(\omega))\hookrightarrow L^\infty(I_*;W^{1,\infty}(\omega)),
\\&
W^{2,2}(I_*;L^2(\omega))
\cap
L^\infty(I_*;W^{3,2}(\omega))\hookrightarrow 
W^{1,4}(I_*;L^{\infty}(\omega)),
\end{align*}
we obtain
\begin{equation}
\begin{aligned}
\label{K3b}
\int_{I_*} \Vert   J_{\zeta }&
\partial_t \nabx  \overline{\mathbf{w}}  \partial_t \bm{\Psi}_{\zeta }^{-1}\circ \bm{\Psi}_{\zeta } \Vert_{L^2_\bx}^2\dt
\lesssim
\int_{I_*}\big(1+\Vert   \zeta 
\Vert_{W^{1,\infty}_\by}^2
\big)
\Vert \partial_t\nabx  \overline{\mathbf{w}} \Vert_{L^2_\bx}^2
\big(1+
\Vert \partial_t\zeta  \Vert_{L^\infty_\by}^2\big) \dt
\\&
\lesssim
T^{1/2}_*
\sup_{I_*}\big(1+\Vert   \zeta 
\Vert_{W^{1,\infty}_\by}^2
\big)
\sup_{I_*}
\Vert \partial_t\nabx  \overline{\mathbf{w}} \Vert_{L^2_\bx}^2
\bigg(\int_{I_*}
\big(1+
\Vert \partial_t\zeta  \Vert_{L^\infty_\by}^4\big)
\dt\bigg)^\frac{1}{2}
\\&
\lesssim
T^{1/2}_*
\Vert (\zeta ,\overline{\mathbf{w}} , \overline{q} ) \Vert_{X_{I_*}}^2.
\end{aligned}
\end{equation}
Also, by using the embeddings
\begin{align*}
&L^\infty(I_*;W^{3,2}(\omega))\hookrightarrow L^\infty(I_*;W^{1,\infty}(\omega)),
\\&
W^{1,2}(I_*;W^{2,2}(\Omega))\hookrightarrow L^\infty(I_*;W^{1,4}(\Omega)),
\\&
W^{2,2}(I_*;L^2(\Omega))
\cap
W^{1,2}(I_*;W^{2,2}(\Omega))\hookrightarrow 
W^{1,4}(I_*;L^{ 4}(\Omega)),
\end{align*}
we obtain
\begin{equation}
\begin{aligned}
\label{K3d}
\int_{I_*}&\Vert   J_{\zeta }\big( \nabx \bm{\Psi}_{\zeta }^{-1}\circ \bm{\Psi}_{\zeta }\big)\nabx  \overline{\mathbf{w}} \partial_t \overline{\mathbf{w}}  \Vert_{L^2_\bx}^2\dt
 \lesssim
\int_{I_*}\big(1+\Vert   \zeta   \Vert_{W^{1,\infty}_\by}^4\big)
\Vert
\nabx \overline{\mathbf{w}} \Vert_{L^4_\bx}^2
\Vert \partial_t\overline{\mathbf{w}} \Vert_{L^4_\bx}^2\dt
\\&\lesssim
T^{1/2}_*
\Big(\sup_{I_*}\Vert   \zeta   \Vert_{W^{1,\infty}_\by}^4
+1\Big)
\sup_{I_*}
\Vert \nabx \overline{\mathbf{w}}   \Vert_{L^4_\bx}^2
\bigg(\int_{I_*}\Vert
\partial_t \overline{\mathbf{w}} \Vert_{L^4_\bx}^4\dt
\bigg)^\frac{1}{2} 
\\&
\lesssim
T^{1/2}_*
\Vert (\zeta ,\overline{\mathbf{w}} , \overline{q} ) \Vert_{X_{I_*}}^2.
\end{aligned}
\end{equation}
Next, we have that
\begin{equation}
\begin{aligned}
\label{K3f}
\int_{I_*}\Vert 
J_{\zeta } \partial_t(\bff\circ \bm{\Psi}_{\zeta }  )
 \Vert_{L^2_\bx}^2\dt
&
\lesssim
T_*\sup_{I_*}
\Vert
J_{\zeta }
\Vert_{L^\infty_\bx}^2\bigg(\sup_{I_*} \Vert \partial_t\bff \Vert_{L^2_\bx}^2+\sup_{I_*} \Vert \nabla\bff \Vert_{L^2_\bx}^2
\sup_{I_*}
\Vert
\partial_t \zeta 
\Vert_{L^\infty_\by}^2\bigg)\\&
\lesssim
T_*
\Vert (\zeta ,\overline{\mathbf{w}} , \overline{q} ) \Vert_{X_{I_*}}^2.
\end{aligned}
\end{equation}
Similarly, we have
\begin{equation}
\begin{aligned}
\label{K3g}
\int_{I_*}\Vert 
\partial_t(J_{\zeta }) 
\bff\circ \bm{\Psi}_{\zeta } 
 \Vert_{L^2_\bx}^2\dt
&
\lesssim
T_*
\sup_{I_*} \Vert \bff \Vert_{L^2_\bx}^2
\sup_{I_*}
\Vert
\bm{\Psi}_{\zeta }^{-1} 
\Vert_{L^\infty_\bx}^2
\sup_{I_*}
\Vert
\partial_t \zeta 
\Vert_{W^{1,\infty}_\by}^2
\lesssim
T_*
\Vert (\zeta ,\overline{\mathbf{w}} , \overline{q} ) \Vert_{X_{I_*}}^2.
\end{aligned}
\end{equation}
It follows from \eqref{K3a}--\eqref{K3g} that
\begin{equation}
\begin{aligned}
\label{k3final}
K_3
\lesssim T^{1/2}_*
\Vert (\zeta ,\overline{\mathbf{w}} , \overline{q} ) \Vert_{X_{I_*}}^2.
\end{aligned}
\end{equation}
Our next goal is to estimate $K_4$. First of all, note that
\begin{equation}
\begin{aligned}
\nonumber
\vert \partial_t \mathbf{H}_{\zeta }(\overline{\mathbf{w}} , \overline{q} )]\vert
&\lesssim
\vert (\mathbf{A}_{\eta_0} -\mathbf{A}_{\zeta })\partial_t\nabx \overline{\mathbf{w}} \vert 
 +
\vert \partial_t(\mathbf{A}_{\eta_0} -\mathbf{A}_{\zeta })\nabx \overline{\mathbf{w}} \vert 
+
\vert (\mathbf{B}_{\eta_0}-\mathbf{B}_{\zeta }) \partial_t (\overline{q}_1-\mathbb S_{\mathbf q}(\overline{\hbar})) \vert 
\\&+
\vert \partial_t(\mathbf{B}_{\eta_0}-\mathbf{B}_{\zeta })  (\overline{q}_1\,\mathbb I_{3\times 3}-\mathbb S_{\mathbf q}(\overline{\hbar})) \vert 
\end{aligned}
\end{equation}
holds uniformly. Due to the continuous embeddings
\begin{align*}
&\qquad\qquad\qquad\qquad W^{1,\infty}(I_*;W^{3,2}(\omega))\hookrightarrow W^{1,\infty}(I_*;W^{1,\infty}(\omega)),
\\&L^{2}(I_*;W^{4,2}(\Omega)) \cap
W^{2,2}(I_*;W^{1,2}(\Omega))
\hookrightarrow
W^{3/2,2}(I_*;W^{1,4}(\Omega))\hookrightarrow
W^{1,4}(I_*;W^{1,4}(\Omega)),\\
&W^{1,2}(I_\ast;W^{2,2}(\Omega))\hookrightarrow C^{0,1/2}(\overline I_\ast;W^{2,2}(\Omega))\hookrightarrow L^{\infty}(I_\ast;W^{2,2}(\Omega)),
\end{align*}
it follows from \eqref{210and212}--\eqref{211and213} that
\begin{equation}
\begin{aligned}
\label{K4a}
\int_{I_*}&\Vert (\mathbf{A}_{\eta_0} -\mathbf{A}_{\zeta }) \partial_t\nabx \overline{\mathbf{w}} \Vert_{W^{1,2}_\bx}^2\dt
+
\int_{I_*}\Vert \partial_t(\mathbf{A}_{\eta_0} -\mathbf{A}_{\zeta })\nabx \overline{\mathbf{w}} \Vert_{W^{1,2}_\bx}^2\dt
\\&\lesssim
\int_{I_*}\Vert \nabx(\mathbf{A}_{\eta_0} -\mathbf{A}_{\zeta })  \Vert_{L^4_\bx}^2\Vert\partial_t\nabx \overline{\mathbf{w}} \Vert_{L^4_\bx}^2\dt
 +
\int_{I_*}\Vert \partial_t\nabx(\mathbf{A}_{\eta_0} -\mathbf{A}_{\zeta })  \Vert_{L^4_\bx}^2\Vert\nabx \overline{\mathbf{w}} \Vert_{L^4_\bx}^2\dt
\\&+
\int_{I_*}\Vert  \mathbf{A}_{\eta_0} -\mathbf{A}_{\zeta }\Vert_{L^\infty_\bx}^2
\Vert\partial_t\nabx^2 \overline{\mathbf{w}} \Vert_{L^2_\bx}^2\dt 
+
\int_{I_*}\Vert  \partial_t(\mathbf{A}_{\eta_0} -\mathbf{A}_{\zeta })\Vert_{L^\infty_\bx}^2
\Vert\nabx^2 \overline{\mathbf{w}} \Vert_{L^2_\bx}^2\dt
\\&\lesssim
T^{1/2}_*
\sup_{I_*}\Vert \eta_0 - \zeta \Vert_{W^{2,4}_\by}^2\bigg(\int_{I_*}\Vert\partial_t\nabx \overline{\mathbf{w}} \Vert_{L^4_\bx}^4\dt\bigg)^\frac{1}{2}
\\
&+T^{1/2}_*
\sup_{I_*}\Vert \partial_t( \eta_0 - \zeta ) \Vert_{W^{2,4}_\by}^2\bigg(\int_{I_*}\Vert\nabx \overline{\mathbf{w}} \Vert_{L^4_\bx}^4\dt\bigg)^\frac{1}{2}
\\
&+
\sup_{I_*}\Vert  \eta_0 - \zeta  \Vert_{W^{1,\infty}_\by}^2
\int_{I_*}\Vert \partial_t\nabx^2 \overline{\mathbf{w}} \Vert_{L^2_\bx}^2\dt
\\
&+
\sup_{I_*}\Vert  \partial_t(\eta_0 - \zeta ) \Vert_{W^{1,\infty}_\by}^2
\int_{I_*}\Vert \nabx^2 \overline{\mathbf{w}} \Vert_{L^2_\bx}^2\dt
\\
&\lesssim
T^{1/2}_*
\Vert (\zeta , \overline{\mathbf{w}} , \overline{q} )\Vert_{X_{I_*}}^2.
\end{aligned}
\end{equation}
Similarly, we obtain
\begin{equation}
\begin{aligned}
\label{K4b}
\int_{I_*}\Vert \partial_t(\mathbf{B}_{\eta_0}-&\mathbf{B}_{\zeta }) (\overline{q}\,\mathbb I_{3\times 3}-\mathbb S_{\mathbf q}(\overline{\hbar}))\Vert_{W^{1,2}_\bx}^2\dt
+
\int_{I_*}\Vert (\mathbf{B}_{\eta_0}-\mathbf{B}_{\zeta }) \partial_t (\overline{q}\,\mathbb I_{3\times 3}-\mathbb S_{\mathbf q}(\overline{\hbar})) \Vert_{W^{1,2}_\bx}^2\dt
\\
&\lesssim
\int_{I_*}\Vert \partial_t\nabx(\mathbf{B}_{\eta_0}-\mathbf{B}_{\zeta }) \Vert_{L^4_\bx}^2
\Vert
\overline{q}\,\mathbb I_{3\times 3}-\mathbb S_{\mathbf q}(\overline{\hbar}) \Vert_{L^4_\bx}^2\dt
\\& +\int_{I_*}\Vert \nabx(\mathbf{B}_{\eta_0}-\mathbf{B}_{\zeta }) \Vert_{L^4_\bx}^2
\Vert
\partial_t (\overline{q}\,\mathbb I_{3\times 3}-\mathbb S_{\mathbf q}(\overline{\hbar})) \Vert_{L^4_\bx}^2\dt
\\
&+
\int_{I_*}\Vert \mathbf{B}_{\eta_0}-\mathbf{B}_{\zeta } \Vert_{L^\infty_\bx}^2
\Vert
\partial_t\nabx (\overline{q}\,\mathbb I_{3\times 3}-\mathbb S_{\mathbf q}(\overline{\hbar})) \Vert_{L^2_\bx}^2\dt\\
& +
\int_{I_*}\Vert\partial_t( \mathbf{B}_{\eta_0}-\mathbf{B}_{\zeta }) \Vert_{L^\infty_\bx}^2
\Vert
\nabx (\overline{q}\,\mathbb I_{3\times 3}-\mathbb S_{\mathbf q}(\overline{\hbar})) \Vert_{L^2_\bx}^2\dt
\\
&\lesssim
\sup_{I_*}\Vert  \partial_t(\eta_0 -  \zeta  ) \Vert_{W^{2,4}_\by}^2
\int_{I_*}
\Vert
\overline{q}\,\mathbb I_{3\times 3}-\mathbb S_{\mathbf q}(\overline{\hbar}) \Vert_{L^4_\bx}^2\dt
 \\
&+\sup_{I_*}\Vert  \eta_0 -  \zeta   \Vert_{W^{2,4}_\by}^2
\int_{I_*}
\Vert
\partial_t (\overline{q}\,\mathbb I_{3\times 3}-\mathbb S_{\mathbf q}(\overline{\hbar})) \Vert_{L^4_\bx}^2\dt
 \\
&+
\sup_{I_*}\Vert  \eta_0 - \zeta  \Vert_{W^{1,\infty}_\by}^2
\int_{I_*}
\Vert
\partial_t (\overline{q}\,\mathbb I_{3\times 3}-\mathbb S_{\mathbf q}(\overline{\hbar}))\Vert_{W^{1,2}_\bx}^2\dt
 \\
&+
\sup_{I_*}\Vert  \partial_t(\eta_0 - \zeta ) \Vert_{W^{1,\infty}_\by}^2
\int_{I_*}
\Vert
\overline{q}\,\mathbb I_{3\times 3}-\mathbb S_{\mathbf q}(\overline{\hbar})\Vert_{W^{1,2}_\bx}^2\dt
 \\
&\lesssim
T^{1/2}_*\big(
\Vert (\zeta , \overline{\mathbf{w}} , \overline{q} )\Vert_{X_{I_*}}^2+1\big)
\end{aligned}
\end{equation}
using \eqref{rBoundsFnot} to controll $\mathbb S_{\mathbf q}(\overline{\hbar})$ in the last step.
By using \eqref{K4a}--\eqref{K4b}, it follows that
\begin{align}
\label{k4final}
K_4
\lesssim T^{1/2}_*
\big(\Vert (\zeta ,\overline{\mathbf{w}} , \overline{q} ) \Vert_{X_{I_*}}^2+1\big).
\end{align}
Let us now obtain an estimate for $K_5$.
Firstly, note that
\begin{align*}
\vert
\mathbf{h}_{\zeta}(\overline{\mathbf{w}})
\vert+\vert
\nabx\mathbf{h}_{\zeta}(\overline{\mathbf{w}})
\vert
&\lesssim
\vert (J_{\eta_0}-J_{\zeta})\partial_t\nabx \overline{\mathbf{w}} 
\vert
+
\vert
\nabx(J_{\eta_0}-J_{\zeta_1})\partial_t \overline{\mathbf{w}}
\vert
\\&
+
\big\vert
J_{\zeta} \nabx \overline{\mathbf{w}} \big(\partial_t \bm{\Psi}_{\zeta}^{-1}\circ \bm{\Psi}_{\zeta} 
+\overline{\mathbf{w}}\big( \nabx \bm{\Psi}_{\zeta}^{-1}\circ \bm{\Psi}_{\zeta}\big)\big)
\big\vert
\\&
+
\big\vert
\nabx[J_{\zeta}\big( \nabx \bm{\Psi}_{\zeta}^{-1}\circ \bm{\Psi}_{\zeta}\big)]  \overline{\mathbf{w}} \nabx \overline{\mathbf{w}} 
\big\vert 
\\&
+
\big\vert
J_{\zeta}  \nabx \overline{\mathbf{w}}  \nabx^2\big(\partial_t\bm{\Psi}_{\zeta}^{-1}\circ \bm{\Psi}_{\zeta} 
\big)
\big\vert   
\\&
+
\vert
J_{\zeta} \nabx(\bff\circ \bm{\Psi}_{\zeta} )
\vert
+
\vert
(\nabx J_{\zeta}) \bff\circ \bm{\Psi}_{\zeta}
\vert 
+\mathrm{L.O.T}.
\end{align*}
where $\mathrm{L.O.T}$
are lower-order terms satisfying
\begin{align}
\label{lot}
\int_{I_*}\Vert \mathrm{L.O.T}\Vert_{L^2(\Omega)}^2\dt
\lesssim T^{1/2}_*
\big(\Vert (\zeta ,\overline{\mathbf{w}} , \overline{q} ) \Vert_{X_{I_*}}^2+1\big).
\end{align}
Furthermore,   the other terms can be treated as was done for $K_3$ leading to
\begin{equation}
\begin{aligned}
\label{k5final}
K_5
\lesssim T^{1/2}_*
\big(\Vert (\zeta ,\overline{\mathbf{w}} , \overline{q} ) \Vert_{X_{I_*}}^2+1\big).
\end{aligned}
\end{equation}
The estimate for $K_6$ is similar to that of $K_4$ by noticing that
\begin{align*}
\sum_{k=0}^2
\vert
\nabx^k \mathbf{H}_{\zeta}(\overline{\mathbf{w}},\overline{q})
\vert
&\lesssim
\vert (\mathbf{B}_{\eta_0}-\mathbf{B}_{\zeta}) \nabx^2(\overline{q}  - S_{\mathbf q}(\overline{\hbar}))\vert
+
\vert \nabx^2(\mathbf{B}_{\eta_0}-\mathbf{B}_{\zeta_1}) (\overline{q}  - S_{\mathbf q}(\overline{\hbar}))\vert 
\\&
+
\vert (\mathbf{A}_{\eta_0} -\mathbf{A}_{\zeta})\nabx^3\overline{\mathbf{w}} \vert
+
\vert \nabx^2(\mathbf{A}_{\eta_0} -\mathbf{A}_{\zeta})\nabx \overline{\mathbf{w}} \vert
+\mathrm{L.O.T}
\end{align*}
where $\mathrm{L.O.T}$
are lower order terms also satisfying \eqref{lot}.
Consequently, we obtain
\begin{equation}
\begin{aligned}
\label{k6final}
K_6
\lesssim T^{1/2}_*\big(
\Vert (\zeta ,\overline{\mathbf{w}} , \overline{q} ) \Vert_{X_{I_*}}^2+1\big).
\end{aligned}
\end{equation}
Similarly, we also obtain
\begin{equation}
\begin{aligned}
\label{k7final}
K_7
\lesssim T^{1/2}_*
\Vert (\zeta ,\overline{\mathbf{w}} , \overline{q} ) \Vert_{X_{I_*}}^2.
\end{aligned}
\end{equation}
By collecting the estimates \eqref{k1final}, \eqref{k2final}, \eqref{k3final}, \eqref{k4final}, \eqref{k5final}, \eqref{k6final} \eqref{k7final} and combining it with \eqref{ballToballEst}--\eqref{rBoundsFnot}, we have shown that for $R$-dependent constants $c(R),C(R)$
\begin{align*}
\Vert \mathcal{T}(\zeta, \overline{\mathbf{w}}, \overline{q} )\Vert_{X_{I_*}}^2
&\leq 
R^2/2
+
c(R) T^{1/2}_*
\big(\Vert (\zeta ,\overline{\mathbf{w}} , \overline{q} ) \Vert_{X_{I_*}}^2+1\big)
 \leq 
R^2/2
+
C(R) T^{1/2}_*.
\end{align*}
Choosing $T_*$ in $I_*=(0,T_*)$ so that $T^{1/2}_*\leq C(R)^{-1}R^2/2$ yields our desired result \eqref{ballToBall}.

\section{Solving the equation for the solute}
\label{sec:FP}
\noindent
In this section, for a known moving domain $\Ozeta$ and a known solenoidal velocity field $\mathbf{w}$, we aim to construct a strong solution of the Fokker--Planck equation
\begin{align}
\label{eq:FP}
M\big(\partial_t \widehat{f} + (\mathbf{w}\cdot \nabx) \widehat{f})
+
 \divq  \big(  (\nabx\mathbf{w}) \bq M\widehat{f} \big) 
=
\Delx(M\widehat{f})
+
 \divq  \big( M \nabq  \widehat{f}
\big)
\end{align} 
in $I\times\Omega_\zeta\times B$,
where the Maxwellian $M$ is given by
\begin{align*}
M(\mathbf{q}) = \frac{e^{-U \left(\frac{1}{2}\vert \mathbf{q} \vert^2 \right) }}{\int_Be^{-U \left(\frac{1}{2}\vert \mathbf{q} \vert^2 \right) }\,\mathrm{d}\mathbf{q}},\quad  U (s) = -\frac{b}{2} \log \bigg(1-  \frac{2s}{b} \bigg), \quad s\in [0,b/2)
\end{align*}
with $b>2$. Equation \eqref{eq:FP} is complemented
with the conditions
\begin{align}
&\widehat{f}(0, \cdot, \cdot) =\widehat{f}_0 \geq 0
& \quad \text{in }\Omega_{\zeta_0} \times B,
\label{fokkerPlankIintialxsec4}
\\
&
\nabx\widehat{f}\cdot \bn_\zeta =0
&\quad \text{on }I \times \partial\Omega_\zeta \times B,
\label{fokkerPlankBoundarySpacexsc4}
\\
&M\big(\nabq\widehat{f}   -  (\nabx \mathbf w) \bq \widehat{f}
 \big) \cdot \frac{\bq}{\vert\bq\vert} =0
&\quad \text{on }I \times \Omega_\zeta \times \partial \overline{B}.
\label{fokkerPlankBoundaryxsc4}
\end{align}
Let us start with a precise definition of what we mean by a strong solution.  
\begin{definition}
\label{def:strsolmartFP}
Assume that the triplet $(\widehat{f}_0,\zeta, \mathbf{w})$ satisfies
\begin{align}
\label{fokkerPlanckDataAlone}
\widehat{f}_0\in  W^{1,2}\big( \Omega_{\zeta(0)}; L^2_M(B) &\big)   ,
\qquad
\mathbf{w}\in L^2(I; W^{3,2}_{\divx}(\Ozeta)) \cap  W^{1,\infty}(I;W^{1,2} ( \Ozeta))\cap  W^{1,2}(I;W^{2,2} ( \Ozeta)),
\\
&\qquad\qquad\zeta\in W^{1,\infty}(I;W^{2,2}(\omega))
,
\label{fokkerPlanckDataAloneA}
\\& \mathbf w  \circ \bm{\varphi}_{\zeta} =(\partial_t\zeta)\bn
\quad \text{on }I \times \omega,  \quad\|\zeta\|_{L^\infty(I\times\omega)}<L.
\label{fokkerPlanckDataAloneB}
\end{align}
We call
$\widehat{f}$
a \textit{strong solution} of   \eqref{eq:FP} with data $(\widehat{f}_0,\zeta, \mathbf{w})$ if 
\begin{itemize}
\item[(a)] $\widehat{f}$ satisfies
\begin{align*}
\widehat{f}&\in   W^{1,\infty}\big(I;W^{1,2}(\Oeta;L^2_M(B))  \big)
\cap 
W^{1,2}\big(I;W^{2,2}(\Oeta;L^2_M(B))  \big)
\\&
\qquad\qquad \qquad\cap W^{1,2}\big(I;W^{1,2}(\Oeta;H^1_M(B))  \big);
\end{align*}
\item[(b)] for all  $  \varphi  \in C^\infty (\overline{I}\times \R^3 \times \overline{B} )$, we have
\begin{equation}
\begin{aligned}
\label{weakFokkerPlanckEq}
\int_I  \frac{\mathrm{d}}{\dt}
\int_{\Ozeta \times B}M \widehat{f} \, \varphi \dq \dx \dt 
&=\int_I\int_{\Ozeta \times B}\big(M \widehat{f} \,\partial_t \varphi 
+
M\mathbf{w} \widehat{f} \cdot \nabx \varphi
-
\nabx \widehat{f} \cdot \nabx \varphi
\big) \dq \dx \dt
\\&
+ \int_I\int_{ \Ozeta \times B}
 \big( M (\nabx\mathbf{w})  \bq\widehat{f}-
 M \nabq  \widehat{f} \big) \cdot \nabq\varphi \dq \dx \dt.
\end{aligned}
\end{equation}
\end{itemize}
\end{definition}
\noindent We now formulate our result on the existence of a  unique strong solution of \eqref{eq:FP}.
\begin{theorem}\label{thm:mainFP}
Let $(\widehat{f}_0,\zeta, \mathbf{w})$ satisfy  \eqref{fokkerPlanckDataAlone}--\eqref{fokkerPlanckDataAloneB} and  
suppose further that $\widetilde f_0\in L^{2}(\Omega_{\zeta(0)};L^2_M(B))$, where $\widetilde f_0$ is given by 
\begin{equation}
\begin{aligned}
\label{eq:FP1Approx1Initial}
M   \widetilde{f}_0 
&=
\Delx(M  \widehat{f}_0)
+
 \divq  \big( M \nabq    \widehat{f}_0
\big)
-
M  (\bu_0\cdot \nabx)   \widehat{f}_0
-
 \divq  \big(   (  \nabx\bu_0) \bq M\widehat{f}_0 \big) .
\end{aligned}
\end{equation}
Then there is a unique strong solution $\widehat{f}$ of \eqref{eq:FP}--\eqref{fokkerPlankBoundaryxsc4}, in the sense of Definition \ref{def:strsolmartFP},
 such that
\begin{equation}
\begin{aligned} 
&
 \sup_I \Vert \partial_t\widehat{f}(t)
\Vert_{L^{2}(\Ozeta;L^2_M(B))}^2+
 \int_I 
 \Vert \partial_t\widehat{f}
\Vert_{W^{1,2}(\Ozeta;L^2_M(B))}^2
 \dt
 +
 \int_I 
 \Vert \partial_t\widehat{f}
\Vert_{L^{2}(\Ozeta;H^1_M(B))}^2
 \dt
 \\
 &+
 \sup_I \Vert \widehat{f}(t)
\Vert_{W^{1,2}(\Ozeta;L^2_M(B))}^2+
 \int_I 
 \Vert \widehat{f}
\Vert_{W^{2,2}(\Ozeta;L^2_M(B))}^2
 \dt
 +
 \int_I 
 \Vert \widehat{f}
\Vert_{W^{1,2}(\Ozeta;H^1_M(B))}^2
 \dt
 \\ 
&\quad\lesssim  \exp\bigg(c\int_I
\Vert   \mathbf{w}\Vert_{W^{5/2+\kappa,2}(\Ozeta)}^2\dt
+
c\int_I \Vert\partial_t\mathbf{w}\Vert_{W^{3/2+\kappa,2 }(\Ozeta )}^2\dt
 \bigg)
\\ 
&\qquad\qquad
\times \exp\bigg( c
\sup_{I}
\Vert  \mathbf{w}\Vert_{ W^{7/4,2}(\Ozeta) }^2
+c
\sup_I\|\partial_t\zeta\|_{W^{1,2}(\omega)}^{3} \bigg)
\\
&\qquad\qquad
\times \Big( \|\widehat f_0\|^2_{W^{1,2}(\Omega_{\zeta(0)};L^2_M(B))}+
\Vert \widetilde{f}_0
\Vert_{L^{2}(\Omega_{\zeta(0)};L^2_M(B))}^2\Big)\label{eq:thm:mainFP}
\end{aligned}
\end{equation} 
holds for any $\kappa\in(0,1/2)$ with a constant depending on the $L^\infty(I;W^{1,\infty}(\omega))$-norm of $\zeta$ but otherwise being independent of the data.
\end{theorem}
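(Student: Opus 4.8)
The plan is to flatten the geometry, build a Galerkin approximation, derive three nested energy estimates (a basic one, a purely spatial one obtained by testing with $-\Delx\widehat f$, and a time–differentiated one), and then pass to the limit. \textbf{Step 1 (Transformation).} Using the Hanzawa transform $\bm{\Psi}_{\zeta(t)}$ and setting $\overline f=\widehat f\circ\bm{\Psi}_\zeta$, $\overline{\mathbf w}=\mathbf w\circ\bm{\Psi}_\zeta$, I rewrite \eqref{eq:FP}--\eqref{fokkerPlankBoundaryxsc4} as a linear parabolic problem on the \emph{fixed} cylinder $I\times\Omega\times B$ whose leading $\bx$–part is $-\divx(M\mathbf A_\zeta\nabx\overline f)$ with a homogeneous co-normal boundary condition on $\partial\Omega$ coming from \eqref{fokkerPlankBoundarySpacexsc4}, and whose $\bq$–part is the (degenerate) Fokker--Planck operator $\divq(M\nabq\overline f)$ with the no-flux condition \eqref{fokkerPlankBoundaryxsc4}; the lower-order coefficients involve $\mathbf A_\zeta,\mathbf B_\zeta,J_\zeta,\partial_t\bm{\Psi}_\zeta$, which by \eqref{210and212}--\eqref{211and213} inherit the regularity of $\zeta$, so that $\zeta\in W^{1,\infty}(I;W^{2,2}(\omega))$ renders them and their time derivatives bounded and smooth enough, with geometry constants uniform in $t$. \textbf{Step 2 (Approximation).} I construct a Galerkin solution $\overline f^N$ in the tensor-product space spanned by $\psi_k\otimes\chi_j$, $k,j\le N$, where $\{\psi_k\}$ are the Neumann eigenfunctions of $-\Delx$ on $\Omega$ (so that $-\Delx$ and $\partial_t$ preserve the Galerkin space) and $\{\chi_j\}$ are the eigenfunctions of $\tfrac1M\divq(M\nabq\,\cdot\,)$ on $L^2_M(B)$ (so the $\bq$–flux condition is built in). Since the equation is linear, the resulting ODE system has a unique global solution on $I$.

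\textbf{A priori estimates.} These are the core, and are carried out directly on the moving domain $\Omega_\zeta$. \emph{(i) Base estimate:} testing with $\widehat f$, using Reynolds' transport theorem, $\divx\mathbf w=0$ and the kinematic condition \eqref{fokkerPlanckDataAloneB}, and integrating the drag term by parts in $\bq$ — the $\partial B$–boundary term vanishing because $M=0$ there, the remaining FENE term $\int(\nabx\mathbf w)\bq\,M\widehat f\cdot\nabq\widehat f$ absorbed into $\tfrac12\int M|\nabq\widehat f|^2$ after a weighted Cauchy--Schwarz (using the structure of $M$, in particular $b>2$) — yields $\sup_I\|\widehat f\|_{L^2_xL^2_M}^2+\int_I(\|\nabx\widehat f\|_{L^2_xL^2_M}^2+\|\nabq\widehat f\|_{L^2_xL^2_M}^2)\dt\lesssim\|\widehat f_0\|^2$ with a Gronwall factor $\exp(c\int\|\nabx\mathbf w\|_{L^\infty}^2)$. \emph{(ii) Spatial estimate:} testing with $-\Delx\widehat f$, the Neumann condition kills the $\partial\Omega_\zeta$–boundary terms produced by the two $\bx$–integrations by parts, the $\partial B$–terms again vanish since $M=0$, and the surviving dissipation $\int M|\Delx\widehat f|^2+\int M|\nabx\nabq\widehat f|^2$, combined with the elliptic $H^2$–Neumann estimate (uniform in $t$ by Step 1), controls $\int_I\|\widehat f\|_{W^{2,2}_xL^2_M}^2\dt+\int_I\|\widehat f\|_{W^{1,2}_xH^1_M}^2\dt$; the transport term now generates a genuine boundary term $\int_{\partial\Omega_\zeta\times B}M|\nabx\widehat f|^2(\partial_t\zeta\circ\bm{\varphi}_\zeta^{-1})$, handled by the trace theorem and interpolation at the cost of the factor $\sup_I\|\partial_t\zeta\|_{W^{1,2}(\omega)}^3$ in \eqref{eq:thm:mainFP}, while the drag term contributes $\int[(\nabx^2\mathbf w)\bq\,M\widehat f+(\nabx\mathbf w)\bq\,M\nabx\widehat f]:\nabx\nabq\widehat f$, bounded by $\|\nabx^2\mathbf w\|_{L^3_x}\|\widehat f\|_{L^6_xL^2_M}+\|\nabx\mathbf w\|_{L^\infty}\|\nabx\widehat f\|_{L^2_xL^2_M}$ after absorbing a little of $\|\nabx\nabq\widehat f\|^2$; the Sobolev indices of $\mathbf w$ that appear ($W^{5/2+\kappa,2}$ and $W^{7/4,2}$) are precisely those making $\nabx\mathbf w\in L^\infty_x$ and $\nabx^2\mathbf w\in L^3_x$.

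\textbf{Time estimate and conclusion.} Differentiating the transformed equation in $t$ produces a problem of the same type: since $\partial_tM=0$, the only new forcing terms come from $\partial_t\mathbf w$ and the time derivatives of the geometric coefficients, which are covered by $\mathbf w\in W^{1,2}(I;W^{2,2})$, $\partial_t\mathbf w\in W^{3/2+\kappa,2}_x$ and $\zeta\in W^{1,\infty}(I;W^{2,2}(\omega))$; testing the differentiated equation with $\partial_t\widehat f$ and noting that the initial datum is $\partial_t\widehat f(0)=\widetilde f_0\in L^2_M$ by hypothesis — this is exactly why the compatibility assumption \eqref{eq:FP1Approx1Initial} is imposed — gives the first line of \eqref{eq:thm:mainFP}. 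Running a single Gronwall argument that combines (i)--(iii) produces the full right-hand side of \eqref{eq:thm:mainFP} uniformly in $N$, and the remaining regularity asserted in Definition \ref{def:strsolmartFP}(a) follows by inserting $\partial_t\widehat f$ back into the equation read as an elliptic problem in $(\bx,\bq)$.

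\textbf{Passage to the limit and uniqueness.} The uniform bounds give weak-$*$ limits, and Aubin--Lions yields strong convergence of $\overline f^N$ in, e.g., $L^2(I;W^{1,2}(\Omega;L^2_M(B)))$, which is enough to pass to the limit in the linear equation and recover the weak formulation \eqref{weakFokkerPlanckEq}; lower semicontinuity transfers \eqref{eq:thm:mainFP} to the limit. Uniqueness is immediate from linearity: the difference of two strong solutions solves the homogeneous problem with zero data, and estimate (i) forces it to vanish. \emph{The main obstacle is the spatial (and then time-differentiated) estimate on the moving domain while the $\bq$–operator is degenerate at $\partial B$:} one must simultaneously make every $\partial B$–boundary term disappear via $M=0$, keep the elliptic $H^2$–Neumann constant uniform in time (this is what the Hanzawa transform in Step 1 buys), absorb the Reynolds-transport boundary term into the dissipation, and verify that the minimal compatibility $\widetilde f_0\in L^2_M$ is exactly enough to launch the time estimate.
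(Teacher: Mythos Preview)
Your overall strategy---base energy estimate, then test with $-\Delx\widehat f$, then differentiate in time and test with $\partial_t\widehat f$---is exactly the paper's (its Lemmas~4.3, 4.4, 4.6), and the individual manipulations you sketch (Reynolds-transport boundary term handled by trace + interpolation giving the $\sup_I\|\partial_t\zeta\|_{W^{1,2}}^3$ factor; the drag term split into a $\|\nabx^2\mathbf w\|_{L^3}$ and a $\|\nabx\mathbf w\|_{L^\infty}$ piece; the compatibility datum $\widetilde f_0$ launching the time-differentiated estimate) are precisely those carried out there.

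Two differences are worth noting. First, the paper does \emph{not} transform to the fixed domain for the a~priori estimates; it works directly on $\Omega_\zeta$ and uses a different approximation: the drag term is replaced by $\divq\big(\chi^n(\nabx\mathbf w)\bq M\widehat f\big)$ with a cutoff $\chi^n\in C^1_c(B)$, $\chi^n\nearrow 1$, and the simplified $\bq$-boundary condition $M\nabq\widehat f^n=0$ on $\partial B$. This makes every $\partial B$ integration by parts trivially rigorous by compact support, whereas your justification ``$M=0$ there'' is delicate once the singular factor $\nabq U\sim(b-|\bq|^2)^{-1}$ appears. Your $\bq$-eigenfunction Galerkin scheme can also absorb this, but the cutoff is lighter. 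Second, your proposal has an internal mismatch: you build the Galerkin basis on the fixed $\Omega$ (Neumann eigenfunctions, so $-\Delx$ preserves the span) but then claim to test with $-\Delx\widehat f$ on the \emph{moving} $\Omega_\zeta$; after push-forward by $\bm\Psi_\zeta$ the Laplacian no longer preserves the Galerkin space, so you must either stay on $\Omega$ with the transformed divergence-form operator, or argue formally on $\Omega_\zeta$ as the paper does. Finally, the paper inserts one additional intermediate estimate (testing the undifferentiated equation with $\partial_t\widehat f$, its Lemma~4.5), which is where the $\sup_I\|\mathbf w\|_{W^{7/4,2}}^2$ factor in \eqref{eq:thm:mainFP} originates; your three-step version would need to account for this term.
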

\noindent We will obtain a solution of \eqref{eq:FP} by way of a limit to the following approximation
\begin{equation}
\begin{aligned}
\label{eq:FP1Approx}
M\big(\partial_t   \widehat{f}^n + (\mathbf{w}\cdot \nabx)   \widehat{f}^n\big)
+
 \divq  \big( \chi^n (  \nabx\mathbf{w}) \bq M\widehat{f}^n \big) 
&=
\Delx(M  \widehat{f}^n)
+
 \divq  \big( M \nabq    \widehat{f}^n
\big).
\end{aligned}
\end{equation}
Here, we solve the equation under the boundary conditions $\nabx \widehat f^n\cdot\bn_\zeta=0$ and $M\nabq \widehat{f} \vert_{\partial B}=0$  and consider the same initial condition $\widehat{f}^n(0)=\widehat{f}_0$. Also, $\chi^n=\chi^n(\bq)\in C^1_c(B)$ is a cut-off function that is identically equal to $1$ on a large part of the ball $B$ and converges as $n\rightarrow\infty$ to 1.
In the following lemmas, we will derive several estimates for
\eqref{eq:FP1Approx} which are uniform with respect to $n$. They transfer directly to \eqref{eq:FP} as the latter is linear. As far as \eqref{eq:FP1Approx} is concerned, we proceed formerly. A rigorous proof can be achieved by working with a Galerkin approximation as was done in \cite[Section 4]{breit2021incompressible}.
The next result is the following.
\begin{lemma}\label{lem:mainFP1}
Let $(\widehat{f}_0,\zeta, \mathbf{w})$ satisfy  \eqref{fokkerPlanckDataAlone}--\eqref{fokkerPlanckDataAloneB}  and let $\widehat{f}^n$ be the corresponding solution to \eqref{eq:FP1Approx}. Then we have
\begin{equation}
\begin{aligned}
\label{fokkerEnergyEst5}
\sup_I \Vert \widehat{f}^n(t)
\Vert_{L^2(\Ozeta;L^2_M(B))}^2
&+
 \int_I 
 \Vert \widehat{f}^n
\Vert_{W^{1,2}(\Ozeta;L^2_M(B))}^2
 \dt
 +
 \int_I 
 \Vert \widehat{f}^n
\Vert_{L^2(\Ozeta;H^1_M(B))}^2
 \dt
\\&\leq  c\exp\bigg(c\int_I \Vert  \mathbf{w} \Vert_{W^{1,\infty}(\Ozeta)}^2 \dt \bigg) 
\Vert \widehat{f}_0
\Vert_{L^2(\Omega_{\zeta(0)};L^2_M(B))}^2
\end{aligned}
\end{equation} 
uniformly in $n\in\mathbb{N}$. 
\end{lemma}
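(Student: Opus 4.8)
The plan is to test the approximate equation \eqref{eq:FP1Approx} with $\widehat{f}^n$ itself, integrate over $\Omega_{\zeta(t)}\times B$, and combine the resulting identity with the Reynolds transport theorem for the moving domain. As noted after \eqref{eq:FP1Approx}, I would argue formally; the rigorous version follows from the Galerkin scheme in \cite[Section 4]{breit2021incompressible}. The final bound then comes out by Grönwall's lemma.

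First I would deal with the material-derivative part. Since $\divx\mathbf{w}=0$ and, by \eqref{fokkerPlanckDataAloneB}, the normal velocity of $\partial\Omega_{\zeta(t)}$ equals $\mathbf{w}\cdot\bn_\zeta$ there, integrating $(\mathbf{w}\cdot\nabx)\widehat{f}^n\,\widehat{f}^n=\tfrac12\,\mathbf{w}\cdot\nabx|\widehat{f}^n|^2$ by parts produces a lateral-boundary term which cancels exactly the one generated by the time-dependence of $\Ozeta$ in Reynolds' theorem, so that
\[
\int_{\Ozeta\times B} M\big(\partial_t\widehat{f}^n+(\mathbf{w}\cdot\nabx)\widehat{f}^n\big)\widehat{f}^n\dq\dx=\frac{1}{2}\frac{\dd}{\dt}\int_{\Ozeta\times B}M\,\vert\widehat{f}^n\vert^2\dq\dx .
\]
For the diffusion terms I would integrate by parts: in $\bx$, the no-flux condition $\nabx\widehat{f}^n\cdot\bn_\zeta=0$ turns $\int M\Delx\widehat{f}^n\,\widehat{f}^n$ into $-\int M|\nabx\widehat{f}^n|^2$; in $\bq$, the approximate boundary condition $M\nabq\widehat{f}^n|_{\partial B}=0$ turns $\int\divq(M\nabq\widehat{f}^n)\,\widehat{f}^n$ into $-\int M|\nabq\widehat{f}^n|^2$; and the regularized drag term contributes no boundary term at all since $\chi^n\in C^1_c(B)$, leaving $-\int\chi^n M(\nabx\mathbf{w})\bq\cdot\nabq\widehat{f}^n\,\widehat{f}^n$. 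Altogether this gives
\[
\frac{1}{2}\frac{\dd}{\dt}\int_{\Ozeta\times B}M\,\vert\widehat{f}^n\vert^2\dq\dx+\int_{\Ozeta\times B}M\,\vert\nabx\widehat{f}^n\vert^2\dq\dx+\int_{\Ozeta\times B}M\,\vert\nabq\widehat{f}^n\vert^2\dq\dx=\int_{\Ozeta\times B}\chi^n M(\nabx\mathbf{w})\bq\cdot\nabq\widehat{f}^n\,\widehat{f}^n\dq\dx .
\]

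To close the estimate I would bound the right-hand side using that $\chi^n$ is bounded uniformly in $n$, that $|\bq|\le\sqrt{b}$ on $B$, and that $M$ is bounded: by Hölder's and Young's inequalities it is at most $\tfrac12\int M|\nabq\widehat{f}^n|^2+c\,\|\mathbf{w}\|_{W^{1,\infty}(\Ozeta)}^2\int M|\widehat{f}^n|^2$, the first summand being absorbed on the left. Grönwall's lemma in $t$ then yields the $\sup_I$-bound for $\|\widehat{f}^n(t)\|_{L^2(\Ozeta;L^2_M(B))}^2$ with the exponential factor as in \eqref{fokkerEnergyEst5}; integrating the differential inequality over $I$ produces the bounds for $\int_I\int_{\Ozeta\times B}M|\nabx\widehat{f}^n|^2$ and $\int_I\int_{\Ozeta\times B}M|\nabq\widehat{f}^n|^2$, and adding $\int_I\int_{\Ozeta\times B}M|\widehat{f}^n|^2\le T\sup_I\int_{\Ozeta\times B}M|\widehat{f}^n|^2$ reproduces exactly the three left-hand terms of \eqref{fokkerEnergyEst5}, with a constant independent of $n$.

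The only point requiring care — rather than a genuine obstacle — is the bookkeeping of boundary terms on the moving lateral boundary $\partial\Omega_{\zeta(t)}$ and on $\partial B$: one must invoke the kinematic compatibility \eqref{fokkerPlanckDataAloneB} to cancel the transport boundary term, the no-flux condition in $\bx$, the boundary condition $M\nabq\widehat{f}^n|_{\partial B}=0$ of the approximation, and the compact support of $\chi^n$ in $B$ so that the regularized drag flux vanishes on $\partial B$; the genuine $\bq$-boundary condition \eqref{fokkerPlankBoundaryxsc4} of the limit problem is recovered only after passing to the limit $n\to\infty$, which is harmless here since all estimates above are uniform in $n$ and \eqref{eq:FP} is linear.
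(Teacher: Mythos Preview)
Your proposal is correct and follows essentially the same approach as the paper: test \eqref{eq:FP1Approx} with $\widehat{f}^n$, use the boundary conditions (no-flux in $\bx$, $M\nabq\widehat{f}^n|_{\partial B}=0$, compact support of $\chi^n$) to obtain the energy identity, estimate the drag term via Young's inequality with $|\bq|\le\sqrt b$, then apply Reynolds' transport theorem (exploiting \eqref{fokkerPlanckDataAloneB}) and Gr\"onwall. The paper's proof is merely a terser version of exactly these steps.
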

\begin{proof}
If we   test \eqref{eq:FP1Approx} with $  \widehat{f}^n $ and integrate the resulting equation over the ball $B$, we obtain by using the boundary condition $M\nabq \widehat{f}\vert_{\partial B}=0$ and the property of the cut-off function $\chi^n$ that
\begin{equation}
\begin{aligned}
\label{fokkerEnergy}
\frac{1}{2}\partial_t\Vert    \widehat{f}^n \Vert_{L^2_M(B)}^2
+
\frac{1}{2}(\mathbf{w}\cdot \nabx)\Vert    \widehat{f}^n \Vert_{L^2_M(B)}^2
&+
\Vert   \nabx \widehat{f}^n \Vert_{L^2_M(B)}^2
+ 
\Vert   \widehat{f}^n\Vert_{H^1_M(B)}^2
\\&=
\int_B
 \chi^n(  \nabx\mathbf{w}) \bq M\widehat{f}^n
\nabq    \widehat{f}^n
\dq
\\&
\leq
\frac{1}{2}
\Vert
\widehat{f}^n
\Vert_{H^1_M(B)}^2
+
c
\vert  \nabx\mathbf{w} \vert^2\Vert\widehat{f}^n
\Vert_{L^2_M(B)}^2.
\end{aligned}
\end{equation}
If we now integrate \eqref{fokkerEnergy} over space-time, apply Reynolds transport theorem and Gr\"onwall's Lemma (keeping \eqref{fokkerPlanckDataAloneB} in mind), we obtain  
\eqref{fokkerEnergyEst5}.
\end{proof}
\begin{remark}\label{rem:max}
As it is common for parabolic equations, the proof of Lemma \ref{lem:mainFP1} can be repeated for powers $q\geq 2$ of $\widehat f^n$ obtaining (ignoring the dissipative terms) 
\begin{equation}
\begin{aligned}
\label{fokkerEnergyEst5'}
\sup_I \Vert \widehat{f}^n(t)
\Vert_{L^q(\Ozeta;L^2_M(B))}^q\leq  c\exp\bigg(c\int_I \Vert  \mathbf{w} \Vert_{W^{1,\infty}(\Ozeta)}^2 \dt \bigg) 
\Vert \widehat{f}_0
\Vert_{L^q(\Omega_{\zeta(0)};L^2_M(B))}^q
\end{aligned}
\end{equation} 
uniformly in $n\in\mathbb{N}$. Checking that the $q$-dependent constant does not explode, we obtain the maximum principle\footnote{Maximum principles for parabolic equations in moving domains were also proved in \cite{breit2021incompressible,BS1} and \cite{BS2}.}
\begin{equation}
\begin{aligned}
\label{fokkerEnergyEst5''}
\sup_I \Vert \widehat{f}^n(t)
\Vert_{L^\infty(\Omega_\eta;L^2_M(B))}\leq  c\exp\bigg(c\int_I \Vert  \mathbf{w} \Vert_{W^{1,\infty}(\Ozeta)}^2 \dt \bigg) 
\Vert \widehat{f}_0
\Vert_{L^\infty(\Omega_{\zeta(0)};L^2_M(B))};
\end{aligned}
\end{equation}
a minimum principle can be proved similarly, but it is not needed for our purposes. 
\end{remark}
Next, we show the following lemma.
\begin{lemma}\label{lem:mainFP2}
Let $(\widehat{f}_0,\zeta, \mathbf{w})$ satisfy  \eqref{fokkerPlanckDataAlone}--\eqref{fokkerPlanckDataAloneB}  and let $\widehat{f}^n$ be the corresponding solution to \eqref{eq:FP1Approx}.
Then we have 
\begin{equation}
\begin{aligned}
\label{fokkerEnergyNeg1}
&\sup_I \Vert \nabx\widehat{f}^n(t)
\Vert_{L^2(\Ozeta;L^2_M(B))}^2+
 \int_I 
 \Vert \Delx\widehat{f}^n
\Vert_{L^2(\Ozeta;L^2_M(B))}^2
 \dt
 +
 \int_I 
 \Vert \nabx\widehat{f}^n
\Vert_{L^2(\Ozeta;H^1_M(B))}^2
 \dt
 \\&
 \qquad\quad\leq  c\exp\bigg(c\int_I \Vert  \mathbf{w} \Vert_{W^{5/2+\kappa,2}(\Ozeta)}^2 \dt
 +
c \int_I\Vert\partial_t\zeta\Vert_{W^{ 2/3,2}(\omega)}^3\dt
\bigg) 
\Vert \widehat{f}_0
\Vert_{W^{1,2}(\Omega_{\zeta(0)};L^2_M(B))}^2
\end{aligned}
\end{equation}
for any $\kappa\in(0,1/2)$ uniformly in $n\in\mathbb{N}$. 
\end{lemma}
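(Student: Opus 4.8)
The plan is to differentiate \eqref{eq:FP1Approx} once in space and test with $\Delx\widehat f^n$, working at the level of the approximating equation (which is linear, so the estimate transfers to \eqref{eq:FP}). First I would apply $\nabx$ to \eqref{eq:FP1Approx}, multiply by $\nabx\widehat f^n$ and integrate over $B$ with the Maxwellian weight. The parabolic terms $\Delx(M\widehat f^n)$ and $\divq(M\nabq\widehat f^n)$ produce, after integration by parts in $\bx$ and $\bq$ respectively, the good quantities $\|\Delx\widehat f^n\|_{L^2_M(B)}^2$ and $\|\nabx\widehat f^n\|_{H^1_M(B)}^2$ (using $M\nabq\widehat f^n|_{\partial B}=0$; the cut-off $\chi^n$ ensures no extra $\partial B$ contribution from the drag term). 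The transport term $M(\mathbf w\cdot\nabx)\widehat f^n$ splits into $\tfrac12(\mathbf w\cdot\nabx)\|\nabx\widehat f^n\|_{L^2_M(B)}^2$ plus a commutator $M(\nabx\mathbf w)\nabx\widehat f^n\cdot\nabx\widehat f^n$, which is controlled by $\|\nabx\mathbf w\|_{L^\infty_\bx}\|\nabx\widehat f^n\|_{L^2_M(B)}^2$. The drag term $\divq(\chi^n(\nabx\mathbf w)\bq M\widehat f^n)$ upon differentiation yields a term with $\nabx^2\mathbf w$ paired against $\widehat f^n$ and one with $\nabx\mathbf w$ paired against $\nabx\widehat f^n$; after integrating by parts in $\bq$ these are absorbed into $\tfrac14\|\nabx\widehat f^n\|_{H^1_M(B)}^2$ at the cost of $(\|\nabx^2\mathbf w\|_{L^\infty_\bx}^2+\|\nabx\mathbf w\|_{L^\infty_\bx}^2)(\|\widehat f^n\|_{L^2_M(B)}^2+\|\nabx\widehat f^n\|_{L^2_M(B)}^2)$, using that $|\bq|\le\sqrt b$ on $\mathrm{supp}\,\chi^n$.

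After integrating in $\bx$ over $\Ozeta$ and applying Reynolds' transport theorem, the only nonstandard contribution is the boundary term on $\partial\Omega_\zeta$ coming both from the time-dependence of the domain and from integrating by parts the Laplacian in $\bx$ against the Neumann condition \eqref{fokkerPlankBoundarySpacexsc4}; since $\nabx\widehat f^n\cdot\bn_\zeta=0$ on $\partial\Omega_\zeta\times B$ but $\nabx^2\widehat f^n$ need not vanish there, one picks up terms of the schematic form $\int_{\partial\Omega_\zeta}|\nabx\widehat f^n|^2(\partial_t\zeta\text{ or }\nabx\mathbf w\cdot\bn)\,\mathrm d\mathcal H^2$ plus curvature terms. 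These are the heart of the matter: I would control them by a trace/interpolation argument, bounding $\|\nabx\widehat f^n\|_{L^2(\partial\Omega_\zeta;L^2_M(B))}^2$ by $\varepsilon\|\nabx\widehat f^n\|_{W^{1,2}(\Ozeta;L^2_M(B))}^2+c_\varepsilon\|\nabx\widehat f^n\|_{L^2(\Ozeta;L^2_M(B))}^2$, absorbing the $\varepsilon$-part into the dissipation, while the $\partial_t\zeta$ factor is estimated in $L^\infty_t$ via $W^{2,2}(\omega)\hookrightarrow L^\infty(\omega)$ (using \eqref{fokkerPlanckDataAloneB}) or, to get the sharper $W^{2/3,2}(\omega)$ norm appearing in the statement, via $W^{2/3,2}(\omega)\hookrightarrow L^3(\omega)$ combined with a Hölder split $L^3_\by\cdot L^6_\by\cdot L^2_\by$ against the trace of $\nabx\widehat f^n$ and a further interpolation of the trace between $L^2(\partial\Omega_\zeta)$ and $W^{1,2}(\Ozeta)$; the exponent $3$ on $\|\partial_t\zeta\|_{W^{2/3,2}}$ in \eqref{fokkerEnergyNeg1} reflects exactly this cubic Young-inequality split. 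The velocity contributions $\int_I(\|\nabx^2\mathbf w\|_{L^\infty_\bx}^2+\|\nabx\mathbf w\|_{L^\infty_\bx}^2)\,\mathrm dt$ are then bounded by $\int_I\|\mathbf w\|_{W^{5/2+\kappa,2}(\Ozeta)}^2\,\mathrm dt$ for any $\kappa\in(0,1/2)$ by Sobolev embedding in $\R^3$.

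Collecting everything gives a differential inequality
\[
\frac{\mathrm d}{\mathrm dt}\|\nabx\widehat f^n\|_{L^2(\Ozeta;L^2_M(B))}^2
+\|\Delx\widehat f^n\|_{L^2(\Ozeta;L^2_M(B))}^2
+\|\nabx\widehat f^n\|_{L^2(\Ozeta;H^1_M(B))}^2
\le
\Phi(t)\,\big(\|\widehat f^n\|_{W^{1,2}(\Ozeta;L^2_M(B))}^2\big),
\]
with $\Phi(t)\lesssim \|\mathbf w\|_{W^{5/2+\kappa,2}(\Ozeta)}^2+\|\partial_t\zeta\|_{W^{2/3,2}(\omega)}^3$ and the lower-order term $\|\widehat f^n\|_{L^2(\Ozeta;L^2_M(B))}^2$ already controlled by Lemma \ref{lem:mainFP1}. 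Grönwall's lemma, together with the fact that $\widehat f^n(0)=\widehat f_0\in W^{1,2}(\Omega_{\zeta(0)};L^2_M(B))$, then yields \eqref{fokkerEnergyNeg1}, uniformly in $n$. The main obstacle, as indicated, is the careful treatment of the $\partial\Omega_\zeta$-boundary terms generated by differentiating in $\bx$ in a moving Lipschitz-in-time domain with Neumann data — in particular getting the sharp $W^{2/3,2}(\omega)$ dependence on $\partial_t\zeta$ rather than a cruder $L^\infty$ bound — and making sure that the cut-off $\chi^n$ genuinely removes the $\partial B$-contribution of the drag term so that all $n$-dependence drops out.
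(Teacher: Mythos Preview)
Your overall strategy matches the paper's: test \eqref{eq:FP1Approx} with $\Delx\widehat f^n$, handle the $\partial\Omega_\zeta$ boundary term from Reynolds' transport via a trace--interpolation argument (which is exactly where the cubic $\|\partial_t\zeta\|_{W^{2/3,2}(\omega)}^3$ comes from), and close by Gr\"onwall. The paper in fact tests directly with $\Delx\widehat f^n$ rather than first differentiating and then testing with $\nabx\widehat f^n$; this is slightly cleaner because the Neumann condition $\nabx\widehat f^n\cdot\bn_\zeta=0$ kills the $\bx$-boundary term from one integration by parts, and the \emph{only} remaining boundary contribution is the Reynolds term $\tfrac12\int_{\partial\Omega_\zeta}\bn_\zeta\cdot((\partial_t\zeta)\bn)\circ\bm{\varphi}_\zeta^{-1}\|\nabx\widehat f^n\|_{L^2_M(B)}^2\,\mathrm d\mathcal H^2$. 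There are no extra ``curvature'' or $\nabx\mathbf w\cdot\bn$ boundary terms to worry about.

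There is, however, a genuine gap in your treatment of the drag term. You propose to bound the commutator by
\[
\big(\|\nabx^2\mathbf w\|_{L^\infty_\bx}^2+\|\nabx\mathbf w\|_{L^\infty_\bx}^2\big)\big(\|\widehat f^n\|_{L^2_M(B)}^2+\|\nabx\widehat f^n\|_{L^2_M(B)}^2\big)
\]
and then claim that $\|\nabx^2\mathbf w\|_{L^\infty(\Ozeta)}\lesssim\|\mathbf w\|_{W^{5/2+\kappa,2}(\Ozeta)}$. This Sobolev embedding is \emph{false} in three dimensions for $\kappa\in(0,1/2)$: one would need $5/2+\kappa>2+3/2$, i.e.\ $\kappa>1$. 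Only $\nabx\mathbf w\in L^\infty$ follows from $W^{5/2+\kappa,2}$. The paper avoids this by using a different H\"older split on the $\nabx^2\mathbf w$ term: after integrating by parts in $\bx$ (the Neumann condition in $\bx$ and the cut-off $\chi^n$ in $\bq$ remove all boundary contributions, since $\nabq(\nabx\widehat f^n\cdot\bn_\zeta)=0$), one estimates
\[
\int_I\|\nabx^2\mathbf w\|_{L^3(\Ozeta)}\,\|\widehat f^n\|_{L^6(\Ozeta;L^2_M(B))}\,\|\nabx\widehat f^n\|_{L^2(\Ozeta;H^1_M(B))}\,\mathrm dt,
\]
using $W^{5/2+\kappa,2}(\Ozeta)\hookrightarrow W^{2,3}(\Ozeta)$ and $W^{1,2}(\Ozeta)\hookrightarrow L^6(\Ozeta)$. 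This produces the Gr\"onwall factor $\|\mathbf w\|_{W^{5/2+\kappa,2}(\Ozeta)}^2\|\widehat f^n\|_{W^{1,2}(\Ozeta;L^2_M(B))}^2$, which is exactly what is needed. Once you make this correction, your argument goes through.
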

\begin{proof}
Now, we test \eqref{eq:FP1Approx} with $\Delx  \widehat{f}^n$. First of all, note that by \eqref{fokkerPlankBoundarySpacexsc4}, the Reynolds transport theorem and \eqref{fokkerPlanckDataAloneB}, 
\begin{equation}
\begin{aligned}
\label{fokkerEnergyEst6}
\int_I\int_{\Ozeta\times B}M \partial_t   \widehat{f}^n  \Delx  \widehat{f}^n\dq\dx\dt
&=
\frac{1}{2}
\int_I\int_{\partial\Ozeta }
\bn_\zeta\cdot((\partial_t\zeta)\bn)\circ\bm{\varphi}_\zeta^{-1}
\Vert \nabx  \widehat{f}^n\Vert_{L^2_M(B)}^2\dd\mathcal{H}^2\dt
\\&-
\frac{1}{2}\int_I\frac{\dd}{\dt}
\Vert \nabx  \widehat{f}^n\Vert_{L^2(\Ozeta;L^2_M(B))}^2\dt,
\end{aligned}
\end{equation}
where, by interpolation, the trace theorem and Young's inequality,
\begin{equation}
\begin{aligned}
\label{fokkerEnergyEst7}
\bigg\vert\int_I&\int_{\partial\Ozeta }\bn_\zeta\cdot((\partial_t\zeta)\bn)\circ\bm{\varphi}_\zeta^{-1}\Vert \nabx  \widehat{f}^n\Vert_{L^2_M(B)}^2\dd\mathcal{H}^2\dt\bigg\vert
\\&\lesssim
\int_I\Vert \bn_\zeta\cdot((\partial_t\zeta)\bn)\circ\bm{\varphi}_\zeta^{-1}\Vert_{L^6(\partial\Ozeta)}\Vert \nabx  \widehat{f}^n\Vert_{L^{12/5}(\partial\Ozeta;L^2_M(B))}^2 \dt
\\&
\lesssim
\int_I\Vert\partial_t\zeta\Vert_{W^{2/3,2}(\omega)}\Vert \nabx  \widehat{f}^n\Vert_{W^{2/3,2}(\Ozeta;L^2_M( B))}^2 \dt
\\&
\lesssim
\int_I\Vert\partial_t\zeta\Vert_{W^{ 2/3,2}(\omega)}\Vert \nabx  \widehat{f}^n\Vert_{L^2(\Ozeta;L^2_M(B))}^{2/3}
\Vert \nabx  \widehat{f}^n\Vert_{W^{1,2}(\Ozeta;L^2_M( B))}^{4/3} \dt
\\&
\leq
\delta
\int_I 
\Vert \nabx  \widehat{f}^n\Vert_{W^{1,2}(\Ozeta;L^2_M(B))}^2 \dt
+
c(\delta)
\int_I\Vert\partial_t\zeta\Vert_{W^{2/3,2}(\omega)}^{3}\Vert \nabx  \widehat{f}^n\Vert_{L^2(\Ozeta;L^2_M(B))}^2 \dt.
\end{aligned}
\end{equation}
Next,
\begin{equation}
\begin{aligned}
\label{fokkerEnergyEst8}
\bigg\vert\int_I\int_{\Ozeta\times B}M  (\mathbf{w}\cdot \nabx)   \widehat{f}^n &\Delx  \widehat{f}^n\dq\dx\dt
\bigg\vert
\leq
\delta
\int_I 
\Vert \Delx  \widehat{f}^n\Vert_{L^2(\Ozeta;L^2_M(B))}^2 \dt
\\&+
c(\delta)
\int_I\Vert\mathbf{w}\Vert_{L^\infty(\Ozeta)}^2\Vert \nabx  \widehat{f}^n\Vert_{L^2(\Ozeta;L^2_M(B))}^2 \dt.
\end{aligned}
\end{equation}
For the dissipative term, we obtain
\begin{align}
\label{fokkerEnergyEst9}
\int_I\int_{\Ozeta\times B}  \Delx(M   \widehat{f}^n) \Delx  \widehat{f}^n\dq\dx\dt
=
\int_I\Vert\Delx  \widehat{f}^n\Vert_{L^2(\Ozeta;L^2_M(B))}\dt.
\end{align}
Next we use \eqref{fokkerPlankBoundarySpacexsc4}--\eqref{fokkerPlankBoundaryxsc4} and Sobolev embeddings and we obtain
\begin{align}
\int_I&\int_{\Ozeta\times B}\divq  \big( \chi^n (  \nabx\mathbf{w}) \bq M\widehat{f}^n - M \nabq    \widehat{f}^n
\big)\Delx  \widehat{f}^n\dq\dx\dt\nonumber
\\&
=
\int_I\int_{\Ozeta\times B}\nabx  \big( \chi^n (  \nabx\mathbf{w}) \bq M\widehat{f}^n - M \nabq    \widehat{f}^n
\big):\nabx\nabq  \widehat{f}^n\dq\dx\dt\nonumber
\\&
\leq-\int_I\Vert\nabx \widehat{f}^n\Vert_{L^2(\Ozeta;H^1_M(B))}^2\dt
+c
\int_I\Vert\nabx^2\mathbf{w}\Vert_{L^3(\Ozeta)}\Vert  \widehat{f}^n\Vert_{L^6(\Ozeta;L^2_M(B))}
\Vert \nabx \widehat{f}^n\Vert_{L^2(\Ozeta;H^1_M(B))}\dt\nonumber
\\&
+c
\int_I\Vert\nabx \mathbf{w}\Vert_{L^\infty(\Ozeta)}\Vert \nabx \widehat{f}^n\Vert_{L^2(\Ozeta;L^2_M(B))}
\Vert \nabx \widehat{f}^n\Vert_{L^2(\Ozeta;H^1_M(B))}\dt\nonumber
\\&
\leq
-\frac{1}{2}\int_I\Vert\nabx \widehat{f}^n\Vert_{L^2(\Ozeta;H^1_M(B))}^2\dt
+c
\int_I\Vert\mathbf{w}\Vert_{W^{5/2+\kappa,2}(\Ozeta)}^2\Vert\widehat{f}^n\Vert_{W^{1,2}(\Ozeta;L^2_M(B))}^2\dt,\label{fokkerEnergyEst10}
\end{align}
where $\kappa\in(0,1/2)$.
By combining \eqref{fokkerEnergyEst7}--\eqref{fokkerEnergyEst10} and applying Sobolev embeddings to the $\mathbf{w}$-terms, we obtain
\eqref{fokkerEnergyNeg1}
uniformly in $n\in \mathbb{N}$.
\end{proof}
Our next lemma is the following.
\begin{lemma}\label{lem:mainFP3}
Let $(\widehat{f}_0,\zeta, \mathbf{w})$ satisfy  \eqref{fokkerPlanckDataAlone}--\eqref{fokkerPlanckDataAloneB}  and let $\widehat{f}^n$ be the corresponding solution to \eqref{eq:FP1Approx}.
Then we have
\begin{equation}
\begin{aligned}
\label{fokkerEnergyNeg2}
&\int_I\Vert \partial_t\widehat{f}^n
\Vert_{L^2(\Ozeta;L^2_M(B))}^2\dt+
\sup_I \Vert \nabx\widehat{f}^n(t)
\Vert_{L^2(\Ozeta;L^2_M(B))}^2
+
\sup_I \Vert  \widehat{f}^n(t)
\Vert_{L^2(\Ozeta;H^1_M(B))}^2
\\
&\qquad\quad\leq\,c 
\exp\bigg(c\int_I
\Vert   \mathbf{w}\Vert_{W^{5/2+\kappa,2}(\Ozeta)}^2\dt
+c
\int_I
\Vert \partial_t \nabx\mathbf{w}\Vert_{L^{2}(\Ozeta)}^2\dt
\bigg)
\\
&\qquad\qquad\,\times
\exp\bigg(c
\sup_{I}
\Vert  \mathbf{w}\Vert_{ W^{7/4,2}(\Ozeta)  }^2
 +c
\sup_I\|\partial_t\zeta\|_{W^{2/3,2}(\omega)}^{3}\bigg)
\|\widehat f_0\|^2_{W^{1,2}(\Omega_{\zeta(0)};L^2_M(B))}
\end{aligned}
\end{equation} 
for any $\kappa\in(0,1/2)$ uniformly in $n\in\mathbb{N}$. 
\end{lemma}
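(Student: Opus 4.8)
To prove Lemma~\ref{lem:mainFP3} I would test the approximate Fokker--Planck equation \eqref{eq:FP1Approx} with $\partial_t\widehat f^n$ and integrate over $\Ozeta\times B$. The left-hand side produces $\|\partial_t\widehat f^n\|_{L^2(\Ozeta;L^2_M(B))}^2$ together with time-derivative-of-gradient terms; the dissipative terms $\Delx(M\widehat f^n)$ and $\divq(M\nabq\widehat f^n)$ paired against $\partial_t\widehat f^n$ give, after integration by parts and using the Reynolds transport theorem together with the boundary conditions \eqref{fokkerPlankBoundarySpacexsc4}--\eqref{fokkerPlankBoundaryxsc4}, expressions of the form $\tfrac12\frac{\dd}{\dt}\big(\|\nabx\widehat f^n\|_{L^2(\Ozeta;L^2_M(B))}^2+\|\widehat f^n\|_{L^2(\Ozeta;H^1_M(B))}^2\big)$ plus boundary integrals involving $\partial_t\zeta$. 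Those boundary terms are handled exactly as in \eqref{fokkerEnergyEst6}--\eqref{fokkerEnergyEst7} in the proof of Lemma~\ref{lem:mainFP2}, producing the $\sup_I\|\partial_t\zeta\|_{W^{2/3,2}(\omega)}^3$ factor; note we also need to commute $\partial_t$ with the moving domain, which again only costs such boundary terms.

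\textbf{Key steps.} First I would write out the identity coming from testing with $\partial_t\widehat f^n$, isolating $\|\partial_t\widehat f^n\|_{L^2(\Ozeta;L^2_M(B))}^2$ on the left. Second, the transport term $M(\mathbf w\cdot\nabx)\widehat f^n$ paired with $\partial_t\widehat f^n$ is estimated by Young's inequality, absorbing a $\delta\|\partial_t\widehat f^n\|^2$ and leaving $c(\delta)\|\mathbf w\|_{L^\infty(\Ozeta)}^2\|\nabx\widehat f^n\|_{L^2(\Ozeta;L^2_M(B))}^2$, which is controlled via the Sobolev embedding $W^{7/4,2}(\Ozeta)\hookrightarrow W^{1,\infty}(\Ozeta)$ (giving the $\sup_I\|\mathbf w\|_{W^{7/4,2}(\Ozeta)}^2$ term). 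Third, the drag term $\divq(\chi^n(\nabx\mathbf w)\bq M\widehat f^n)$ tested against $\partial_t\widehat f^n$ is integrated by parts in $\bq$, yielding $-\int \chi^n(\nabx\mathbf w)\bq M\widehat f^n\cdot\nabq\partial_t\widehat f^n$; here one either pairs with $\partial_t\nabq\widehat f^n$ directly (using the dissipative $\divq$-term on the left to absorb it after integrating by parts in time again) or, more cleanly, moves the time derivative off by writing $\nabq\partial_t\widehat f^n$ in terms of $\partial_t(\nabq\widehat f^n)$ and handling the resulting $\frac{\dd}{\dt}$-term; this is where the $\partial_t\nabx\mathbf w$ term enters, producing $c\int_I\|\partial_t\nabx\mathbf w\|_{L^2(\Ozeta)}^2\,\dt$. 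Fourth, the spatial-dissipation mixed term requires a higher spatial bound on $\widehat f^n$, which is precisely what Lemma~\ref{lem:mainFP2} supplies; combining with the $W^{5/2+\kappa,2}(\Ozeta)$-estimates on $\mathbf w$ from \eqref{fokkerEnergyNeg1} gives the $\int_I\|\mathbf w\|_{W^{5/2+\kappa,2}(\Ozeta)}^2\,\dt$ factor. Finally, Gr\"onwall's lemma in time, together with the estimate of Lemma~\ref{lem:mainFP2}, closes the bound, and the initial value $\|\partial_t\widehat f^n(0)\|_{L^2}^2=\|\widetilde f_0^n\|_{L^2}^2$ is controlled by $\|\widehat f_0\|_{W^{1,2}(\Omega_{\zeta(0)};L^2_M(B))}^2$ via \eqref{eq:FP1Approx1Initial} (this is where one uses $\widehat f_0\in W^{1,2}$, not just the abstract assumption $\widetilde f_0\in L^2$, to keep the $n$-independence).

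\textbf{Main obstacle.} The delicate point is the drag term $\divq(\chi^n(\nabx\mathbf w)\bq M\widehat f^n)$ tested against $\partial_t\widehat f^n$: after integration by parts in $\bq$ it involves $\nabq\partial_t\widehat f^n$, a quantity of the same order as the dissipative control one is trying to establish, so one must carefully integrate by parts in \emph{time} as well (picking up a full time derivative that becomes part of the $\sup_I$ quantity on the left, plus a term with $\partial_t(\nabx\mathbf w)$) rather than simply using Young's inequality, otherwise the estimate does not close. A second subtlety, minor but necessary for the stated form of the constant, is to verify that all Sobolev embeddings used ($W^{7/4,2}\hookrightarrow W^{1,\infty}$ in 3D, the various trace and fractional embeddings on $\Ozeta$) hold with constants uniform in time under the standing assumption $\|\zeta\|_{L^\infty(I;W^{1,\infty}(\omega))}$-bounded; this follows from \eqref{210and212}--\eqref{211and213} by transporting to the fixed reference domain $\Omega$, exactly as in Lemma~\ref{lem:mainFP2}. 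Since \eqref{eq:FP1Approx} is linear and the cut-off $\chi^n$ is bounded in $C^1_c(B)$ uniformly, every constant produced is independent of $n$, so the estimate passes to the limit and also holds for \eqref{eq:FP}.
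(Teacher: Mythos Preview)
Your approach is the same as the paper's: test \eqref{eq:FP1Approx} with $\partial_t\widehat f^n$, integrate the dissipative terms by parts to obtain $-\tfrac12\partial_t\big(\|\nabx\widehat f^n\|^2+\|\widehat f^n\|_{H^1_M}^2\big)$ plus Reynolds-transport boundary terms, and handle the drag term by integrating by parts in $\bq$ and then in time (the paper splits the latter into four pieces $I_5^1,\dots,I_5^4$, exactly the decomposition you describe).

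Two small corrections. First, the embedding you quote, $W^{7/4,2}(\Ozeta)\hookrightarrow W^{1,\infty}(\Ozeta)$, is false in three dimensions; what is actually used is $W^{7/4,2}\hookrightarrow L^\infty$ for the transport term (your $I_4$) and $W^{7/4,2}\hookrightarrow W^{1,4}$ for the total-derivative piece of the drag term after the integration by parts in time. Second, and more importantly, no initial term $\|\partial_t\widehat f^n(0)\|_{L^2}^2$ arises in this argument: the time integration only produces the values at $t=0$ of $\|\nabx\widehat f^n\|^2$ and $\|\widehat f^n\|_{H^1_M}^2$ (and of the drag bilinear form), not of $\partial_t\widehat f^n$. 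Your remark about controlling $\widetilde f_0$ via \eqref{eq:FP1Approx1Initial} is therefore unnecessary here---and, as stated, incorrect, since \eqref{eq:FP1Approx1Initial} involves $\Delx\widehat f_0$ and $\divq(M\nabq\widehat f_0)$, which are not bounded by $\|\widehat f_0\|_{W^{1,2}(\Omega_{\zeta(0)};L^2_M(B))}$. The quantity $\widetilde f_0$ enters only in Lemma~\ref{lem:mainFP4}.
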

\begin{proof}
Test \eqref{eq:FP1Approx} with $\partial_t \widehat{f}^n$. This yields
\begin{equation}
\begin{aligned}
\Vert   \partial_t \widehat{f}^n \Vert_{L^2_M(B)}^2
&=
-
\frac{1}{2}
\partial_t
\Vert   \nabx \widehat{f}^n \Vert_{L^2_M(B)}^2
-
\frac{1}{2}
\partial_t
\Vert   \widehat{f}^n\Vert_{H^1_M(B)}^2
+
\int_B\divx(M\nabx \widehat{f}^n\partial_t\widehat{f}^n)\dq
\\&
-
\int_B
M\mathbf{w}\cdot \nabx   \widehat{f}^n\partial_t\widehat{f}^n\dq
-
\int_B
\divq( \chi^n(  \nabx\mathbf{w}) \bq M\widehat{f}^n)
\partial_t \widehat{f}^n
\dq
\\&
=:I_1+\ldots+I_5.
\end{aligned}
\end{equation}
By  Reynold's transport theorem,
\begin{equation}
\begin{aligned}
\int_I\int_{\Ozeta}(I_1&+I_2)\dx\dt
=
-
\frac{1}{2}
\int_I
\frac{\dd}{\dt}
\Big(
\Vert   \nabx \widehat{f}^n \Vert_{L^2(\Ozeta;L^2_M(B))}^2
+
\Vert   \widehat{f}^n\Vert_{L^2(\Ozeta;H^1_M(B))}^2
\Big)\dt
\\&+
\frac{1}{2}
\int_I
\int_{\partial \Ozeta}
\bn_\zeta\cdot((\partial_t\zeta)\bn)\circ\bm{\varphi}_\zeta^{-1}
\Big(
\Vert   \nabx \widehat{f}^n \Vert_{L^2_M(B)}^2
+
\Vert   \widehat{f}^n\Vert_{H^1_M(B)}^2
\Big)\dd\mathcal{H}^2\dt,
\end{aligned}
\end{equation}
where, by the trace theorem, \eqref{fokkerPlanckDataAloneA} and Lemma \ref{lem:mainFP2} for $\kappa\in(0,1/2)$ 
\begin{align*}
\frac{1}{2}&
\int_I
\int_{\partial \Ozeta}
\bn_\zeta\cdot((\partial_t\zeta)\bn)\circ\bm{\varphi}_\zeta^{-1}\Big(
\Vert   \nabx \widehat{f}^n \Vert_{L^2_M(B)}^2
+
\Vert   \widehat{f}^n\Vert_{H^1_M(B)}^2
\Big)\dd\mathcal{H}^2\dt
\\&\lesssim
\int_I
\Vert\bn_\zeta\cdot((\partial_t\zeta)\bn)\circ\bm{\varphi}_\zeta^{-1}\Vert_{L^4(\partial \Ozeta)} 
\Big(
\Vert   \nabx \widehat{f}^n \Vert_{L^{8/3}(\partial \Ozeta;L^2_M(B))}^2
+
\Vert   \widehat{f}^n\Vert_{L^{8/3}(\partial \Ozeta;H^1_M(B))}^2
\Big)\dt 
\\&\lesssim
\sup_I\Vert\partial_t\zeta\Vert_{W^{1/2,2}(\omega)}
\int_I\Big(
\Vert   \nabx \widehat{f}^n \Vert_{W^{1,2}(  \Ozeta;L^2_M(B))}^2
+
\Vert   \widehat{f}^n\Vert_{W^{1,2}(  \Ozeta;H^1_M(B))}^2
\Big)\dt
\\
&\lesssim
\sup_I\Vert\partial_t\zeta\Vert_{W^{1/2,2}(\omega)}
\exp\bigg(c\int_I \Vert  \mathbf{w} \Vert_{W^{5/2+\kappa,2}(\Ozeta)}^2 \dt+c\int_I\Vert\partial_t\zeta\Vert_{W^{2/3,2}(\omega)}^{3}\dt \bigg) 
\Vert \widehat{f}_0
\Vert_{W^{1,2}(\Omega_{\zeta(0)};L^2_M(B))}^2
\\
&\lesssim
\exp\bigg(c\int_I \Vert  \mathbf{w} \Vert_{W^{5/2+\kappa,2}(\Ozeta)}^2 \dt+c\sup_I\Vert\partial_t\zeta\Vert_{W^{2/3,2}(\omega)}^{3} \bigg) 
\Vert \widehat{f}_0
\Vert_{W^{1,2}(\Omega_{\zeta(0)};L^2_M(B))}^2.
\end{align*}
By Gauss theorem and \eqref{fokkerPlankBoundarySpacexsc4}, we obtain $
\int_{\Omega_\zeta}I_3\dx=0$.
Also, by Lemma \ref{lem:mainFP2}, for any $\delta>0$,
\begin{equation*}
\begin{aligned}
\int_I\int_{\Ozeta}I_4\dx\dt
&\leq 
c(\delta)\int_I\|\mathbf w\|^2_{L^\infty(\Omega_\zeta)}\dt\sup_I\Vert   \nabx \widehat{f}^n \Vert_{L^2(\Ozeta;L^2_M(B))}^2+\delta\int_I\Vert   \partial_t \widehat{f}^n \Vert_{L^2(\Ozeta;L^2_M(B))}^2
\\&
\leq 
c(\delta)\exp\bigg(c\int_I \Vert  \mathbf{w} \Vert_{W^{5/2+\kappa,2}(\Ozeta)}^2 \dt 
+
c\int_I\Vert\partial_t\zeta\Vert_{W^{2/3,2}(\omega)}^{3}\dt\bigg) 
\Vert \widehat{f}_0
\Vert_{W^{1,2}(\Omega_{\zeta(0)};L^2_M(B))}^2\\
&+\delta\int_I\Vert   \partial_t \widehat{f}^n \Vert_{L^2(\Ozeta;L^2_M(B))}^2.
\end{aligned}
\end{equation*}
Using integration by parts and applying Reynolds transport theorem,
\begin{equation}
\begin{aligned}
\int_I\int_{\Ozeta}I_5\dx\dt
&=
\int_I\int_{\Ozeta\times B}
 \chi^n(  \nabx\mathbf{w}) \bq M \widehat{f}^n
\partial_t \nabq\widehat{f}^n
\dq\dx\dt
\\
&=-
\int_I\int_{\Ozeta\times B}
 \chi^n(  \partial_t\nabx\mathbf{w}) \bq M \widehat{f}^n
 \nabq\widehat{f}^n
\dq\dx\dt
\\&-
\int_I\int_{\Ozeta\times B}
 \chi^n(  \nabx\mathbf{w}) \bq M \partial_t\widehat{f}^n
 \nabq\widehat{f}^n
\dq\dx\dt
\\&+
\int_I\frac{\dd}{\dt}
\int_{\Ozeta\times B}
 \chi^n(  \nabx\mathbf{w}) \bq M \widehat{f}^n
 \nabq\widehat{f}^n
\dq\dx\dt
\\&-
\int_I\int_{\partial\Ozeta \times B}\bn_\zeta\cdot((\partial_t\zeta)\bn)\circ\bm{\varphi}_\zeta^{-1}
 \chi^n(   \nabx\mathbf{w}) \bq M \widehat{f}^n
 \nabq\widehat{f}^n
\dq\dd\mathcal{H}^2\dt 
\\&=:
I_5^1+\ldots+I_5^4,
\end{aligned}
\end{equation}
where, by \eqref{fokkerPlanckDataAlone}--\eqref{fokkerPlanckDataAloneB} as well as Lemmas \ref{lem:mainFP1} and \ref{lem:mainFP2}, 
\begin{equation}
\begin{aligned}
I_5^1
&\lesssim
\int_I
\Vert \widehat{f}^n\Vert_{L^6(\Ozeta;H^1_M(B))}^2\dt
+
\int_I
\Vert \partial_t\nabx \mathbf{w}\Vert_{L^2(\Ozeta)}^2\Vert \widehat{f}^n\Vert_{L^3(\Ozeta;L^2_M(B))}^2\dt
\\
&\lesssim
\int_I
\Vert \widehat{f}^n\Vert_{W^{1,2}(\Ozeta;H^1_M(B))}^2\dt
+
\sup_I\Vert \widehat{f}^n(t)\Vert_{W^{1,2}(\Ozeta;L^2_M(B))}^2\int_I
\Vert \partial_t \nabx\mathbf{w}\Vert_{L^{2}(\Ozeta)}^2\dt
\\&\lesssim\exp\bigg(c\int_I \Vert  \mathbf{w} \Vert_{W^{5/2+\kappa,2}(\Ozeta)}^2 \dt
+
c\int_I
\Vert \partial_t \nabx\mathbf{w}\Vert_{L^{2}(\Ozeta)}^2\dt
 +c\int_I\Vert\partial_t\zeta\Vert_{W^{2/3,2}(\omega)}^{3}\dt \bigg) 
\\&
\qquad\times\Vert \widehat{f}_0
\Vert_{W^{1,2}(\Omega_{\zeta(0)};L^2_M(B))}^2
.
\end{aligned}
\end{equation}
Also, 
\begin{align*}
I_5^2
&\leq
\delta
\int_I
\Vert 
\partial_t\widehat{f}^n\Vert_{L^2(\Ozeta;L^2_M(B))}^2\dt
+
c(\delta)
\int_I
\Vert   \mathbf{w}\Vert_{W^{1,\infty}(\Ozeta)}^2\Vert \widehat{f}^n\Vert_{L^2(\Ozeta;H^1_M(B))}^2\dt
\end{align*}
holds for any $\delta>0$, where the second term will be handled using Gr\"onwall's lemma. Also, by  Lemma \ref{lem:mainFP2},
\begin{align*}
I_5^3
&\leq c(\delta)
\sup_{I}\Big(
\Vert \nabx \mathbf{w}(t)\Vert_{L^4(\Ozeta)}^2\Vert \widehat{f}^n(t)\Vert_{L^4(\Ozeta;L^2_M(B))}^2\Big)+\delta\sup_{I}
\Vert \widehat{f}^n(t)\Vert_{L^2(\Ozeta;H^1_M(B))}^2
\\
&\leq c(\delta)
\sup_{I}\Big(
\Vert  \mathbf{w}(t)\Vert_{W^{7/4,2}(\Ozeta)}^2
\Vert \widehat{f}^n(t)\Vert_{W^{1,2}(\Ozeta;L^2_M(B))}^2\Big)+\delta\sup_{I}
\Vert \widehat{f}^n(t)\Vert_{L^2(\Ozeta;H^1_M(B))}^2
\\
&\leq  
c(\delta)
\exp\bigg(c\int_I \Vert  \mathbf{w} \Vert_{W^{5/2+,2}(\Ozeta)}^2 \dt
+
c
\sup_{I}
\Vert \mathbf{w}\Vert_{W^{7/4,2}(\Ozeta)}^2
 +
 c\int_I\Vert\partial_t\zeta\Vert_{W^{2/3,2}(\omega)}^{3}\dt \bigg) 
\Vert \widehat{f}_0
\Vert_{W^{1,2}(\Omega_{\zeta(0)};L^2_M(B))}^2\\
&+\delta\sup_{I}
\Vert \widehat{f}^n(t)\Vert_{L^2(\Ozeta;H^1_M(B))}^2.
\end{align*}
By the trace theorem, \eqref{fokkerPlanckDataAlone}--\eqref{fokkerPlanckDataAloneB}, Lemma \ref{lem:mainFP1} and Lemma \ref{lem:mainFP2}, 
\begin{align*}
I_5^4
&\lesssim
\int_I\Vert \bn_\zeta\cdot((\partial_t\zeta)\bn)\circ\bm{\varphi}_\zeta^{-1}\Vert_{L^4(\partial\Ozeta)}^2
\Vert  \nabx \mathbf{w}\Vert_{L^4(\partial\Ozeta)}^2
 \Vert   \widehat{f}^n\Vert_{L^4(\partial\Ozeta;L^2_M(B))}^2 \dt
+
\int_I
\Vert \widehat{f}^n\Vert_{L^4(\partial\Ozeta;H^1_M(B))}^2\dt
\\&
\lesssim
\int_I\Vert\partial_t\zeta\Vert_{W^{1/2,2}(\omega)}^2\Vert   \nabla\mathbf{w}\Vert_{W^{1/2,2}( \partial\Ozeta)}^2\Vert  \widehat{f}^n\Vert_{W^{1,2}(\Ozeta;L^2_M(B))}^2 \dt
+
\int_I
\Vert \widehat{f}^n\Vert_{W^{1,2}(\Ozeta;H^1_M(B))}^2\dt
\\
&\lesssim\int_I\Vert\partial_t\zeta\Vert_{W^{1/2,2}(\omega)}^2\Vert   \mathbf{w}\Vert_{W^{2,2}( \Ozeta)}^2 \dt\exp\bigg(c\int_I
\Vert   \mathbf{w}\Vert_{W^{5/2+\kappa,2}(\Ozeta)}^2\dt
+
c
\int_I\|\partial_t\zeta\|_{W^{2/3,2}(\omega)}^{3}\dt\bigg)
\\&
\qquad\times\|\widehat f_0\|^2_{W^{1,2}(\Omega_{\zeta(0)};L^2_M(B))}
\\
&\lesssim\exp\bigg(c\int_I
\Vert   \mathbf{w}\Vert_{W^{5/2+\kappa,2}(\Ozeta)}^2\dt+c
\sup_I\|\partial_t\zeta\|_{W^{2/3,2}(\omega)}^{3} \bigg)\|\widehat f_0\|^2_{W^{1,2}(\Omega_{\zeta(0)};L^2_M(B))}.
\end{align*}
Collecting all estimates, we obtain the desired estimate \eqref{fokkerEnergyNeg2}.  
\end{proof}

\begin{lemma}\label{lem:mainFP4}
Let $(\widehat{f}_0,\zeta, \mathbf{w})$ satisfy  \eqref{fokkerPlanckDataAlone}--\eqref{fokkerPlanckDataAloneB}  and let $\widehat{f}^n$ be the corresponding solution to \eqref{eq:FP1Approx}.
Suppose further that   $\widetilde{f}_0$ satisfies \eqref{eq:FP1Approx1Initial}. 
Then we have 
\begin{equation}
\begin{aligned}
\label{fokkerEnergyNeg3}
\sup_I &\Vert \partial_t\widehat{f}^n(t)
\Vert_{L^2(\Ozeta;L^2_M(B))}^2+
 \int_I 
 \Vert \partial_t\widehat{f}^n
\Vert_{W^{1,2}(\Ozeta;L^2_M(B))}^2
 \dt
 +
 \int_I 
 \Vert \partial_t\widehat{f}^n
\Vert_{L^2(\Ozeta;H^1_M(B))}^2
 \dt
\\
&\lesssim\exp\bigg(c\int_I
\Vert   \mathbf{w}\Vert_{W^{5/2+\kappa,2}(\Ozeta)}^2\dt
+
c\int_I 
\Vert\partial_t\mathbf{w}\Vert_{W^{3/2+\kappa,2}(\Ozeta )}^2\dt
+c
\, \sup_I\|\partial_t\zeta\|_{W^{1,2}(\omega)}^{3} \bigg) 
\\&
\qquad
\times \Big( \|\widehat f_0\|^2_{W^{1,2}(\Omega_{\zeta(0)};L^2_M(B))}+
\Vert \widetilde{f}_0
\Vert_{L^{2}(\Omega_{\zeta(0)};L^2_M(B))}^2\Big)
\end{aligned}
\end{equation}
for all $\kappa\in(0,1/2)$ uniformly in $n\in\mathbb{N}$.  
\end{lemma}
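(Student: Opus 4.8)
The plan is to control $g^n:=\partial_t\widehat f^n$ by differentiating the approximate equation \eqref{eq:FP1Approx} in time and running a parabolic energy argument parallel to Lemmas \ref{lem:mainFP1}--\ref{lem:mainFP3}. Since \eqref{eq:FP1Approx} is linear in $\widehat f^n$, differentiation in time yields
\begin{align*}
M\big(\partial_t g^n+(\mathbf w\cdot\nabx)g^n\big)+\divq\big(\chi^n(\nabx\mathbf w)\bq M g^n\big)
&=\Delx(Mg^n)+\divq\big(M\nabq g^n\big)\\
&\quad-M(\partial_t\mathbf w\cdot\nabx)\widehat f^n-\divq\big(\chi^n(\partial_t\nabx\mathbf w)\bq M\widehat f^n\big),
\end{align*}
complemented by the boundary conditions obtained by differentiating \eqref{fokkerPlankBoundarySpacexsc4}--\eqref{fokkerPlankBoundaryxsc4} in time, and by $g^n(0)=\widetilde f_0$ read off from \eqref{eq:FP1Approx1Initial}; at the level of the Galerkin scheme the datum $\partial_t\widehat f^n(0)$ is uniformly bounded in $L^2(\Omega_{\zeta(0)};L^2_M(B))$ by $\|\widetilde f_0\|_{L^2(\Omega_{\zeta(0)};L^2_M(B))}+\|\widehat f_0\|_{W^{1,2}(\Omega_{\zeta(0)};L^2_M(B))}$, using $0\le\chi^n\le1$. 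As in the previous lemmas the computation is made rigorous on a Galerkin level, and I would carry it out after transforming to the fixed reference domain by the Hanzawa map (so that the conormal direction is $t$-independent), cf.\ \cite[Section~4]{breit2021incompressible}.

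\textbf{Energy estimate.} Next I would test this equation against $g^n$ and integrate over $\Ozeta\times B$. The contributions that do not involve $\partial_t\mathbf w$ are handled exactly as in the proof of Lemma \ref{lem:mainFP1}, but now for $g^n$: using Reynolds' transport theorem, the boundary condition $M\nabq g^n|_{\partial B}=0$ and the cut-off property of $\chi^n$, they produce $\tfrac12\tfrac{\dd}{\dt}\|g^n\|_{L^2(\Ozeta;L^2_M(B))}^2$, the dissipative terms $\|\nabx g^n\|_{L^2(\Ozeta;L^2_M(B))}^2$ and $\|g^n\|_{L^2(\Ozeta;H^1_M(B))}^2$, a Gr\"onwall-type term with factor $\|\mathbf w\|_{W^{1,\infty}(\Ozeta)}^2\lesssim\|\mathbf w\|_{W^{5/2+\kappa,2}(\Ozeta)}^2$, and boundary terms on the moving interface. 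The latter — those from Reynolds' transport theorem and those from the differentiated conormal condition — are estimated as in \eqref{fokkerEnergyEst6}--\eqref{fokkerEnergyEst7} and in the treatment of $I_5^4$ in Lemma \ref{lem:mainFP3}: via the trace theorem, interpolation and Young's inequality, absorbing a small multiple of $\int_I\|g^n\|_{W^{1,2}(\Ozeta;H^1_M(B))}^2\dt$ into the left-hand side and leaving factors involving $\sup_I\|\partial_t\zeta\|_{W^{1,2}(\omega)}^3$ together with quantities already controlled by Lemmas \ref{lem:mainFP2}--\ref{lem:mainFP3}. The two new source terms are bounded, for $\delta>0$, by
\begin{align*}
\Big|\int_{\Ozeta\times B}M(\partial_t\mathbf w\cdot\nabx)\widehat f^n\,g^n\dq\dx\Big|
&\le\delta\|g^n\|_{L^2(\Ozeta;L^2_M(B))}^2
+c(\delta)\|\partial_t\mathbf w\|_{L^\infty(\Ozeta)}^2\|\nabx\widehat f^n\|_{L^2(\Ozeta;L^2_M(B))}^2,\\
\Big|\int_{\Ozeta\times B}\chi^n(\partial_t\nabx\mathbf w)\bq M\widehat f^n\cdot\nabq g^n\dq\dx\Big|
&\le\delta\|\nabq g^n\|_{L^2(\Ozeta;L^2_M(B))}^2
+c(\delta)\|\partial_t\nabx\mathbf w\|_{L^3(\Ozeta)}^2\|\widehat f^n\|_{L^6(\Ozeta;L^2_M(B))}^2,
\end{align*}
using $W^{3/2+\kappa,2}(\Ozeta)\hookrightarrow L^\infty(\Ozeta)$, $W^{3/2+\kappa,2}(\Ozeta)\hookrightarrow W^{1,3}(\Ozeta)$ and $W^{1,2}(\Ozeta;L^2_M(B))\hookrightarrow L^6(\Ozeta;L^2_M(B))$, and then inserting the bounds for $\|\nabx\widehat f^n\|_{L^2(\Ozeta;L^2_M(B))}$ and $\|\widehat f^n\|_{W^{1,2}(\Ozeta;L^2_M(B))}$ from Lemmas \ref{lem:mainFP1}--\ref{lem:mainFP3}, which already carry the exponential weights appearing in \eqref{fokkerEnergyNeg3}.

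\textbf{Conclusion.} Choosing $\delta$ small to absorb the dissipative contributions, integrating in time and applying Gr\"onwall's lemma then gives \eqref{fokkerEnergyNeg3}; the exponent is finite since, by \eqref{fokkerPlanckDataAlone}--\eqref{fokkerPlanckDataAloneB}, one has $\mathbf w\in L^2(I;W^{5/2+\kappa,2}(\Ozeta))$, $\partial_t\mathbf w\in L^2(I;W^{3/2+\kappa,2}(\Ozeta))$, $\mathbf w\in L^\infty(I;W^{7/4,2}(\Ozeta))$ and $\partial_t\zeta\in L^\infty(I;W^{1,2}(\omega))$ (the last two by standard interpolation from \eqref{fokkerPlanckDataAlone}--\eqref{fokkerPlanckDataAloneA}). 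All constants depend on the data only and on $0\le\chi^n\le1$, so the estimate is uniform in $n$, which is all that is needed since \eqref{eq:FP} is linear.

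\textbf{Main obstacle.} The principal difficulty is the one that already appeared in Lemma \ref{lem:mainFP3}, now one order higher: the boundary terms on the moving interface $\partial\Ozeta$ produced when the time-differentiated equation is integrated against $g^n$, in particular those from the $\bx$-integration by parts of $\Delx(Mg^n)$. On $\Ozeta$ directly, the conormal direction $\bn_\zeta$ depends on $t$, so the differentiated Neumann condition for $g^n$ is inhomogeneous and a priori brings in second-order spatial derivatives of $\widehat f^n$ on $\partial\Ozeta$, which cannot be controlled by Lemma \ref{lem:mainFP2}. Working instead with the Hanzawa-transformed equation over the fixed domain $\Omega$ replaces this by an inhomogeneous conormal condition involving only $\partial_t$ of the (time-dependent) coefficients times first-order derivatives of the transformed unknown, after which the trace theorem, the interpolation inequality used in \eqref{fokkerEnergyEst7}, and Lemmas \ref{lem:mainFP2}--\ref{lem:mainFP3} close the estimate. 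A secondary point is the uniform bound on $\partial_t\widehat f^n(0)$, which is exactly where the hypothesis $\widetilde f_0\in L^2(\Omega_{\zeta(0)};L^2_M(B))$ enters.
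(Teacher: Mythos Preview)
Your proposal is correct and follows the same strategy as the paper: differentiate \eqref{eq:FP1Approx} in time, test the resulting equation for $\widetilde f^n:=\partial_t\widehat f^n$ against $\widetilde f^n$, estimate the two $\partial_t\mathbf w$--source terms exactly as you do (via $W^{3/2+\kappa,2}\hookrightarrow L^\infty\cap W^{1,3}$ and the bounds from Lemmas~\ref{lem:mainFP1}--\ref{lem:mainFP2}), and close by Gr\"onwall.

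The only noteworthy difference is in the handling of the interface term. The paper works directly on the moving domain and records the differentiated Neumann condition simply as $\nabx\widetilde f^n\cdot\bn_\zeta=-\nabx\widehat f^n\cdot\partial_t\bn_\zeta$; the boundary integral $\int_{\partial\Omega_\zeta}M\,\partial_t\bn_\zeta\cdot\nabx\widehat f^n\,\widetilde f^n$ is then bounded by placing $\widetilde f^n$ and $\nabx\widehat f^n$ in $L^4(\partial\Omega_\zeta;L^2_M(B))$ via the trace theorem, absorbing the $\widetilde f^n$--part and controlling $\|\nabx\widehat f^n\|_{W^{1,2}(\Omega_\zeta;L^2_M(B))}$ through Lemma~\ref{lem:mainFP2}. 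Thus only first-order traces of $\widehat f^n$ are ever used, so your anticipated obstruction does not actually materialise in the paper's (admittedly formal) presentation. Your alternative of passing to the fixed reference domain via the Hanzawa map is a clean way to make this step rigorous and is entirely consistent with the paper's own remark that the rigorous argument proceeds at the Galerkin level; it simply trades the moving-boundary subtlety for time-dependent coefficients with a homogeneous conormal direction. Either route leads to \eqref{fokkerEnergyNeg3}. A small inaccuracy: when testing with $\widetilde f^n$ the Reynolds-transport boundary term cancels against the convective boundary term (as in Lemma~\ref{lem:mainFP1}), so the only genuine interface contribution is the one from the inhomogeneous Neumann condition; Lemma~\ref{lem:mainFP3} is not needed here, only Lemmas~\ref{lem:mainFP1}--\ref{lem:mainFP2}.
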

\begin{proof}
Now  set $\widetilde{f}^n:=\partial_t\widehat{f}^n$ and consider the following equation
\begin{equation}
\begin{aligned}
\label{eq:FP1Approx1}
M\big(\partial_t \widetilde{f}^n + (\mathbf{w}\cdot \nabx)  \widetilde{f}^n)
&+
 \divq  \big( \chi^n (  \nabx\mathbf{w}) \bq M\widetilde{f}^n \big) 
-
\Delx(M \widetilde{f}^n)
-
 \divq  \big( M \nabq \widetilde{f}^n
\big)
\\&
=
-
M (\partial_t\mathbf{w}\cdot \nabx)   \widehat{f}^n
+
 \divq  \big( \chi^n (  \partial_t\nabx\mathbf{w}) \bq M\widehat{f}^n \big) 
\end{aligned}
\end{equation}
in $I\times\Omega_\zeta\times B$ subject to
\begin{align}
&\widetilde{f}^n(0, \cdot, \cdot) =\widetilde{f}_0 \geq 0
& \quad \text{in }\Omega_{\zeta_0} \times B,
\label{fokkerPlankIintialxsec4time}
\\
&
\nabx\widetilde{f}^n\cdot \bn_\zeta =-\nabx\widehat{f}^n\cdot \partial_t\bn_\zeta
&\quad \text{on }I \times \partial\Omega_\zeta \times B,
\label{fokkerPlankBoundarySpacexsc4time}
\\
& M\nabq\widetilde{f}^n \cdot \frac{\bq}{\vert\bq\vert} =0
&\quad \text{on }I \times \Omega_\zeta \times \partial \overline{B}
\label{fokkerPlankBoundaryxsc4time}
\end{align}
and where $\widetilde{f}_0$ satisfies \eqref{eq:FP1Approx1Initial}. 
We now test \eqref{eq:FP1Approx1} with $\widetilde{f}^n$. Since the left-hand side of \eqref{eq:FP1Approx1} is of the same form as \eqref{eq:FP1Approx}, we obtain similarly to \eqref{fokkerEnergy}
\begin{equation}
\begin{aligned}
\label{fokkerEnergy1}
\int_I\int_{\Ozeta}
&\Big(
\frac{1}{2}\partial_t\Vert   \widetilde{f}^n \Vert_{L^2_M(B)}^2
+
\frac{1}{2}(\mathbf{w}\cdot \nabx)\Vert    \widetilde{f}^n \Vert_{L^2_M(B)}^2
+
\Vert   \nabx \widetilde{f}^n \Vert_{L^2_M(B)}^2
+ 
\frac{1}{2}
\Vert  \widetilde{f}^n\Vert_{H^1_M(B)}^2
\Big)\dx\dt
\\&
\lesssim
\int_I 
\Vert  \nabx\mathbf{w} \Vert_{L^\infty(\Ozeta)}^2\Vert\widetilde{f}^n
\Vert_{L^2(\Ozeta;L^2_M(B))}^2\dt
-
\int_I\int_{\partial\Ozeta\times B}
    M\partial_t\bn_\zeta\cdot\nabx\widehat{f}^n 
  \widetilde{f}^n\dq\dx\dt
\\&
+
\int_I\int_{\Ozeta\times B}
  \chi^n (  \partial_t\nabx\mathbf{w}) \bq M\widehat{f}^n \cdot \nabq
  \widetilde{f}^n\dq\dx\dt
  -
\int_I\int_{\Ozeta\times B} M
(\partial_t\mathbf{w}\cdot \nabx)   \widehat{f}^n
  \widetilde{f}^n\dq\dx\dt,
\end{aligned}
\end{equation}
where the second term on the right-hand side is due to \eqref{fokkerPlankBoundarySpacexsc4time}. For the boundary term, we use the trace theorem and Lemma \ref{lem:mainFP2} to obtain
\begin{align*}
\bigg\vert\int_I&\int_{\partial\Ozeta\times B}
    M\partial_t\bn_\zeta\cdot\nabx\widehat{f}^n 
  \widetilde{f}^n\dq\dx\dt
  \bigg\vert
\\
&\leq
\int_I 
\Vert \widetilde{f}^n\Vert_{L^{4}(\partial\Ozeta;L^2_M(B))}\|\partial_t\zeta\|_{W^{1,2}(\omega)}
\Vert \nabx \widehat{f}^n\Vert_{L^{4}(\partial\Ozeta;L^2_M(B))}\dt
\\
&\leq
\delta
\int_I 
\Vert \widetilde{f}^n\Vert_{W^{1,2}(\Ozeta;L^2_M(B))}^2\dt
+c(\delta)\sup_I\|\partial_t\zeta\|_{W^{1,2}(\omega)}^2
\int_I 
\Vert \nabx \widehat{f}^n\Vert_{W^{1,2}(\Ozeta;L^2_M(B))}^2\dt
\\
&\leq
\delta
\int_I 
\Vert \widetilde{f}^n\Vert_{W^{1,2}(\Ozeta;L^2_M(B))}^2\dt
\\&\quad+
c(\delta)\exp\bigg(c\int_I
\Vert   \mathbf{w}\Vert_{W^{5/2+\kappa,2}(\Ozeta)}^2\dt
+
c\,\sup_I\|\partial_t\zeta\|_{W^{1,2}}^{3} \bigg)\|\widehat f_0\|^2_{W^{1,2}(\Omega_{\zeta(0)};L^2_M(B))}.
\end{align*}
Next we use \eqref{fokkerPlanckDataAlone} and Lemma \ref{lem:mainFP2} to obtain
\begin{equation}
\begin{aligned}
\bigg\vert\int_I&\int_{ \Ozeta\times B}
  \chi^n (  \partial_t\nabx\mathbf{w}) \bq M\widehat{f}^n \cdot \nabq
  \widetilde{f}^n\dq\dx\dt
  \bigg\vert
\\&\leq
c(\delta)
\int_I\int_{ \Ozeta }
\vert\partial_t\nabx\mathbf{w}\vert^2
\Vert  \widehat{f}^n\Vert_{L^2_M(B)}^2\dx\dt+\delta
\int_I\Vert    \widetilde{f}^n\Vert_{L^2(\Ozeta;H^1_M(B))}^2\dt
\\&
\leq
c
\int_I 
\Vert\partial_t\nabx\mathbf{w}\Vert_{L^3(\Ozeta )}^2\dt
\sup_I
\Vert  \widehat{f}^n\Vert_{W^{1,2}(\Ozeta;L^2_M(B))}^2 +\delta
\int_I\Vert    \widetilde{f}^n\Vert_{L^2(\Ozeta;H^1_M(B))}^2\dt
\\&
\leq
c \exp\bigg(c\int_I
\Vert   \mathbf{w}\Vert_{W^{5/2+\kappa,2}(\Ozeta)}^2\dt
+
c\int_I 
\Vert\partial_t\nabx\mathbf{w}\Vert_{L^3(\Ozeta )}^2\dt
+
c\int_I\|\partial_t\zeta\|_{W^{1,2}(\omega)}^{3}\dt\bigg)
\\
&\quad \times
\|\widehat f_0\|^2_{W^{1,2}(\Omega_{\zeta(0)};L^2_M(B))}
+
\delta
\int_I\Vert    \widetilde{f}^n\Vert_{L^2(\Ozeta;H^1_M(B))}^2\dt.
\end{aligned}
\end{equation} 
Finally, we use Lemma \ref{lem:mainFP1} to also obtain
\begin{align*}
\bigg\vert\int_I\int_{ \Ozeta\times B}& M
(\partial_t\mathbf{w}\cdot \nabx)   \widehat{f}^n
  \widetilde{f}^n\dq\dx\dt
  \bigg\vert
\\&\lesssim
\int_I 
\Vert \nabx \widehat{f}^n\Vert_{L^2(\Ozeta;L^2_M(B))}^2\dt
+
\int_I\int_{ \Ozeta }
\vert\partial_t\mathbf{w}\vert^2
\Vert \widetilde{f}^n\Vert_{L^2_M(B)}^2\dx\dt
\\&\lesssim\exp\bigg(c\int_I
\Vert   \mathbf{w}\Vert_{W^{1,\infty}(\Ozeta)}^2\dt\bigg)\|\widehat f_0\|^2_{L^{2}(\Omega_{\zeta(0)};L^2_M(B))}
\\&\quad+
\int_I 
 \Vert\partial_t\mathbf{w}\Vert_{L^{\infty}(\Ozeta )}^2 
\Vert \widetilde{f}^n\Vert_{L^2(\Ozeta ;L^2_M(B))}^2 \dt.
\end{align*} 
Subsequently, similar to \eqref{fokkerEnergyEst5}, we use Reynold's transport theorem and the embedding $$W^{3/2+\kappa,2}(\Omega_\zeta)\hookrightarrow L^\infty(\Omega_\zeta)\cap W^{1,3}(\Omega_\zeta)$$ for any $\kappa>0$ and obtain
\begin{equation}
\begin{aligned}
\label{fokkerEnergyEst6}
\sup_I &\Vert\widetilde{f}^n
\Vert_{L^2(\Ozeta;L^2_M(B))}^2
+
 \int_I 
 \Vert \widetilde{f}^n
\Vert_{W^{1,2}(\Ozeta;L^2_M(B))}^2
 \dt
 +
 \int_I 
 \Vert \widetilde{f}^n
\Vert_{L^2(\Ozeta;H^1_M(B))}^2
 \dt
\\&\lesssim 
\exp\bigg(c\int_I
\Vert   \mathbf{w}\Vert_{W^{5/2+\kappa,2}(\Ozeta)}^2\dt
+
c 
\int_I 
\Vert\partial_t\mathbf{w}\Vert_{W^{3/2+\kappa,2}(\Ozeta )}^2\dt 
 +c\,
 \sup_I \|\partial_t\zeta\|_{W^{1,2}(\omega)}^{3}\bigg)
\\
& \quad
\times\|\widehat f_0\|^2_{W^{1,2}(\Omega_{\zeta(0)};L^2_M(B))}
+
\int_I \Big(\Vert\partial_t\mathbf{w}\Vert_{W^{3/2+\kappa,2}(\Ozeta )}^2 
+
\Vert  \mathbf{w} \Vert_{W^{1,\infty}(\Ozeta)}^2
\Big)
\Vert\widetilde{f}^n
\Vert_{L^2(\Ozeta;L^2_M(B))}^2 \dt.
\end{aligned}
\end{equation}
Applying Gr\"onwall's lemma yields the claim. 
\end{proof}

\section{The fully coupled system}
\label{sec:fullyCoupled}
\noindent 
In the section  we use  yet again, a fixed point argument to establish the existence of a unique local strong solution to the fully mutually coupled solute-solvent-structure system. As already shown in Section \ref{sec:solveSolventStructure}, such a fixed point argument requires showing the closedness of an anticipated solution in a ball and a contraction argument. These two properties will be shown in two different spaces where one space is a subspace of the other. More precisely, we consider
\begin{align*}
X&:=  L^{\infty}\big(I;W^{1,2}(\Oeta;L^2_M(B))  \big)
\cap
L^{2}\big(I;W^{2,2}(\Oeta;L^2_M(B))  \big)
\cap L^{2}\big(I;W^{1,2}(\Oeta;H^1_M(B))  \big)\\
&\cap W^{1,\infty}\big(I;L^{2}(\Oeta;L^2_M(B))  \big)
\cap
W^{1,2}\big(I;W^{1,2}(\Oeta;L^2_M(B))  \big)
\cap W^{1,2}\big(I;L^{2}(\Oeta;H^1_M(B))  \big),
\\
Y&:=  L^{\infty}\big(I;L^2(\Oeta;L^2_M(B))  \big)
\cap 
L^2\big(I;W^{1,2}(\Oeta;L^2_M(B))  \big)
\cap
L^2\big(I;L^2(\Oeta;H^1_M(B))  \big) 
\end{align*}
equipped with their canonical norms $\Vert \cdot\Vert_X$ and $\Vert \cdot\Vert_Y$, respectively.
Here, and in what follows, we have abused notation by reusing $I:=I_{**}$ where $I_{**}=(0,T_{**})$ is such that $T_{**}<T_*$ with $I_*=(0,T_*)$ being the local time on which the solution to the purely solvent-structure system was constructed in Section \ref{sec:solveSolventStructure}. Accordingly, we also abuse notation and set $T:=T_{**}$.
\\
For the purpose of a contraction argument, which is to be performed in the larger space $Y$, it is convenient to transform the moving domain to the fixed reference domain. For this reason, we also introduce the space
\begin{align*}
\overline{Y}:=  L^{\infty}\big(I;L^2(\Omega;L^2_M(B))  \big)
\cap 
L^2\big(I;W^{1,2}(\Omega;L^2_M(B))  \big)
\cap 
L^2\big(I;L^2(\Omega;H^1_M(B))  \big) 
\end{align*}
equipped with its canonical norm  $\Vert \cdot\Vert_{\overline{Y}}$
\\ 
Now, for ${\hbar}\in Y$, let $(\eta,\bu,\pi)$  be a unique solution of \eqref{contEqAlone}--\eqref{momEqAlone} with dataset $(\bff, g, \eta_0, \eta_\star, \bu_0, \mathbb{S}_\bq( \hbar))$ as shown in Section \ref{sec:solveSolventStructure}. On the other hand, for
\begin{align*}
(\eta,\bu)\in  
W^{1,\infty}(I&;W^{3,2}(\omega))
{
 \cap W^{2,2}\big(I ;W^{1,2}(\omega)  \big)
 }
\times
W^{1,\infty}(I;W^{1,2}_{\divx} ( \Oeta))
\\&
\cap L^2(I; W^{3,2}(\Oeta)) \cap    W^{1,2}(I;W^{2,2} ( \Oeta)),
\end{align*}
let $\widehat{f}$ be the solution of \eqref{eq:FP} with dataset $(\widehat{f}_0,\eta,\bu)$ as shown in Section \ref{sec:FP}. Now define the mapping $\mathtt{T}=\mathtt{T}_1\circ\mathtt{T}_2$ where
\begin{align*}
\mathtt{T}( \hbar)=\widehat{f}, \qquad \mathtt{T}_2( \hbar)=(\eta,\bu,\pi), \qquad \mathtt{T}_1(\eta,\bu,\pi)=\widehat{f}
\end{align*}
and let
\begin{align*}
B_R:=\big\{  \hbar\in X \,:\,\Vert \hbar\Vert_X^2 \leq R^2  \big\}.
\end{align*}
Let show that $\mathtt{T}:X\rightarrow X$ maps $B_R$ into $B_R$, i.e., for any $ \hbar \in B_R$, we have that 
\begin{align*}
\Vert \widehat{f}\Vert_X^2=\Vert T( \hbar) \Vert_X^2=\Vert T_1\circ T_2(\hbar) \Vert_X^2=\Vert T_1(\eta,\bu, \pi) \Vert_X^2 \leq R^2.
\end{align*}
Indeed if we let $\hbar \in B_R$ then by the a priori estimate  \eqref{eq:thm:mainFP},
\begin{equation}
\begin{aligned} 
&
 \sup_I \Vert \partial_t\widehat{f}(t)
\Vert_{L^{2}(\Oeta;L^2_M(B))}^2+
 \int_I 
 \Vert \partial_t\widehat{f}
\Vert_{W^{1,2}(\Oeta;L^2_M(B))}^2
 \dt
 +
 \int_I 
 \Vert \partial_t\widehat{f}
\Vert_{L^{2}(\Oeta;H^1_M(B))}^2
 \dt
 \\
 &+
 \sup_I \Vert \widehat{f}(t)
\Vert_{W^{1,2}(\Oeta;L^2_M(B))}^2+
 \int_I 
 \Vert \widehat{f}
\Vert_{W^{2,2}(\Oeta;L^2_M(B))}^2
 \dt
 +
 \int_I 
 \Vert \widehat{f}
\Vert_{W^{1,2}(\Oeta;H^1_M(B))}^2
 \dt
 \\ 
&\quad\lesssim  \exp\bigg(c\int_I
\Vert   \mathbf{u}\Vert_{W^{5/2+\kappa,2}(\Oeta)}^2\dt
+ 
c\int_I \Vert\partial_t\mathbf{u}\Vert_{W^{3/2+\kappa,2 }(\Oeta )}^2\dt
 \bigg)
\\ 
&\qquad\qquad
\times \exp\bigg( c
\sup_{I}
\Vert  \mathbf{u}\Vert_{ W^{7/4,2}(\Oeta) }^2
+c
\sup_I\|\partial_t\eta\|_{W^{1,2}(\omega)}^{3} \bigg)
\\
&\qquad\qquad
\times \Big( \|\widehat f_0\|^2_{W^{1,2}(\Omega_{\eta_0};L^2_M(B))}+
\Vert \widetilde{f}_0
\Vert_{L^{2}(\Omega_{\eta_0};L^2_M(B))}^2\Big)\label{aprioriFP}
\end{aligned}
\end{equation} 
%
We now aim to derive an  estimate for the terms in the exponential. By interpolation we obtain for some $\theta_1\in(0,1)$
\begin{align*}
\int_I
\Vert   \mathbf{u}\Vert_{W^{5/2+\kappa,2}(\Oeta)}^2\dt&\leq \int_I
\Vert   \mathbf{u}\Vert_{W^{2,2}(\Oeta)}^{2\theta_1}\Vert   \mathbf{u}\Vert_{W^{3,2}(\Oeta)}^{2(1-\theta_1)}\dt
\\
&\leq \bigg(\int_I
\Vert   \mathbf{u}\Vert_{W^{2,2}(\Oeta)}^{2}\dt\bigg)^{\theta_1}\bigg(\int_I\Vert   \mathbf{u}\Vert_{W^{3,2}(\Oeta)}^{2}\dt\bigg)^{1-\theta_1}
\\
&\leq T^{\theta_1}\sup_I
\Vert   \mathbf{u}\Vert_{W^{2,2}(\Oeta)}^{2\theta_1}\bigg(\int_I\Vert   \mathbf{u}\Vert_{W^{3,2}(\Oeta)}^{2}\dt\bigg)^{1-\theta_1}\\
&\lesssim T^{\theta_1}\bigg(\sup_I
\Vert   \mathbf{u}\Vert_{W^{2,2}(\Oeta)}^{2}+\int_I\Vert   \mathbf{u}\Vert_{W^{3,2}(\Oeta)}^{2}\dt\bigg)
\end{align*}
and, similarly, for some $\theta_2,\theta_3\in(0,1)$,
\begin{align*}
\int_I
\Vert   \partial_t\mathbf{u}\Vert_{W^{3/2+\kappa,2}(\Oeta)}^2\dt
&\leq 
\int_I \Vert   \partial_t\mathbf{u}\Vert_{W^{1,2}(\Oeta)}^{2\theta_2}\Vert   \partial_t\mathbf{u}\Vert_{W^{2,2}(\Oeta)}^{2(1-\theta_2)}\dt
\\
&\lesssim T^{\theta_2}\bigg(\sup_I
\Vert   \partial_t\mathbf{u}\Vert_{W^{1,2}(\Oeta)}^{2}+\int_I\Vert   \partial_t\mathbf{u}\Vert_{W^{2,2}(\Oeta)}^{2}\dt\bigg),
\\
\sup_I
\Vert   \mathbf{u}\Vert_{W^{7/4,2}(\Oeta)}^2
&\lesssim
\sup_I
\Vert   \mathbf{u}\Vert_{W^{2,2}(\Oeta)}^2
\\&\lesssim
\Vert   \mathbf{u}\Vert_{L^2(I;W^{2,2}(\Oeta))}^{2\theta_3}\Vert\mathbf{u}\Vert_{W^{1,2}(I;W^{2,2}(\Oeta))}^{2(1-\theta_3)}
\\
&\leq T^{\theta_3}\bigg(\Vert   \mathbf{u}\Vert_{L^\infty(I;W^{2,2}(\Oeta))}^{2}+\Vert\mathbf{u}\Vert^2_{W^{1,2}(I;W^{2,2}(\Oeta))}\bigg)\\
&\lesssim T^{\theta_3}\Vert\mathbf{u}\Vert^2_{W^{1,2}(I;W^{2,2}(\Oeta))}.
\end{align*}
Finally, we have for some $\theta_4\in(0,1)$ 
\begin{align*}
\sup_I
\Vert    \partial_t\eta\Vert_{W^{1,2}(\omega)}^2&\leq \Vert    \partial_t\eta\Vert_{L^2(I;W^{1,2}(\omega))}^{2\theta_4}\Vert    \partial_t\eta\Vert_{W^{1,2}(I;W^{1,2}(\omega))}^{2(1-\theta_4)}
\\
&\leq T^{\theta_4}\Vert    \partial_t\eta\Vert_{L^\infty(I;W^{1,2}(\omega))}^{2\theta_4}\Vert    \partial_t\eta\Vert_{W^{1,2}(I;W^{1,2}(\omega))}^{2(1-\theta_4)}\\
&\leq T^{\theta_4}\bigg(\Vert    \partial_t\eta\Vert_{L^\infty(I;W^{1,2}(\omega))}^{2}+\Vert    \partial_t\eta\Vert_{W^{1,2}(I;W^{1,2}(\omega))}^{2}\bigg)
\\
&\lesssim T^{\theta_4}\Vert   \eta\Vert_{W^{2,2}(I;W^{1,2}(\omega))}^{2}.
\end{align*}
By  Theorem \ref{thm:fluidStructureWithoutFK}, we can control
\begin{equation}
\begin{aligned}
\label{twoVeloOneShell}
\Vert\mathbf{u}\Vert^2_{L^{2}(I;W^{3,2}(\Oeta))}
&+\Vert\mathbf{u}\Vert^2_{W^{1,\infty}(I;W^{1,2}(\Oeta))}+
\Vert\mathbf{u}\Vert^2_{W^{1,2}(I;W^{2,2}(\Oeta))}
+
\Vert   \eta\Vert_{W^{2,2}(I;W^{1,2}(\omega))}^{2}
\end{aligned}
\end{equation}
in terms of
\begin{equation}
\begin{aligned}
\label{maximalThisPaper}
\mathcal{E}_{**}(\bff,g, \eta_0, \eta_\star, \bu_0)
+
\int_{I} 
\Big(
\Vert \mathbb{S}_\bq(\partial_t\hbar)
\Vert_{W^{1,2}(\Oeta)}^2
+ 
\Vert \mathbb{S}_\bq(\hbar)
\Vert_{W^{2,2}(\Oeta)}^2
\Big)
\dt,
\end{aligned}
\end{equation} 
where
\begin{equation}
\begin{aligned}
\label{bigDataNorm*E}
\mathcal{E}_{**}(\bff,g, \eta_0, \eta_\star, \bu_0)&:=
\Vert \eta_\star\Vert_{W^{3,2}(\omega)}^2
 +
 \Vert \eta_0\Vert_{W^{5,2}(\omega)}^2
  +
 \Vert
  \bu_0\Vert_{W^{3,2}(\Omega_{\eta_0})}^2
  +
  \Vert \mathbb{S}_\bq(\hbar(0))
  \Vert_{W^{2,2}(\Omega_{\eta_0})}^2
   \\&
    +
   \Vert \bff(0)\Vert_{W^{1,2}(\Omega_{\eta_0})}^2    +
 \Vert
  g(0)\Vert_{W^{1,2}(\omega)}^2
   \\&+
   \int_I \big(\Vert \partial_t g \Vert_{W^{1,2}(\omega)}^2
  +
 \Vert \partial_t\bff\Vert_{L^2(\Oeta)}^2
    \big)
\dt.
\end{aligned}
\end{equation}
However, by using \cite[(3.4)]{masmoudi2008well} (for $k>1$), we obtain 
\begin{equation}
\begin{aligned}
\label{aprioriFP1}
\int_{I} 
\Big(
\Vert
\mathbb{S}_\bq(\partial_t\overline{\hbar} )
\Vert_{W^{1,2}(\Oeta)}^2
&+
\Vert \mathbb{S}_\bq( \hbar)
\Vert_{W^{2,2}(\Oeta)}^2\Big)
\dt
\\&\lesssim
 \int_{I}
 \Vert  \partial_t\hbar
\Vert_{W^{1,2}(\Oeta;L^2_M(B))}^2
\dt
+
\int_{I}
 \Vert  \hbar
\Vert_{W^{2,2}(\Oeta;L^2_M(B))}^2
\dt\leq R^2.
\end{aligned}
\end{equation}
For  small enough $T$ and for $R$ very large, we obtain from \eqref{aprioriFP}-- \eqref{aprioriFP1} that $\Vert \widehat{f}\Vert_X^2 \leq R$. Thus, $\mathtt{T}:X\rightarrow X$ maps $B_R$ into $B_R$.
\\
Next, we show that $\mathtt{T}$ is a contraction in the larger space $Y$. For this reason, on the one hand, 
we let $\widehat{f}^1$ and $\widehat{f}^2$ be two solutions of \eqref{eq:FP} with data $(\widehat{f}_0,\eta^1, \bu^1)$ and $(\widehat{f}_0,\eta^2, \bu^2)$, respectively.
On the other hand, we let
$(\eta^1,\bu^1,\pi^1)$ and $(\eta^2,\bu^2,\pi^2)$  be two solutions of \eqref{contEqAlone}-\eqref{momEqAlone} with dataset $(\bff, g, \eta_0, \eta_\star, \bu_0, \mathbb{S}_\bq(\hbar^1))$ and $(\bff, g, \eta_0, \eta_\star, \bu_0, \mathbb{S}_\bq(\hbar^2))$, respectively.
For the former, by setting $\widehat{f}^{12}=\widehat{f}^1-\widehat{f}^2$, $\bu^{12}=\bu^1-\bu^2$ and $\eta^{12}=\eta^1-\eta^2$, we wish to obtain a bound for $\widehat{f}^{12}$ in the norm $Y$ in terms of a suitable norm of  $\bu^{12}$ and $\eta^{12}$.
This bound is achieved by transforming the equation \eqref{eq:FP} from the moving domain to the fixed reference domain and obtaining the equivalent bound for $\overline{\widehat{f}^{12}}$ in the norm $\overline{Y}$ in terms of a suitable norm of   $\overline{\bu}^{12}$ and $\overline{\eta}^{12}$.
Here, $\overline{\widehat{f}^{12}}=\overline{\widehat{f}^1}-\overline{\widehat{f}^2}$ and $\overline{\bu}^{12}=\overline{\bu}^1-\overline{\bu}^2$ with $\overline{\widehat{f}^i}= \widehat{f}^i\circ\bm{\Psi}_{\eta^i}$ and $\overline{\bu}^i=\bu^i\circ\bm{\Psi}_{\eta^i} $, $i=1,2$. Before obtaining this latter bound, let's see how a single strong solution $\widehat{f}$ of   \eqref{eq:FP} with a data $(\widehat{f}_0,\eta, \bu)$ transforms to a fixed domain. The difference equation will then be deduced from the equation of the single equation.
\\
Let $\varphi=\overline{\varphi}\circ\bm{\Psi}_{\eta}^{-1}$, $\overline{\widehat{f}}= \widehat{f}\circ\bm{\Psi}_{\eta}$ and $\overline{\bu}=\bu\circ\bm{\Psi}_{\eta} $. Since $\widehat{f}$ is
a  strong solution of   \eqref{eq:FP} with a data $(\widehat{f}_0,\eta, \bu)$, it follows from \eqref{weakFokkerPlanckEq} that
\begin{align*}
\int_I  \frac{\mathrm{d}}{\dt}
\int_{\Oeta \times B}&M \overline{\widehat{f}}\circ\bm{\Psi}_{\eta}^{-1} \, \overline{\varphi}\circ\bm{\Psi}_{\eta}^{-1} \dq \dx \dt 
\\&=
\int_I\int_{\Oeta \times B}M \overline{\widehat{f}}\circ\bm{\Psi}_{\eta}^{-1} \big(\partial_t  \overline{\varphi}\circ\bm{\Psi}_{\eta}^{-1}
+
\nabx\overline{\varphi}\circ\bm{\Psi}_{\eta}^{-1}\cdot\partial_t\bm{\Psi}_{\eta}^{-1}
\big) \dq \dx \dt
\\&+
\int_I\int_{\Oeta \times B}\big(
M\overline{\bu}\circ\bm{\Psi}_{\eta}^{-1}\overline{\widehat{f}}\circ\bm{\Psi}_{\eta}^{-1} \cdot\nabx\bm{\Psi}_{\eta}^{-1} \nabx\overline{\varphi}\circ\bm{\Psi}_{\eta}^{-1}\big) \dq \dx \dt
\\&
+ \int_I\int_{ \Oeta \times B}
  M (\nabx \bm{\Psi}_{\eta}^{-1})^\top(\nabx\overline{\bu}\circ\bm{\Psi}_{\eta}^{-1})  \bq\overline{\widehat{f}}\circ\bm{\Psi}_{\eta}^{-1} \cdot \nabq\overline{\varphi}\circ\bm{\Psi}_{\eta}^{-1} \dq \dx \dt
\\&
- \int_I\int_{ \Oeta \times B}
  M (\nabx \bm{\Psi}_{\eta}^{-1})^\top \nabx \bm{\Psi}_{\eta}^{-1}\nabx\overline{\widehat{f}}\circ\bm{\Psi}_{\eta}^{-1} \cdot \nabx\overline{\varphi}\circ\bm{\Psi}_{\eta}^{-1} \dq \dx \dt
 \\&
- \int_I\int_{ \Oeta \times B}
  M \nabq \overline{\widehat{f}}\circ\bm{\Psi}_{\eta}^{-1}  \cdot \nabq\overline{\varphi}\circ\bm{\Psi}_{\eta}^{-1} \dq \dx \dt
\end{align*}
and thus,
\begin{align*}
\int_I  \frac{\mathrm{d}}{\dt}
\int_{\Omega \times B}J_\eta M \overline{\widehat{f}}\, \overline{\varphi}  \dq \dx \dt 
&=
\int_I\int_{\Omega \times B}J_\eta M \overline{\widehat{f}}  \big(\partial_t  \overline{\varphi} 
+
\nabx\overline{\varphi} \cdot\partial_t\bm{\Psi}_{\eta}^{-1}\circ\bm{\Psi}_{\eta}
\big) \dq \dx \dt
\\&+
\int_I\int_{\Omega \times B}J_\eta 
M\overline{\bu}\overline{\widehat{f}}  \cdot\nabx\bm{\Psi}_{\eta}^{-1}\circ\bm{\Psi}_{\eta} \nabx\overline{\varphi}   \dq \dx \dt
\\&
+ \int_I\int_{ \Omega \times B}
  J_\eta M (\nabx \bm{\Psi}_{\eta}^{-1}\circ\bm{\Psi}_{\eta})^\top(\nabx\overline{\bu})  \bq\overline{\widehat{f}}  \cdot \nabq\overline{\varphi}  \dq \dx \dt
\\&
- \int_I\int_{ \Omega \times B}
  J_\eta M (\nabx \bm{\Psi}_{\eta}^{-1}\circ\bm{\Psi}_{\eta})^\top \nabx \bm{\Psi}_{\eta}^{-1}\circ\bm{\Psi}_{\eta}\nabx\overline{\widehat{f}}  \cdot \nabx\overline{\varphi} \dq \dx \dt
 \\&
- \int_I\int_{ \Omega \times B}
  J_\eta M \nabq \overline{\widehat{f}}  \cdot \nabq\overline{\varphi} \dq \dx \dt.
\end{align*}
Replace $\overline{\varphi}$ with $\overline{\varphi}/J_\eta$ to obtain
\begin{align*}
\int_I  \frac{\mathrm{d}}{\dt}&
\int_{\Omega \times B}  M \overline{\widehat{f}}\, \overline{\varphi}  \dq \dx \dt 
=
\int_I\int_{\Omega \times B}\big( M \overline{\widehat{f}}  \partial_t  \overline{\varphi} 
+
J_\eta M \overline{\widehat{f}}  \partial_t (J_\eta^{-1}) \overline{\varphi} 
\big) \dq \dx \dt
\\&+
\int_I\int_{\Omega \times B}\big(M \overline{\widehat{f}}  
\nabx  \overline{\varphi} \cdot\partial_t\bm{\Psi}_{\eta}^{-1}\circ\bm{\Psi}_{\eta}
+
J_\eta M \overline{\widehat{f}} 
(\nabx J_{\eta}^{-1})\overline{\varphi} \cdot\partial_t\bm{\Psi}_{\eta}^{-1}\circ\bm{\Psi}_{\eta}
\big) \dq \dx \dt
\\&+
\int_I\int_{\Omega \times B}\big( 
M\overline{\bu}\overline{\widehat{f}}  \cdot\nabx\bm{\Psi}_{\eta}^{-1}\circ\bm{\Psi}_{\eta} \nabx\overline{\varphi} 
+
J_\eta 
M\overline{\bu}\overline{\widehat{f}}  \cdot\nabx\bm{\Psi}_{\eta}^{-1}\circ\bm{\Psi}_{\eta} (\nabx J_\eta^{-1})\overline{\varphi} 
\big)  \dq \dx \dt
\\&
- \int_I\int_{ \Omega \times B}
  M (\nabx \bm{\Psi}_{\eta}^{-1}\circ\bm{\Psi}_{\eta})^\top \nabx \bm{\Psi}_{\eta}^{-1}\circ\bm{\Psi}_{\eta}\nabx\overline{\widehat{f}}  \cdot (\nabx\overline{\varphi} 
  +
  J_\eta (\nabx J_\eta^{-1})\overline{\varphi} 
  )\dq \dx \dt
\\&
+ \int_I\int_{ \Omega \times B}
   M (\nabx \bm{\Psi}_{\eta}^{-1}\circ\bm{\Psi}_{\eta})^\top(\nabx\overline{\bu})  \bq\overline{\widehat{f}}  \cdot \nabq\overline{\varphi}  \dq \dx \dt
 \\&
- \int_I\int_{ \Omega \times B}
   M \nabq \overline{\widehat{f}}  \cdot \nabq\overline{\varphi} \dq \dx \dt.
\end{align*}
Set $\overline{\widehat{f}^{12}}=\overline{\widehat{f}^1}-\overline{\widehat{f}^2}$, $\eta^{12}=\eta^1-\eta^2$ and $\overline{\bu}^{12}=\overline{\bu}^1-\overline{\bu}^2$. Then we obtain
\begin{align*}
\frac{1}{2}
\int_I  \frac{\mathrm{d}}{\dt}&
\int_{\Omega \times B}  M \overline{\widehat{f}^{12}}\, \overline{\varphi}  \dq \dx \dt 
+
\int_I\int_{ \Omega \times B}
   M \nabq \overline{\widehat{f}^{12}}  \cdot \nabq\overline{\varphi} \dq \dx \dt
  \\&
+ \int_I\int_{ \Omega \times B}
  M (\nabx \bm{\Psi}_{\eta^2}^{-1}\circ\bm{\Psi}_{\eta^2})^\top \nabx \bm{\Psi}_{\eta^2}^{-1}\circ\bm{\Psi}_{\eta^2}\nabx\overline{\widehat{f}^{12}}  \cdot \nabx\overline{\varphi} 
  \dq \dx \dt
\\&=
\int_I\int_{\Omega \times B} 
h^1_2(\overline{\varphi})  \dq \dx \dt
+
\int_I\int_{\Omega \times B}
J_{\eta^2} M \overline{\widehat{f}^{12}} \partial_t (J_{\eta^1}^{-1}) \overline{\varphi} 
 \dq \dx \dt
\\&+
\int_I\int_{\Omega \times B}M \overline{\widehat{f}^{12}} \big[ 
\nabx  \overline{\varphi} 
+
J_{\eta^2}   
(\nabx J_{\eta^1}^{-1})\overline{\varphi} \big]\cdot\partial_t\bm{\Psi}_{\eta^1}^{-1}\circ\bm{\Psi}_{\eta^1}
 \dq \dx \dt
\\&+
\int_I\int_{\Omega \times B} 
M\big[\overline{\bu}^{12}\overline{\widehat{f}^1}  
+
\overline{\bu}^2\overline{\widehat{f}^{12}}  \big]\cdot\nabx\bm{\Psi}_{\eta^1}^{-1}\circ\bm{\Psi}_{\eta^1} \nabx\overline{\varphi} 
  \dq \dx \dt
\\&+
\int_I\int_{\Omega \times B}  
J_{\eta^2} 
M\big[\overline{\bu}^{12}\overline{\widehat{f}^1}  
+
\overline{\bu}^2\overline{\widehat{f}^{12}} \big] \cdot\nabx\bm{\Psi}_{\eta^1}^{-1}\circ\bm{\Psi}_{\eta^1} (\nabx J_{\eta^1}^{-1})\overline{\varphi} 
  \dq \dx \dt
  \\&
- \int_I\int_{ \Omega \times B}
 M (\nabx \bm{\Psi}_{\eta^2}^{-1}\circ\bm{\Psi}_{\eta^2})^\top \nabx \bm{\Psi}_{\eta^2}^{-1}\circ\bm{\Psi}_{\eta^2}\nabx\overline{\widehat{f}^{12}} \cdot 
  J_{\eta^1} (\nabx J_{\eta^1}^{-1})\overline{\varphi} 
  \dq \dx \dt
   \\&
+ \int_I\int_{ \Omega \times B}
  M (\nabx \bm{\Psi}_{\eta^2}^{-1}\circ\bm{\Psi}_{\eta^2})^\top \big[(\nabx\overline{\bu}^{12})  \bq\overline{\widehat{f}^1}  
  +
  (\nabx\overline{\bu}^2)  \bq\overline{\widehat{f}^{12}}   \big] \cdot \nabq\overline{\varphi}
   \dq \dx \dt,
\end{align*}
where
\begin{align*}
h^1_2(\overline{\varphi})&:=(J_{\eta^1}-J_{\eta^2}) M \overline{\widehat{f}^{1}} \partial_t (J_{\eta^1}^{-1}) \overline{\varphi} 
+
J_{\eta^2} M \overline{\widehat{f}^{2}} \partial_t (J_{\eta^1}^{-1}-J_{\eta^2}^{-1}) \overline{\varphi} 
\\&+
M \overline{\widehat{f}^2}  
\nabx  \overline{\varphi} \cdot\big[\partial_t(\bm{\Psi}_{\eta^1}^{-1}-\bm{\Psi}_{\eta^2}^{-1})\circ\bm{\Psi}_{\eta^1}
+
 \cdot\partial_t\bm{\Psi}_{\eta^2}^{-1}\circ(\bm{\Psi}_{\eta^1}
-\bm{\Psi}_{\eta^2})
\big]
\\&+
\big[(J_{\eta^1}-J_{\eta^2}) M \overline{\widehat{f}^1} 
(\nabx J_{\eta^1}^{-1})
+
J_{\eta^2} M \overline{\widehat{f}^2} 
(\nabx (J_{\eta^1}^{-1}-J_{\eta^2}^{-1}))\big]\overline{\varphi} \cdot\partial_t\bm{\Psi}_{\eta^1}^{-1}\circ\bm{\Psi}_{\eta^1}
\\&+
J_{\eta^2} M \overline{\widehat{f}^2} 
(\nabx J_{\eta^2}^{-1})\overline{\varphi} \cdot\big[\partial_t(\bm{\Psi}_{\eta^1}^{-1}-\bm{\Psi}_{\eta^2}^{-1})\circ\bm{\Psi}_{\eta^1}
+
\partial_t\bm{\Psi}_{\eta^2}^{-1}\circ(\bm{\Psi}_{\eta^1}-\bm{\Psi}_{\eta^2})\big]
\\&+
M\overline{\bu}^2\overline{\widehat{f}^2}  \cdot\big[\nabx(\bm{\Psi}_{\eta^1}^{-1}-\bm{\Psi}_{\eta^2}^{-1})\circ\bm{\Psi}_{\eta^1} 
+
\nabx\bm{\Psi}_{\eta^2}^{-1}\circ(\bm{\Psi}_{\eta^1}-\bm{\Psi}_{\eta^2}) \big]
\nabx\overline{\varphi}
\\&
+
M\big[(J_{\eta^1}-J_{\eta^2}) 
\overline{\bu}^1\overline{\widehat{f}}^1  \cdot\nabx\bm{\Psi}_{\eta^1}^{-1} 
+
J_{\eta^2} 
\overline{\bu}^2\overline{\widehat{f}^2}  \cdot\nabx(\bm{\Psi}_{\eta^1}^{-1}-\bm{\Psi}_{\eta^2}^{-1})
\big]
\circ\bm{\Psi}_{\eta^1} (\nabx J_{\eta^1}^{-1})\overline{\varphi} 
\\&
+
J_{\eta^2} 
M\overline{\bu}^2\overline{\widehat{f}^2}  \cdot
\big[\nabx\bm{\Psi}_{\eta^2}^{-1}\circ(\bm{\Psi}_{\eta^1}-\bm{\Psi}_{\eta^2}) (\nabx J_{\eta^1}^{-1})  
+
\nabx\bm{\Psi}_{\eta^2}^{-1}\circ\bm{\Psi}_{\eta^2} (\nabx (J_{\eta^1}^{-1}-J_{\eta^2}^{-1}))
\big]\overline{\varphi} 
  \\&
 + M (\nabx \bm{\Psi}_{\eta^2}^{-1}\circ\bm{\Psi}_{\eta^2})^\top\big[ \nabx (\bm{\Psi}_{\eta^1}^{-1}-\bm{\Psi}_{\eta^2}^{-1})\circ\bm{\Psi}_{\eta^1}
  +
   \nabx \bm{\Psi}_{\eta^2}^{-1}\circ(\bm{\Psi}_{\eta^1}^{-1}-\bm{\Psi}_{\eta^2})
   \big]\nabx\overline{\widehat{f}^1}  \cdot \nabx\overline{\varphi} 
    \\&
 + M \big[ (\nabx (\bm{\Psi}_{\eta^1}^{-1}-\bm{\Psi}_{\eta^2}^{-1})\circ\bm{\Psi}_{\eta^1})^\top
  +
   (\nabx \bm{\Psi}_{\eta^2}^{-1}\circ(\bm{\Psi}_{\eta^1}^{-1}-\bm{\Psi}_{\eta^2}))^\top
   \big]\nabx \bm{\Psi}_{\eta^1}^{-1}\circ\bm{\Psi}_{\eta1}\nabx\overline{\widehat{f}^1}  \cdot \nabx\overline{\varphi} 
  \\&
 + M  (\nabx \bm{\Psi}_{\eta^2}^{-1}\circ\bm{\Psi}_{\eta^2})^\top
  \nabx \bm{\Psi}_{\eta^2}^{-1}\circ\bm{\Psi}_{\eta^2}\nabx\overline{\widehat{f}^2}  \cdot \big[ (J_{\eta^1}-J_{\eta^2})\nabx J_{\eta^1}^{-1}+
  J_{\eta^2} \nabx (J_{\eta^1}^{-1}-J_{\eta^2}^{-1})\big]\overline{\varphi}  
    \\&
 + M (\nabx \bm{\Psi}_{\eta^2}^{-1}\circ\bm{\Psi}_{\eta^2})^\top\big[ \nabx (\bm{\Psi}_{\eta^1}^{-1}-\bm{\Psi}_{\eta^2}^{-1})\circ\bm{\Psi}_{\eta^1}
  +
   \nabx \bm{\Psi}_{\eta^2}^{-1}\circ(\bm{\Psi}_{\eta^1}^{-1}-\bm{\Psi}_{\eta^2})
   \big]\nabx\overline{\widehat{f}^1}  \cdot  J_{\eta^1} (\nabx J_{\eta^1}^{-1})\overline{\varphi} 
    \\&
 + M \big[ (\nabx (\bm{\Psi}_{\eta^1}^{-1}-\bm{\Psi}_{\eta^2}^{-1})\circ\bm{\Psi}_{\eta^1})^\top
  +
   (\nabx \bm{\Psi}_{\eta^2}^{-1}\circ(\bm{\Psi}_{\eta^1}^{-1}-\bm{\Psi}_{\eta^2}))^\top
   \big]\nabx \bm{\Psi}_{\eta^1}^{-1}\circ\bm{\Psi}_{\eta^1}\nabx\overline{\widehat{f}^1}  \cdot  J_{\eta^1} (\nabx J_{\eta^1}^{-1})\overline{\varphi}  
\\&+
M \big[(\nabx (\bm{\Psi}_{\eta^1}^{-1}-\bm{\Psi}_{\eta^2}^{-1})\circ\bm{\Psi}_{\eta^1})^\top 
  +
    (\nabx \bm{\Psi}_{\eta^2}^{-1}\circ(\bm{\Psi}_{\eta^1}-\bm{\Psi}_{\eta^2}))^\top\big]
    (\nabx\overline{\bu}^1)  \bq\overline{\widehat{f}^1}  \cdot \nabq\overline{\varphi}.
\end{align*}
Take $\overline{\varphi} =\overline{\widehat{f}^{12}}$ as test function. Note that the estimate from Theorem 
\ref{thm:mainFP} does not yields boundedness of $\widehat f^i$, $i=1,2$ in space-time. However, the maximum principle from Remark \ref{rem:max} does the job and will be used repeatedly in the following.
To estimate $h^1_2$, we need the following  estimates for the critical terms
\begin{equation}
\begin{aligned}
\label{preH12}
\int_I&\int_{ \Omega \times B}
M \overline{\widehat{f}^2}  
\nabx\overline{\widehat{f}^{12}}\cdot\partial_t(\bm{\Psi}_{\eta^1}^{-1}-\bm{\Psi}_{\eta^2}^{-1})\circ\bm{\Psi}_{\eta^1}
 \dq \dx \dt
 \\&+
 \int_I\int_{ \Omega \times B}
M \overline{\bu}^2\overline{\widehat{f}^2}  
\cdot \nabx(\bm{\Psi}_{\eta^1}^{-1}-\bm{\Psi}_{\eta^2}^{-1})\circ\bm{\Psi}_{\eta^1}\nabx\overline{\widehat{f}^{12}}
 \dq \dx \dt
\\&
\leq
 \delta\int_I  
 \Vert \nabx\overline{\widehat{f}^{12}}
\Vert_{L^{2}(\Omega;L^2_M(B))}^2
 \dt
+  c(\delta,\eta^1, \overline{\widehat{f}^{2}} )
\int_I\Vert \partial_t(\bm{\Psi}_{\eta^1}^{-1}-\bm{\Psi}_{\eta^2}^{-1})
\Vert_{L^{2}(\Omega)}^2
  \dt
  \\&
+
\delta
\int_I 
 \Vert \nabx\overline{\widehat{f}^{12}}
\Vert_{L^{2}(\Omega;L^2_M(B))}^2
 \dt
+   c(\delta,\eta^1, \overline{\widehat{f}^{2}},\overline{\bu}^2)
\sup_I
\Vert \nabx(\bm{\Psi}_{\eta^1}^{-1}-\bm{\Psi}_{\eta^2}^{-1})
\Vert_{L^{2}(\Omega)}^2
.
\end{aligned}
\end{equation}
Subsequently, by using \eqref{211and213}, we obtain
\begin{equation}
\begin{aligned}
\int_I&\int_{ \Omega \times B}
 h^1_2(\overline{\widehat{f}^{12}})
   \dq \dx \dt
   \leq
    \delta
   \int_I 
 \Vert \nabx\overline{\widehat{f}^{12}}
\Vert_{L^2(\Omega;L^2_M(B))}^2
 \dt
 +
 \delta
   \int_I 
 \Vert \overline{\widehat{f}^{12}}
\Vert_{L^2(\Omega;H^1_M(B))}^2
 \dt
   \\&
 +
   c(\delta, \eta^1, \eta^2,\overline{\widehat{f}^{1}},\overline{\widehat{f}^{2}},\overline{\bu}^1,\overline{\bu}^2)
 \bigg( \int_I  \Vert \overline{\widehat{f}^{12}}
\Vert_{L^2(\Omega;L^2_M(B))}^2 \dt
+
\int_I\Vert \partial_t\eta^{12}
\Vert_{W^{1,2} (\omega)}^2
+
\sup_I
\Vert \eta^{12}
\Vert_{ W^{2,2}(\omega)}^2
 \bigg)
\end{aligned}
\end{equation}
for any $\delta>0$. Next,
\begin{align*}
\int_I\int_{\Omega \times B}M \overline{\widehat{f}^{12}} 
\nabx  \overline{\widehat{f}^{12}} \cdot\partial_t\bm{\Psi}_{\eta^1}^{-1}\circ\bm{\Psi}_{\eta^1}
 \dq \dx \dt\lesssim \delta    \int_I 
 \Vert \overline{\widehat{f}^{12}}
\Vert_{L^2(\Omega;H^1_M(B))}^2
 \dt+   c(\delta, \eta^1)
 \int_I 
 \Vert \overline{\widehat{f}^{12}}
\Vert_{L^2(\Omega;L^2_M(B))}^2\dt
\end{align*}
as well as
\begin{equation}
\begin{aligned}
\int_I&\int_{\Omega \times B}\big[
J_{\eta^2} M \overline{\widehat{f}^{12}} \partial_t (J_{\eta^1}^{-1}) \overline{\widehat{f}^{12}}  
+
M \overline{\widehat{f}^{12}} 
J_{\eta^2}   
(\nabx J_{\eta^1}^{-1})\overline{\widehat{f}^{12}}  \cdot\partial_t\bm{\Psi}_{\eta^1}^{-1}\circ\bm{\Psi}_{\eta^1}\big]
 \dq \dx \dt
   \\&
   \leq
   c(\eta^1, \eta^2)\int_I 
 \Vert \overline{\widehat{f}^{12}}
\Vert_{L^2(\Omega;L^2_M(B))}^2
 \dt.
\end{aligned}
\end{equation}
Similar to \eqref{preH12}, we have that
\begin{equation}
\begin{aligned}
\int_I&\int_{\Omega \times B} 
M \overline{\bu}^{12}\overline{\widehat{f}^1}  
\cdot\nabx\bm{\Psi}_{\eta^1}^{-1}\circ\bm{\Psi}_{\eta^1} \nabx\overline{\widehat{f}^{12}} 
  \dq \dx \dt
   \\&
   \leq
   c(\delta, \eta^1,\overline{\widehat{f}^{1}})
   \int_I
 \Vert \overline{\bu}^{12}
\Vert_{W^{1,2}(\Omega )}^2
 \dt
 +
 \delta
   \int_I 
 \Vert \overline{\widehat{f}^{12}}
\Vert_{W^{1,2}(\Omega;L^2_M(B))}^2
 \dt.
\end{aligned}
\end{equation}
We also  have that
\begin{equation}
\begin{aligned}
\int_I&\int_{\Omega \times B}  
J_{\eta^2} 
M\big[\overline{\bu}^{12}\overline{\widehat{f}^1}  
+
\overline{\bu}^2\overline{\widehat{f}^{12}} \big] \cdot\nabx\bm{\Psi}_{\eta^1}^{-1}\circ\bm{\Psi}_{\eta^1} (\nabx J_{\eta^1}^{-1})\overline{\widehat{f}^{12}}
  \dq \dx \dt
   \\&
   \leq
   c(  \eta^1,\eta^2,\overline{\widehat{f}^{1}})
      \int_I
 \Vert \overline{\bu}^{12}
\Vert_{W^{1,2}(\Omega )}^2
 \dt
 +
 c( \eta^1,\eta^2,\overline{\widehat{f}^{1}},\overline{\bu}^2)
   \int_I 
 \Vert \overline{\widehat{f}^{12}}
\Vert_{L^2(\Omega;L^2_M(B))}^2
 \dt
\end{aligned}
\end{equation}
as well as
\begin{equation}
\begin{aligned}
\int_I&\int_{ \Omega \times B}
  M (\nabx \bm{\Psi}_{\eta^2}^{-1}\circ\bm{\Psi}_{\eta^2})^\top \big[(\nabx\overline{\bu}^{12})  \bq\overline{\widehat{f}^1}  
  +
  (\nabx\overline{\bu}^2)  \bq\overline{\widehat{f}^{12}}   \big] \cdot \nabq\overline{\widehat{f}^{12}}
   \dq \dx \dt
   \\&
   \leq
   c(\delta, \eta^2,\overline{\widehat{f}^{1}})
      \int_I
 \Vert \overline{\bu}^{12}
\Vert_{W^{1,2}(\Omega )}^2
 \dt
 +
 c(\delta, \eta^2)
   \int_I  \Vert\nabx\overline\bu^{2}\Vert_{L^\infty(\Omega)}
 \Vert \overline{\widehat{f}^{12}}
\Vert_{L^2(\Omega;L^2_M(B))}^2
 \dt
 \\&+
   \delta
   \int_I 
 \Vert \overline{\widehat{f}^{12}}
\Vert_{L^2(\Omega;H^1_M(B))}^2
 \dt.
\end{aligned}
\end{equation}
Next, we   obtain
\begin{equation}
\begin{aligned}
\int_I&\int_{\Omega \times B}M \overline{\widehat{f}^{12}}  
\big( 
\nabx  \overline{\widehat{f}^{12}}\cdot
 \partial_t\bm{\Psi}_{\eta^1}^{-1}\circ\bm{\Psi}_{\eta^1}
 +
  \overline{\bu}^2 \cdot\nabx\bm{\Psi}_{\eta^1}^{-1}\circ\bm{\Psi}_{\eta^1} \nabx\overline{\widehat{f}^{12}} 
\big)
  \dq \dx \dt
  \\&
  +
  \int_I\int_{ \Omega \times B}
 M (\nabx \bm{\Psi}_{\eta^2}^{-1}\circ\bm{\Psi}_{\eta^2})^\top \nabx \bm{\Psi}_{\eta^2}^{-1}\circ\bm{\Psi}_{\eta^2}\nabx\overline{\widehat{f}^{12}} \cdot 
  J_{\eta^1} (\nabx J_{\eta^1}^{-1})\overline{\widehat{f}^{12}}
  \dq \dx \dt
   \\&
   \leq 
   \delta
   \int_I 
 \Vert \nabx\overline{\widehat{f}^{12}}
\Vert_{L^2(\Omega;L^2_M(B))}^2
 \dt
 +
 c(\delta,\eta^1,\eta^2,\overline{\widehat{f}^{1}},\overline{\widehat{f}^{2}},\overline{\bu}^2)
   \int_I 
 \Vert \overline{\widehat{f}^{12}}
\Vert_{L^2(\Omega;L^2_M(B))}^2
 \dt.
\end{aligned}
\end{equation}
By combining the above with the ellipticity of $(\nabx \bm{\Psi}_{\eta^2}^{-1}\circ\bm{\Psi}_{\eta^2})^\top \nabx \bm{\Psi}_{\eta^2}^{-1}\circ\bm{\Psi}_{\eta^2}$ and applying Gr\"onwall's lemma, we have shown that
\begin{equation}
\begin{aligned}
\label{weakFPdifference1}
\Vert \overline{\widehat{f}^{12}}
\Vert_{\overline{Y}}^2
&=
\sup_{I}
\Vert \overline{\widehat{f}^{12}}
\Vert_{L^2(\Omega;L^2_M(B))}^2
+
\int_I
\Vert \overline{\widehat{f}^{12}}
\Vert_{W^{1,2}(\Omega;L^2_M(B))}^2
\dt
+
\int_I
\Vert \overline{\widehat{f}^{12}}
\Vert_{L^2(\Omega;H^1_M(B))}^2
\dt
\\&
   \leq e^{cT}
   \bigg(
   \int_I 
 \Vert \overline{\bu}^{12}
\Vert_{W^{1,2}(\Omega )}^2
 \dt
 +
\int_I\Vert \partial_t\eta^{12}
\Vert_{W^{1,2} (\omega)}^2\dt
+
\sup_I\Vert \eta^{12}
\Vert_{W^{2,2} (\omega)}^2
\bigg)
\end{aligned}
\end{equation}
with a constant $c=c( \eta^1, \eta^2,\overline{\widehat{f}^{1}},\overline{\widehat{f}^{2}},\overline{\bu}^1,\overline{\bu}^2)$. 
\\ \\ 
Now let us consider the two solutions $(\eta^1,\bu^1,\pi^1)$ and $(\eta^2,\bu^2,\pi^2)$  of \eqref{contEqAlone}--\eqref{momEqAlone} with datasets $$(\bff, g, \eta_0, \eta_\star, \bu_0, \mathbb{S}_\bq(\hbar^1))\quad\text{and}\quad(\bff, g, \eta_0, \eta_\star, \bu_0, \mathbb{S}_\bq(\hbar^2)),$$ respectively,
and let 
$\overline{\bu}^i=\bu^i\circ \bm{\Psi}_{\eta^i}$ and $\overline{\hbar}^i=\hbar^i\circ \bm{\Psi}_{\eta^i}$, for $i=1,2$, be the transformations onto the fixed reference domain.  Since $\nabx \overline{\bu}^i = \nabx\bu^i\circ \bm{\Psi}_{\eta^i}\nabx \bm{\Psi}_{\eta^i}$, $i=1,2$, by setting $\eta^{12}:=\eta^1-\eta^2$, $\overline{\bu}^{12}:=\overline{\bu}^1-\overline{\bu}^2$, $\overline{\pi}^{12}:=\overline{\pi}^1-\overline{\pi}^2$ and $\overline{\hbar}^{12}:=\overline{\hbar}^1-\overline{\hbar}^2$, we obtain
\begin{equation}
\begin{aligned}\label{eq:523}
\int_I\Vert
\nabx \overline{\bu}^{12}
\Vert_{L^2(\Omega)}^2
\dt
&\lesssim
\int_I\Vert
\nabx\bu^1\nabx \bm{\Psi}_{\eta^1}\circ \bm{\Psi}_{\eta^1}^{-1}-(\nabx\bu^2\circ \bm{\Psi}_{\eta^2})\circ \bm{\Psi}_{\eta^1}^{-1}\nabx \bm{\Psi}_{\eta^2}\circ \bm{\Psi}_{\eta^1}^{-1}
\Vert_{L^2(\Omega_{\eta^1})}^2\dt
\\&
\lesssim
\int_I\Vert
\nabx\bu^1(\nabx \bm{\Psi}_{\eta^1}\circ \bm{\Psi}_{\eta^1}^{-1}-\nabx \bm{\Psi}_{\eta^2}\circ \bm{\Psi}_{\eta^1}^{-1})
\Vert_{L^2(\Omega_{\eta^1})}^2\dt
\\&+
\int_I\Vert
(\nabx\bu^1-(\nabx\bu^2\circ \bm{\Psi}_{\eta^2})\circ \bm{\Psi}_{\eta^1}^{-1})\nabx \bm{\Psi}_{\eta^2}\circ \bm{\Psi}_{\eta^1}^{-1}
\Vert_{L^2(\Omega_{\eta^1})}^2\dt
\\&
=:K_1+K_2,
\end{aligned}
\end{equation}
where
\begin{equation}
\begin{aligned}
K_1&
\lesssim
\int_I\Vert \nabx\bu^1\Vert_{L^4(\Omega_{\eta^1})}^2\Vert
J_{\eta^1}
\nabx (\bm{\Psi}_{\eta^1}
-
\bm{\Psi}_{\eta^2})
\Vert_{L^4(\Omega)}^2\dt\\
&
\lesssim
\int_I\Vert \bu^1\Vert_{W^{2,2}(\Omega_{\eta^1})}^2\Vert
\bm{\Psi}_{\eta^1}
-
\bm{\Psi}_{\eta^2}
\Vert_{W^{2,2}(\Omega)}^2\dt\\
&
\lesssim
T\sup_I\Vert \bu^1\Vert_{W^{2,2}(\Omega_{\eta^1})}^2\sup_I\Vert
\eta^1 - \eta^2
\Vert_{W^{2,2}(\omega)}^2\\&\lesssim
T\sup_I\Vert
\eta^1 - \eta^2
\Vert_{W^{2,2}(\omega)}^2
\end{aligned}
\end{equation}
and
\begin{equation}
\begin{aligned}
K_2
\lesssim
\int_I\Vert
\nabx\bu^1- \nabx\overline{\bu}^2 \circ \bm{\Psi}_{\eta^1}^{-1} 
\Vert_{L^2(\Omega_{\eta^1})}^2\dt.
\end{aligned}
\end{equation}
Also,
\begin{equation}
\begin{aligned}\label{eq:2303}
&\int_I\Vert
 \overline{\bu}^{12}
\Vert_{L^2(\Omega)}^2
\dt
=
\int_I\Vert
\bu^1- \overline{\bu}^2\circ \bm{\Psi}_{\eta^1}^{-1} 
\Vert_{L^2(\Omega_{\eta^1})}^2
\dt.
\end{aligned}
\end{equation}
It, therefore, follows  from \eqref{eq:523}--\eqref{eq:2303} that for $T\ll1$,
\begin{equation}
\begin{aligned}
\label{weakFluidDifference1}
\int_I &
 \Vert \overline{\bu}^{12}
\Vert_{W^{1,2}(\Omega )}^2
 \dt
 +
\int_I\Vert \partial_t\eta^{12}
\Vert_{W^{1,2} (\omega)}^2\dt
+
\sup_I\Vert \eta^{12}
\Vert_{W^{2,2} (\omega)}^2
\\
&\lesssim 
\int_{I }\Vert  \bu^1-
\overline{\bu}^2\circ \bm{\Psi}_{\eta^1}^{-1}\Vert_{W^{1,2}(\Omega_{\eta^1(t)})}^2 \dt
 +
\int_I\Vert \partial_t\eta^{12}
\Vert_{W^{1,2} (\omega)}^2\dt
+
\sup_I\Vert \eta^{12}
\Vert_{W^{2,2} (\omega)}^2
\end{aligned}
\end{equation}
However, by \cite[Remark 5.2]{BMSS},
\begin{equation}
\begin{aligned}
\label{weakFluidDifference0}
\int_{I }\Vert  \bu^1-
\overline{\bu}^2\circ \bm{\Psi}_{\eta^1}^{-1}\Vert_{W^{1,2}(\Omega_{\eta^1(t)})}^2 \dt
 &+
\int_I\Vert \partial_t\eta^{12}
\Vert_{W^{1,2} (\omega)}^2\dt
+
\sup_I\Vert \eta^{12}
\Vert_{W^{2,2} (\omega)}^2
\\&\lesssim 
\int_{I }
 \Vert\mathbb{S}_\bq(\hbar^1-\overline{\hbar}^2\circ \bm{\Psi}_{\eta^1}^{-1}) 
\Vert_{L^2(\Omega_{\eta^1(t)})}^2\dt
\end{aligned}
\end{equation}
where
\begin{align*}
\int_{I }
 \Vert\mathbb{S}_\bq(\hbar^1-\overline{\hbar}^2\circ \bm{\Psi}_{\eta^1}^{-1}) 
\Vert_{L^2(\Omega_{\eta^1(t)})}^2\dt
=
\int_{I }
 \Vert\mathbb{S}_\bq(\overline{\hbar}^{12}) \Vert_{L^2(\Omega)}^2
\dt.
\end{align*}
If we now combine \eqref{weakFPdifference1} with \eqref{weakFluidDifference1}-\eqref{weakFluidDifference0} and the fact that
\begin{equation}
\begin{aligned}
\int_{I }
 \Vert\mathbb{S}_\bq(\overline{\hbar}^{12}) \Vert_{L^2(\Omega)}^2
\dt
&\leq
c(\delta)T 
\sup_I \Vert \overline{\hbar}^{12}
\Vert_{L^2(\Omega;L^2_M(B))}^2
+
\delta
 \int_I 
 \Vert \overline{\hbar}^{12}
\Vert_{L^2(\Omega;H^1_M(B))}^2
 \dt,
\end{aligned}
\end{equation}
we obtain
\begin{equation}
\begin{aligned}
\Vert \overline{\widehat{f}^{12}}
\Vert_{\overline{Y}}^2
&\leq c\,e^{cT}(c(\delta)T +\delta)\Vert \overline{\hbar}^{12}
\Vert_{\overline{Y}}^2 
\leq
\tfrac{1}{2}\Vert \overline{\hbar}^{12}
\Vert_{\overline{Y}}^2 
\end{aligned}
\end{equation}
choosing first $\delta$ and then $T$ accordingly. The existence of the desired fixed point now follows.

\section{The 2D co-rotational model}
\subsection{Solving the equation for the solute}
\label{sec:FP}
\noindent
In this section, for a known moving domain $\Ozeta$ and a known solenoidal velocity field $\mathbf{w}$ with skwe-symmetric gradient $\mathcal W(\mathbf w)=\frac{1}{2}(\nabla\mathbf w-\nabla\mathbf w^\top)$, we aim to construct a strong solution of the Fokker--Planck equation
\begin{align}
\label{eq:FPy}
M\big(\partial_t \widehat{f} + (\mathbf{w}\cdot \nabx) \widehat{f})
+
 \divq  \big(  \mathcal W(\mathbf{w}) \bq M\widehat{f} \big) 
=
\Delx(M\widehat{f})
+
 \divq  \big( M \nabq  \widehat{f}
\big)
\end{align} 
in $I\times\Omega_\zeta\times B$,
where the Maxwellian $M$ is given by
\begin{align*}
M(\mathbf{q}) = \frac{e^{-U \left(\frac{1}{2}\vert \mathbf{q} \vert^2 \right) }}{\int_Be^{-U \left(\frac{1}{2}\vert \mathbf{q} \vert^2 \right) }\,\mathrm{d}\mathbf{q}},\quad  U (s) = -\frac{b}{2} \log \bigg(1-  \frac{2s}{b} \bigg), \quad s\in [0,b/2)
\end{align*}
with $b>2$.
 Equation \eqref{eq:FPy} is complemented
with the conditions
\begin{align}
&\widehat{f}(0, \cdot, \cdot) =\widehat{f}_0 \geq 0
& \quad \text{in }\Omega_{\zeta_0} \times B,
\label{fokkerPlankIintialxsec4Y}
\\
&
\nabx\widehat{f}\cdot \bn_\zeta =0
&\quad \text{on }I \times \partial\Omega_\zeta \times B,
\label{fokkerPlankBoundarySpacexsc4Y}
\\
&M\big(\nabq\widehat{f}   -  \mathcal W(\mathbf w) \bq \widehat{f}
 \big) \cdot \frac{\bq}{\vert\bq\vert} =0
&\quad \text{on }I \times \Omega_\zeta \times \partial \overline{B}.
\label{fokkerPlankBoundaryxsc4Y}
\end{align}
Let us start with a precise definition of what we mean by a strong solution.  
\begin{definition}
\label{def:strsolmartFP}
Assume that the triplet $(\widehat{f}_0,\zeta, \mathbf{w})$ satisfies
\begin{align}
\label{fokkerPlanckDataAloneY}
\widehat{f}_0\in  W^{1,2}\big( \Omega_{\zeta(0)}; L^2_M(B) \big)   ,
\qquad
\mathbf{w}\in L^2(I; W^{2,2}_{\divx}(\Ozeta)),
\\
\zeta\in W^{1,\infty}(I;W^{1,2}(\omega))
\cap 
L^{\infty}(I;W^{3,2}(\omega))
,
\label{fokkerPlanckDataAloneAY}
\\\mathbf w  \circ \bm{\varphi}_{\zeta} =(\partial_t\zeta)\bn
\quad \text{on }I \times \omega,  \quad\|\zeta\|_{L^\infty(I\times\omega)}<L.
\label{fokkerPlanckDataAloneBY}
\end{align}
We call
$\widehat{f}$
a \textit{strong solution} of   \eqref{eq:FPy} with data $(\widehat{f}_0,\zeta, \mathbf{w})$ if 
\begin{itemize}
\item[(a)] $\widehat{f}$ satisfies
\begin{align*}
\widehat{f}&\in   L^{\infty}\big(I;W^{1,2}(\Oeta;L^2_M(B))  \big)
\cap 
L^{2}\big(I;W^{2,2}(\Oeta;L^2_M(B))  \big)
\\&
\qquad\qquad \qquad\cap L^{2}\big(I;W^{1,2}(\Oeta;H^1_M(B))  \big);
\end{align*}
\item[(b)] for all  $  \varphi  \in C^\infty (\overline{I}\times \R^3 \times \overline{B} )$, we have
\begin{equation}
\begin{aligned}
\label{weakFokkerPlanckEq}
\int_I  \frac{\mathrm{d}}{\dt}
\int_{\Ozeta \times B}M \widehat{f} \, \varphi \dq \dx \dt 
&=\int_I\int_{\Ozeta \times B}\big(M \widehat{f} \,\partial_t \varphi 
+
M\mathbf{w} \widehat{f} \cdot \nabx \varphi
-
\nabx \widehat{f} \cdot \nabx \varphi
\big) \dq \dx \dt
\\&
+ \int_I\int_{ \Ozeta \times B}
 \big( M \mathcal W(\mathbf{w})  \bq\widehat{f}-
 M \nabq  \widehat{f} \big) \cdot \nabq\varphi \dq \dx \dt.
\end{aligned}
\end{equation}
\end{itemize}
\end{definition}
\noindent We now formulate our result on the existence of a  unique strong solution of \eqref{eq:FPy}.
\begin{theorem}\label{thm:mainFP}
Let $(\widehat{f}_0,\zeta, \mathbf{w})$ satisfy  \eqref{fokkerPlanckDataAloneY}--\eqref{fokkerPlanckDataAloneBY}
Then there is a unique strong solution $\widehat{f}$ of \eqref{eq:FPy}--\eqref{fokkerPlankBoundaryxsc4Y}, in the sense of Definition \ref{def:strsolmartFP},
 such that
\begin{equation}
\begin{aligned} 
 &+
 \sup_I \Vert \widehat{f}(t)
\Vert_{W^{1,2}(\Ozeta;L^2_M(B))}^2+
 \int_I 
 \Vert \widehat{f}
\Vert_{W^{2,2}(\Ozeta;L^2_M(B))}^2
 \dt
 +
 \int_I 
 \Vert \widehat{f}
\Vert_{W^{1,2}(\Ozeta;H^1_M(B))}^2
 \dt
 \\ 
&\quad\lesssim  \exp\bigg(c\int_I \Vert  \mathbf{w} \Vert_{W^{2,2}(\Ozeta)}^2 \dt
 +
c \int_I\Vert\partial_t\zeta\Vert_{W^{1/3,2}(\omega)}^{12/5}\dt
\bigg) 
\Vert \widehat{f}_0
\Vert_{W^{1,2}(\Omega_{\zeta(0)};L^2_M(B))}^2
\label{eq:thm:mainFP}
\end{aligned}
\end{equation} 
holds with a constant depending on $\sup_I\|\mathbf w\|_{L^2(\Ozeta)}$, $\sup_I\|\zeta \|_{W^{1,\infty}(\omega)}$ and $\|\widehat f_0\|_{L^\infty(\Omega_{\zeta(0)};L^2_M(B))}$.
\end{theorem}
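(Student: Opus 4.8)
The plan is to follow the scheme of the FENE case, but to exploit the skew-symmetry of $\mathcal W(\mathbf w)$, which removes the lower-order contributions of the drag term and, crucially, makes the basic estimates \emph{exponential-free} — this is what ultimately allows the bound to be combined with the global-in-time fluid-structure estimates of \cite{breit2022regularity}. First I would solve the linear, regularised problem obtained by inserting a \emph{radial} cut-off $\chi^n=\chi^n(|\bq|)\in C^1_c(B)$, identically $1$ on a large part of $B$, in front of the drag term, keeping the boundary conditions and $\widehat f^n(0)=\widehat f_0$; existence of $\widehat f^n$ follows from a Galerkin approximation as in \cite[Section~4]{breit2021incompressible}, and all estimates below will be uniform in $n$ and pass to \eqref{eq:FPy} by linearity. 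The zeroth-order estimate is obtained by testing with $\widehat f^n$: the decisive point is that the drag term drops out entirely,
\begin{equation*}
\int_B \chi^n\, \mathcal W(\mathbf w)\bq\, M \widehat f^n \cdot \nabq \widehat f^n \dq = -\tfrac12 \int_B |\widehat f^n|^2\, \divq\!\big(\chi^n\, \mathcal W(\mathbf w)\bq\, M\big)\dq = 0,
\end{equation*}
because $\operatorname{tr}\mathcal W(\mathbf w)=0$, both $\nabq M$ and $\nabq\chi^n$ are parallel to $\bq$, and $\bq^\top\mathcal W(\mathbf w)\bq=0$ by skew-symmetry. Combined with Reynolds' transport theorem and the kinematic condition (which makes the boundary contributions of the material derivative cancel), this gives $\sup_I\Vert\widehat f^n\Vert_{L^2(\Ozeta;L^2_M(B))}^2+\int_I\Vert\widehat f^n\Vert_{W^{1,2}(\Ozeta;L^2_M(B))}^2\dt+\int_I\Vert\widehat f^n\Vert_{L^2(\Ozeta;H^1_M(B))}^2\dt\le\Vert\widehat f_0\Vert_{L^2(\Omega_{\zeta(0)};L^2_M(B))}^2$ with \emph{no} exponential; running the same computation for $|\widehat f^n|^q$, $q\ge2$ (the drag term again vanishes) and letting $q\to\infty$ yields the exponential-free maximum principle $\sup_I\Vert\widehat f^n\Vert_{L^\infty(\Omega_\zeta;L^2_M(B))}\lesssim\Vert\widehat f_0\Vert_{L^\infty(\Omega_{\zeta(0)};L^2_M(B))}$.

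Next I would test with $\Delx\widehat f^n$. The term $\int M\partial_t\widehat f^n\,\Delx\widehat f^n$ produces $-\tfrac12\frac{\mathrm d}{\mathrm dt}\Vert\nabx\widehat f^n\Vert_{L^2(\Ozeta;L^2_M(B))}^2$ plus a moving-boundary term carrying $\partial_t\zeta$; via the trace theorem, the embeddings $W^{1/3,2}(\omega)\hookrightarrow L^6(\omega)$ and $W^{7/12,2}(\Ozeta)\hookrightarrow L^{12/5}(\partial\Ozeta)$, the interpolation $W^{7/12,2}=[L^2,W^{1,2}]_{7/12}$ and Young's inequality, this term is absorbed into $\delta\int_I\Vert\nabx\widehat f^n\Vert_{W^{1,2}(\Ozeta;L^2_M(B))}^2\dt$ modulo $c(\delta)\int_I\Vert\partial_t\zeta\Vert_{W^{1/3,2}(\omega)}^{12/5}\Vert\nabx\widehat f^n\Vert_{L^2(\Ozeta;L^2_M(B))}^2\dt$ — this is the origin of the exponent $12/5$. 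The transport term is controlled by $\delta\Vert\Delx\widehat f^n\Vert^2+c(\delta)\Vert\mathbf w\Vert_{L^\infty(\Ozeta)}^2\Vert\nabx\widehat f^n\Vert^2$ with $\Vert\mathbf w\Vert_{L^\infty(\Ozeta)}\lesssim\Vert\mathbf w\Vert_{W^{2,2}(\Ozeta)}$ (two dimensions), the $\Delx$-term gives the good $\Vert\Delx\widehat f^n\Vert_{L^2(\Ozeta;L^2_M(B))}^2$, and the two terms under $\divq$ are integrated by parts in $\bx$, producing the good $-\Vert\nabx\widehat f^n\Vert_{L^2(\Ozeta;H^1_M(B))}^2$ together with $\int\nabx(\chi^n\mathcal W(\mathbf w)\bq M\widehat f^n):\nabx\nabq\widehat f^n$; here the second-order derivative of $\mathbf w$ is paired with the already-established $L^\infty(\Ozeta;L^2_M(B))$-bound for $\widehat f^n$ rather than with a Sobolev embedding, giving $\Vert\nabx^2\mathbf w\Vert_{L^2(\Ozeta)}\Vert\widehat f^n\Vert_{L^\infty(\Ozeta;L^2_M(B))}\Vert\nabx\widehat f^n\Vert_{L^2(\Ozeta;H^1_M(B))}$, while the first-order $\mathbf w$-derivative is absorbed using $W^{2,2}(\Ozeta)\hookrightarrow W^{1,p}(\Ozeta)$ for every $p<\infty$. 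Absorbing the $\delta$-terms, using the zeroth-order bound and Gr\"onwall's lemma then delivers the $W^{1,2}_\bx$/$W^{2,2}_\bx$/$W^{1,2}_\bx(H^1_M)$-estimate with the exponential $\exp\big(c\int_I\Vert\mathbf w\Vert_{W^{2,2}(\Ozeta)}^2\dt+c\int_I\Vert\partial_t\zeta\Vert_{W^{1/3,2}(\omega)}^{12/5}\dt\big)$ — that is, \eqref{eq:thm:mainFP} at the level of $\widehat f^n$, uniformly in $n$.

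Finally, the uniform estimates provide weak-$*$ limits and, via Aubin–Lions, strong convergence in the intermediate topology needed to pass to the limit in the weak formulation \eqref{weakFokkerPlanckEq}, so that the limit $\widehat f$ is a strong solution in the required regularity class; uniqueness follows from linearity by testing the difference of two solutions against itself (the same cancellation applies), and $\widehat f\ge0$ follows by testing with $\widehat f_-=\min\{0,\widehat f\}$ exactly as in the remark following Theorem~\ref{thm:main}. The dependence of the constant on $\sup_I\Vert\zeta\Vert_{W^{1,\infty}(\omega)}$, $\sup_I\Vert\mathbf w\Vert_{L^2(\Ozeta)}$ and $\Vert\widehat f_0\Vert_{L^\infty(\Omega_{\zeta(0)};L^2_M(B))}$ is tracked through the geometric (Hanzawa) bounds, the Reynolds-transport boundary terms and the use of the maximum principle, respectively.

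The hard part will be the $\Delx$-testing step: one must (i) keep the moving-boundary contributions under control and extract precisely the sharp power $\Vert\partial_t\zeta\Vert_{W^{1/3,2}(\omega)}^{12/5}$ through the trace/interpolation/Young chain above, and (ii) close the estimate in two space dimensions, where $W^{2,2}\not\hookrightarrow W^{1,\infty}$, by systematically trading the missing embedding for the exponential-free $L^\infty(\Ozeta;L^2_M(B))$-bound on $\widehat f^n$ coming from the co-rotational maximum principle. It is exactly the skew-symmetry of $\mathcal W(\mathbf w)$ — together with the radiality of the cut-off, which preserves the cancellation at the approximate level — that keeps both the zeroth-order and the maximum-principle estimates free of any exponential in $\mathbf w$, and this is what makes the resulting solution global in time once the theorem is fed into the fluid-structure results of \cite{breit2022regularity}.
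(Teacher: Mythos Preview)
Your plan matches the paper's proof (Lemmas~6.4 and~6.6 together with Remark~6.5) almost exactly: the exponential-free zeroth-order estimate and maximum principle via the skew-symmetry cancellation, followed by the $\Delta_x$-testing with the same trace/interpolation/Young chain producing the exponent $12/5$. Two points deserve comment.

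First, the paper notes (Remark preceding Lemma~6.4) that in the co-rotational case no cut-off $\chi^n$ is needed at all, since $\divq(\mathcal W(\mathbf w)\bq M\widehat f)=\mathcal W(\mathbf w)\bq\, M\cdot\nabq\widehat f$ is already in $L^2(B)$; your radial cut-off is harmless but superfluous.

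Second, and this is a small gap, you propose to \emph{estimate} the first-order contribution
\[
\sum_\gamma\int_{\Omega_\zeta\times B}\mathcal W(\mathbf w)\bq\, M\,\partial_\gamma\widehat f^n\cdot\partial_\gamma\nabq\widehat f^n\dq\dx
\]
arising from the drag term in the $\Delta_x$-step via $W^{2,2}\hookrightarrow W^{1,p}$, $p<\infty$. This does not close with the stated exponential $\exp\big(c\int_I\|\mathbf w\|_{W^{2,2}}^2\big)$: pairing $\|\nabla\mathbf w\|_{L^p}$ with a Gagliardo--Nirenberg control of $\|\nabla_x\widehat f^n\|_{L^q(L^2_M)}$ inevitably forces a power of $\|\mathbf w\|_{W^{2,2}}$ strictly larger than $2$ in the Gr\"onwall term, which would jeopardise the global-in-time application since the fluid-structure result only provides $\mathbf u\in L^2(I;W^{2,2})$. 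The paper instead observes that this integral \emph{vanishes identically}, by the \emph{same} skew-symmetry cancellation you already used at the zeroth level, now applied with $\partial_\gamma\widehat f^n$ in place of $\widehat f^n$. Once you use this, the only surviving drag contribution is the $\nabla^2\mathbf w$-piece, which you correctly handle via the maximum principle. For the transport term the paper uses Ladyzhenskaya's inequality (this is what brings in the dependence on $\sup_I\|\mathbf w\|_{L^2(\Ozeta)}$), though your embedding $W^{2,2}\hookrightarrow L^\infty$ in two dimensions also works.
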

\noindent We will obtain a solution of \eqref{eq:FPy} by way of a limit to the following approximation
\begin{equation}
\begin{aligned}
\label{eq:FP1ApproxY}
M\big(\partial_t   \widehat{f}^n + (\mathbf{w}\cdot \nabx)   \widehat{f}^n\big)
+
 \divq  \big( \mathcal W(\mathbf{w})\bq M\widehat{f}^n \big) 
&=
\Delx(M  \widehat{f}^n)
+
 \divq  \big( M \nabq    \widehat{f}^n
\big).
\end{aligned}
\end{equation}
Here, we solve the equation under the boundary conditions
\begin{align*} 
&
\nabx\widehat{f}^n\cdot \bn_\zeta =0
&\quad \text{on }I \times \partial\Omega_\zeta \times B,
\\
&M\big(\nabq\widehat{f}^n   -   \mathcal W(\mathbf w) \bq \widehat{f}^n
 \big) \cdot \frac{\bq}{\vert\bq\vert} =0
&\quad \text{on }I \times \Omega_\zeta \times \partial \overline{B},
\end{align*}
and consider the same initial condition $\widehat{f}^n(0)=\widehat{f}_0$. 
\begin{remark}
Unlike the case where $\mathcal W(\mathbf w)$  is replaced by $\nabx\bu$ considered in Section \ref{sec:FP}, we do not require a cutoff  $\chi^n(\bq)\in C^1_c(B)$ here since $\divq  \big( \mathcal W(\mathbf{w})\bq M\widehat{f}^n \big)
=
 \mathcal W(\mathbf{w})\bq M\cdot\nabq\widehat{f}^n  \in L^2(B)$ holds for the co-rotational case.
\end{remark}
In the following couple of lemmas, we will derive  estimates for
\eqref{eq:FP1ApproxY} which are uniform with respect to $n$. They transfer directly to \eqref{eq:FPy} as the latter is linear. 
The first of two results leading to the proof of Theorem \ref{thm:mainFP} is the following.
\begin{lemma}\label{lem:mainFP1Y}
Let $(\widehat{f}_0,\zeta, \mathbf{w})$ satisfy  \eqref{fokkerPlanckDataAloneY}--\eqref{fokkerPlanckDataAloneBY}  and let $\widehat{f}^n$ be the corresponding solution to \eqref{eq:FP1ApproxY}. Then we have
\begin{equation}
\begin{aligned}
\label{fokkerEnergyEst5Y}
\sup_I \Vert \widehat{f}^n(t)
\Vert_{L^2(\Ozeta;L^2_M(B))}^2
&+
 \int_I 
 \Vert \widehat{f}^n
\Vert_{W^{1,2}(\Ozeta;L^2_M(B))}^2
 \dt
 +
 \int_I 
 \Vert \widehat{f}^n
\Vert_{L^2(\Ozeta;H^1_M(B))}^2
 \dt
\\&\lesssim
\Vert \widehat{f}_0
\Vert_{L^2(\Omega_{\zeta(0)};L^2_M(B))}^2
\end{aligned}
\end{equation} 
uniformly in $n\in\mathbb{N}$. 
\end{lemma}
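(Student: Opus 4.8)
The plan is to follow the scheme of the proof of Lemma~\ref{lem:mainFP1}: test the approximate equation \eqref{eq:FP1ApproxY} with $\widehat f^n$, integrate over $B$, then integrate over $\Omega_\zeta$ and use Reynolds' transport theorem. The decisive simplification in the co-rotational setting is that the drag term drops out \emph{identically}, so that, in contrast with \eqref{fokkerEnergy}, there is no right-hand side of the form $c|\nabx\mathbf w|^2\|\widehat f^n\|_{L^2_M(B)}^2$ and hence no Gr\"onwall step (and, as already noted in the remark preceding the lemma, no cut-off $\chi^n$ is needed). Accordingly, the bound will be just a multiple of the data $\|\widehat f_0\|_{L^2(\Omega_{\zeta(0)};L^2_M(B))}^2$.

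Concretely, I would first test \eqref{eq:FP1ApproxY} with $\widehat f^n$ and integrate over $B$. The time-derivative and transport terms produce $\tfrac12\partial_t\|\widehat f^n\|_{L^2_M(B)}^2$ and $\tfrac12(\mathbf w\cdot\nabx)\|\widehat f^n\|_{L^2_M(B)}^2$ as in \eqref{fokkerEnergy}; the term $\divq(M\nabq\widehat f^n)$ yields, after integration by parts in $\mathbf q$ (the boundary term vanishing by \eqref{fokkerPlankBoundaryxsc4Y}, which in the co-rotational case reduces to $M\nabq\widehat f^n\cdot\tfrac{\mathbf q}{|\mathbf q|}=0$ on $\partial B$ since $\mathcal W(\mathbf w)\mathbf q\cdot\mathbf q=0$), the dissipative contributions $\|\nabx\widehat f^n\|_{L^2_M(B)}^2$ and $\|\widehat f^n\|_{H^1_M(B)}^2$ as in \eqref{fokkerEnergy}; and the term $\Delx(M\widehat f^n)$ is kept for the moment. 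The new point is the drag term: integrating $\int_B\divq(\mathcal W(\mathbf w)\mathbf q\,M\widehat f^n)\,\widehat f^n\dq$ by parts gives $-\tfrac12\int_B\divq(\mathcal W(\mathbf w)\mathbf q\,M)\,|\widehat f^n|^2\dq$, with no boundary contribution because $\mathcal W(\mathbf w)\mathbf q$ is tangent to $\partial B$. Since $\mathcal W(\mathbf w)$ is skew-symmetric, $\divq(\mathcal W(\mathbf w)\mathbf q)=\tr\mathcal W(\mathbf w)=0$, and $\nabq M\cdot\mathcal W(\mathbf w)\mathbf q=-M\,U'(\tfrac12|\mathbf q|^2)\,\mathcal W(\mathbf w)\mathbf q\cdot\mathbf q=0$; hence $\divq(\mathcal W(\mathbf w)\mathbf q\,M)\equiv0$ and the drag term vanishes altogether.

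Next I would integrate the resulting pointwise-in-$(t,\bx)$ identity over $\Omega_\zeta$: the Laplacian term becomes $-\|\nabx\widehat f^n\|_{L^2(\Omega_\zeta;L^2_M(B))}^2$ after using $\nabx\widehat f^n\cdot\bn_\zeta=0$ from \eqref{fokkerPlankBoundarySpacexsc4Y}, and $\tfrac12\int_{\Omega_\zeta}(\mathbf w\cdot\nabx)\|\widehat f^n\|_{L^2_M(B)}^2$ becomes a boundary integral over $\partial\Omega_\zeta$ since $\divx\mathbf w=0$. Applying Reynolds' transport theorem to $\tfrac{\dd}{\dt}\|\widehat f^n\|_{L^2(\Omega_\zeta;L^2_M(B))}^2$ and using the kinematic coupling $\mathbf w\circ\bm{\varphi}_\zeta=(\partial_t\zeta)\bn$ from \eqref{fokkerPlanckDataAloneBY}, this boundary integral cancels exactly the one generated by the transport theorem, leaving
\begin{align*}
\frac{\dd}{\dt}\|\widehat f^n\|_{L^2(\Omega_\zeta;L^2_M(B))}^2
+2\|\nabx\widehat f^n\|_{L^2(\Omega_\zeta;L^2_M(B))}^2
+2\|\widehat f^n\|_{L^2(\Omega_\zeta;H^1_M(B))}^2\le 0 .
\end{align*}
Integrating in time over $I$, and using $\int_I\|\widehat f^n\|_{L^2(\Omega_\zeta;L^2_M(B))}^2\dt\le T\sup_I\|\widehat f^n(t)\|_{L^2(\Omega_\zeta;L^2_M(B))}^2$ to recover the full $W^{1,2}$- and $H^1_M$-norms on the left, yields \eqref{fokkerEnergyEst5Y}, uniformly in $n$.

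The computation above is formal; the rigorous version is obtained on a Galerkin approximation exactly as in \cite[Section~4]{breit2021incompressible}, which is also where the dependence of the constant on $\sup_I\|\zeta\|_{W^{1,\infty}(\omega)}$ (through the uniform-in-time control of the geometry and the Hanzawa transform used in applying Reynolds' theorem), on $\sup_I\|\mathbf w\|_{L^2(\Omega_\zeta)}$, and — via the maximum principle of Remark~\ref{rem:max} — on $\|\widehat f_0\|_{L^\infty(\Omega_{\zeta(0)};L^2_M(B))}$, enters. There is no genuinely hard step here: the only points requiring care are the verification that \emph{both} the bulk and the boundary parts of the drag term vanish in the co-rotational case, and the cancellation of the moving-boundary terms via the interplay of Reynolds' transport theorem and the kinematic condition — exactly as for Lemma~\ref{lem:mainFP1}, but now with the drag term removed entirely.
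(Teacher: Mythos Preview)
Your proof is correct and mirrors the paper's: test \eqref{eq:FP1ApproxY} with $\widehat f^n$, use the skew-symmetry of $\mathcal W(\mathbf w)$ so the drag term vanishes identically (the paper records this as the identity \eqref{allq} with $q=1$), and then integrate over $\Omega_\zeta\times I$ using Reynolds' transport theorem together with the kinematic condition \eqref{fokkerPlanckDataAloneBY}. One minor remark: the constant dependencies on $\sup_I\|\mathbf w\|_{L^2(\Omega_\zeta)}$ and $\|\widehat f_0\|_{L^\infty(\Omega_{\zeta(0)};L^2_M(B))}$ you mention in the last paragraph are not actually needed for this lemma --- they enter only in Lemma~\ref{lem:mainFP2Y}.
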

\begin{proof} 
Before we begin, we first note that since
\begin{itemize}
\item $\divq(\mathcal W( \mathbf{w}) \bq)=0$, 
\item $\nabq M=-M\nabq U=-M\frac{b\bq}{b-\vert\bq\vert^2}$,
\item $(\mathcal W( \mathbf{w}) \bq)\cdot\bq=0$,
\end{itemize}
for any $q\geq 1$, we have that
\begin{equation}
\begin{aligned}
\label{allq}
\int_B  \divq  \big(   \mathcal W(\mathbf{w})\bq M\widehat{f}^n \big)(\widehat{f}^n)^q\dq
&=
 \int_B    \mathcal W(\mathbf{w})\bq M\cdot \nabq\widehat{f}^n (\widehat{f}^n)^q\dq
\\&=
\frac{1}{q+1}
\int_B    \mathcal W(\mathbf{w})\bq M\cdot \nabq(\widehat{f}^n)^{q+1}\dq
\\&
=
\frac{1}{q+1}
\int_B  \divq  \big(  \mathcal W(\mathbf{w})\bq M(\widehat{f}^n)^{q+1} \big)\dq.
\end{aligned} 
\end{equation} 
We only require $q=1$ at this point. The case $q>1$ will be used in Remark \ref{rem:max} below.

Now, if we  test \eqref{eq:FP1ApproxY} with $  \widehat{f}^n $ and integrate the resulting equation over the ball $B$, we obtain by using the boundary conditions that
\begin{equation}
\begin{aligned}
\label{fokkerEnergyY}
\frac{1}{2}\partial_t\Vert    \widehat{f}^n \Vert_{L^2_M(B)}^2
+
\frac{1}{2}(\mathbf{w}\cdot \nabx)\Vert    \widehat{f}^n \Vert_{L^2_M(B)}^2
&+
\Vert   \nabx \widehat{f}^n \Vert_{L^2_M(B)}^2
+ 
\Vert   \widehat{f}^n\Vert_{H^1_M(B)}^2
=
0.
\end{aligned}
\end{equation}
If we now integrate \eqref{fokkerEnergyY} over space-time and apply Reynolds transport theorem (using also \eqref{fokkerPlanckDataAloneBY}), we obtain  
\eqref{fokkerEnergyEst5Y}.
\end{proof}
\begin{remark}\label{rem:max}
The proof of Lemma \ref{lem:mainFP1Y} can be repeated for powers $q\geq 2$ of $\widehat f^n$ obtaining (ignoring the dissipative terms and using \eqref{allq}) 
\begin{equation}
\begin{aligned}
\label{fokkerEnergyEst5'}
\sup_I \Vert \widehat{f}^n(t)
\Vert_{L^q(\Ozeta;L^2_M(B))}^q\lesssim
\Vert \widehat{f}_0
\Vert_{L^q(\Omega_{\zeta(0)};L^2_M(B))}^q
\end{aligned}
\end{equation} 
uniformly in $n\in\mathbb{N}$. Checking that the $q$-dependent constant does not explode, we obtain the maximum principle
\begin{equation}
\begin{aligned}
\label{fokkerEnergyEst5''}
\sup_I \Vert \widehat{f}^n(t)
\Vert_{L^\infty(\Omega_\eta;L^2_M(B))}\lesssim
\Vert \widehat{f}_0
\Vert_{L^\infty(\Omega_{\zeta(0)};L^2_M(B))};
\end{aligned}
\end{equation}
a minimum principle can be proved similarly, but it is not needed for our purposes. 
\end{remark}
Next, we show the following lemma.
\begin{lemma}\label{lem:mainFP2Y}
Let $(\widehat{f}_0,\zeta, \mathbf{w})$ satisfy  \eqref{fokkerPlanckDataAloneY}--\eqref{fokkerPlanckDataAloneBY}  and let $\widehat{f}^n$ be the corresponding solution to \eqref{eq:FP1ApproxY}.
Then we have 
\begin{equation}
\begin{aligned}
\label{fokkerEnergyNeg1Y}
&\sup_I \Vert \nabx\widehat{f}^n(t)
\Vert_{L^2(\Ozeta;L^2_M(B))}^2+
 \int_I 
 \Vert \Delx\widehat{f}^n
\Vert_{L^2(\Ozeta;L^2_M(B))}^2
 \dt
 +
 \int_I 
 \Vert \nabx\widehat{f}^n
\Vert_{L^2(\Ozeta;H^1_M(B))}^2
 \dt
 \\&
 \qquad\quad\lesssim \exp\bigg(c\int_I \Vert  \mathbf{w} \Vert_{W^{2,2}(\Ozeta)}^2 \dt
 +
c \int_I\Vert\partial_t\zeta\Vert_{W^{1/3,2}(\omega)}^{12/5}\dt
\bigg) 
\Vert \widehat{f}_0
\Vert_{W^{1,2}(\Omega_{\zeta(0)};L^2_M(B))}^2
\end{aligned}
\end{equation}
uniformly in $n\in\mathbb{N}$, where the hidden constant also depends on $\sup_I\|\mathbf w\|_{L^2(\Ozeta)}$, $\sup_I\|\zeta \|_{W^{1,\infty}(\omega)}$ and $\|\widehat f_0\|_{L^\infty(\Omega_{\zeta(0)};L^2_M(B))}$.
\end{lemma}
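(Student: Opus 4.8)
The plan is to follow the scheme of the proof of Lemma~\ref{lem:mainFP2}, but to exploit the two algebraic features of the co-rotational drag term recorded in the proof of Lemma~\ref{lem:mainFP1Y}, namely $\divq(\mathcal W(\mathbf w)\bq)=0$ and, crucially, $(\mathcal W(\mathbf w)\bq)\cdot\bq=0$. Moving the $\bq$-dissipation to the left and grouping it with the drag term, \eqref{eq:FP1ApproxY} reads $M\partial_t\widehat f^n+M(\mathbf w\cdot\nabx)\widehat f^n+\divq\!\big(\mathcal W(\mathbf w)\bq M\widehat f^n-M\nabq\widehat f^n\big)=\Delx(M\widehat f^n)$, and I would test this with $\Delx\widehat f^n$, integrate over $B$ and then over $I\times\Ozeta$, and use Reynolds' transport theorem. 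The right-hand side yields the good term $\int_I\|\Delx\widehat f^n\|_{L^2(\Ozeta;L^2_M(B))}^2\dt$. For the time-derivative term, one integration by parts in $\bx$ kills the $\partial\Ozeta$-boundary contribution by \eqref{fokkerPlankBoundarySpacexsc4Y} and Reynolds' theorem produces $-\tfrac12\int_I\tfrac{\dd}{\dt}\|\nabx\widehat f^n\|_{L^2(\Ozeta;L^2_M(B))}^2\dt$ together with a boundary integral over $\partial\Ozeta$ carrying the normal velocity $\bn_\zeta\cdot((\partial_t\zeta)\bn)\circ\bm{\varphi}_\zeta^{-1}$ (the delicate term, postponed). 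The transport term is estimated by Young's inequality as $\le\delta\int_I\|\Delx\widehat f^n\|_{L^2(\Ozeta;L^2_M(B))}^2\dt+c(\delta)\int_I\|\mathbf w\|_{L^\infty(\Ozeta)}^2\|\nabx\widehat f^n\|_{L^2(\Ozeta;L^2_M(B))}^2\dt$, and, since $W^{2,2}(\Ozeta)\hookrightarrow L^\infty(\Ozeta)$ in two dimensions with $\|\mathbf w\|_{L^\infty}^2\lesssim\|\mathbf w\|_{L^2}\|\mathbf w\|_{W^{2,2}}$ by interpolation, this is exactly of Gr\"onwall type, feeding $c\int_I\|\mathbf w\|_{W^{2,2}(\Ozeta)}^2\dt$ into the exponent and $\sup_I\|\mathbf w\|_{L^2(\Ozeta)}$ into the constant.

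The decisive step is the grouped $\bq$-term. Integrating it by parts once in $\bx$ (the $\partial\Ozeta$-term vanishing by \eqref{fokkerPlankBoundarySpacexsc4Y}) and once in $\bq$ (the $\partial B$-term vanishing by \eqref{fokkerPlankBoundaryxsc4Y}, since there $(\mathcal W(\mathbf w)\bq M\widehat f^n-M\nabq\widehat f^n)\cdot\tfrac{\bq}{|\bq|}=0$), one arrives at $\int_I\int_{\Ozeta\times B}\nabx\!\big(\mathcal W(\mathbf w)\bq M\widehat f^n-M\nabq\widehat f^n\big):\nabx\nabq\widehat f^n\dq\dx\dt$. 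The $-M\nabq\widehat f^n$ part equals $-\int_I\int_{\Ozeta\times B}M|\nabx\nabq\widehat f^n|^2\dq\dx\dt$, which together with Lemma~\ref{lem:mainFP1Y} (controlling $\int_I\int_{\Ozeta\times B}M|\nabx\widehat f^n|^2$) supplies the dissipative term $\int_I\|\nabx\widehat f^n\|_{L^2(\Ozeta;H^1_M(B))}^2\dt$. In the drag part, when $\nabx$ falls on $\widehat f^n$ the integrand becomes, slice by slice in $\bx$, $\tfrac12(\mathcal W(\mathbf w)\bq)\cdot\nabq|\partial_{x_i}\widehat f^n|^2$, and integrating this against $M$ in $\bq$ gives $-\tfrac12\int_B\divq(M\mathcal W(\mathbf w)\bq)\,|\partial_{x_i}\widehat f^n|^2\dq=0$, because $\divq(M\mathcal W(\mathbf w)\bq)=M\divq(\mathcal W(\mathbf w)\bq)+\nabq M\cdot\mathcal W(\mathbf w)\bq=0$ (the first term by skew-symmetry of $\mathcal W(\mathbf w)$, the second because $\nabq M$ is parallel to $\bq$) and $M|_{\partial B}=0$. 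This is exactly the cancellation that, in the co-rotational case, removes the dangerous top-order term which in the general case forced $\mathbf w\in W^{5/2+\kappa,2}(\Ozeta)$.

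What survives of the drag term is the contribution with $\nabx$ on $\mathcal W(\mathbf w)$, namely $\int_I\int_{\Ozeta\times B}M\,(\nabx\mathcal W(\mathbf w))\bq\,\widehat f^n:\nabx\nabq\widehat f^n\dq\dx\dt$. Since $|\bq|\le\sqrt b$ on $B$ this is bounded by $\int_I\int_{\Ozeta}|\nabx\mathcal W(\mathbf w)|\,\|\widehat f^n\|_{L^2_M(B)}\,\|\nabx\widehat f^n\|_{H^1_M(B)}\dx\dt\le\sup_I\|\widehat f^n\|_{L^\infty(\Ozeta;L^2_M(B))}\int_I\|\mathbf w\|_{W^{2,2}(\Ozeta)}\|\nabx\widehat f^n\|_{L^2(\Ozeta;H^1_M(B))}\dt$, and here I would invoke the maximum principle of Remark~\ref{rem:max} to replace $\sup_I\|\widehat f^n\|_{L^\infty(\Ozeta;L^2_M(B))}$ by $C\|\widehat f_0\|_{L^\infty(\Omega_{\zeta(0)};L^2_M(B))}$; Young's inequality then absorbs a $\delta$-fraction of the $H^1_M(B)$-dissipation and leaves $c(\delta)\|\widehat f_0\|_{L^\infty}^2\int_I\|\mathbf w\|_{W^{2,2}(\Ozeta)}^2\dt$. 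It is precisely this use of the maximum principle — available because $\widehat f^n$ solves a transport–diffusion equation whose drag term integrates to zero against any power of $\widehat f^n$, cf.\ \eqref{allq} — that keeps the whole argument within the modest regularity class $\mathbf w\in L^2(I;W^{2,2}(\Ozeta))$ of \eqref{fokkerPlanckDataAloneY}, and that puts $\|\widehat f_0\|_{L^\infty(\Omega_{\zeta(0)};L^2_M(B))}$ into the multiplicative constant.

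It remains to estimate the moving-boundary term $\tfrac12\int_I\int_{\partial\Ozeta}\bn_\zeta\cdot((\partial_t\zeta)\bn)\circ\bm{\varphi}_\zeta^{-1}\,\|\nabx\widehat f^n\|_{L^2_M(B)}^2\dd\mathcal H^1\dt$, which I expect to be the main obstacle. Writing $g:=\|\nabx\widehat f^n\|_{L^2_M(B)}$ as a scalar function on $\Ozeta$, one uses on the one-dimensional curve $\partial\Ozeta$ the Hölder split $\big|\int_{\partial\Ozeta}\psi\,g^2\big|\le\|\psi\|_{L^6(\partial\Ozeta)}\|g\|_{L^{12/5}(\partial\Ozeta)}^2$, the sharp two-dimensional trace/Sobolev chains $W^{7/12,2}(\Ozeta)\hookrightarrow W^{1/12,2}(\partial\Ozeta)\hookrightarrow L^{12/5}(\partial\Ozeta)$ and $W^{1/3,2}(\omega)\hookrightarrow L^6(\omega)$, the interpolation $\|g\|_{W^{7/12,2}(\Ozeta)}\lesssim\|g\|_{L^2(\Ozeta)}^{5/12}\|g\|_{W^{1,2}(\Ozeta)}^{7/12}$ with $\|g\|_{W^{1,2}(\Ozeta)}=\|\nabx\widehat f^n\|_{W^{1,2}(\Ozeta;L^2_M(B))}$ controlled through $\|\Delx\widehat f^n\|_{L^2(\Ozeta;L^2_M(B))}$ by elliptic regularity for the homogeneous Neumann problem (uniformly in $t$ since $\Omega_\zeta$ is uniformly smooth), and finally Young's inequality with exponents $\tfrac{12}{5}$ and $\tfrac{12}{7}$. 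This yields $\delta\int_I\|\nabx\widehat f^n\|_{W^{1,2}(\Ozeta;L^2_M(B))}^2\dt+c(\delta)\int_I\|\partial_t\zeta\|_{W^{1/3,2}(\omega)}^{12/5}\|\nabx\widehat f^n\|_{L^2(\Ozeta;L^2_M(B))}^2\dt$, the first term being absorbed (again via elliptic regularity together with the $\|\Delx\widehat f^n\|^2$ on the left) and the second being of Gr\"onwall type. Collecting everything, adding Lemma~\ref{lem:mainFP1Y}, absorbing the $\delta$-terms, and applying Gr\"onwall's lemma with exponent $c\int_I\|\mathbf w\|_{W^{2,2}(\Ozeta)}^2\dt+c\int_I\|\partial_t\zeta\|_{W^{1/3,2}(\omega)}^{12/5}\dt$ and a constant depending on $\sup_I\|\mathbf w\|_{L^2(\Ozeta)}$, $\sup_I\|\zeta\|_{W^{1,\infty}(\omega)}$ (entering through the Hanzawa/Reynolds bounds \eqref{210and212} and the normal velocity) and $\|\widehat f_0\|_{L^\infty(\Omega_{\zeta(0)};L^2_M(B))}$ gives \eqref{fokkerEnergyNeg1Y}. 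The co-rotational cancellation and the maximum-principle step are the structurally clean ingredients; locating the exact fractional trace/Sobolev chain in two dimensions that produces the exponents $W^{1/3,2}(\omega)$ and the power $12/5$ while keeping the Maxwellian weight $M$ intact is the delicate bookkeeping.
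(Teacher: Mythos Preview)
Your proposal is correct and follows essentially the same approach as the paper's proof: test with $\Delx\widehat f^n$, use Reynolds' transport theorem for the time derivative, handle the moving-boundary term via the $L^6$--$L^{12/5}$ H\"older split with the trace chain $W^{7/12,2}(\Ozeta)\hookrightarrow L^{12/5}(\partial\Ozeta)$ and interpolation (giving the $W^{1/3,2}$-norm of $\partial_t\zeta$ to the power $12/5$), exploit the skew-symmetry of $\mathcal W(\mathbf w)$ to kill the top-order drag contribution, and invoke the maximum principle \eqref{fokkerEnergyEst5''} for the remaining $\nabx\mathcal W(\mathbf w)$-term. The only cosmetic difference is that for the transport term the paper uses the 2D Ladyzhenskaya inequality $\|\cdot\|_{L^4}\lesssim\|\cdot\|_{L^2}^{1/2}\|\nabla\cdot\|_{L^2}^{1/2}$ on both $\mathbf w$ and $\nabx\widehat f^n$ (leading to $\|\nabx\mathbf w\|_{L^2}^2$ in the Gr\"onwall factor), whereas you use $\|\mathbf w\|_{L^\infty}^2\lesssim\|\mathbf w\|_{L^2}\|\mathbf w\|_{W^{2,2}}$; both routes are fine and produce the same exponent.
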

\begin{proof}
Now, we test \eqref{eq:FP1ApproxY} with $\Delx  \widehat{f}^n$. First of all, note that by \eqref{fokkerPlankBoundarySpacexsc4Y}, the Reynolds transport theorem and \eqref{fokkerPlanckDataAloneBY}, 
\begin{equation}
\begin{aligned}
\label{fokkerEnergyEst6}
\int_I\int_{\Ozeta\times B}M \partial_t   \widehat{f}^n  \Delx  \widehat{f}^n\dq\dx\dt
&=
\frac{1}{2}
\int_I\int_{\partial\Ozeta }
\bn_\zeta\cdot((\partial_t\zeta)\bn)\circ\bm{\varphi}_\zeta^{-1}
\Vert \nabx  \widehat{f}^n\Vert_{L^2_M(B)}^2\dd\mathcal{H}^1\dt
\\&-
\frac{1}{2}\int_I\frac{\dd}{\dt}
\Vert \nabx  \widehat{f}^n\Vert_{L^2(\Ozeta;L^2_M(B))}^2\dt,
\end{aligned}
\end{equation}
where, by interpolation, the trace theorem and Young's inequality,
\begin{equation}
\begin{aligned}
\label{fokkerEnergyEst7Y}
\bigg\vert\int_I&\int_{\partial\Ozeta }\bn_\zeta\cdot((\partial_t\zeta)\bn)\circ\bm{\varphi}_\zeta^{-1}\Vert \nabx  \widehat{f}^n\Vert_{L^2_M(B)}^2\dd\mathcal{H}^1\dt\bigg\vert
\\&\lesssim
\int_I\Vert \bn_\zeta\cdot((\partial_t\zeta)\bn)\circ\bm{\varphi}_\zeta^{-1}\Vert_{L^6(\partial\Ozeta)}\Vert \nabx  \widehat{f}^n\Vert_{L^{12/5}(\partial\Ozeta;L^2_M(B))}^2 \dt
\\&
\lesssim
\int_I\Vert\partial_t\zeta\Vert_{W^{1/3,2}(\omega)}\Vert \nabx  \widehat{f}^n\Vert_{W^{7/12,2}(\Ozeta;L^2_M( B))}^2 \dt
\\&
\lesssim
\int_I\Vert\partial_t\zeta\Vert_{W^{1/3,2}(\omega)}\Vert \nabx  \widehat{f}^n\Vert_{L^2(\Ozeta;L^2_M(B))}^{5/6}
\Vert \nabx  \widehat{f}^n\Vert_{W^{1,2}(\Ozeta;L^2_M( B))}^{7/6} \dt
\\&
\leq
\delta
\int_I 
\Vert \nabx  \widehat{f}^n\Vert_{W^{1,2}(\Ozeta;L^2_M(B))}^2 \dt
+
c(\delta)
\int_I\Vert\partial_t\zeta\Vert_{W^{1/3,2}(\omega)}^{12/5}\Vert \nabx  \widehat{f}^n\Vert_{L^2(\Ozeta;L^2_M(B))}^2 \dt.
\end{aligned}
\end{equation}
Next, by Ladyszenskaya's inequality,
\begin{equation}
\begin{aligned}
\label{fokkerEnergyEst8}
\bigg\vert\int_I\int_{\Ozeta\times B}&M  (\mathbf{w}\cdot \nabx)   \widehat{f}^n \Delx  \widehat{f}^n\dq\dx\dt
\bigg\vert
\\&\leq
\int_I \Vert \mathbf w \Vert_{L^4(\Ozeta)}
\Vert \nabx  \widehat{f}^n\Vert_{L^4(\Ozeta;L^2_M(B))} \Vert \nabx^2  \widehat{f}^n\Vert_{L^2(\Ozeta;L^2_M(B))}\dt
\\
&\lesssim \int_I \Vert \mathbf w \Vert^{1/2}_{L^2(\Ozeta)}\Vert\nabx\mathbf w \Vert^{1/2}_{L^2(\Ozeta)}
\Vert \nabx  \widehat{f}^n\Vert^{1/2}_{L^2(\Ozeta;L^2_M(B))} \Vert \nabx^2  \widehat{f}^n\Vert_{L^2(\Ozeta;L^2_M(B))}^{3/2}\dt\\
&\leq \delta \int_I\Vert \nabx^2  \widehat{f}^n\Vert_{L^2(\Ozeta;L^2_M(B))}^2 \dt+
c(\delta)
\int_I\Vert\nabx\mathbf{w}\Vert_{L^2(\Ozeta)}^2\Vert \nabx  \widehat{f}^n\Vert_{L^2(\Ozeta;L^2_M(B))}^2 \dt.
\end{aligned}
\end{equation}
For the dissipative term, we obtain
\begin{align}
\label{fokkerEnergyEst9}
\int_I\int_{\Ozeta\times B}  \Delx(M   \widehat{f}^n) \Delx  \widehat{f}^n\dq\dx\dt
=
\int_I\Vert\Delx  \widehat{f}^n\Vert_{L^2(\Ozeta;L^2_M(B))}\dt.
\end{align}
Finally,
 we have by \eqref{fokkerPlankBoundarySpacexsc4Y}
\begin{align}
\int_I&\int_{\Ozeta\times B}\divq  \big(  \mathcal W(  \mathbf{w}) \bq M\widehat{f}^n\big)\Delx  \widehat{f}^n\dq\dx\dt\nonumber
\\&
=\sum_\gamma
\int_I\int_{\Ozeta\times B}\partial_\gamma  \big(  \mathcal W(\mathbf{w}) \bq M\widehat{f}^n\big):\partial_\gamma\nabq  \widehat{f}^n\dq\dx\dt\nonumber
\\
&
=\sum_\gamma
\int_I\int_{\Ozeta\times B}  \big( (  \partial_\gamma\mathcal W(\mathbf{w})) \bq M\widehat{f}^n\big):\partial_\gamma\nabq  \widehat{f}^n\dq\dx\dt\nonumber
\\
&
+\sum_\gamma
\int_I\int_{\Ozeta\times B} \big(  \mathcal W(\mathbf{w}) \bq M\partial_\gamma\widehat{f}^n\big):\partial_\gamma\nabq  \widehat{f}^n\dq\dx\dt\nonumber.
\end{align}
The last term vanishes again because of the skew-symmetry of $\mathcal W(\mathbf w)$, while the first one is bounded by (employing the maximum principle, cf. \eqref{fokkerEnergyEst5''}) 
\begin{align}
& 
\int_I\Vert\nabx^2\mathbf{w}\Vert_{L^2(\Ozeta)}\Vert  \widehat{f}^n\Vert_{L^\infty(\Ozeta;L^2_M(B))}
\Vert \nabx \widehat{f}^n\Vert_{L^2(\Ozeta;H^1_M(B))}\dt\nonumber
\\&
\leq
\delta\int_I\Vert\nabx \widehat{f}^n\Vert_{L^2(\Ozeta;H^1_M(B))}^2\dt
+c(\delta)
\int_I\Vert\mathbf{w}\Vert_{W^{2,2}(\Ozeta)}^2\dt,\label{fokkerEnergyEst10A}
\end{align}
where $\delta>0$. Finally, we note that
\begin{align}
-\int_I&\int_{\Ozeta\times B}\divq  \big( M \nabq    \widehat{f}^n
\big)\Delx  \widehat{f}^n\dq\dx\dt
=-\int_I\Vert\nabx \widehat{f}^n\Vert_{L^2(\Ozeta;H^1_M(B))}^2\dt\label{fokkerEnergyEst10Y}
\end{align}
as a consequence of \eqref{fokkerPlankBoundarySpacexsc4Y}.
By combining \eqref{fokkerEnergyEst7Y}--\eqref{fokkerEnergyEst10Y} we obtain
\eqref{fokkerEnergyNeg1Y}
uniformly in $n\in \mathbb{N}$.
\end{proof}

As far as the temporal regularity is concerned we have the following result.
\begin{lemma}\label{lem:mainFP4}
Let $(\widehat{f}_0,\zeta, \mathbf{w})$ satisfy  \eqref{fokkerPlanckDataAloneY}--\eqref{fokkerPlanckDataAloneBY}  and let $\widehat{f}^n$ be the corresponding solution to \eqref{eq:FP1ApproxY}.
Suppose further that  $\widetilde f_0\in L^{2}(\Omega_{\zeta(0)};L^2_M(B))$, where $\widetilde f_0$ is given by 
\begin{equation}
\begin{aligned}
\label{eq:FP1Approx1Initial2D}
M   \widetilde{f}_0 
&=
\Delx(M  \widehat{f}_0)
+
 \divq  \big( M \nabq    \widehat{f}_0
\big)
-
M  (\mathbf w(0)\cdot \nabx)   \widehat{f}_0
-
 \divq  \big(   (  \nabx\mathbf w(0)) \bq M\widehat{f}_0 \big) .
\end{aligned}
\end{equation} 
Then we have 
\begin{equation}
\begin{aligned}
\label{fokkerEnergyNeg3}
\sup_I \Vert \partial_t\widehat{f}^n(t)
&\Vert_{L^2(\Ozeta;L^2_M(B))}^2
+
 \int_I 
 \Vert \partial_t\widehat{f}^n
\Vert_{W^{1,2}(\Ozeta;L^2_M(B))}^2
 \dt
 +
 \int_I 
 \Vert \partial_t\widehat{f}^n
\Vert_{L^2(\Ozeta;H^1_M(B))}^2
 \dt\\
&\lesssim \sup_I\|\partial_t\zeta\|_{W^{1,2}(\omega)}^2
\int_I 
\Vert \nabx \widehat{f}^n\Vert_{W^{1,2}(\Ozeta;L^2_M(B))}^2\dt
+\int_I 
\Vert \nabx\partial_t \mathbf{w}\Vert_{L^2(\Ozeta)}^{2}\dt
\\&
+\sup_I \Vert \nabx \widehat{f}^n\Vert_{L^2(\Ozeta;L^2_M(B))}^2\int_I\Vert \nabx^2 \widehat{f}^n\Vert^{2}_{L^2(\Ozeta;L^2_M(B))}\dt
\end{aligned}
\end{equation}
uniformly in $n\in\mathbb{N}$, where the hidden constant also depends on $\sup_I\|\mathbf w\|_{L^2(\Ozeta)}$, $\sup_I\|\partial_t\mathbf w\|_{L^2(\Ozeta)}$, $\sup_I\|\zeta \|_{W^{1,\infty}(\omega)}$ and $\|\widehat f_0\|_{L^\infty(\Omega_{\zeta(0)};L^2_M(B))}$.
\end{lemma}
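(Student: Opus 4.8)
The plan is to mimic the structure of the proof of Lemma~\ref{lem:mainFP3} from Section~\ref{sec:FP}, but now in the simpler co-rotational setting. First I would set $\widetilde f^n:=\partial_t\widehat f^n$ and differentiate \eqref{eq:FP1ApproxY} in time. Because the drag term now only involves $\mathcal W(\mathbf w)$ and no cut-off $\chi^n$, the differentiated equation reads
\begin{equation*}
\begin{aligned}
M\big(\partial_t \widetilde f^n + (\mathbf w\cdot\nabx)\widetilde f^n\big)
&+\divq\big(\mathcal W(\mathbf w)\bq M\widetilde f^n\big)
-\Delx(M\widetilde f^n)-\divq\big(M\nabq\widetilde f^n\big)
\\&=
-M(\partial_t\mathbf w\cdot\nabx)\widehat f^n
+\divq\big(\mathcal W(\partial_t\mathbf w)\bq M\widehat f^n\big)
\end{aligned}
\end{equation*}
in $I\times\Omega_\zeta\times B$, supplemented with $\widetilde f^n(0)=\widetilde f_0$, the shifted Neumann condition $\nabx\widetilde f^n\cdot\bn_\zeta=-\nabx\widehat f^n\cdot\partial_t\bn_\zeta$ on $I\times\partial\Omega_\zeta\times B$, and $M(\nabq\widetilde f^n-\mathcal W(\mathbf w)\bq\widetilde f^n)\cdot\bq/\vert\bq\vert=0$ on $I\times\Omega_\zeta\times\partial\overline B$. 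The key simplification, exactly as in Lemma~\ref{lem:mainFP1Y}, is that the drag term on the left-hand side contributes nothing in the energy estimate: testing with $\widetilde f^n$ and using skew-symmetry of $\mathcal W(\mathbf w)$ together with $(\mathcal W(\mathbf w)\bq)\cdot\bq=0$ kills $\int_B\divq(\mathcal W(\mathbf w)\bq M\widetilde f^n)\widetilde f^n\dq$ and there is no loss from a cut-off.

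Next I would test the differentiated equation with $\widetilde f^n$ and integrate over $I\times\Omega_\zeta\times B$, applying the Reynolds transport theorem and \eqref{fokkerPlanckDataAloneBY}. This produces, as in \eqref{fokkerEnergy1}, the dissipative terms $\int_I\Vert\nabx\widetilde f^n\Vert_{L^2(\Ozeta;L^2_M(B))}^2\dt$ and $\tfrac12\int_I\Vert\widetilde f^n\Vert_{L^2(\Ozeta;H^1_M(B))}^2\dt$ on the left, plus four error terms on the right:
the boundary term $-\int_I\int_{\partial\Ozeta\times B}M\partial_t\bn_\zeta\cdot\nabx\widehat f^n\,\widetilde f^n\dq\dd\mathcal H^1\dt$ coming from \eqref{fokkerPlankBoundarySpacexsc4time}, the mixed drag term $\int_I\int_{\Ozeta\times B}\mathcal W(\partial_t\mathbf w)\bq M\widehat f^n\cdot\nabq\widetilde f^n\dq\dx\dt$, the transport-in-time term $-\int_I\int_{\Ozeta\times B}M(\partial_t\mathbf w\cdot\nabx)\widehat f^n\,\widetilde f^n\dq\dx\dt$, and the zeroth-order term involving $\nabx\mathbf w$ acting on $\widetilde f^n$ itself. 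The boundary term is handled by the trace theorem (in 2D, $W^{1,2}(\partial\Ozeta)\hookrightarrow L^4(\partial\Ozeta)$), the interpolation $\Vert\nabx\widehat f^n\Vert_{L^4(\partial\Ozeta)}\lesssim\Vert\nabx\widehat f^n\Vert_{W^{1,2}(\Ozeta)}$, a small parameter $\delta$ to absorb $\Vert\widetilde f^n\Vert_{W^{1,2}(\Ozeta;L^2_M(B))}^2$, and then Lemma~\ref{lem:mainFP2Y} to bound $\int_I\Vert\nabx\widehat f^n\Vert_{W^{1,2}(\Ozeta;L^2_M(B))}^2\dt$ in terms of the stated right-hand side with the factor $\sup_I\Vert\partial_t\zeta\Vert_{W^{1,2}(\omega)}^2$. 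The mixed drag term is bounded by $\delta\int_I\Vert\widetilde f^n\Vert_{L^2(\Ozeta;H^1_M(B))}^2\dt$ plus $c(\delta)\int_I\Vert\nabx\partial_t\mathbf w\Vert_{L^2(\Ozeta)}^2\dt\,\sup_I\Vert\widehat f^n\Vert_{L^\infty(\Ozeta;L^2_M(B))}^2$, using the maximum principle \eqref{fokkerEnergyEst5''} for the last factor. The transport term contributes $\int_I\Vert\nabx\widehat f^n\Vert_{L^2(\Ozeta;L^2_M(B))}^2\dt$ (controlled by Lemma~\ref{lem:mainFP2Y}, supplying $\sup_I\Vert\nabx\widehat f^n\Vert_{L^2(\Ozeta;L^2_M(B))}^2\int_I\Vert\nabx^2\widehat f^n\Vert_{L^2(\Ozeta;L^2_M(B))}^2\dt$ via Sobolev in 2D when one wants a product structure) plus $\int_I\Vert\partial_t\mathbf w\Vert_{L^\infty(\Ozeta)}^2\Vert\widetilde f^n\Vert_{L^2(\Ozeta;L^2_M(B))}^2\dt$, which is a Gr\"onwall term; in 2D one uses $W^{1,2}(\Ozeta)\hookrightarrow L^p$ for all finite $p$ or the Ladyzhenskaya trick rather than a fractional embedding. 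Finally, the zeroth-order $\nabx\mathbf w$-term is straightforwardly $\lesssim\int_I\Vert\nabx\mathbf w\Vert_{L^\infty(\Ozeta)}^2\Vert\widetilde f^n\Vert_{L^2(\Ozeta;L^2_M(B))}^2\dt$, again Gr\"onwall.

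After collecting, choosing $\delta$ small to absorb all the $\Vert\widetilde f^n\Vert_{W^{1,2}(\Ozeta;L^2_M(B))}^2$ and $\Vert\widetilde f^n\Vert_{L^2(\Ozeta;H^1_M(B))}^2$ contributions into the dissipation on the left, and invoking Gr\"onwall's lemma with the integrable coefficient $\Vert\partial_t\mathbf w\Vert_{L^\infty(\Ozeta)}^2+\Vert\nabx\mathbf w\Vert_{L^\infty(\Ozeta)}^2$ (finite in 2D since $\mathbf w\in L^2(I;W^{2,2}(\Ozeta))$ and $\partial_t\mathbf w\in L^2(I;W^{2,2})$ is assumed in the fully coupled setting, though here only the raw norms appearing on the right of \eqref{fokkerEnergyNeg3} are needed), I obtain the stated estimate \eqref{fokkerEnergyNeg3}. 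One also needs $\widetilde f^n(0)=\widetilde f_0\in L^2(\Omega_{\zeta(0)};L^2_M(B))$ as given by \eqref{eq:FP1Approx1Initial2D} to supply the initial value of $\Vert\widetilde f^n\Vert_{L^2(\Ozeta;L^2_M(B))}^2$ — this is why the hypothesis on $\widetilde f_0$ is imposed. As in Section~\ref{sec:FP}, the argument is run on the Galerkin level and the bounds, being uniform in $n$ and insensitive to the (absent) cut-off, pass to the limit to give a solution of \eqref{eq:FPy} with the same estimate. The main obstacle, as in Lemma~\ref{lem:mainFP3}, is the boundary term $\int_I\int_{\partial\Ozeta\times B}M\partial_t\bn_\zeta\cdot\nabx\widehat f^n\,\widetilde f^n$: one must trade $\partial_t\bn_\zeta$ (whose $L^4(\partial\Ozeta)$-norm is controlled by $\Vert\partial_t\zeta\Vert_{W^{1,2}(\omega)}$) against the trace of $\nabx\widehat f^n$ and the trace of $\widetilde f^n$, and the only spare regularity to absorb the latter into the dissipation is the $W^{1,2}_\bx$-dissipation for $\widetilde f^n$ just produced — this forces the appearance of $\sup_I\Vert\partial_t\zeta\Vert_{W^{1,2}(\omega)}^2\int_I\Vert\nabx\widehat f^n\Vert_{W^{1,2}(\Ozeta;L^2_M(B))}^2\dt$ on the right-hand side and is precisely the term that ties this lemma back to Lemma~\ref{lem:mainFP2Y}.
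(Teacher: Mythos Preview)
Your overall strategy coincides with the paper's: differentiate \eqref{eq:FP1ApproxY} in time, test the resulting equation for $\widetilde f^n=\partial_t\widehat f^n$ with $\widetilde f^n$, exploit the skew-symmetry of $\mathcal W(\mathbf w)$ to kill the left-hand drag term, and estimate the boundary term, the drag-in-time term, and the transport-in-time term by trace theorem, maximum principle, and a 2D product inequality, respectively. That much is correct.

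There is, however, a gap in your handling of the transport term and the Gr\"onwall step. You split
\[
\int_I\!\!\int_{\Ozeta\times B} M(\partial_t\mathbf w\cdot\nabx)\widehat f^n\,\widetilde f^n\dq\dx\dt
\]
so that the Gr\"onwall coefficient becomes $\Vert\partial_t\mathbf w\Vert_{L^\infty(\Ozeta)}^2$ (and you list an additional ``zeroth-order $\nabx\mathbf w$-term'' contributing $\Vert\nabx\mathbf w\Vert_{L^\infty(\Ozeta)}^2$). Neither quantity is controllable under the hypotheses of the lemma: in two dimensions $W^{1,2}\not\hookrightarrow L^\infty$ and $W^{2,2}\not\hookrightarrow W^{1,\infty}$, and the only information on $\partial_t\mathbf w$ available through the right-hand side of \eqref{fokkerEnergyNeg3} and the hidden constants is $L^\infty(I;L^2(\Ozeta))\cap L^2(I;W^{1,2}(\Ozeta))$. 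Your parenthetical appeal to ``$\partial_t\mathbf w\in L^2(I;W^{2,2})$ in the fully coupled setting'' is also not justified at the regularity level of Theorem~\ref{thm:main2D}. The paper avoids this by applying Ladyzhenskaya's inequality to \emph{both} factors simultaneously,
\[
\Vert\partial_t\mathbf w\Vert_{L^4(\Ozeta)}\Vert\nabx\widehat f^n\Vert_{L^4(\Ozeta;L^2_M)}
\lesssim
\Vert\partial_t\mathbf w\Vert_{L^2}^{1/2}\Vert\nabx\partial_t\mathbf w\Vert_{L^2}^{1/2}
\Vert\nabx\widehat f^n\Vert_{L^2}^{1/2}\Vert\nabx^2\widehat f^n\Vert_{L^2}^{1/2},
\]
and then, after absorbing $\sup_I\Vert\partial_t\mathbf w\Vert_{L^2}$ into the hidden constant, uses Young's inequality to produce exactly the three contributions $\int_I\Vert\nabx\partial_t\mathbf w\Vert_{L^2}^2\dt$, $\sup_I\Vert\nabx\widehat f^n\Vert_{L^2}^2\int_I\Vert\nabx^2\widehat f^n\Vert_{L^2}^2\dt$, and $\int_I\Vert\widetilde f^n\Vert_{L^2}^2\dt$. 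The Gr\"onwall coefficient is then a constant, not $\Vert\partial_t\mathbf w\Vert_{L^\infty}^2$. Finally, the ``zeroth-order $\nabx\mathbf w$-term'' you include does not arise at all: you already correctly observed that $\int_B\divq(\mathcal W(\mathbf w)\bq M\widetilde f^n)\widetilde f^n\dq=0$ by skew-symmetry, so no residual term involving $\nabx\mathbf w$ acting on $\widetilde f^n$ survives on the right.
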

\begin{proof}
Now  set $\widetilde{f}^n:=\partial_t\widehat{f}^n$ and consider the following equation
\begin{equation}
\begin{aligned}
\label{eq:FP1Approx1Y}
M\big(\partial_t \widetilde{f}^n + (\mathbf{w}\cdot \nabx)  \widetilde{f}^n)
&+
 \divq  \big( \mathcal W(\mathbf{w}) \bq M\widetilde{f}^n \big) 
-
\Delx(M \widetilde{f}^n)
-
 \divq  \big( M \nabq \widetilde{f}^n
\big)
\\&
=
-
M (\partial_t\mathbf{w}\cdot \nabx)   \widehat{f}^n
+
 \divq  \big( \mathcal W(  \partial_t\mathbf{w}) \bq M\widehat{f}^n \big) 
\end{aligned}
\end{equation}
in $I\times\Omega_\zeta\times B$ subject to
\begin{align}
&\widetilde{f}^n(0, \cdot, \cdot) =\widetilde{f}_0 \geq 0
& \quad \text{in }\Omega_{\zeta_0} \times B,
\label{fokkerPlankIintialxsec4time}
\\
&
\nabx\widetilde{f}^n\cdot \bn_\zeta =-\nabx\widehat{f}^n\cdot \partial_t\bn_\zeta
&\quad \text{on }I \times \partial\Omega_\zeta \times B,
\label{fokkerPlankBoundarySpacexsc4timeY}
\\
& M\nabq\widetilde{f}^n \cdot \frac{\bq}{\vert\bq\vert} =0
&\quad \text{on }I \times \Omega_\zeta \times \partial \overline{B}
\label{fokkerPlankBoundaryxsc4time}
\end{align}
and where $\widetilde{f}_0$ satisfies \eqref{eq:FP1Approx1Initial}. 
We now test \eqref{eq:FP1Approx1Y} with $\widetilde{f}^n$. Since the left-hand side of \eqref{eq:FP1Approx1Y} is of the same form as \eqref{eq:FP1ApproxY}, we obtain similarly to \eqref{fokkerEnergyY}
\begin{equation}
\begin{aligned}
\label{fokkerEnergy1}
\int_I\int_{\Ozeta}
&\Big(
\frac{1}{2}\partial_t\Vert   \widetilde{f}^n \Vert_{L^2_M(B)}^2
+
\frac{1}{2}(\mathbf{w}\cdot \nabx)\Vert    \widetilde{f}^n \Vert_{L^2_M(B)}^2
+
\Vert   \nabx \widetilde{f}^n \Vert_{L^2_M(B)}^2
+ 
\frac{1}{2}
\Vert  \widetilde{f}^n\Vert_{H^1_M(B)}^2
\Big)\dx\dt
\\&
=
-
\int_I\int_{\partial\Ozeta\times B}
    M\partial_t\bn_\zeta\cdot\nabx\widehat{f}^n 
  \widetilde{f}^n\dq\dx\dt
\\&
+
\int_I\int_{\Ozeta\times B}
 (  \partial_t\mathcal W(\mathbf{w})) \bq M\widehat{f}^n \cdot \nabq
  \widetilde{f}^n\dq\dx\dt
  -
\int_I\int_{\Ozeta\times B} M
(\partial_t\mathbf{w}\cdot \nabx)   \widehat{f}^n
  \widetilde{f}^n\dq\dx\dt,
\end{aligned}
\end{equation}
where the second term on the right-hand side is due to \eqref{fokkerPlankBoundarySpacexsc4timeY}. For the boundary term, we use the trace theorem and Lemma \ref{lem:mainFP2Y} to obtain
\begin{align*}
\bigg\vert\int_I&\int_{\partial\Ozeta\times B}
    M\partial_t\bn_\zeta\cdot\nabx\widehat{f}^n 
  \widetilde{f}^n\dq\dx\dt
  \bigg\vert
\\
&\leq
\int_I 
\Vert \widetilde{f}^n\Vert_{L^{4}(\partial\Ozeta;L^2_M(B))}\|\partial_t\zeta\|_{W^{1,2}(\omega)}
\Vert \nabx \widehat{f}^n\Vert_{L^{4}(\partial\Ozeta;L^2_M(B))}\dt
\\
&\leq
\delta
\int_I 
\Vert \widetilde{f}^n\Vert_{W^{1,2}(\Ozeta;L^2_M(B))}^2\dt
+c(\delta)\sup_I\|\partial_t\zeta\|_{W^{1,2}(\omega)}^2
\int_I 
\Vert \nabx \widehat{f}^n\Vert_{W^{1,2}(\Ozeta;L^2_M(B))}^2\dt.
\end{align*}
Next we use \eqref{fokkerPlanckDataAloneY} and the maximum principle \eqref{fokkerEnergyEst5''} to infer
\begin{equation}
\begin{aligned}
\bigg\vert\int_I&\int_{ \Ozeta\times B}
  \mathcal W(  \partial_t\mathbf{w}) \bq M\widehat{f}^n \cdot \nabq
  \widetilde{f}^n\dq\dx\dt
  \bigg\vert
\\&\leq
c(\delta)
\int_I\int_{ \Ozeta }
\vert\partial_t\nabx\mathbf{w}\vert^2
\Vert  \widehat{f}^n\Vert_{L^2_M(B)}^2\dx\dt+\delta
\int_I\Vert    \widetilde{f}^n\Vert_{L^2(\Ozeta;H^1_M(B))}^2\dt
\\&
\leq
c(\delta)
\int_I 
\Vert\partial_t\nabx\mathbf{w}\Vert_{L^2(\Ozeta )}^2\dt +\delta
\int_I\Vert    \widetilde{f}^n\Vert_{L^2(\Ozeta;H^1_M(B))}^2\dt.
\end{aligned}
\end{equation} 
Finally, we use Lemma \ref{lem:mainFP1Y} to also obtain
\begin{align*}
\bigg\vert\int_I\int_{ \Ozeta\times B}& M
(\partial_t\mathbf{w}\cdot \nabx)   \widehat{f}^n
  \widetilde{f}^n\dq\dx\dt
  \bigg\vert
\\
&\lesssim
\int_I 
\Vert \partial_t \mathbf{w}\Vert_{L^4(\Ozeta)}
\Vert \nabx \widehat{f}^n\Vert_{L^4(\Ozeta;L^2_M(B))}\Vert \widetilde{f}^n\Vert_{L^2(\Ozeta;L^2_M(B))}\dt\\
&\lesssim
\int_I 
\Vert \partial_t \mathbf{w}\Vert^{1/2}_{L^2(\Ozeta)}\Vert\partial_t \nabx \mathbf{w}\Vert^{1/2}_{L^2(\Ozeta)}\Vert \nabx \widehat{f}^n\Vert^{1/2}_{L^2(\Ozeta;L^2_M(B))}\Vert \nabx^2 \widehat{f}^n\Vert^{1/2}_{L^2(\Ozeta;L^2_M(B))}\Vert \widetilde{f}^n\Vert_{L^2(\Ozeta;L^2_M(B))}\dt
\\
&\lesssim
\int_I 
\Vert \partial_t\nabx \mathbf{w}\Vert^{1/2}_{L^2(\Ozeta)}\Vert \nabx \widehat{f}^n\Vert^{1/2}_{L^2(\Ozeta;L^2_M(B))}\Vert \nabx^2 \widehat{f}^n\Vert^{1/2}_{L^2(\Ozeta;L^2_M(B))}\Vert \widetilde{f}^n\Vert_{L^2(\Ozeta;L^2_M(B))}\dt
\\
&\lesssim 
\int_I 
\Vert \partial_t\nabx \mathbf{w}\Vert_{L^2(\Ozeta)}^{2}\dt
+
\sup_I \Vert \nabx \widehat{f}^n\Vert_{L^2(\Ozeta;L^2_M(B))}^2\int_I\Vert \nabx^2 \widehat{f}^n\Vert^{2}_{L^2(\Ozeta;L^2_M(B))}\dt
\\
&+\int_I\Vert \widetilde{f}^n\Vert_{L^2(\Ozeta;L^2_M(B))}^2\dt.
\end{align*} 
Subsequently we use Reynold's transport and Gronwall's lemma   yielding the claim. 
\end{proof}

\subsection{The fully coupled system}
\label{sec:fullyCoupled}
\noindent 

We consider now the set of equations
\begin{align}
\partial_t^2\eta -\partial_t\Dely \eta + \Dely^2\eta&=g-\bn^\top\mathbb{T}\circ\bm{\varphi}_\eta\bn_\eta  \det(\naby\bm{\varphi}_\eta) ,
\label{shellEq2D}
\\
\partial_t \bu  + (\mathbf{u}\cdot \nabx)\mathbf{u} 
&= 
\Delx \bu -\nabx\pi+ \bff+
\divx   \mathbb{S}_\bq(\widehat{f}),
\label{momEq2D}
\\
\divx \bu&=0,
\label{contEq2D}
\\
M\big(\partial_t \widehat{f} + (\mathbf{u}\cdot \nabx) \widehat{f})
+
 \divq  \big( (\mathcal W(\bu)) \bq M\widehat{f} \big) 
&=
\Delx(M \widehat{f})
+
\divq  \big( M \nabq  \widehat{f}
\big),
\label{fokkerPlank2D}
\end{align}
where
\begin{align*}
\mathbb{T}=(\nabx\bu+\nabx\bu^\top)-\pi\mathbb{I}_{2\times2}+\mathbb{S}_\bq(\widehat{f}),\quad 
\mathbb{S}_\bq(\widehat{f} ) 
=
 \int_B M(\bq) \widehat{f} (t, \mathbf{x},\mathbf{q})\nabq U(\tfrac{1}{2}\vert\mathbf{q}\vert^2 ) \otimes\mathbf{q} \dq, 
\end{align*}
subject to initial conditions $\bu_0,\eta_0,\eta_\star,\widehat f_0$ and boundary conditions
\begin{align}
&\bu  \circ \bm{\varphi}_{\eta} =(\partial_t\eta)\bn
&\quad \text{on }I \times \omega ,\\
&
\nabx\widehat{f}\cdot \bn_\eta =0
&\quad \text{on }I \times \partial\Omega_\eta \times B,
\label{fokkerPlankBoundarySpacexsc42D}
\\
&M\big(\nabq\widehat{f}   -  \mathcal W(\mathbf u) \bq \widehat{f}
 \big) \cdot \frac{\bq}{\vert\bq\vert} =0
&\quad \text{on }I \times \Omega_\eta \times \partial \overline{B}.
\label{fokkerPlankBoundaryxsc42D}
\end{align}
A weak solution to \eqref{shellEq2D}--\eqref{fokkerPlankBoundaryxsc42D} can be defined as in Defintion \ref{def:weakSolution} (simply replacing $\nabx \bu$ by $\mathcal W(\bu)$ in the last integral of (d)). Its existence follows again from \cite{breit2021incompressible}; indeed replacing $\nabx \bu$ by $\mathcal W(\bu)$ does not alter the arguments there.
We speak about a strong solution, if a weak solutions satisfies
\begin{align*}
\eta&\in W^{2,2}\big(I;L^{2}(\omega)  \big)\cap W^{1,2}\big(I;W^{2,2}(\omega)  \big)\cap L^{\infty}\big(I;W^{3,2}(\omega)  \big)\cap L^{2}\big(I;W^{4,2}(\omega)  \big),\\
\bu &\in
W^{1,2} \big(I; L^{2}(\Oeta) \big)\cap  L^{2}\big(I;W^{2,2}(\Oeta)  \big),\quad
\pi \in
L^2\big(I;L^{2}(\Oeta)  \big),
\\ 
\widehat{f} & 
\in   L^{\infty}\big(I;W^{1,2}(\Oeta;L^2_M(B))  \big)
\cap 
L^{2}\big(I;W^{2,2}(\Oeta;L^2_M(B))  \big)
\\
&\qquad\qquad\cap  L^{2}\big(I;W^{1,2}(\Oeta;H^1_M(B))  \big).
\end{align*}
and $(\bu,\pi)$ solves the mometum equation a.a. in $I\times\Omega_\eta$. We have the following result
\begin{theorem}\label{thm:main2D}
Let $(\bff, g, \eta_0, \eta_\star, \bu_0, \widehat{f}_0)$ be a dataset satisfying 
\begin{equation}
\begin{aligned}
\label{dataset2D} 
&\bff \in L^2\big(I; L^2_{\mathrm{loc}}(\mathbb{R}^3 )\big),\quad
g \in L^2\big(I; W^{1,2}(\omega)\big), \quad
\eta_0 \in W^{3,2}(\omega) \text{ with } \Vert \eta_0 \Vert_{L^\infty( \omega)} < L,
\\
&\eta_\star \in W^{1,2}(\omega), \quad  \widehat{f}_0\in L^2\big( \Omega_{\eta_0} ;H^1_M(B)\big)\cap L^\infty\big(\Omega_{\eta_{0}};L^2_M(B)\big), \\
&\bu_0\in W^{1,2}_{\mathrm{\divx}}(\Omega_{\eta_0} ) \text{ is such that }\bu_0 \circ \bm{\varphi}_{\eta_0} =\eta_\star \bn \text{ on } \omega.
\end{aligned}
\end{equation}  
There is a unique  strong solution $( \eta, \bu,  \pi, \widehat{f} )$ of \eqref{shellEq2D}--\eqref{fokkerPlankBoundaryxsc42D}. The interval of existence is of the form $I = (0, t)$, where $t < T$ only in case $\Omega_{\eta(s)}$ approaches a self-intersection when $s\rightarrow t$ or it degenerates\footnote{Self-intersection and degeneracy are excluded if $\|\eta\|_{L^\infty_{t,y}}<L$, cf. \eqref{eq:boundary1}.} (namely, if $\lim_{s\rightarrow t}\partial_y\bm{\varphi}_\eta(s,y)=0$ or $\lim_{s\rightarrow t}\bn(y)\cdot\bn_{\eta(s)}(y)=0$ for some $y\in\omega$).
\end{theorem}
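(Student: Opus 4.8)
The plan is to establish existence and uniqueness first on a short interval by a fixed-point argument modelled on Section~\ref{sec:fullyCoupled}, and then to propagate the solution to the maximal interval by a continuation argument that closes globally (unless the geometry degenerates) because in two dimensions, for the co-rotational model, \emph{none} of the relevant a priori bounds blow up in finite time. The two inputs are: (i) the global-in-time strong well-posedness of the \emph{2D} solvent--structure problem \eqref{shellEq2D}--\eqref{contEq2D} from \cite{breit2022regularity}, which for a source $\mathbb S_\bq(\hbar)\in L^2(I;W^{1,2}(\Oeta))$ produces a unique $(\eta,\bu,\pi)$ in exactly the strong class listed in the statement, with all norms controlled on any bounded interval as long as $\Omega_\eta$ stays non-degenerate; and (ii) Theorem~\ref{thm:mainFP} and Lemmas~\ref{lem:mainFP1Y}, \ref{lem:mainFP2Y} of Section~\ref{sec:FP}, which for $\bu\in L^2(I;W^{2,2}_{\divx}(\Oeta))$ and $\eta\in W^{1,\infty}(I;W^{1,2}(\omega))\cap L^\infty(I;W^{3,2}(\omega))$ produce a unique strong solution $\widehat f$ of the co-rotational Fokker--Planck equation with the estimate \eqref{eq:thm:mainFP}. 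The decisive feature of \eqref{eq:thm:mainFP}, in contrast with the general (non-co-rotational) case and with 3D, is that its exponent involves the velocity only through $\int_I\|\bu\|_{W^{2,2}(\Oeta)}^2\dt$ and the structure only through $\int_I\|\partial_t\eta\|_{W^{1/3,2}(\omega)}^{12/5}\dt$, with an implicit constant governed by $\sup_I\|\bu\|_{L^2(\Oeta)}$, $\sup_I\|\eta\|_{W^{1,\infty}(\omega)}$ and $\|\widehat f_0\|_{L^\infty(\Omega_{\eta_0};L^2_M(B))}$ --- quantities which the 2D solvent--structure estimates keep bounded on bounded intervals.

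For the local step, I would fix $T_0>0$ small, set $I_0=(0,T_0)$, and define $\mathtt T\colon\hbar\mapsto\widehat f$ by first solving \eqref{shellEq2D}--\eqref{contEq2D} with datum $(\bff,g,\eta_0,\eta_\star,\bu_0,\mathbb S_\bq(\hbar))$ and then solving \eqref{fokkerPlank2D} with datum $(\widehat f_0,\eta,\bu)$; here $\|\mathbb S_\bq(\hbar)\|_{W^{1,2}(\Oeta)}\lesssim\|\hbar\|_{W^{1,2}(\Oeta;L^2_M(B))}$ as in \cite[(3.4)]{masmoudi2008well}, so that $\hbar$ in a ball $B_R$ of $X:=L^\infty(I_0;W^{1,2}(\Oeta;L^2_M(B)))\cap L^2(I_0;W^{2,2}(\Oeta;L^2_M(B)))\cap L^2(I_0;W^{1,2}(\Oeta;H^1_M(B)))$ is an admissible source. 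Since $\int_{I_0}\|\bu\|_{W^{2,2}(\Oeta)}^2\dt$ is bounded by the data plus $\int_{I_0}\|\mathbb S_\bq(\hbar)\|_{W^{1,2}(\Oeta)}^2\dt\lesssim T_0R^2$, and $\int_{I_0}\|\partial_t\eta\|_{W^{1/3,2}(\omega)}^{12/5}\dt\lesssim T_0\|\partial_t\eta\|_{L^\infty(I_0;W^{1,2}(\omega))}^{12/5}$, the bound \eqref{eq:thm:mainFP} yields $\|\mathtt T(\hbar)\|_X^2\le C$ uniformly for $\hbar\in B_R$ once $T_0$ is small, where $C$ depends only on the data; choosing $R^2>C$ makes $\mathtt T$ a self-map of $B_R$. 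For the contraction I would pass to the fixed reference domain by the Hanzawa transform and argue as in Section~\ref{sec:fullyCoupled}: the Lipschitz dependence of $(\eta,\bu)$ on $\mathbb S_\bq(\hbar)$ from \cite[Remark~5.2]{BMSS}, together with a co-rotational stability estimate obtained by testing the transformed difference equation with $\overline{\widehat f^{12}}$ and absorbing the lower-order terms via the maximum principle \eqref{fokkerEnergyEst5''} (its co-rotational version, following Lemma~\ref{lem:mainFP1Y}), yields $\|\widehat f^{12}\|_Y^2\le Ce^{CT_0}(c(\delta)T_0+\delta)\|\hbar^{12}\|_Y^2$ in the larger space $Y:=L^\infty(I_0;L^2(\Oeta;L^2_M(B)))\cap L^2(I_0;W^{1,2}(\Oeta;L^2_M(B)))\cap L^2(I_0;L^2(\Oeta;H^1_M(B)))$; choosing first $\delta$ then $T_0$ small makes $\mathtt T$ a contraction. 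The Banach fixed-point theorem gives a unique local strong solution; uniqueness of the full solution follows from the same stability estimate.

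For the global step, let $(0,t_{\max})$ be the maximal interval on which this construction can be iterated while $\|\eta\|_{L^\infty((0,t)\times\omega)}<L$ and $\partial_y\bm\varphi_\eta$, $\bn\cdot\bn_\eta$ stay bounded away from $0$. I claim $t_{\max}=T$. Suppose instead $t_{\max}<T$ with the above non-degeneracy in force up to $t_{\max}$. Then the global 2D estimates of \cite{breit2022regularity} bound $(\eta,\bu,\pi)$ in its strong class on $(0,t_{\max})$ in terms of the data and $\int_0^{t_{\max}}\|\mathbb S_\bq(\widehat f)\|_{W^{1,2}(\Oeta)}^2\dt\lesssim\int_0^{t_{\max}}\|\widehat f\|_{W^{1,2}(\Oeta;L^2_M(B))}^2\dt$, while \eqref{eq:thm:mainFP} bounds $\widehat f$ in $X$ on $(0,t_{\max})$, its exponent being finite because $\int_0^{t_{\max}}\|\bu\|_{W^{2,2}(\Oeta)}^2\dt<\infty$ and $\partial_t\eta\in L^\infty(I;W^{1,2}(\omega))\hookrightarrow L^{12/5}(I;W^{1/3,2}(\omega))$. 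Feeding each estimate into the other produces no finite-time singularity, so all components extend continuously (at least weakly) to $t_{\max}$, with $\|\eta(t_{\max})\|_{L^\infty(\omega)}<L$, $\bu(t_{\max})\in W^{1,2}_{\divx}(\Omega_{\eta(t_{\max})})$ and $\widehat f(t_{\max})\in W^{1,2}(\Omega_{\eta(t_{\max})};L^2_M(B))\cap L^\infty(\Omega_{\eta(t_{\max})};L^2_M(B))$ (the $L^\infty$-bound from \eqref{fokkerEnergyEst5''}). These provide admissible data for the local step at time $t_{\max}$; and since the local existence time depends only on the sizes of these data, which are bounded uniformly as $t\uparrow t_{\max}$, there is a uniform $\delta_0>0$ of further existence. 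Restarting from $t_1\in(t_{\max}-\delta_0,t_{\max})$ extends the solution beyond $t_{\max}$, contradicting maximality; hence $t_{\max}=T$ unless one of the geometric degeneracies of the statement occurs.

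The main obstacle is precisely the closure of this bootstrap in the global step: one must check that the 2D solvent--structure estimate (with source $\mathbb S_\bq(\widehat f)$) and the Fokker--Planck estimate \eqref{eq:thm:mainFP} (with data $(\eta,\bu)$) reinforce one another without a finite-time blow-up. This is where both hypotheses are essential: the co-rotational structure is what permits \eqref{eq:thm:mainFP} to be phrased in terms of $\|\bu\|_{W^{2,2}}$ rather than $\|\bu\|_{W^{1,\infty}}$ (the latter would \emph{not} be globally available, since $W^{2,2}(\Oeta)\not\hookrightarrow W^{1,\infty}$ in 2D), and two-dimensionality supplies both the global solvent--structure estimate and the Ladyzhenskaya-type interpolations used in Lemmas~\ref{lem:mainFP2Y} and \ref{lem:mainFP4}. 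A secondary point, identical to Section~\ref{sec:fullyCoupled}, is that transforming \eqref{fokkerPlank2D} to the reference domain in the contraction step generates boundary terms which can only be absorbed thanks to the centre-of-mass diffusion in the Fokker--Planck equation.
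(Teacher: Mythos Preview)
Your approach differs substantially from the paper's: you build the solution via a local fixed-point argument followed by continuation, whereas the paper bypasses this machinery entirely. It takes a global \emph{weak} solution $(\eta,\bu,\widehat f)$ (known from \cite{breit2021incompressible}), observes that the weak regularity $\widehat f\in L^2(I;W^{1,2}(\Oeta;L^2_M(B)))$ already yields $\divx\mathbb S_\bq(\widehat f)\in L^2(I;L^2(\Oeta))$, invokes \cite{breit2022regularity} to solve the 2D fluid--structure subsystem strongly and globally with this source, identifies the result with the weak solution via weak--strong uniqueness \cite[Remark~5.2]{BMSS}, and finally applies Lemma~\ref{lem:mainFP2Y} to upgrade $\widehat f$. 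Uniqueness is inherited from weak--strong uniqueness and the linearity of the Fokker--Planck problem.

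Your global step, as written, does not close. You bound $(\eta,\bu,\pi)$ via \cite{breit2022regularity} in terms of $\int_0^{t_{\max}}\|\widehat f\|_{W^{1,2}(\Oeta;L^2_M(B))}^2\dt$, and then bound the latter through \eqref{eq:thm:mainFP}, whose exponent contains $\int_0^{t_{\max}}\|\bu\|_{W^{2,2}(\Oeta)}^2\dt$. Substituting one into the other produces a differential inequality of the form $B'(t)\lesssim\exp(cB(t))$ for $B(t)=\int_0^t\|\bu\|_{W^{2,2}}^2\dd\tau$, which blows up in finite time; so ``feeding each estimate into the other'' is not benign. The missing step --- and the real reason the 2D co-rotational problem is global --- is Lemma~\ref{lem:mainFP1Y}: because the drag term $\divq(\mathcal W(\bu)\bq M\widehat f)$ vanishes when tested against $\widehat f$ by skew-symmetry, the basic energy estimate gives $\int_0^{t_{\max}}\|\widehat f\|_{W^{1,2}(\Oeta;L^2_M(B))}^2\dt\lesssim\|\widehat f_0\|_{L^2(\Omega_{\eta_0};L^2_M(B))}^2$ \emph{independently of $\bu$}. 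This breaks the circularity: first bound the source, then $(\eta,\bu)$ via \cite{breit2022regularity}, and only then apply \eqref{eq:thm:mainFP} for the higher regularity of $\widehat f$. The paper's argument encodes exactly this observation, obtaining it implicitly from the weak-solution energy balance rather than invoking Lemma~\ref{lem:mainFP1Y} directly; this also lets it skip the local fixed-point construction altogether.
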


\begin{proof}
Take a weak solution $( \eta, \bu, \widehat{f} )$ to \eqref{shellEq2D}--\eqref{fokkerPlankBoundaryxsc42D} which exists according to \cite{breit2021incompressible}. By \cite{breit2022regularity} with right-hand side $\divx\mathbb S(\widehat{f})\in L^2(I;L^2(\Omega_\eta))$ there is a strong solution
$( \eta, \bu,  \pi)$  to the fluid-structure system (which belongs to the correct function space).
 By weak-strong uniqueness (see \cite[Remark 5.2]{BMSS}) it must coincide with the weak solution. By Lemma \ref{lem:mainFP2Y} we also get spatial regularity of $\widehat f$ such that the constructed solution lives in the claimed function spaces and the proof is complete.
\end{proof}

\begin{remark}[Temporal regularity]
Having (as in the proof of Theorem \ref{thm:main2D} above) the weak solution from \cite{breit2022regularity} at hand, we apply Lemma \ref{lem:mainFP2Y} and, eventually,
Lemma \ref{lem:mainFP4}. By  Lemma \ref{lem:mainFP4} we get
\begin{equation}
\begin{aligned}
\label{fokkerEnergyNeg3'}
\sup_I \Vert \partial_t\widehat{f}^n(t)
\Vert_{L^2(\Oeta;L^2_M(B))}^2&+
 \int_I 
 \Vert \partial_t\widehat{f}^n
\Vert_{W^{1,2}(\Oeta;L^2_M(B))}^2
 \dt
 +
 \int_I 
 \Vert \partial_t\widehat{f}^n
\Vert_{L^2(\Ozeta;H^1_M(B))}^2
 \dt\\
&\lesssim1+ \sup_I\|\partial_t\zeta\|_{W^{1,2}(\omega)}^2+\int_I 
\Vert \partial_t\nabx \mathbf{u}\Vert_{L^2(\Oeta)}^{2}\dt.
\end{aligned}
\end{equation}
If a flat reference geometry is considered (the case of elastic plates),
by \cite[Theorem 4.4]{schwarzacherSu}\footnote{The result of \cite{schwarzacherSu} applies even without dissipation in the structure equation.} the right-hand side is controlled by the initial data and $\int_I\Vert \partial_t\widehat{f}^n(t)
\Vert_{L^2(\Ozeta;L^2_M(B))}^2\dt$ hence the estimate can be closed by Gronwall's lemma. Again by \cite[Theorem 4.4]{schwarzacherSu} one gets temporal regularity for the fluid. In conclusion, for elastic shells one obtains
\begin{align*}
 \widehat f&\in W^{1,\infty}(I;L^2(\Omega_\eta;L^2_M(B)))\cap W^{1,2}(I;L^2(\Omega_\eta;H^1_M(B))\cap W^{1,2}(I;L^2(\Omega_\eta;L^2_M(B))),\\
 \bu&\in W^{1,\infty}(I;L^2(\Omega_\eta))\cap W^{1,2}(I;W^{1,2}(\Omega_\eta)).
\end{align*}
The same result certainly applies when considering the problem in a fixed fluid-domain (as the estimate from \cite[Theorem 4.4]{schwarzacherSu} is well-known then). However, it remains open  whether the result from \cite{schwarzacherSu} holds for elastic shells to conclude in the same way.
\end{remark}


\end{document}